\documentclass[12pt]{article}

\usepackage[utf8]{inputenc}
\usepackage[T1]{fontenc}
\usepackage{lmodern}
\usepackage[english]{babel}
\usepackage{amsmath,mathtools,amsthm,amssymb,amsfonts}
\usepackage{color}
\usepackage{comment}

\usepackage{hyperref}
\usepackage{mathrsfs}
\usepackage{graphicx}
\usepackage{float}
\usepackage{enumitem}
\usepackage{tikz} 
\usepackage{esint}
\usepackage{pgfplots}
\usepackage{esvect}
\usepackage{pdfpages}
\usepackage{nicefrac}
\usepackage{leftidx, tensor}
\usepackage{cleveref}
\usepackage{accents}
\usetikzlibrary{shapes,arrows}
\usetikzlibrary{calc}
\usetikzlibrary{shapes.geometric}
\usetikzlibrary{shapes.arrows}
\usetikzlibrary{arrows.meta}
\usetikzlibrary{decorations.markings}
\usetikzlibrary{fit}
\usetikzlibrary{patterns}
\usetikzlibrary{hobby}
\usepgfplotslibrary{patchplots}
\pgfplotsset{compat=1.11}

\newcommand{\definedas}{\mathrel{\raise.095ex\hbox{\rm :}\mkern-5.2mu=}}

\newcommand{\R}{\mathbb{R}}

\newcommand{\Z}{\mathbb{Z}}

\newcommand{\Sbb}{\mathbb{S}}

\renewcommand{\d}{\,\mathrm{d}}

\newcommand{\ul}[1]{\underline{#1}}

\newcommand{\btr}[1]{\left\vert#1\right\vert}
\newcommand{\newbtr}[1]{\vert#1\vert}
\newcommand{\norm}[1]{\btr{\btr{#1}}}

\newcommand{\spann}[1]{\left\langle#1\right\rangle}

\newcommand{\Ric}{\mathrm{Ric}}
\newcommand{\scal}{\mathrm{R}}
\newcommand{\Riem}{\operatorname{Rm}}

\newcommand{\two}{\operatorname{II}}
\newcommand{\tr}{\text{tr}}
\newcommand{\dive}{\operatorname{div}}

\newcommand{\newnorm}[1]{\vert\vert#1\vert\vert}
\newcommand{\hatgamma}{\widehat{\gamma}}
\newcommand{\tildegamma}{\widetilde{\gamma}}

\theoremstyle{plain}
\newtheorem{thm}{Theorem}[section]
\newtheorem{prop}[thm]{Proposition}

\newtheorem{lem}[thm]{Lemma}

\theoremstyle{definition}
\newtheorem{defi}[thm]{Definition}

\newtheorem{bem}[thm]{Remark}
\newtheorem{kor}[thm]{Corollary}

\begin{document}
		\begin{center}\LARGE On the uniqueness of surfaces of constant spacetime mean curvature in asymptotically Schwarzschildean lightcones\end{center}
		\vspace{0.5cm}
		\begin{center}
			{\large Klaus Kr\"oncke and Markus Wolff}
		\end{center}
		\vspace{0.4cm}
		\begin{abstract}
			In this paper, we address the uniqueness of surfaces of constant spacetime mean curvature in an asymptotically Schwarzschildean lightcone of mass $m>0$. We prove that there exists a unique asymptotically flat foliation by surfaces of constant spacetime mean curvature for a fairly generic notion of asymptotic flatness. This foliation has Bondi energy $m$ and vanishing Bondi linear momentum. The authors have already established the existence of such a foliation in previous work \cite{kroenckewolff}, but proven uniqueness only in a very restrictive class of surfaces. Although this restrictive class of surfaces was necessary for the construction, here we show that the foliation is a posteriori unique under significantly weaker assumptions.
		\end{abstract}

	\section{Introduction}
	
		In their previous work, the authors constructed an asymptotic foliation of an asymptotically Schwarzschildean lightcone by surfaces of constant spacetime mean curvature \cite[Theorem 5.2]{kroenckewolff}. We recall that for a spacelike, codimension $2$ surface $(\Sigma,\gamma)$ in an ambient spacetime $(\overline{M},\overline{g})$, the \emph{spacetime mean curvature} $\mathcal{H}^2$ of $\Sigma$ as introduced by Cederbaum--Sakovich \cite{cederbaumsakovich} is defined as the Lorentzian length of the codimension $2$ mean curvature vector $\vec{\mathcal{H}}$, i.e.,
		\[
		\mathcal{H}^2=\overline{g}(\vec{\mathcal{H}},\vec{\mathcal{H}}).
		\]
		We say $(\Sigma,\gamma)$ is a \emph{surface of constant spacetime mean curvature} (STCMC) if $\mathcal{H}^2$ is constant along $\Sigma$. Although we will only consider surfaces where $\mathcal{H}^2$ is strictly positive here, we note that $\vec{\mathcal{H}}$ has a priori no fixed causal character and trapped surfaces, where $\mathcal{H}^2<0$ on $\Sigma$, naturally occur in the context of general relativity. Moreover, throughout this article we will restrict our attention to surfaces that arise as spacelike cross sections of a given null hypersurface. We recall that a null hypersurface $\mathcal{N}$ in a spacetime $(\overline{M},\overline{g})$ carries a degenerate induced metric and is ruled by null geodesics. In particular, null hypersurfaces model the  trajectories of light that is emanating off of a given light source, such as a far away celestial object. Thus, null hypersurfaces arise naturally in physical considerations and conserved quantities such as energy, linear momentum, angular momentum, and center of mass are studied similarly as for asymptotically flat and asymptotically hyperbolic initial data sets. 
		
		In the context of center of mass, which is generally much more subtle than energy and linear momentum, asymptotic foliations by surfaces of prescribed curvature and geometric notions of center of mass have been widely studied. This goes back to an idea first proposed by Huisken and Yau \cite{huiskenyau}. See for example \cite{cederbaumsakovich, eichmairkoerber1, huiskenyau, huang, lammmetzgerschulze, metzger, nerz, nevestian2,  tenan1, tenan2, ye} and the references given therein for a non-exhaustive list. Many of these results were first established for CMC surfaces. In \cite{cederbaumsakovich}, Cederbaum and Sakovich introduce STCMC surfaces as a natural generalization to CMC surfaces in the context of general relativity and establish the existence of an asymptotic foliation of asymptotically Euclidean initial data sets by STCMC surfaces. See also Tenan \cite{tenan1}. The corresponding result for asymptotically hyperboloidal initial data sets close to the Schwarzschild-Anti\,deSitter space has recently been established by Tenan \cite{tenan2}. In this context, our result in \cite{kroenckewolff} establishes the existence of such a foliation in the null setting, and we note that the spacetime mean curvature in fact presents itself as the natural analogue to the concept of mean curvature in the null setting. Here, we also mention recent work by Lambert and Scheuer \cite{lambertscheuer} which establish the existence of a (local) foliation of a null hypersurface by STCMC surfaces close to a stable marginally outer trapped surface.
		
		To define a center of mass in a well-defined manner, the asymptotic foliation by surfaces of prescribed curvature necessarily has to satisfy a suitable uniqueness property. Following a general strategy, the leaves of the foliation under consideration are usually both constructed and shown to be unique within an a priori class of surfaces. Closely motivated by the original work of Huisken and Yau, the authors considered an a priori class in \cite{kroenckewolff} that seems much more restrictive in the null setting compared to its Riemannian counterpart. In particular, the Bondi linear momentum has to vanish along any foliation within the a priori class. We note however, that this restrictive class is necessary for the construction in \cite{kroenckewolff} and that (Bondi-) energy and linear momentum transform equivariantly under Lorentz transformations, i.e., this restriction can be understood as a necessary gauge condition. In fact, an analogous gauge of balanced coordinates seems necessary for the construction of CMC and STCMC foliations in the asymptotically hyperbolic and hyperboidal case. See \cite{cederbaumcortiersakovich, nevestian1, nevestian2, tenan2}. We also refer the interested reader to a more detailed discussion in \cite[Section 6]{kroenckewolff}. Here, we simply observe that this gauge condition can be understood as fixing the time axis (at infinity) to be given in direction of the Bondi energy-momentum vector. From this point of view, considering notions of center of mass only in this restrictive setting appears to be justified both from a geometric and physical perspective. Motivated by special relativity, the authors propose a notion of center of mass in \cite{kroenckewolff} under the assumption that the linear momentum vanishes. In this sense, the conditional uniqueness of the STCMC foliation should already be sufficient for applications to center of mass. Again, we refer the interested reader to \cite[Section 6]{kroenckewolff} for details.
		
		We plan to closely investigate the proposed center of mass in subsequent work, but aim to answer a naturally arising tangential question here instead: Are the conditions necessary for the construction generically satisfied under the STCMC condition? In other words, if one expects STCMC surfaces to be optimally centered, does an asymptotic foliation by STCMC surfaces necessarily have vanishing linear momentum? Here, we give an affirmative answer within a significantly weaker a priori class of surfaces. We state a simplified version of our main theorem:
		
		\begin{thm}\label{thm_intro}
			Let $\mathcal{N}$ be an asymptotically Schwarzschildean lightcone of mass $m>0$. Then there exists a unique, asymptotically flat background foliation by surfaces of constant spacetime mean curvature. Moreover, the foliation has Bondi energy $m$ and vanishing Bondi linear momentum.
		\end{thm}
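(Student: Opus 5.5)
Existence is already available: \cite[Theorem 5.2]{kroenckewolff} constructs an asymptotic STCMC foliation of $\mathcal{N}$ whose leaves lie in the restrictive a priori class (graphs $\omega$ over the background foliation $\{r=\text{const}\}$ that are almost round and centered, with vanishing Bondi linear momentum). There the Bondi energy $m$ and vanishing linear momentum are also computed. So the new content of Theorem \ref{thm_intro} is uniqueness within the weaker class of asymptotically flat background foliations. My plan is to show a posteriori that every STCMC leaf in this weaker class already lies in the a priori class of \cite{kroenckewolff}, and then invoke the uniqueness statement proven there.

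\textbf{Step 1: the STCMC equation.} Write a leaf as $\Sigma_\omega=\{r=\omega\}$ over $\Sbb^2$, with a conformal factor relative to the round metric. On a lightcone one has $\mathcal{H}^2=4\theta^+\theta^-$ (up to normalisation), where $\theta^\pm$ are the null expansions. In Minkowski, along the null generator $\theta^+$ is determined by the scale, and $\theta^-$ involves $\Delta\log\omega$ and $|\nabla\omega|^2$. So $\mathcal{H}^2=\text{const}$ becomes a quasilinear elliptic equation for $u=\log\omega$, essentially $\Delta u$ plus a $2$-dimensional curvature term, of Liouville type: $\mathcal{H}^2=\text{const}$ says that the metric $e^{2u}\gamma_{\Sbb^2}$ rescaled suitably has constant Gauss curvature. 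In Schwarzschild one picks up additional mass terms of order $m/\omega$. \emph{Asymptotically flat} should mean, e.g., $\omega\to\infty$ with $|\nabla u|$, $|\nabla^2u|$ bounded, so the perturbation terms are small relative to the leading Liouville operator.

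\textbf{Step 2: rigidity of the model.} In exact Minkowski lightcone the solutions of the leading equation are exactly the cross sections obtained by Lorentz boosts, i.e. $\omega=c/(a_0-\vec a\cdot x)$ with $a_0^2-|\vec a|^2=1$. This follows from the Obata/Liouville classification of constant curvature conformal metrics on $\Sbb^2$. These are precisely the STCMC cuts with arbitrary linear momentum (rapidity $\vec a$), which is exactly why a priori the momentum need not vanish.

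\textbf{Step 3: the mass term breaks the boost degeneracy. This is the main obstacle.} Expand the Schwarzschild STCMC equation around a boosted Minkowski cut $\omega_{\vec a}$. Then project onto the kernel of the linearised Liouville operator, namely the constants and the first spherical harmonics $x^i$. This gives a solvability (Lyapunov--Schmidt) condition that the $\ell=1$ component of the $m$-dependent term must vanish. I would compute this component explicitly for the boosted cut. The expectation is that it is proportional to $m\,\vec a/\sqrt{1+|\vec a|^2}$ times a nonzero factor, in analogy to how the Bondi momentum of a boosted section is $m\sinh$ of the rapidity. For $m>0$ this forces $\vec a=0$ to leading order, with the error controlled by $\omega^{-1}$ decay of the perturbation terms. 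The difficulty is that $\vec a$ is not a priori small, since the weaker class allows any fixed boost. So the estimates have to be uniform in $\vec a$ on compact sets, and one also needs to rule out $|\vec a|\to\infty$ along the foliation. For the latter I would use the asymptotic flatness assumption, namely uniform bounds on $|\nabla u|$, which bound $|\vec a|$.

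\textbf{Step 4: conclusion.} Having shown that $\vec a$ vanishes asymptotically, elliptic estimates for the linearised operator on the complement of its kernel show that $u-\text{const}$ has the decay required in the a priori class of \cite{kroenckewolff}. Uniqueness there then identifies the foliation with the constructed one, whose Bondi energy is $m$ and whose linear momentum vanishes.
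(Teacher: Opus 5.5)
Your architecture matches the paper's. Existence is taken from \cite[Theorem 5.2]{kroenckewolff}, and uniqueness follows by showing that every STCMC leaf in the weaker class already lies in the a priori class $B_\rho(B_1,B_2,B_3)$ of \cite{kroenckewolff}, whose uniqueness result then applies. Your Step 3 is also the paper's mechanism (Step 2 in the proof of Proposition~\ref{prop_proof_end}), but in exact rather than linearised form. Testing with the M\"obius (Kazdan--Warner) potential $f$ of the actual metric $\widetilde\gamma=\omega^2\widehat\gamma$ gives $\int f\d\widetilde\mu=\int f\widetilde{\scal}\d\widetilde\mu=0$. Hence $8m\widetilde\rho^{-1}\int_{\Sbb^2}\frac{\widetilde\rho}{\omega}f\d\widehat\mu=\int f\bigl(\mathcal{H}^2-2\widetilde{\scal}+8m\omega^{-3}\bigr)\d\widetilde\mu$. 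The explicit value $6\int_{\Sbb^2}b_{\vec a}^{-1}\cos\theta\d\widehat\mu=-8\pi\btr{\vec a}$ is the non-degeneracy you only conjectured.

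The gap is the premise of your Step 3: that the leaf is quantitatively close to a boosted cut $b_{\rho,\vec a}$ with a well-defined $\vec a$. Nothing in your Steps 1--2 produces this. The Liouville/Obata classification is a rigidity statement for exactly constant curvature. It says nothing about a metric whose curvature is only approximately constant, and your hypothesis gives bounded, not small, $C^2$ norms of $u$.

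Without a closeness estimate, and a modulation choice of $\vec a$ making the remainder $\phi$ transverse to the kernel, the projection does not bound $\vec a$. The quadratic terms in $\phi$, of size $\rho^{-2}\norm{\phi}_{C^2}^2$, compete with the mass contribution to the projected equation, which is only of size $m\rho^{-3}\btr{\vec a}$. Moreover, for $\vec a\neq 0$ the kernel of the linearisation at $b_{\rho,\vec a}$ is spanned by M\"obius pull-backs of the $x^i$, not by the $x^i$ themselves. So projecting onto $x^i$ does not even remove the linear term.

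This closeness is where the paper does its real work, which is why the projection step is short there:
\begin{itemize}
\item The null Simons identity, with a Stampacchia iteration using the Sobolev inequality of Definition~\ref{defi_sobolev}, gives $\newbtr{\accentset{\circ}{A}}\le(\mathcal{H}^2)^{1+\eta}$.
\item A maximum principle gives $\newbtr{\nabla\accentset{\circ}{A}}\le(\mathcal{H}^2)^{\frac32+\eta}$.
\item The traced Codazzi equation absorbs the gradient of the mass term into $\dive\accentset{\circ}{A}$, yielding $\newbtr{\widetilde\nabla\widetilde{\scal}}=o(\widetilde\rho^{-3})$.
\item Corollary~\ref{kor_quantitativecontrol} then gives $C^2$-closeness to $b_{\widetilde\rho,\vec a}$. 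This is quantitative uniformization under the balancing condition, with $\vec a$ fixed by the $4$-vector $\textbf{Z}$, and it rests on the balanced Onofri inequality of Appendix~\ref{appendix_roundness}.
\end{itemize}

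Under your stronger reading of asymptotic flatness (uniform $C^2$ bounds on $u$, bounded boosts), you could plausibly fill the gap yourself. You would need Schauder estimates for the quasilinear STCMC equation, a compactness argument combined with the classification, and an explicit modulation of $\vec a$ before the Lyapunov--Schmidt step. But this must actually be supplied. Even then, it does not reach the weak class $\mathcal{S}^1_{\alpha,\varepsilon,\kappa}$ of Theorem~\ref{thm_main1}. There only $\newbtr{\widehat\nabla\ln\omega}$ is controlled, with a bound allowed to diverge like $(\mathcal{H}^2)^{-\alpha}$, and the associated boosts may grow like $\rho^{2\alpha}$.
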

		
		We refer the reader to Theorem \ref{thm_main2} for a precise statement. We note that we require some comparability in $C^2$ to a family of (boosted) round spheres, so our result likely does not apply to general asymptotic flat background foliations. However, as one always heuristically expects an asymptotically flat background foliation to converge (upon rescaling) to a family of boosted spheres at infinity, our additional assumptions are satisfied for a fairly generic (strong) notion of asymptotic flatness. More precisely, we establish the uniqueness of STCMC surfaces (Theorem \ref{thm_main1}) within a significantly weaker a priori class compared to \cite{kroenckewolff}. While the a priori class is quite technical, we note that it naturally contains spacelike cross sections close to a boosted sphere, where the boost vector $\vec{a}$ is in fact allowed to diverge to infinity at a precisely controlled rate. See Lemma \ref{lem_classS_observation}. The a priori class moreover significantly simplifies in spherical symmetry, in particular in the exact Schwarzschild lightcone. We note however that the uniqueness of STCMC surfaces in the exact Schwarzschild lightcone (without any additional assumptions) has already been established by Chen--Wang \cite{chenwang}. On the other hand, we expect the uniqueness of STCMC surfaces without any additional assumptions to be false in general as counterexamples should arise by analogy to the work of Brendle--Eichmair \cite{brendleeichmair}.
		
		Our approach is strongly motivated by the work of Huisken--Yau \cite{huiskenyau} on the uniqueness of stable CMC surfaces in an asymptotically Schwarzschildean Riemannian manifold. See also \cite{chodosheichmair, eichmairkoerbermetzgerschulze, ma, qingtian} and the references given therein for other related uniqueness results. As in \cite{huiskenyau}, we show that an STCMC surface within the weaker a priori class (Definition \ref{defi_classS}) must lie in the stronger a priori class as in \cite[Definition 3.1]{kroenckewolff}. By the uniqueness of STCMC surfaces established in \cite[Proposition 3.15]{kroenckewolff}, Theorem \ref{thm_main1} follows. The main strategy to establish this inclusion is to derive some preliminary pointwise estimates for $\newbtr{\accentset{\circ}{A}}$ and $\newbtr{\nabla\accentset{\circ}{A}}$, where $\accentset{\circ}{A}$ denotes the trace-free part of the so-called \emph{scalar second fundamental form} $A$. Although these initial estimates are far from the desired ones, they are at a critical rate to employ quantitative estimates for conformally round surfaces (Corollary \ref{kor_quantitativecontrol}) which subsequently yield the improved estimates. The crucial, initial bound on $\newbtr{\accentset{\circ}{A}}$ is derived from a Stampacchia iteration, using a null Simons' identity (Proposition \ref{prop_prelim_nullsimon})  and a suitable Sobolev inequality (Definition \ref{defi_sobolev}).\\\\
		
		This paper is structured as follows: In Section \ref{sec_prelim} we introduce some necessary prerequisites about null hypersurfaces and the geometry of codimension $2$ surfaces, define our asymptotic assumptions and compare those to the notion of asymptotic flat background foliations. In Section \ref{section_sobolev} we formulate a Sobolev inequality with respect to the spacetime mean curvature and investigate under which conditions it holds with a uniformly controlled constant. We prove our main theorem in Section \ref{sec_uniqueness}. We collect some more technical statements and estimates in the Appendix, such as a contracted version of the null Simons' identity and curvature identities in our setting. We further give an outline of a proof for a quantitative estimate for almost round surfaces which we require for our analysis.\\
		
		\begin{center}
			\textbf{\large Acknowledgments}\\
		\end{center}
		K.K. is supported by the Verg foundation. The research of M.W. is funded by the Austrian Science Fund (FWF) 10.55776/ESP1094725.

	\section{Preliminaries}\label{sec_prelim}
	\subsection{Null Geometry}\label{subsec_prelim_nullgeometry}
	
	We give a brief overview of the relevant concepts in null geometry and the geometry of codimension $2$ surfaces. We refer the interested reader to \cite[Section 2.1]{kroenckewolff} for a more detailed summary and the references given therein for further details.
	
	We say a smooth, oriented hypersurface $\mathcal{N}$ in an ambient spacetime $(\overline{M},\overline{g})$ (of dimension $4$) is a \emph{null hypersurface} if $\mathcal{N}$ carries a degenerate induced metric. In particular, there exists a smooth null vector field $\ul{L}\in \Gamma(T\mathcal{N})$ such that $T_p\mathcal{N}=(\ul{L}_p)^\perp\subset T_p\overline{M}$ for any $p\in\mathcal{N}$. In this sense, $\ul{L}$ is both tangential and normal to $\mathcal{N}$ and we moreover observe that
	\begin{align}\label{eq_surfgrav}
		\overline{\nabla}_{\ul{L}}\ul{L}=\kappa\ul{L}
	\end{align}
	for a smooth function $\kappa$ on $\mathcal{N}$, where $\overline{\nabla}$ denotes the Levi-Civita connection of $(\overline{M},\overline{g})$. Hence, the integral curves of $\ul{L}$ are null pre-geodesics, and we call $\ul{L}$ a \emph{null generator} of $\mathcal{N}$.
	
	In the following, we consider spacelike codimension $2$ submanifolds $(\Sigma,\gamma)$ of $(\overline{M},\overline{g})$ as (spacelike) cross sections of $\mathcal{N}$, i.e., $\Sigma\subset \mathcal{N}$ and we always assume that any integral curve of $\ul{L}$ intersects $\Sigma$ transversally and exactly once. As $T\Sigma\subset T\mathcal{N}$, we have $\ul{L}\in\Gamma(T^\perp\Sigma)$ and there exists a uniquely determined null vector field $L$ such that $\{\ul{L},L\}$ forms a null frame of $\Gamma(T^\perp\mathcal{N})$ with $\overline{g}(\ul{L},L)=2$. We recall that the \emph{null second fundamental forms} of $\Sigma$ with respect to $\{\ul{L},L\}$ are defined as
	\begin{align*}
		\underline{\chi}(X,Y):=\overline{g}(\overline{\nabla}_X\ul{L},Y),\qquad\chi(X,Y):=\overline{g}(\overline{\nabla}_XL,Y),
	\end{align*}
	where $X,Y\in\Gamma(T\Sigma)$, and that the \emph{null expansions} of $\Sigma$ with respect to $\{\ul{L},L\}$ are defined as
	\begin{align*}
		\ul{\theta}:=\tr_\gamma\ul{\chi},\qquad
		\theta:=\tr_\gamma\chi.
	\end{align*}
	We observe that the vector-valued second fundamental form $\vec{\two}$ and codimension $2$ mean curvature vector $\vec{\mathcal{H}}$ of $(\Sigma,\gamma)$ in $(\overline{M},\overline{g})$ can be written as
	\begin{align*}
		\vec{\two}=-\frac{1}{2}\chi\ul{L}-\frac{1}{2}\ul{\chi}L,\qquad
		\vec{\mathcal{H}}=-\frac{1}{2}\theta\ul{L}-\frac{1}{2}\ul{\theta}L
	\end{align*}
	with respect to the null frame $\{\ul{L},L\}$. In particular, the \emph{spacetime mean curvature} $\mathcal{H}^2$ of $\Sigma$, defined as the Lorentzian length of $\vec{\mathcal{H}}$ \cite{cederbaumsakovich}, is given by
	\[
	\mathcal{H}^2:=\overline{g}(\vec{\mathcal{H}},\vec{\mathcal{H}})=\ul{\theta}\theta.
	\]
	Further, the \emph{connection-$1$ form} of $\Sigma$ with respect to $\{\ul{L},L\}$ is defined as
	\[
	\zeta(X):=\frac{1}{2}\overline{g}(\overline{\nabla}_X\ul{L},L),
	\]
	where $X\in\Gamma(T\Sigma)$. In addition, if $\ul{\theta}$ is non-vanishing along $\Sigma$, we define the \emph{scalar second fundamental form} $A$ of $\Sigma$ as
	\begin{align}\label{eq_prelim_defA}
		A:=\ul{\theta}\chi,
	\end{align}
	and \emph{torsion} $\tau$ of $\Sigma$ as
	\begin{align}\label{eq_prelim_deftau}
		\tau:=\zeta-\d\ln(\ul{\theta}),
	\end{align}
	see also \cite[Section 3]{wolff1} and \cite[Section 1]{roesch}, respectively. We observe that the quantities $\mathcal{H}^2=\tr_\gamma A$, $A$, $\tau$ are independent of the choice of null frame $\{\ul{L},L\}$ (if well-defined), and thus independent of the choice of null generator $\ul{L}$ for a spacelike cross section of $\mathcal{N}$.

	We recall the Gauss equations with respect to a null frame $\{\ul{L},L\}$:
	\begin{prop}[{\cite[Proposition 4.24]{wolff_thesis}}]\label{prop_prelim_nullgauss}
		Let $(x^i)$ denote local coordinates of $(\Sigma,\gamma)$. Then
		\begin{align}
			\overline{\Riem}_{ijkl}&=\Riem_{ijkl}-\frac{1}{2}\chi_{jl}\ul{\chi}_{ik}-\frac{1}{2}\ul{\chi}_{jl}\chi_{ik}+\frac{1}{2}\chi_{jk}\ul{\chi}_{il}+\frac{1}{2}\ul{\chi}_{jk}\chi_{il},\label{eq_prelim_gauss1}\\
			\overline{\Ric}_{ik}-\frac{1}{2}\overline{\Riem}_{i\ul{L}kL}-\frac{1}{2}\overline{\Riem}_{iLk\ul{L}}&=\Ric_{ik}-\frac{1}{2}\theta\ul{\chi}_{ik}-\frac{1}{2}\ul{\theta}\chi_{ik}+\frac{1}{2}(\chi\cdot\ul{\chi})_{ik}+\frac{1}{2}(\ul{\chi}\cdot\chi)_{ik},\label{eq_prelim_gauss2}\\
			\overline{\scal}-2\overline{\Ric}(L,\ul{L})+\frac{1}{2}\overline{\Riem}(\ul{L},L,L,\ul{L})&=\operatorname{R}-\mathcal{H}^2+\newbtr{\vec{\two}}^2.\label{eq_prelim_gauss3}
		\end{align}
	\end{prop}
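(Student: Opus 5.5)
The plan is to derive all three identities from the Gauss formula for a spacelike codimension $2$ submanifold, written in the null frame $\{\ul{L},L\}$. The first step is the Gauss decomposition $\overline{\nabla}_XY=\nabla_XY+\vec{\two}(X,Y)$ for $X,Y\in\Gamma(T\Sigma)$. The normal part is obtained by expanding in the frame with $\overline{g}(\ul{L},L)=2$ and $\overline{g}(\ul{L},\ul{L})=\overline{g}(L,L)=0$. Writing $\vec{\two}(X,Y)=a\ul{L}+bL$, we have $\overline{g}(\vec{\two}(X,Y),L)=2a$ and $\overline{g}(\vec{\two}(X,Y),L)=\overline{g}(\overline{\nabla}_XY,L)=-\chi(X,Y)$. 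This gives $a=-\frac{1}{2}\chi(X,Y)$, and likewise $b=-\frac{1}{2}\ul{\chi}(X,Y)$, which is the stated formula for $\vec{\two}$.

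Next we insert this into the standard Gauss equation
\[
\overline{\Riem}(X,Y,Z,W)=\Riem(X,Y,Z,W)+\overline{g}(\vec{\two}(X,Z),\vec{\two}(Y,W))-\overline{g}(\vec{\two}(X,W),\vec{\two}(Y,Z)).
\]
The overall sign depends on the curvature convention of \cite{wolff_thesis}, and I will fix it so that the result matches \eqref{eq_prelim_gauss1}. The only nonzero frame pairing is $\overline{g}(\ul{L},L)=2$, so
\[
\overline{g}(\vec{\two}_{ik},\vec{\two}_{jl})=\tfrac{1}{2}\left(\chi_{ik}\ul{\chi}_{jl}+\ul{\chi}_{ik}\chi_{jl}\right).
\]
Substituting this pairing directly yields \eqref{eq_prelim_gauss1}.

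For \eqref{eq_prelim_gauss2}, we contract \eqref{eq_prelim_gauss1} with $\gamma^{jl}$. On the right-hand side this produces $\Ric_{ik}-\frac{1}{2}\theta\ul{\chi}_{ik}-\frac{1}{2}\ul{\theta}\chi_{ik}+\frac{1}{2}(\chi\cdot\ul{\chi})_{ik}+\frac{1}{2}(\ul{\chi}\cdot\chi)_{ik}$. On the left-hand side we write the inverse ambient metric in the adapted frame as
\[
\overline{g}^{-1}=\gamma^{jl}\partial_j\otimes\partial_l+\tfrac{1}{2}\left(\ul{L}\otimes L+L\otimes\ul{L}\right).
\]
This gives $\gamma^{jl}\overline{\Riem}_{ijkl}=\overline{\Ric}_{ik}-\frac{1}{2}\overline{\Riem}_{i\ul{L}kL}-\frac{1}{2}\overline{\Riem}_{iLk\ul{L}}$.

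For \eqref{eq_prelim_gauss3}, we contract once more with $\gamma^{ik}$. On the right-hand side we get $\scal-\theta\ul{\theta}+\tr(\chi\cdot\ul{\chi})$. The last term equals $\newbtr{\vec{\two}}^2=\gamma^{ik}\gamma^{jl}\overline{g}(\vec{\two}_{ij},\vec{\two}_{kl})=\langle\chi,\ul{\chi}\rangle$, using the same pairing computation. The trace $\theta\ul{\theta}$ is $\mathcal{H}^2$. On the left-hand side we need $\gamma^{ik}\overline{\Ric}_{ik}=\overline{\scal}-\overline{\Ric}(L,\ul{L})$, again from the frame decomposition of $\overline{g}^{-1}$. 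We also need
\[
\gamma^{ik}\overline{\Riem}_{i\ul{L}kL}=\overline{\Ric}(\ul{L},L)-\tfrac{1}{2}\overline{\Riem}(L,\ul{L},\ul{L},L),
\]
and the analogous identity with $L$ and $\ul{L}$ interchanged; only one mixed null term survives because of the antisymmetry of $\overline{\Riem}$. Summing and using the pair symmetries of $\overline{\Riem}$ gives $\overline{\scal}-2\overline{\Ric}(L,\ul{L})+\frac{1}{2}\overline{\Riem}(\ul{L},L,L,\ul{L})$.

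The main obstacle is bookkeeping rather than ideas. The factors of $\frac{1}{2}$ coming from $\overline{g}(\ul{L},L)=2$ have to be tracked carefully, and the sign and index-ordering convention for $\overline{\Riem}$ must be kept consistent. This matters most in the final double trace, where the null–null curvature components from the two mixed terms must combine into a single $\frac{1}{2}\overline{\Riem}(\ul{L},L,L,\ul{L})$ with the correct sign. I would check this step against Minkowski space with round spheres, where every curvature term vanishes.
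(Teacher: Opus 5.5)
Your proposal is correct and is the standard derivation. The paper gives no proof of its own and simply cites \cite[Proposition 4.24]{wolff_thesis}. Your null-frame expression for $\vec{\two}$, the pairing $\overline{g}(\vec{\two}_{ik},\vec{\two}_{jl})=\frac{1}{2}(\chi_{ik}\ul{\chi}_{jl}+\ul{\chi}_{ik}\chi_{jl})$, and both traces with $\overline{g}^{-1}=\gamma^{jl}\partial_j\otimes\partial_l+\frac{1}{2}(\ul{L}\otimes L+L\otimes\ul{L})$ all check out. This includes the double trace producing $\overline{\scal}-2\overline{\Ric}(L,\ul{L})+\frac{1}{2}\overline{\Riem}(\ul{L},L,L,\ul{L})$.

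One correction: the sign of the Gauss equation is fixed by the paper's convention, not chosen to match the target. Your displayed equation holds for $\Riem(X,Y,Z,W)=g(R(X,Y)Z,W)$, but the paper uses $\Riem(X,Y,W,Z)=g(\nabla_X\nabla_YZ-\nabla_Y\nabla_XZ-\nabla_{[X,Y]}Z,W)$, which swaps the last two slots. From $\overline{g}(\overline{\nabla}_X\vec{\two}(Y,Z),W)=-\overline{g}(\vec{\two}(Y,Z),\vec{\two}(X,W))$ one then gets $\overline{\Riem}(X,Y,Z,W)=\Riem(X,Y,Z,W)-\overline{g}(\vec{\two}(X,Z),\vec{\two}(Y,W))+\overline{g}(\vec{\two}(X,W),\vec{\two}(Y,Z))$. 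This sign is exactly what yields \eqref{eq_prelim_gauss1}; your display would flip every $\chi\ul{\chi}$ term. Your planned Minkowski check with $\chi=\ul{\chi}=r^{-1}\gamma$ would catch this.
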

	Here, $\overline{\Riem}$, $\overline{\Ric}$, $\overline{\scal}$, and $\Riem$, $\Ric$, $\scal$ denote the Riemann curvature tensor, Ricci tensor, and scalar curvature of $(\overline{M},\overline{g})$ and $(\Sigma,\gamma)$, respectively. Here, we use the following conventions for the Riemann curvature tensor $\Riem$, Ricci curvature $\Ric$, and scalar curvature $\scal$ of a semi-Riemannian manifold $(M,g)$:
	\begin{align*}
		\Riem(X,Y,W,Z)&=g(\nabla_X\nabla_YZ-\nabla_Y\nabla_XZ-\nabla_{[X,Y]}Z,W),\\
		\Ric(X,Y)&=\tr_g\Riem(X,\cdot,Y,\cdot),\\
		R&=\tr_g \Ric.
	\end{align*}
	
	We further recall the Codazzi Equation for $A$ (if well-defined), see \cite[Remark 2.3]{kroenckewolff}:
	\begin{prop}\label{prop_prelim_codazziA}
		Let $(x^i)$ denote local coordinates of $(\Sigma,\gamma)$. Then
		\begin{align*}
			\nabla_iA_{jk}-\nabla_jA_{ik}&=\ul{\theta}\overline{\Riem}_{ijkL}+\tau_jA_{ik}-\tau_iA_{jk}.
		\end{align*}
	\end{prop}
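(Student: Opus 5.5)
The plan is to reduce the statement to the classical Codazzi equation for the null second fundamental form $\chi$ and then account for the factor $\ul{\theta}$ in $A=\ul{\theta}\chi$ by the product rule. All computations will be done in local coordinates $(x^i)$ on $\Sigma$, with $\nabla$ the Levi-Civita connection of $\gamma$.

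\textbf{Step 1 (normal decomposition of $\overline{\nabla}_XL$).} For $X\in\Gamma(T\Sigma)$, the tangential part of $\overline{\nabla}_XL$ is $\chi(X,\cdot)^\sharp$. For the normal part we use the null frame. Since $\overline{g}(L,L)=0$, we get $\overline{g}(\overline{\nabla}_XL,L)=0$, so there is no $\ul{L}$-component. Since $\overline{g}(L,\ul{L})=2$ is constant,
\[
\overline{g}(\overline{\nabla}_XL,\ul{L})=-\overline{g}(L,\overline{\nabla}_X\ul{L})=-2\zeta(X).
\]
Writing the normal part as $cL$ and using $\overline{g}(cL,\ul{L})=2c$, we obtain $c=-\zeta(X)$. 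Hence
\[
\overline{\nabla}_XL=\chi(X,\cdot)^\sharp-\zeta(X)L.
\]

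\textbf{Step 2 (Codazzi for $\chi$).} Differentiate $\chi(Y,Z)=\overline{g}(\overline{\nabla}_YL,Z)$ along $X$, and use the decomposition $\overline{\nabla}_XZ=\nabla_XZ+\vec{\two}(X,Z)$. Now antisymmetrize in $X,Y$; with normal coordinates at a point one may assume $[X,Y]=0$.
\begin{itemize}[nosep]
\item The second-derivative terms combine to $\overline{\Riem}(X,Y,Z,L)$ in the paper's convention (after the symmetry $\overline{\Riem}(X,Y,L,Z)=-\overline{\Riem}(X,Y,Z,L)$). Here $\overline{g}(\vec{\two}(X,Z),\overline{\nabla}_YL)$ contributes nothing problematic: $\vec{\two}$ is normal, and only its $\ul{L}$-part pairs with the $L$-term.
\item The $-\zeta(Y)L$ term, differentiated, produces $-\zeta(Y)\,\overline{g}(\overline{\nabla}_XL,Z)=-\zeta(Y)\chi(X,Z)$.
\end{itemize}
This yields
\[
\nabla_i\chi_{jk}-\nabla_j\chi_{ik}=\overline{\Riem}_{ijkL}+\zeta_j\chi_{ik}-\zeta_i\chi_{jk}.
\]
The overall sign of the curvature term must be checked carefully against the stated convention $\Riem(X,Y,W,Z)=g(\nabla_X\nabla_YZ-\dots,W)$. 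I expect this bookkeeping of signs and index positions to be the only real obstacle; everything else is mechanical.

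\textbf{Step 3 (pass to $A$).} By the product rule,
\[
\nabla_iA_{jk}=\ul{\theta}\,\nabla_i\chi_{jk}+\partial_i\ul{\theta}\,\chi_{jk}=\ul{\theta}\,\nabla_i\chi_{jk}+\partial_i\ln(\ul{\theta})\,A_{jk}.
\]
Antisymmetrizing in $i,j$ and inserting Step 2 gives
\[
\nabla_iA_{jk}-\nabla_jA_{ik}=\ul{\theta}\,\overline{\Riem}_{ijkL}+(\zeta_j-\partial_j\ln\ul{\theta})A_{ik}-(\zeta_i-\partial_i\ln\ul{\theta})A_{jk}.
\]
By the definition \eqref{eq_prelim_deftau}, $\tau=\zeta-\d\ln(\ul{\theta})$, so the last two terms are exactly $\tau_jA_{ik}-\tau_iA_{jk}$, which is the claim.

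As a consistency check, both sides should be independent of the choice of null frame. Under $\ul{L}\mapsto f\ul{L}$, $L\mapsto f^{-1}L$, the quantities $A$ and $\tau$ are invariant. The curvature term is also invariant, since $\ul{\theta}\mapsto f\ul{\theta}$ while $\overline{\Riem}_{ijkL}\mapsto f^{-1}\overline{\Riem}_{ijkL}$. This check serves to confirm the signs fixed in Step 2.
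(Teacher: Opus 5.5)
Your proof is correct; the paper gives no argument here and simply recalls the identity from \cite[Remark 2.3]{kroenckewolff}, and your route (Codazzi for $\chi$ with the connection $1$-form $\zeta$, then the product rule for $A=\ul{\theta}\chi$, with $\zeta-\d\ln\ul{\theta}$ assembling into $\tau$) is the natural derivation of it. Only the Step 2 bookkeeping needs tidying.
\begin{itemize}
\item The term $+\zeta(Y)\chi(X,Z)$ comes exactly from $\overline{g}(\overline{\nabla}_YL,\vec{\two}(X,Z))=\zeta(Y)\chi(X,Z)$, which your first bullet dismisses. The $-\zeta(Y)\chi(X,Z)$ in your second bullet belongs to the alternative computation where $\overline{\nabla}_YL$ is expanded first, and it changes sign when you solve for $\nabla_X\chi$.
\item The frame-rescaling check confirms the sign of the $\zeta$-terms but cannot detect the sign of $\overline{\Riem}_{ijkL}$. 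That sign follows directly from $\overline{g}(\overline{\nabla}_i\overline{\nabla}_jL-\overline{\nabla}_j\overline{\nabla}_iL,\partial_k)=\overline{\Riem}_{ijkL}$ in the paper's convention.
\end{itemize}
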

	
	Using the above Codazzi Equation for $A$, we recall the null Simons' identity established in \cite[Proposition 2.4]{kroenckewolff}. See also \cite{anderssonmetzger} for a similar notion of Simons' identity.
	\begin{prop}[Null Simons' Identity]\label{prop_prelim_nullsimon}
		\begin{align*}
			\nabla_k\nabla_l A_{ij}
			=&\nabla_i\nabla_j A_{kl}+\Riem_{kijm}A^m_l+\Riem_{kilm}A^m_j
			\\&
			+\tau_j\nabla_iA_{kl}+\tau_i\nabla_kA_{jl}-\tau_k\nabla_iA_{jl}-\tau_l\nabla_kA_{ij}
			\\	&
			+\nabla_i\tau_jA_{kl}+\nabla_k\tau_iA_{lj}-\nabla_i\tau_kA_{jl}-\nabla_k\tau_lA_{ij}
			\\		&
			+\nabla_i(\overline{\Riem}_{kjl(\ul{\theta}L)})+\nabla_k(\overline{\Riem}_{lij(\ul{\theta}L)}).
		\end{align*}
	\end{prop}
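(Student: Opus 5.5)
This is a direct commutation computation of the kind used in the classical Simons' identity. The plan is to express $\nabla_k\nabla_l A_{ij}$ in terms of $\nabla_i\nabla_j A_{kl}$ by alternately applying the Codazzi equation for $A$ (Proposition \ref{prop_prelim_codazziA}) and the Ricci commutation formula for second covariant derivatives on $(\Sigma,\gamma)$. Throughout I use the symmetry of $A$, which holds since $\chi$ is symmetric.

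\emph{Step 1: first Codazzi swap.} Use $A_{ij}=A_{ji}$ and apply Codazzi in the pair $(l,i)$:
\[
\nabla_lA_{ij}=\nabla_iA_{lj}+\ul{\theta}\overline{\Riem}_{lij L}+\tau_iA_{lj}-\tau_lA_{ij}.
\]
Differentiate in $k$. Note that $\nabla_k(\ul{\theta}\overline{\Riem}_{lijL})$ is to be read as the covariant derivative on $\Sigma$ of the tensor $\overline{\Riem}_{lij(\ul{\theta}L)}$; this is exactly the form in which the curvature term appears in the statement. This step produces the terms $\nabla_k\tau_iA_{lj}-\nabla_k\tau_lA_{ij}+\tau_i\nabla_kA_{lj}-\tau_l\nabla_kA_{ij}$ and $\nabla_k(\overline{\Riem}_{lij(\ul{\theta}L)})$.

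\emph{Step 2: commute derivatives.} Write $\nabla_k\nabla_iA_{lj}=\nabla_i\nabla_kA_{lj}+[\nabla_k,\nabla_i]A_{lj}$. With the stated curvature convention, the commutator gives the two terms $\Riem_{kijm}A^m_l+\Riem_{kilm}A^m_j$. These are the intrinsic curvature terms in the claim, up to the ordering of indices and symmetries of $\Riem$, which I will check carefully against the convention $\Riem(X,Y,W,Z)=g(\nabla_X\nabla_YZ-\dots,W)$.

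\emph{Step 3: second Codazzi swap.} Apply Codazzi in the pair $(k,j)$ after writing $A_{lj}=A_{jl}$:
\[
\nabla_kA_{jl}=\nabla_jA_{kl}+\ul{\theta}\overline{\Riem}_{kjlL}+\tau_jA_{kl}-\tau_kA_{jl}.
\]
Then differentiate in $i$. This produces $\nabla_i\nabla_jA_{kl}$, the terms $\tau_j\nabla_iA_{kl}-\tau_k\nabla_iA_{jl}+\nabla_i\tau_jA_{kl}-\nabla_i\tau_kA_{jl}$, and the term $\nabla_i(\overline{\Riem}_{kjl(\ul{\theta}L)})$.

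Summing Steps 1--3 yields exactly the stated identity. The one expected obstacle is bookkeeping. First, the signs and index positions in the Riemann commutator must match the paper's convention. Second, the Codazzi term must be consistently interpreted as a tensor on $\Sigma$. Since $\ul{\theta}L$ is frame independent, $\overline{\Riem}_{\cdot\cdot\cdot(\ul{\theta}L)}$ is a well-defined tensor, and its $\Sigma$-covariant derivative is what appears. No normal-connection terms need to be expanded, because they are absorbed into that derivative.
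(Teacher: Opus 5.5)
Your proposal is correct and is essentially the standard derivation of this identity, which the paper cites from \cite[Proposition 2.4]{kroenckewolff} rather than reproving: Codazzi in the pair $(l,i)$, then the Ricci commutation $[\nabla_k,\nabla_i]A_{lj}=\Riem_{kijm}A^m_l+\Riem_{kilm}A^m_j$, then Codazzi in the pair $(k,j)$. The commutation formula does hold with the convention $\Riem(X,Y,W,Z)=\overline{g}$-free intrinsic $g(\nabla_X\nabla_YZ-\nabla_Y\nabla_XZ-\nabla_{[X,Y]}Z,W)$ once you use antisymmetry in the last index pair, and all $\tau$- and curvature terms then land exactly as in the statement, with $\nabla_k(\overline{\Riem}_{lij(\ul{\theta}L)})$ read as the $\Sigma$-covariant derivative of a $3$-tensor on $\Sigma$, just as the paper later does in Appendix~\ref{appendix_nullsimon}.
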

	\begin{bem}\label{bem_prelim_contractedNullSimons}
		Using some algebraic manipulations, we find that taking a trace yields the following \emph{contracted Null Simons' identity} for a $2$-dimensional STCMC surface:
		\begin{align*}
			\Delta\accentset{\circ}{A}_{ij}
			=&\,-2\tau^k\nabla_k\accentset{\circ}{A}_{ij}\\
			&\,+\left(\frac{1}{2}\mathcal{H}^2+\spann{\ul{\theta}^{-1}\accentset{\circ}{\ul{\chi}},\accentset{\circ}{A}}-\left(\overline{R}-3\overline{\Ric}(\ul{L},\ul{L})+\overline{\Riem}(\ul{L},L,L,\ul{L})\right)-\dive\tau-\btr{\tau}^2\right)\accentset{\circ}{A}_{ij}\\
			&\,+\left(\left(\overline{\Ric}_{jm}-\frac{1}{2}\overline{\Riem}_{\ul{L}jLm}-\frac{1}{2}\overline{\Riem}_{Lj\ul{L}m}\right)\accentset{\circ}{A}^m_i-\overline{\Riem}_{ikjm}\accentset{\circ}{A}^{mk}\right)+\left(\nabla_k\tau_i-\nabla_i\tau_k\right)\accentset{\circ}{A}^k_j\\
			&\,+\left(\overline{\Riem}_{k\ul{L}jL}-\overline{\Riem}_{j\ul{L}kL}\right)\accentset{\circ}{A}_i^k-\frac{1}{2}\left(\Riem_{ki\ul{L}L}\accentset{\circ}{A}^k_j+\overline{\Riem}_{kl\ul{L}L}\accentset{\circ}{A}_i^k\right)\\
			&\,+\frac{1}{2}\mathcal{H}^2\left(\left(\nabla_i\tau_j+\nabla_j\tau_i+2\tau_i\tau_j\right)-\left(\dive\tau+\btr{\tau}^2\right)\gamma_{ij}\right)\\
			&\,-\frac{1}{2}\mathcal{H}^2\left(\left(\overline{\Riem}_{i\ul{L}jL}+\overline{\Riem}_{j\ul{L}iL}\right)-\left(\overline{\Ric}(\ul{L},L)-\frac{1}{2}\overline{\Riem}(\ul{L},L,L,\ul{L})\right)\gamma_{ij}\right)\\
			&\,+\gamma^{kl}\left(\overline{\nabla}_i\overline{\Riem}_{kjl(\ul{\theta}L)}-\overline{\nabla}_k\overline{\Riem}_{ilj(\ul{\theta}L)}\right)+\frac{1}{2}\left(\overline{\Riem}_{i(\ul{\theta}L)j(\ul{\theta}L)}-\overline{\Ric}(\ul{\theta}L,\ul{\theta}L)\ul{\theta}^{-1}\ul{\chi}_{ij}\right).
		\end{align*}
		See Appendix \ref{appendix_nullsimon} for a brief outline of the proof.
	\end{bem}
	
	As in \cite{kroenckewolff}, we will identify objects on $\mathcal{N}$ with (families of) objects on the standard sphere $\Sbb^2$. In particular, we will perform computations and formulate our decay assumptions on $\Sbb^2$. We briefly recall these relations and our conventions for a given smooth null hypersurface, and refer the interested reader to \cite[Section 2.1]{kroenckewolff} for more details: Recall that for a fixed choice of null generator $\ul{L}$ and spacelike cross section $S$, $\mathcal{N}$ admits a (local) \emph{background foliation} given by a diffeomorphism
	\[
	\Phi_{{\ul{L},S}}\colon (r_1,r_2)\times S\to \mathcal{N}, (s,q)\mapsto \gamma^{\ul{L}}_q(s),
	\]
	where $\gamma^{\ul{L}}_q$ denotes the integral curve of $\ul{L}$ starting at $q\in S\subset \mathcal{N}$, and we always assume that $\Phi_{{\ul{L},S}}$ is surjective for convenience. In the following, we will always assume that there exists a diffeomorphism $\phi\colon \Sbb^2\to S$, which gives rise to a diffeomorphism
	\[
	\Phi_{\ul{L}}:=\Phi_{{\ul{L},S}}\circ(\mathrm{id},\phi)\colon (r_1,r_2)\times\Sbb^2\to\mathcal{N},\, (s,\vec{x})\mapsto p.
	\]
	Observe that for a given spacelike cross section $\Sigma$, we can now uniquely identify $\Sigma$ as the graph of a function $\omega\colon \Sbb^2\to (r_1,r_2)$ in the sense that 
	\[\Sigma=\{s=\omega\}=\{p\in\mathcal{N}\,\vert\, p=\Phi_{\ul{L}}(\omega(\vec{x}),\vec{x})\}.
	\]
	We will write $\Sigma=\Sigma_\omega$ and denote the induced metric on $\Sigma$ as $\gamma_{\omega}$.
	
	We denote the leaves of the background foliation by $S_r:=\{s=r\}$, and denote all objects defined on $S_r$ via a subscript $r$. In particular, we observe that the null vector field $L_r\in\Gamma(T^\perp\Sigma_r)$, null second fundamental forms $\ul{\chi}_r$, $\chi_r$, null expansions $\ul{\theta}_r$, $\theta_r$, and connection $1$-form $\zeta_r$ extend to smooth objects on $\mathcal{N}$. Recall that for any spacelike cross section $\Sigma$, $p\in\Sigma$, there exists a diffeomorphism
	\[
	T_pS_{r=\omega(\vec{x})}\to T_p\Sigma,\qquad V\mapsto \widetilde{V}:=V+V(\omega)\ul{L},
	\]
	and we note that
	\[
	L_p=(L_{r=\omega(\vec{x})})_p+\btr{\nabla\omega}_{\gamma_\omega}\ul{L}-2\nabla\omega(p),
	\]
	where $L$ denotes the unique null vector field in $\Gamma(T^\perp\Sigma)$ such that $\overline{g}(\ul{L},L)=2$, and $\nabla\omega$ the gradient of $\omega$ on $\Sigma$, respectively. The above diffeomorphism further implies that local coordinates $x^I$ on $\Sbb^2$ (extending to local coordinates on the leaves $S_r$ via $\Phi_{\ul{L}}$) gives rise to local coordinates on $\Sigma$ with tangent vector fields
	\[
		\partial_i=\partial_I+\partial_I(\omega)\ul{L}.
	\]
	In particular\footnote{Recall that $\omega$ is extended constantly along the integral of $\ul{L}$},
	\begin{align*}
		\nabla\omega&=(\gamma_\omega)^{ij}(\partial_j\omega)\partial_i=(\gamma_\omega)^{IJ}(\partial_J\omega)(\partial_I+\partial_I\omega\ul{L})=\nabla^I\omega\partial_I+\btr{\nabla\omega}^2_{\gamma_\omega}\ul{L}
	\end{align*}
	by slight abuse of notation, where $\nabla^I\omega:=(\gamma_\omega)^{IJ}\partial_J\omega$. Consequently, the frame $\{\partial_i,\ul{L},L\}$ of $TM$ along a spacelike cross section $\Sigma$ admits the following decomposition in terms of a given background foliation:
	\begin{align}
		\begin{split}\label{eq_prelim_nullgeometry_frame}
		\partial_i&=\partial_I+\partial_I\omega\ul{L},\\
		\ul{L}&=\ul{L},\\
		L&=L_r-\btr{\nabla\omega}^2_{\gamma_{\omega}}\ul{L}-2\nabla\omega^I\partial_I.
		\end{split}
	\end{align}

	Finally, we recall that using the above relations, it is a well-known fact that $\gamma_r$, $\ul{\chi}_r$ extend to well-defined transversal tensor fields on $\Gamma(T\mathcal{N})$, cf. \cite[Section 3]{marssoria}, and $\ul{\theta}_r$ to a well-defined function on $\mathcal{N}$ (independent of the choice of background foliation). Hence, we will often omit the subscript $r$ for these objects without ambiguity. In particular, the properties of $\ul{\theta}$ are fully determined by $\mathcal{N}$. We say $\mathcal{N}$ is a \emph{null cone} if $\ul{\theta}>0$ on $\mathcal{N}$, cf. \cite[Definition 2.1]{roesch2}, and note that $A$, $\tau$ are well-defined for any spacelike cross section of a null cone.
	
	As in \cite{kroenckewolff}, we extend\footnote{Here, we emphasize that this extension does depend on the choice of background foliation. See also \cite{lambertscheuer} for a different choice of extension.} $\chi_r$, $\zeta_r$ transversally to tensor fields on $\Gamma(T\mathcal{N})$, i.e.,
	\begin{align*}
		\chi_r(\cdot, \ul{L})=\chi_r(\ul{L},\cdot)&=0,\qquad 
		\zeta_r(\ul{L})=0,
	\end{align*}
	and thus find that $\chi_r(\widetilde{V},\widetilde{W})=\chi_r(V,W)$, $\zeta_r(\widetilde{V})=\zeta_r(V)$ when restricted to $\Gamma(T\Sigma)$, cf. \cite[Proposition 4.22]{wolff_thesis}.
	
	\subsection{Asymptotically Schwarzschildean lightcones}\label{subsec_prelim_asymSchw}
	
	We define asymptotically Schwarzschildean lightcones as in \cite{kroenckewolff}. For the convenience of the reader, we briefly collect the definition of our decay assumptions. For more details, we refer the interested reader to \cite[Sections 2.2 and 2.4]{kroenckewolff}.

	\begin{defi}\label{defi_prelim_O}
		Let $(T_r)$ be a (smooth) family of $(m,n)$-tensors on $\Sbb^2$, $m,n\in\mathbb{N}_0$, and let $k,l\in\mathbb{N}_0$, $\alpha\in\Z$. We say $(T_r)$ is in $O_{k,l}(r^{\alpha})$ (or simply $T_r=O_{k,l}(r^{\alpha})$) if
		\[
		\btr{\widehat{\nabla}^i\partial_r^jT_r}_{\hatgamma}\le Cr^{\alpha-j}
		\]
		for all $0\le i\le k$, $0\le j\le l$. We often omit the subscript $r$ for simplicity.
	\end{defi}
	
	Here and in the following $\widehat{\nabla}$ denotes the Levi--Civita connection of the standard round metric $\widehat{\gamma}$. See Appendix \ref{appendix_differencetensor} for our conventions which we use here and in the following sections. As we also require control on the curvature tensor of the ambient spacetime $\overline{M}$, we further make the following definition for tensor fields $T$ on $\overline{M}$ evaluated along the null hypersurface:
	
	\begin{defi}\label{defi_prelim_O_spacetime}
		Let $(S_r)\subset \overline{M}$ be a smooth family of topological codimension $2$ spheres in an ambient spacetime $(\overline{M},\overline{g})$, and let $\{\ul{L}_r,L_r\}$ be a (smooth) null frame of $S_r$, $\alpha\in\Z$. We say an $(m,n)$-tensor $T$ of $\overline{M}$ is in $\overline{O}_{k,l}(r^\alpha)$ (or simply $T=\overline{O}_{k,l}(r^{\alpha})$) (with respect to the null frame $\{\ul{L}_r,L_r\}$ and $\hatgamma$) if for all subsets $\mathcal{A}\subseteq \{1,\dotsc,n\}$, $\mathcal{B}\subseteq \{1,\dotsc,m\}$
		the (smooth) family of $(m-\btr{\mathcal{B}},n-\btr{\mathcal{A}})$-tensors
		\[
		(T_r)_{\alpha_1\dotsc\alpha_n}^{\beta_1\dotsc\beta_m}
		\]
		on $\Sbb^2$ defined via $\alpha_i\in\{\ul{L}_r,L_r\}$ $\forall i\in\mathcal{A}$, ${\beta_j\in\{\ul{L}_r^*,L_r^*\}}$ $\forall j\in\mathcal{B}$, and the pullback of $T$ with respect to $S_r$ is in $O_{k,l}(r^{\alpha+n-\btr{\mathcal{A}}-m+\btr{\mathcal{B}}})$ in the sense of Definition \ref{defi_prelim_O}.
	\end{defi}
	
	With these definitions at hand, we define an asymptotically Schwarzschildean lightcone as in \cite[Definition 2.10]{kroenckewolff}:
	
	\begin{defi}[Asymptotically Schwarzschildean lightcones]\label{defi_asymclassS}
		Let $\mathcal{N}$ be a null hypersurface in an ambient spacetime. We say $\mathcal{N}$ is an \emph{asymptotically Schwarzschildean lightcone} (of mass $m$), if there exists a geodesic null generator $\ul{L}$ and an (asymptotic) background foliation $(S_r)_{r\in(r_0,\infty)}$, $r_0>0$, of topological $2$-spheres, and a smooth, real-valued function $h:(r_0,\infty)\to(0,\infty)$ with $h(r)=1-\frac{2m}{r}+O_4(r^{-2})$, such that
		\begin{align*}
			\gamma_r&=r^2\hatgamma+O_{3,3}(1),\\
			\ul{\chi}_r&=r\hatgamma+O_{3,3}(r^{-1}),\\
			\chi_r&=rh(r)\hatgamma+O_{1,1}(r^{-1}),\\
			\zeta_r&=O_{2,2}(r^{-2}),
		\end{align*}
		and the ambient curvature tensor satisfies
		\[
		\overline{\Riem}-\overline{\Riem}^{Schw}=\overline{O}_{2,2}(r^{-4}),
		\]
		and
		\[
		\overline{\nabla}\overline{\Riem}-\overline{\nabla}\overline{\Riem}^{Schw}=\overline{O}_{1,1}(r^{-5}),
		\]
		where $\overline{\Riem}^{Schw}$, $\overline{\nabla}\overline{\Riem}^{Schw}$ denote the curvature in the Schwarzschild spacetime of mass $m$, 
		{and where we evaluate $\overline{\Riem}^{Schw}$, $\overline{\nabla}\overline{\Riem}^{Schw}$ in $\ul{L}$, $L_r$ with respect to the corresponding null vectors $\ul{L}$, $L_r$ in the Schwarzschild spacetime by slight abuse of notation.} 
	\end{defi}
	Note that it is direct to see that
	\begin{align}\label{eq_asym_inversemetric}
		\gamma_r^{kl}=\frac{1}{r^2}\hatgamma^{kl}+O_{3,3}(r^{-4})
	\end{align}
	which yields the following immediate consequences along the leaves of the background foliation $(S_r)$:
	\begin{lem}\label{lem_prelim_backgroundfol}
		Let $\mathcal{N}$ be an asymptotically Schwarzschildean lightcone. For a leave $S_r$ of the background foliation, we find
		\begin{align*}
			\ul{\theta}_r&=\frac{2}{r}+O_{3,3}(r^{-3}),\\
			\theta_r&=\frac{2}{r}h(r)+O_{1,1}(r^{-3}),
		\end{align*}
		and 
		\begin{align*}
			\accentset{\circ}{\ul{\chi}}_r&=O_{3,3}(r^{-1}),\\
			\accentset{\circ}{\chi}_r&=O_{1,1}(r^{-1}).
		\end{align*}
		In particular, we have
		\begin{align*}
			\accentset{\circ}{A}_r&=O_{1,1}(r^{-2}),\\
			\tau_r&=O_{2,2}(r^{-2}),\\
			\mathcal{H}^2_r&=\frac{4}{r^2}-\frac{8m}{r^3}+O_{1,1}(r^{-4})
		\end{align*}
		for $r$ sufficiently large.
	\end{lem}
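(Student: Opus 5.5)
The plan is to compute everything directly from the expansions in Definition \ref{defi_asymclassS}, using the inverse metric \eqref{eq_asym_inversemetric}. Two facts about the calculus of Definition \ref{defi_prelim_O} will be used throughout. First, products of $O_{k,l}(r^\alpha)$ and $O_{k',l'}(r^\beta)$ lie in $O_{\min(k,k'),\min(l,l')}(r^{\alpha+\beta})$. Second, smooth functions of such quantities with nonvanishing leading term behave the same way, for example reciprocals and square roots. For the traces, write
\[
\ul{\theta}_r=\gamma_r^{kl}(\ul{\chi}_r)_{kl}=\left(r^{-2}\hatgamma^{kl}+O_{3,3}(r^{-4})\right)\left(r\hatgamma_{kl}+O_{3,3}(r^{-1})\right)=\frac{2}{r}+O_{3,3}(r^{-3}).
\]
Here we use $\tr_{\hatgamma}\hatgamma=2$, and note that the cross terms $r\cdot r^{-4}$ and $r^{-2}\cdot r^{-1}$ are both $O(r^{-3})$. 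The same computation for $\theta_r$ uses the $O_{1,1}$ error in $\chi_r$. Since $h=1+O(r^{-1})$, it gives $\theta_r=\frac{2}{r}h(r)+O_{1,1}(r^{-3})$.

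For the trace-free parts I would use $\accentset{\circ}{\ul{\chi}}_r=\ul{\chi}_r-\frac12\ul{\theta}_r\gamma_r$. Inserting the expansions, the leading terms $r\hatgamma-\frac12\cdot\frac{2}{r}\cdot r^2\hatgamma$ cancel exactly. The remainder is $O_{3,3}(r^{-1})+O_{3,3}(r^{-3})\cdot r^2+\frac{2}{r}O_{3,3}(1)=O_{3,3}(r^{-1})$. The argument for $\accentset{\circ}{\chi}_r$ is identical: the $h$ factors in the leading terms cancel, leaving $O_{1,1}(r^{-1})$. Next, $\accentset{\circ}{A}_r=\ul{\theta}_r\accentset{\circ}{\chi}_r$, since the trace-free part commutes with scaling by the function $\ul{\theta}_r$. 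This gives $O_{3,3}(r^{-1})\cdot O_{1,1}(r^{-1})\subset O_{1,1}(r^{-2})$. For the spacetime mean curvature, $\mathcal{H}^2_r=\ul{\theta}_r\theta_r=\frac{4}{r^2}h(r)+O_{1,1}(r^{-4})$. Inserting $h=1-\frac{2m}{r}+O(r^{-2})$ yields $\frac{4}{r^2}-\frac{8m}{r^3}+O_{1,1}(r^{-4})$.

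The torsion $\tau_r=\zeta_r-\d\ln\ul{\theta}_r$ is the only term requiring a little care. Write $\ul{\theta}_r=\frac{2}{r}(1+f)$ with $f=O_{3,3}(r^{-2})$. On the leaf $S_r$ the differential $\d$ is tangential and $\frac{2}{r}$ is constant there, so $\d\ln\ul{\theta}_r=\d\ln(1+f)=\frac{\d f}{1+f}$. Here $\d f=\widehat{\nabla}f$ is $O_{2,3}(r^{-2})$ as a $1$-form on $\Sbb^2$ measured with $\hatgamma$; one angular derivative is lost, and $\partial_r$ commutes with $\widehat\nabla$. Together with $\zeta_r=O_{2,2}(r^{-2})$ this gives $\tau_r=O_{2,2}(r^{-2})$.

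The main point to check is bookkeeping rather than a genuine obstacle. Every $O$-term carries derivative bounds with decay $r^{\alpha-j}$ for $j$ radial derivatives, and this must be preserved under products, reciprocals $(1+f)^{-1}$, and the logarithm. That follows from Leibniz and chain rules together with $\partial_r^j(r^\beta)=O(r^{\beta-j})$. The restriction ``for $r$ sufficiently large'' is needed to ensure $1+f$ is bounded away from zero, so that these operations are legitimate.
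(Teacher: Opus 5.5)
Your proposal is correct and follows the paper's approach. The paper gives no separate proof: it notes the inverse-metric expansion \eqref{eq_asym_inversemetric} and treats the lemma as an immediate consequence of Definition \ref{defi_asymclassS}, which is the term-by-term computation you carry out, including the cancellation of leading terms in the trace-free parts, the identity $\accentset{\circ}{A}_r=\ul{\theta}_r\accentset{\circ}{\chi}_r$, and the loss of one angular derivative in $\d\ln\ul{\theta}_r$.
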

	\begin{bem}\label{bem_prelim_backgroundfol}
		We recall that $\gamma_r$ is (uniformly) equivalent to  $\widetilde{\gamma}_r=r^2\hatgamma$ provided $r$ is sufficiently large. In particular, any $(n,m)$-tensor satisfies
		\[
		\btr{T}_{\gamma_r}\le C r^{n-m}\btr{T}_{\hatgamma}.
		\]
	\end{bem} 
	We now consider a cross-section $\Sigma=\Sigma_{\omega}$ in $\mathcal{N}$ with induced metric $\gamma=\gamma_\omega$, where here and in the following $\omega:\Sbb^2\to(r_0,\infty)$ always denotes the graph function with respect to our choice of background foliation given by Definition \ref{defi_asymclassS}.
	
	In contrast to \cite{kroenckewolff}, we will not assume an a priori control on the derivatives of $\omega$ here, and will derive some pointwise estimates on the tensor norms along $\Sigma$. As the estimates are derived in almost complete analogy to \cite[Section 2.4]{kroenckewolff}, using Lemma \ref{lem_appendix_derivativesomega} and Lemma \ref{lem_appendix_derivativesTR} from the Appendix, we will omit most of the proofs.
	
	\begin{lem}\label{lem_prelim_tau}
		Let $\mathcal{N}$ be an asymptotically Schwarzschildean lightcone, $\Sigma=\Sigma_\omega$. If $\omega\ge r_0$ sufficiently large, then
		\begin{align*}
			\newbtr{\tau}_\gamma&\le \frac{C}{\omega^3}\left(1+\newbtr{\widehat{\nabla}\ln\omega}_{\hatgamma}\right),\\
			\newbtr{\nabla\tau}_\gamma&\le \frac{C}{\omega^4}\left(1+\newbtr{\widehat{\nabla}\ln\omega}^2_{\hatgamma}+\newbtr{\widehat{\nabla}^2\ln\omega}_{\hatgamma}\right),\\
			\newbtr{\nabla^2\tau}&\le \frac{C}{\omega^5}\left(\left(1+\newbtr{\widehat{\nabla}\ln\omega}_{\hatgamma}\right)\left(1+\newbtr{\widehat{\nabla}\ln\omega}_{\hatgamma}^2+\newbtr{\widehat{\nabla}^2\ln\omega}_{\hatgamma}\right)+\newbtr{\widehat{\nabla}^3\ln\omega}_{\hatgamma}\right)	
		\end{align*}
		for some constant $C$ independent of $\omega$.
	\end{lem}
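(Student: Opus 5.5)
We propose to compute $\tau$ on $\Sigma=\Sigma_\omega$ explicitly in terms of the background foliation quantities and the graph function $\omega$, and then estimate every term with the decay assumptions of Definition \ref{defi_asymclassS}. By \eqref{eq_prelim_nullgeometry_frame}, $\partial_i=\partial_I+\partial_I\omega\,\ul{L}$ and $L=L_r-\btr{\nabla\omega}^2\ul{L}-2\nabla^I\omega\,\partial_I$. Since $\ul{L}$ is geodesic, we have $\overline{\nabla}_{\ul{L}}\ul{L}=0$, and $\overline{g}(\overline{\nabla}_{\partial_I}\ul{L},\ul{L})=0$. Together with $\overline{g}(\ul{L},\ul{L})=0$ and the transversal extensions of $\ul{\chi}_r,\zeta_r$, this gives
\[
\zeta(\partial_i)=\zeta_r(\partial_I)-\ul{\chi}(\partial_I,\nabla^J\omega\,\partial_J),
\]
which is the standard transformation $\zeta=\zeta_r-\ul{\chi}(\nabla\omega,\cdot)$. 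Since $\ul{\theta}$ is a function on $\mathcal{N}$ independent of the foliation, $\d\ln\ul{\theta}$ on $\Sigma$ is the pullback $\partial_I(\ln\ul{\theta}_r)+\partial_I\omega\,\partial_r\ln\ul{\theta}_r$, evaluated at $r=\omega$.

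We then substitute the asymptotics. By Lemma \ref{lem_prelim_backgroundfol} we have $\ul{\theta}_r=\frac{2}{r}+O_{3,3}(r^{-3})$, so $\partial_r\ln\ul{\theta}_r=-\frac{1}{r}+O(r^{-3})$ and $\widehat{\nabla}\ln\ul{\theta}_r=O(r^{-2})$. On the other side, $\ul{\chi}=r\hatgamma+O(r^{-1})$ and $\nabla^J\omega=\gamma^{JK}\partial_K\omega=\omega^{-2}\hatgamma^{JK}\partial_K\omega+O(\omega^{-4}\btr{\widehat{\nabla}\omega})$, so $\ul{\chi}(\nabla\omega,\cdot)=\omega^{-1}\d\omega+O(\omega^{-3}\btr{\widehat{\nabla}\omega})$. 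The leading terms $-\omega^{-1}\d\omega$ coming from $\zeta$ and $+\omega^{-1}\d\omega$ coming from $-\d\ln\ul{\theta}$ cancel exactly. This is the key point: it is what produces the gain to $\omega^{-3}$ rather than $\omega^{-1}\btr{\widehat{\nabla}\ln\omega}$. The remainder is
\[
\tau_I=O(\omega^{-2})+O(\omega^{-2})\,\partial_I\ln\omega
\]
as a $1$-form in $\hatgamma$-components. Since $\gamma$ is uniformly equivalent to $\omega^2\hatgamma$ (an analogue of Remark \ref{bem_prelim_backgroundfol}, which holds because $\gamma_\omega(\partial_i,\partial_j)=\gamma_r(\partial_I,\partial_J)$ for a null generator), this gives $\btr{\tau}_\gamma\le C\omega^{-3}(1+\btr{\widehat{\nabla}\ln\omega}_{\hatgamma})$.

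For the derivative bounds we would differentiate this explicit formula. We write $\nabla\tau=\widehat{\nabla}\tau-(\Gamma-\widehat{\Gamma})*\tau$, where the difference of Christoffel symbols of $\gamma_\omega$ and $\omega^2\hatgamma$ is controlled via Lemma \ref{lem_appendix_derivativesomega}. The difference tensor for $\omega^2\hatgamma$ against $\hatgamma$ is of order $\widehat{\nabla}\ln\omega$, plus lower-order terms from the $O_{3,3}(1)$ perturbation composed with $\omega$. Derivatives of coefficients $T_r$ evaluated at $r=\omega$ are handled by Lemma \ref{lem_appendix_derivativesTR}: each $\widehat{\nabla}$ produces $\widehat{\nabla}T+\partial_rT\,\d\omega$, and since $\partial_r$ gains $r^{-1}$, every derivative costs a factor $(1+\btr{\widehat{\nabla}\ln\omega})$ or brings in a factor $\widehat{\nabla}^2\ln\omega$. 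Counting the extra $\omega^{-1}$ from the $\gamma$-norm per covariant index then yields the stated structure at orders $\omega^{-4}$ and $\omega^{-5}$. The regularity indices, namely $\zeta_r\in O_{2,2}$, $\ul{\chi}\in O_{3,3}$ and $\gamma_r\in O_{3,3}$, are exactly sufficient for two derivatives.

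The main obstacle is making sure that the cancellation of the leading $\omega^{-1}\d\omega$ terms persists after differentiation. If we differentiated $\zeta$ and $\d\ln\ul{\theta}$ separately, we would obtain terms like $\omega^{-2}\btr{\widehat{\nabla}^2\ln\omega}$ in the $\gamma$-norm, which are worse than claimed. We would therefore first simplify $\tau$ algebraically, cancelling the leading parts exactly, and only then differentiate the remainder. That remainder has coefficients that are $O(r^{-2})$ smaller throughout, so the product-rule bookkeeping gives the claimed bounds directly.
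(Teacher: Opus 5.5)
Your proposal is correct and is essentially the paper's proof. The paper starts from the identity $\tau=\tau_r-\accentset{\circ}{\ul{\chi}}(\cdot,\nabla\omega)-\ul{\theta}^{-1}\bigl(\newbtr{\accentset{\circ}{\ul{\chi}}}^2+\overline{\Ric}(\ul{L},\ul{L})\bigr)\d\omega$, which amounts to your expression for $\tau$ after the $\frac{1}{2}\ul{\theta}\,\d\omega$ terms cancel, and then estimates it with Lemma \ref{lem_appendix_derivativesomega} and Lemma \ref{lem_appendix_derivativesTR} as you describe; the only difference is that the paper makes the cancellation exact via the Raychaudhuri equation and uses $\overline{\Ric}(\ul{L},\ul{L})=O_{2,2}(r^{-4})$, whereas you read it off from $\ul{\theta}_r=\frac{2}{r}+O_{3,3}(r^{-3})$ and $\ul{\chi}=r\hatgamma+O_{3,3}(r^{-1})$, which gives the same remainder.
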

	\begin{proof}
		As in the proof of \cite[Lemma 2.15]{kroenckewolff}, the proof directly follows from the identity
		\[
			\tau=\tau_r-\accentset{\circ}{\ul{\chi}}(\cdot,\nabla\omega)-\frac{1}{\ul{\theta}}\left(\newbtr{\accentset{\circ}{\ul{\chi}}}^2+\overline{\Ric}(\ul{L},\ul{L})\right)\d\omega,
		\]
		and Lemma \ref{lem_appendix_derivativesomega} and Lemma \ref{lem_appendix_derivativesTR}, where we use that
		\[
			\overline{\Ric}(\ul{L},\ul{L})=O_{2,2}(r^{-4})
		\]
		by Proposition \ref{prop_appendix_curvatureidentities}.
	\end{proof}
	
	\begin{lem}\label{lem_prelim_A}
		Let $\mathcal{N}$ be an asymptotically Schwarzschildean lightcone, $\Sigma=\Sigma_\omega$. If $\omega\ge r_0$ sufficiently large, then
		\begin{align*}
			\newbtr{A}_\gamma&\le \frac{C}{\omega^2}\left(1+\newbtr{\widehat{\nabla}\ln\omega}_{\hatgamma}^2+\newbtr{\widehat{\nabla}^2\ln\omega}_{\hatgamma}\right),\\
			\newbtr{\nabla A}_\gamma&\le \frac{C}{\omega^3}\left(\left(1+\newbtr{\widehat{\nabla}\ln\omega}_{\hatgamma}\right)\left(1+\newbtr{\widehat{\nabla}\ln\omega}_{\hatgamma}^2+\newbtr{\widehat{\nabla}^2\ln\omega}_{\hatgamma}\right)+\newbtr{\widehat{\nabla}^3\ln\omega}_{\hatgamma}\right)
		\end{align*}
		for some constant $C$ independent of $\omega$.
	\end{lem}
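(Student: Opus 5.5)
Following the proof of Lemma \ref{lem_prelim_tau} and of \cite[Lemma 2.15]{kroenckewolff}, the approach is to write $A=\ul{\theta}\chi$ explicitly along $\Sigma$ in terms of the background foliation, using the frame decomposition \eqref{eq_prelim_nullgeometry_frame}. The steps below then estimate each term with Lemma \ref{lem_appendix_derivativesomega} and Lemma \ref{lem_appendix_derivativesTR}.

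\emph{Step 1: the identity for $\chi$.} Insert $\partial_i=\partial_I+\partial_I\omega\ul{L}$ and $L=L_r-\btr{\nabla\omega}^2\ul{L}-2\nabla^I\omega\partial_I$ into $\chi(\partial_i,\partial_j)=\overline{g}(\overline{\nabla}_{\partial_i}L,\partial_j)$. Expanding with $\overline{\nabla}_{\ul{L}}\ul{L}=0$ (geodesic generator) and the definitions of $\ul{\chi}_r,\chi_r,\zeta_r$ gives
\[
\chi=\chi_r-2\nabla^2\omega-2\zeta_r\otimes\d\omega-2\d\omega\otimes\zeta_r-\btr{\nabla\omega}^2\ul{\chi}+\text{(terms quadratic in }\d\omega\text{ contracted with }\ul{\chi}\text{, plus }\overline{\Riem}(\cdot,\ul{L},\cdot,\ul{L})\text{ terms)}.
\]
This is the standard formula, cf.\ \cite[Section 2.4]{kroenckewolff}; here $\nabla^2$ is the Hessian on $\Sigma$. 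Multiplying by $\ul{\theta}$ gives $A$.

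\emph{Step 2: the norm of $A$.} Bound each term in $\btr{\cdot}_\gamma$, using that $\gamma$ is equivalent to $\omega^2\hatgamma$ (Remark \ref{bem_prelim_backgroundfol}, with $\gamma_\omega$ differing from $\gamma_r$ only by a term in $\d\omega\otimes\d\omega$ weighted by $\ul{L}$, which is null, so $\gamma_{IJ}=(\gamma_r)_{IJ}$ at $r=\omega$). The relevant orders are as follows.
\begin{itemize}[label={}]
\item $\ul{\theta}\chi_r$ is $O(\omega^{-2})$ in $\gamma$-norm.
\item $\ul{\theta}\nabla^2\omega$: Lemma \ref{lem_appendix_derivativesomega} controls it by $\omega^{-1}\cdot\omega^{-2}\btr{\widehat\nabla^2\omega}$ plus lower order. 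Since $\widehat\nabla^2\omega=\omega(\widehat\nabla^2\ln\omega+\d\ln\omega\otimes\d\ln\omega)$, this gives $\omega^{-2}(\btr{\widehat\nabla^2\ln\omega}+\btr{\widehat\nabla\ln\omega}^2)$.
\item $\btr{\nabla\omega}^2_\gamma\sim\btr{\widehat\nabla\ln\omega}^2$, and $\ul{\chi}$ has $\gamma$-norm $\sim\omega^{-1}$, so this term contributes $\omega^{-2}\btr{\widehat\nabla\ln\omega}^2$.
\item The $\zeta_r$ terms are of lower order and are absorbed via $\btr{\widehat\nabla\ln\omega}\le1+\btr{\widehat\nabla\ln\omega}^2$.
\end{itemize}
Summing these yields the first estimate.

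\emph{Step 3: the derivative $\nabla A$.} Differentiate the identity from Step 1 once. The Christoffel symbols of $\gamma$ relative to $\hatgamma$ (the difference tensor, Appendix \ref{appendix_differencetensor}) involve $\widehat\nabla\ln\omega$, and $\partial_r$-derivatives of background quantities produce extra factors $\d\omega$. Lemma \ref{lem_appendix_derivativesTR} handles derivatives of $T_r$ evaluated at $r=\omega$, including these chain-rule factors. Each derivative costs one power of $\omega$ and either a factor $(1+\btr{\widehat\nabla\ln\omega})$ or the next derivative $\widehat\nabla^3\ln\omega$. This gives
\[
(1+\btr{\widehat\nabla\ln\omega})(1+\btr{\widehat\nabla\ln\omega}^2+\btr{\widehat\nabla^2\ln\omega})+\btr{\widehat\nabla^3\ln\omega}.
\]

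The main obstacle is the bookkeeping in Step 3: checking that every mixed term, such as $\btr{\widehat\nabla\ln\omega}^3$ or $\btr{\widehat\nabla\ln\omega}\btr{\widehat\nabla^2\ln\omega}$, is dominated by the stated product. This works since $\btr{\widehat\nabla\ln\omega}^k\le C(1+\btr{\widehat\nabla\ln\omega})(1+\btr{\widehat\nabla\ln\omega}^2)$ for $k\le3$. Beyond that, the argument relies on Lemma \ref{lem_appendix_derivativesomega} supplying the conversion between $\nabla^k\omega$ on $\Sigma$ and $\widehat\nabla^k\ln\omega$ with exactly these weights.
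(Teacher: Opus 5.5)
Your proposal is correct and follows the paper's proof: express $A$ through the background foliation via \cite[Equation (3.12)]{kroenckewolff}, then bound each term using Lemma \ref{lem_appendix_derivativesomega} and Lemma \ref{lem_appendix_derivativesTR}. One minor correction to Step 1, which does not affect the estimates: the identity is exact with only four terms, $A=A_r-2\ul{\theta}\nabla^2\omega+\btr{\nabla\omega}_\gamma^2\ul{\theta}\ul{\chi}-2\ul{\theta}(\d\omega\otimes\zeta_r+\zeta_r\otimes\d\omega)$. In particular the $\btr{\nabla\omega}^2\ul{\chi}$ term carries a plus sign (its trace must reproduce the term $+\ul{\theta}^2\btr{\nabla\omega}^2$ in $\mathcal{H}^2$), and there are no further quadratic or curvature terms.
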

	\begin{proof}
		As before, the claim is a straightforward application of Lemma \ref{lem_appendix_derivativesomega} and Lemma \ref{lem_appendix_derivativesTR} upon using the identity
		\[	
			A=A_r-2\ul{\theta}\nabla^2\omega+\btr{\nabla\omega}_\gamma^2\ul{\theta}\ul{\chi}-2\ul{\theta}\left(\d\omega\otimes \zeta_r+\zeta_r\otimes\d\omega\right),
		\]
		see \cite[Equation (3.12)]{kroenckewolff}, where we note that $A_r=O_{1,1}(1)$, $\ul{\chi}=O_{3,3}(r)$. 
	\end{proof}
	Conversely, the above identity for $A$ further implies that
	\begin{align}\label{eq_prelim_c2estimate_A}
			\newbtr{\widehat{\nabla}^2\ln\omega}_{\hatgamma}&\le C\left(1+\omega^2\btr{A}_\gamma+\newbtr{\widehat{\nabla}\ln\omega}_{\hatgamma}^2\right).
	\end{align}
	Using that $\newbtr{\nabla\chi}_\gamma\le \newbtr{\nabla\ul{\theta}}_\gamma+\newbtr{\nabla\accentset{\circ}{\ul{\chi}}}_\gamma$, and $\ul{\theta}=2\omega^{-1}+O_{3,3}(r^{-3})$, we in fact find that
		\[
			\newbtr{\nabla\ul{\chi}}_\gamma\le \frac{C}{\omega^{2}}\left(\omega^{-2}+\newbtr{\widehat{\nabla}\ln\omega}_{\hatgamma}\right).
		\]
	Using this slightly refined estimate, we further find that
	\begin{align}\label{eq_prelim_c3estimate_nablaA}
		\newbtr{\widehat{\nabla}^3\ln\omega}_{\hatgamma}\le C\left(\omega^3\btr{\nabla A}_\gamma+\left(1+\newbtr{\widehat{\nabla}\ln\omega}_{\hatgamma}\right)\left(1+\omega^2\btr{A}_\gamma+\newbtr{\widehat{\nabla}\ln\omega}_{\hatgamma}^2\right)\right).
	\end{align}
	Combining Equations \eqref{eq_prelim_c2estimate_A} and \eqref{eq_prelim_c3estimate_nablaA} with Lemma \ref{lem_prelim_tau}, we obtain the following:
	\begin{kor}\label{kor_prelim_nablatau}
		Let $\mathcal{N}$ be an asymptotically Schwarzschildean lightcone, $\Sigma=\Sigma_\omega$. If $\omega\ge r_0$ sufficiently large, then
		\[
			\newbtr{\nabla^2\tau}_\gamma\le \frac{C}{\omega^2}\newbtr{\nabla A}_\gamma+\frac{C}{\omega^3}\btr{A}_\gamma+\frac{C}{\omega^5}\left(1+\newbtr{\widehat{\nabla}\ln\omega}_{\hatgamma}^3\right)
		\]
		for some constant $C$ independent of $\omega$.
	\end{kor}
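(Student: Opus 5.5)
Abbreviate $X:=\newbtr{\widehat{\nabla}\ln\omega}_{\hatgamma}$, $Y:=\newbtr{\widehat{\nabla}^2\ln\omega}_{\hatgamma}$ and $Z:=\newbtr{\widehat{\nabla}^3\ln\omega}_{\hatgamma}$. The plan is to start from the third estimate of Lemma \ref{lem_prelim_tau},
\[
\newbtr{\nabla^2\tau}_\gamma\le \frac{C}{\omega^5}\Big((1+X)(1+X^2+Y)+Z\Big),
\]
and to replace $Y$ and $Z$ by quantities involving $A$ and $\nabla A$. Equation \eqref{eq_prelim_c2estimate_A} gives
\[
Y\le C(1+\omega^2\btr{A}_\gamma+X^2),
\]
and Equation \eqref{eq_prelim_c3estimate_nablaA} gives
\[
Z\le C\big(\omega^3\btr{\nabla A}_\gamma+(1+X)(1+\omega^2\btr{A}_\gamma+X^2)\big).
\]

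Substituting both bounds, the $Z$ term contributes $\frac{C}{\omega^2}\btr{\nabla A}_\gamma$ directly, which is the first term of the claim. All remaining contributions have the form
\[
\frac{C}{\omega^5}(1+X)\big(1+X^2+\omega^2\btr{A}_\gamma\big).
\]
This splits into a purely graphical part $\frac{C}{\omega^5}(1+X)(1+X^2)$ and a mixed part $\frac{C}{\omega^3}(1+X)\btr{A}_\gamma$. For the graphical part, Young's inequality gives $X\le 1+X^3$ and $X^2\le 1+X^3$, so it is bounded by $\frac{C}{\omega^5}(1+X^3)$, the third term of the claim.

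The mixed term $\frac{C}{\omega^3}X\btr{A}_\gamma$ is the only real point. The claimed inequality carries only $\frac{C}{\omega^3}\btr{A}_\gamma$ without a factor of $X$, so a naive bound is off by a factor $(1+X)$. I would first try to avoid producing the product $X\btr{A}_\gamma$ at all. In Lemma \ref{lem_prelim_tau}, the factor $(1+X)$ multiplies $(1+X^2+Y)$ only through lower-order terms of the form $\nabla\omega\ast\nabla^2\omega$. Using the refined bound $\newbtr{\nabla\ul{\chi}}_\gamma\le C\omega^{-2}(\omega^{-2}+X)$, these terms can be organised so that $\nabla^2\omega$ enters only through $\nabla^2\omega=-\frac{1}{2}\ul{\theta}^{-1}A+(\text{graphical terms})$, i.e. via the identity for $A$ used in Lemma \ref{lem_prelim_A}. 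The resulting products $\nabla\omega\ast A$ carry an extra $\omega^{-1}$ weight, since $\btr{\nabla\omega}_\gamma\sim\omega^{-1}X$. If that bookkeeping still leaves a term $\frac{C}{\omega^3}X\btr{A}_\gamma$, I would apply Young's inequality in the form $X\btr{A}_\gamma\le \omega^{-2}X^3+\omega\btr{A}_\gamma^{3/2}$, which does not close cleanly. So the cleaner route is the careful expansion of $\nabla^2\tau$ from the formula $\tau=\tau_r-\accentset{\circ}{\ul{\chi}}(\cdot,\nabla\omega)-\ul{\theta}^{-1}(\newbtr{\accentset{\circ}{\ul{\chi}}}^2+\overline{\Ric}(\ul{L},\ul{L}))\d\omega$. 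There $\nabla^2\tau$ contains $\accentset{\circ}{\ul{\chi}}\ast\nabla^3\omega$, with coefficient $O(\omega^{-1})$. Commuting derivatives and using $\nabla^3\omega\sim\ul{\theta}^{-1}\nabla A+\nabla\omega\ast A+\dots$, the $A$-terms appear with coefficients $\accentset{\circ}{\ul{\chi}}\ast\nabla\ul{\theta}^{-1}$ and $\accentset{\circ}{\ul{\chi}}\ast\ul{\theta}^{-1}\ast\zeta_r$. These are $O(\omega^{-3})$ times $\btr{A}_\gamma$, up to terms absorbed into $\frac{C}{\omega^5}X^3$.

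The main obstacle is therefore the precise tracking of which terms multiply $A$. I expect it to be resolved by the decay $\accentset{\circ}{\ul{\chi}}=O_{3,3}(r^{-1})$, which is one order better than the $O(r)$ size of $\ul{\chi}$.
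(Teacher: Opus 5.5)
Your opening substitution is exactly the paper's proof, which consists only of inserting \eqref{eq_prelim_c2estimate_A} and \eqref{eq_prelim_c3estimate_nablaA} into Lemma \ref{lem_prelim_tau}. Your bookkeeping is also right. With $X=\newbtr{\widehat{\nabla}\ln\omega}_{\hatgamma}$, the substitution gives
\[
\newbtr{\nabla^2\tau}_\gamma\le \frac{C}{\omega^2}\newbtr{\nabla A}_\gamma+\frac{C}{\omega^3}(1+X)\btr{A}_\gamma+\frac{C}{\omega^5}\left(1+X^3\right),
\]
and the paper passes over the factor $1+X$ without comment.

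The gap in your proposal is the attempt to remove that factor, which does not work. First, $\newbtr{\nabla\omega}_\gamma$ is comparable to $X$, not to $\omega^{-1}X$: by Lemma \ref{lem_appendix_derivativesomega}, $\gamma\approx\omega^2\hatgamma$ absorbs the $\omega$ in $\widehat{\nabla}\omega=\omega\widehat{\nabla}\ln\omega$. So products $\nabla\omega\ast A$ gain no extra weight. Second, in your expansion $\nabla(\ul{\theta}^{-1})=(\tfrac12+O(\omega^{-2}))\d\omega+\dots$ has size $X$. Hence $\accentset{\circ}{\ul{\chi}}\ast\nabla\ul{\theta}^{-1}\ast A$ is of size $\omega^{-3}X\btr{A}_\gamma$, not $\omega^{-3}\btr{A}_\gamma$. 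There are further terms of the same size that you did not list:
\begin{itemize}
\item differentiating $\accentset{\circ}{\ul{\chi}}(\cdot,\nabla\omega)$ twice produces $\partial_r\accentset{\circ}{\ul{\chi}}\ast\d\omega\ast\nabla^2\omega$;
\item differentiating $\ul{\theta}^{-1}(\newbtr{\accentset{\circ}{\ul{\chi}}}^2+\overline{\Ric}(\ul{L},\ul{L}))\d\omega$ produces $\partial_r(\cdot)\,\d\omega\ast\nabla^2\omega$.
\end{itemize}
Both coefficients have size $\omega^{-4}$, so with $\nabla^2\omega=-\frac12\ul{\theta}^{-1}A+\dots$ they are again $\omega^{-3}X\btr{A}_\gamma$. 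The decay $\accentset{\circ}{\ul{\chi}}=O_{3,3}(r^{-1})$ is what produces the $\omega^{-3}$; it does nothing about $X$.

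In fact no bookkeeping can remove the term. In spherical symmetry $\tau_r=0$ and $\accentset{\circ}{\ul{\chi}}=0$, so $\tau=\pm c(\omega)\d\omega$ with $c=\ul{\theta}^{-1}\overline{\Ric}(\ul{L},\ul{L})$. Nothing in Definition \ref{defi_asymclassS} prevents $\overline{\Ric}(\ul{L},\ul{L})=\rho_0r^{-4}+O(r^{-5})$ with $\rho_0\neq0$. The identity for $A$ from the proof of Lemma \ref{lem_prelim_A} gives $\nabla^2\omega=g\gamma-\frac{1}{2\ul{\theta}}A$ with $g=\frac{\mathcal{H}^2_r}{4\ul{\theta}}+\frac{\ul{\theta}}{4}\newbtr{\nabla\omega}^2$. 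Take a frame with $\nabla\omega\parallel e_1$. The coefficient of $A_{11}\,\omega_1$ in $(\nabla^2\tau)_{111}$ is then $\pm\bigl(-\frac{3c'}{2\ul{\theta}}+\frac{c\,\ul{\theta}'}{2\ul{\theta}^2}-\frac{c}{4}\bigr)=\pm\frac78\rho_0\,\omega^{-3}+O(\omega^{-4})$. All other contributions to this component are bounded by the right-hand side of the claim.

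Now prescribe the $3$-jet of $\omega$ at a point so that:
\begin{itemize}
\item $X\sim\Lambda$;
\item $\omega^2\btr{A_{11}}\sim\omega^2\btr{A}_\gamma\sim\Lambda^3$;
\item the totally symmetric part of $\nabla A$ vanishes.
\end{itemize}
The rest of $\nabla A$ is fixed by Proposition \ref{prop_prelim_codazziA} and is of size $\btr{\tau}_\gamma\btr{A}_\gamma$ plus curvature terms. This gives $\omega^{-2}\newbtr{\nabla A}_\gamma\lesssim\omega^{-7}\Lambda^4+\omega^{-5}\Lambda^3$. So the left side is $\gtrsim\omega^{-5}\Lambda^4$, while the right side of the claim is $O(\omega^{-5}\Lambda^3+\omega^{-7}\Lambda^4)$. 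No $C$ independent of $\omega$ survives $\Lambda\to\infty$.

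What can actually be proved is the displayed bound with $(1+X)\btr{A}_\gamma$, and that is all the paper needs. In Lemma \ref{lem_proof_deltaA_deltanablaA} one has $\btr{A}_\gamma\le C\mathcal{H}^2$. The extra term $\omega^{-3}X\mathcal{H}^2\newbtr{\accentset{\circ}{A}}\newbtr{\nabla\accentset{\circ}{A}}$ is then absorbed by Young's inequality into $c_2\mathcal{H}^2\newbtr{\nabla\accentset{\circ}{A}}^2$ and the term $\omega^{-6}(\mathcal{H}^2)^2(1+X^2)$ already contained in $f_2$.
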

	
	\subsubsection{Asymptotically flat background foliations}\label{subsubsec_prelim_AF}
	We close this Subsection with a brief remark on the properties of asymptotically flat background foliations, in particular in contrast with the notion of asymptotically Schwarzschildean lightcones as considered here.
	
	Generally speaking, in a similar spirit to Definition \ref{defi_asymclassS} above, a null hypersurface is called an asymptotically flat lightcone if it admits an asymptotically flat background foliation, i.e., a background foliation that asymptotically behaves like a foliation of round spheres in the Minkowski lightcone. In particular, an asymptotically flat background foliation $(S_r)$ satisfies
	\[
		\lim_{r\to\infty}r^2K_r=1,
	\] 
	where $K_r$ denotes the Gauss curvature of $S_r$. See for example \cite{marssoria, sauter}. However, as the restricted Lorentz group is a subgroup of the isometries of the Minkowski spacetime that leaves the lightcone invariant, the Minkowski lightcone admits a plethora of foliations by round spheres related by such Lorentz transformations. In particular, an asymptotically flat lightcone also admits a plethora of asymptotically flat background foliations and applying a Lorentz transformation to a given asymptotically flat background foliation yields another asymptotically flat background foliation\footnote{We note however, that any two asymptotically flat background foliations are not necessarily directly related by a suitable Lorentz transformation. In particular, any arbitrary but sufficiently small perturbation of a given asymptotically flat background foliation also yields an asymptotically flat background foliation. Heuristically, this is due to the presence of so-called supertranslations in the BMS group, see e.g. \cite{maedlerwinicour}.}.
	
	As we may take an asymptotically Schwarzschildean lightcone to be in particular asymptotically flat, we may apply a Lorentz transformation to the foliation $(S_r)$ given in Definition \ref{defi_asymclassS} to obtain an asymptotically flat background foliation $(\widetilde{S}_{\widetilde{r}})$. We observe, that along the foliation $(\widetilde{S}_{\widetilde{r}})$ we find
	\[
		\widetilde{\chi}_{\widetilde{r}}=\left(\widetilde{r}-2\widetilde{m}(\vec{x})\right)\hatgamma+O_{3,3}(\widetilde{r}^{-1}),
	\]
	where $\widetilde{m}(\vec{x})$ is a (non-constant) function\footnote{This function is usually referred to as the so-called mass aspect function.} on $\Sbb^2$. Hence, $(\widetilde{S}_{\widetilde{r}})$ is not an asymptotically Schwarzschildean background foliation as in Definition \ref{defi_asymclassS}. More concretely, while the standard lightcone in the Schwarzschild spacetime admits many asymptotically flat background foliations of (exact) round spheres, the only asymptotically Schwarzschildean background foliation of round spheres is the foliation by centered round spheres, which is the unique foliation by STCMC surfaces, see \cite{chenwang}.
	
	Recall that for a given asymptotically flat lighcone, the Bondi energy $E_B$ and Bondi (linear) momentum $\vec{P}_B$ are defined via limits of surface integrals with respect to a given asymptotically flat background foliation, see e.g. \cite{sauter}. In particular, the Bondi energy $E_B$ is given as the limit of the Hawking energy with respect to the given foliation, i.e.,
	\[
		E_B((S_r))=\lim\limits_{r\to\infty}m_{H}(S_r)=\lim\limits_{r\to\infty}\sqrt{\frac{\btr{S_r}}{16\pi}}\left(1-\frac{1}{16\pi}\int_{S_r}\mathcal{H}^2_r\right).
	\]
	If $E_B((S_r))\ge \newbtr{\vec{P}_B((S_r))}$, then one defines the Bondi mass as $m_B:=\sqrt{E_B((S_r))^2-\newbtr{\vec{P}_B((S_r))}^2}$. Note that Bondi mass $m_B$ is independent of the given choice of asymptotically flat background foliation and (if well-defined) can alternatively be realized as the infimum over all possible Bondi energies, i.e.,
	\[
		m_B=\inf\limits_{(S_r)\text{ }A.F.}E_B((S_r)).
	\]
	Observe that any asymptotically flat background foliation arising from a Lorentz boost of a given asymptotically flat background foliation can be expressed as a family of graph functions $\omega_{\rho,\vec{a}}$ given by
	\[
		\omega_{\rho,\vec{a}}:=\frac{\rho}{\sqrt{1+\btr{\vec{a}}^2}-\vec{a}\cdot\vec{x}}
	\]
	for some vector $\vec{a}\in\R^3$, compare Section \ref{subsec_prelim_quantativeestimates} below. In particular, for an asymptotically Schwarzschildean lightcone of mass $m$, using the given asymptotically Schwarzschildean background foliation as reference, we find that\footnote{In particular, the infimum as above is realized as the infimum over all possible Lorentz transformations of the given asymptotically Schwarzschildean background foliation. While somewhat tedious, this can be explicitly checked in the setting of our strong asymptotics. More generally, this is related to the fact that the energy momentum vector is invariant under supertranslations. See e.g. \cite{maedlerwinicour}.}
	\[
		m_B=\inf_{\vec{a}\in\R^3}\lim\limits_{\rho\to\infty}m_H(\Sigma_{\rho,\vec{a}})=\inf_{\vec{a}\in\R^3}\sqrt{1+\btr{\vec{a}}^2}m=m.
	\]
	In particular, $E_B=m$ and $\vec{P}_B=\vec{0}$ for an asymptotically Schwarzschildean background foliation as in Definition \ref{defi_asymclassS}. 
	
	\subsection{Quantative estimates for almost round surfaces}\label{subsec_prelim_quantativeestimates}
	
	Recall that every spacelike cross section of the standard Minkowski lightcone is of the form $\Sigma=\{t=r=\omega\}=:\Sigma_\omega$ for a function $\omega:\Sbb^2\to\R$ and the  induced metric is $\omega^2\hatgamma$. We define the associated $4$-vector $\textbf{Z}=\textbf{Z}(\Sigma_{\omega})$ by setting its components as
	\begin{align*}
		\textbf{Z}^t:=\frac{1}{\btr{\Sigma}}\int_{\Sbb^2}\omega^3\d\widehat{\mu},\qquad
		\textbf{Z}^i:=\frac{1}{\btr{\Sigma}}\int_{\Sbb^2}\omega^3f^i\d\widehat{\mu},\quad i=1,2,3
	\end{align*}
	where $\d\widehat{\mu}$ is the volume element of the standard {round} metric $\widehat{\gamma}$ on $\Sbb^2$ and $f^i$ denote first spherical harmonics. This $4$-vector was {introduced for spacelike cross sections of the Minkowski lightcone} by the second named author in \cite{wolff4}, and we refer to \cite{wolff4} for more details. {We note that this $4$-vector is up to scaling equivalent to a notion of hyperbolic center independently defined by Cederbaum--Cortier--Sakovich, cf. \cite{cederbaumcortiersakovich}.}
	
	As shown in \cite{wolff4}, $\textbf{Z}$ is a timelike, {future-pointing} vector, so there exists a unique $\vec{a}\in\R^3$ such that
	\[
	\textbf{Z}=\btr{\textbf{Z}}\begin{pmatrix}
		\sqrt{1+\btr{\vec{a}}^2}\\\vec{a}
	\end{pmatrix}.
	\]
	{For a vector $\vec{a}\in \R^3$, let $\Lambda_{\vec{a}}$ be the Lorentz boost associated to $\vec{a}$}.
	The boosted surface $\Lambda_{\vec{a}}(\Sigma_{\omega})$ is now given by $\Sigma_{\omega_{\vec{a}}}$,
	where
	\[
	\omega_{\vec{a}}(\vec{x})=\frac{\omega\circ\Phi(\vec{x})}{\sqrt{1+\btr{\vec{a}}^2}-\vec{a}\cdot\vec{x}},
	\]
	and $\Phi\colon\Sbb^2\to\Sbb^2$ is a diffeomorphism\footnote{In fact, $\Phi$ is a M\"obius transformation in the M\"obius group, which is isomorphic to $\operatorname{SO}^+(1,3)$.} uniquely determined by $\Lambda_{\vec{a}}$. It was shown by the second author in \cite{wolff4} that $\textbf{Z}$ transforms equivariantly under Lorentz boosts, i.e.,
	\[
	\textbf{Z}(\Sigma_{\omega_{\vec{a}}})=\Lambda_{\vec{a}}(\textbf{Z}(\Sigma_\omega)).
	\]
	For $\rho>0$, $\vec{a}\in\R^3$,
	we set	
	\[
	b_{\rho,\vec{a}}(\vec{x}):=\frac{\rho}{\sqrt{1+\btr{\vec{a}}^2}-\vec{a}\cdot\vec{x}}
	\]
	and abbreviate $b_{\vec{a}}:=b_{1,\vec{a}}$. 
	
	As in \cite{kroenckewolff}, we require quantitative pointwise estimates that arise from a scalar curvature comparison. As we impose much less control on the surfaces under consideration, we need a slightly refined version of 
	\cite[Proposition 2.9]{kroenckewolff}:
	
	\begin{prop}\label{prop_gausscurvature_c1}
		Let $\gamma=\omega^2\hatgamma$ be a conformally round with Gauss curvature $K$ such that 
		\[
			\int_{\Sbb^2}\omega^2\d\widehat{\mu}=4\pi\text{, and } \int_{\Sbb^2} f_i\omega^3\d\widehat{\mu}=0
		\] 
		for $i=1,2,3$. There exist a uniform constant $\varepsilon>0$ , such that if
		\[
		\norm{{K}-1}_{C^1\left(\hatgamma\right)}\le \varepsilon
		\]
		then
		\begin{align*}
			\norm{\omega-1}_{C^{2,\alpha}\left(\hatgamma\right)}\le C\norm{{K}-1}_{C^\alpha\left(\hatgamma\right)}
		\end{align*}
		for a positive constant $C$ only depending on $\alpha$.
	\end{prop}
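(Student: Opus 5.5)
Write $\omega=e^{u}$. The metric $\gamma=e^{2u}\hatgamma$ has Gauss curvature determined by the Liouville equation
\[
-\widehat{\Delta}u+1=Ke^{2u}.
\]
The plan has two stages. First, a compactness/contradiction argument shows that the $C^1$-smallness of $K-1$ together with the two normalizations forces $u$ to be small in $C^{2,\alpha}$. Second, the Liouville equation is linearized around $u=0$ to get the linear estimate, with the normalizations removing the kernel.

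\textbf{Step 1 (a priori smallness).} I would argue by contradiction. Suppose there is a sequence $\omega_k$ with $\norm{K_k-1}_{C^1}\to0$ and the normalizations, but $\norm{\omega_k-1}_{C^{2,\alpha}}\ge\delta$.

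\emph{Step 1a.} The Gauss curvature is uniformly pinched, so the surfaces $(\Sbb^2,\omega_k^2\hatgamma)$ are almost round in the intrinsic sense. By uniformization, each $\omega_k^2\hatgamma$ is isometric to a metric which, after composing with a suitable Möbius transformation $\Phi_k$, is $C^{2,\alpha}$-close to $\hatgamma$. This uses Cheeger--Gromov compactness for metrics with $K\to1$ and the area normalization $4\pi$. Concretely, $u_k=v_k\circ\Phi_k+\ln b_{\vec a_k}$ with $v_k\to0$ in $C^{2,\alpha}$. The $C^1$ control on $K$ is exactly what yields $C^{2,\alpha}$ (indeed $C^{3,\alpha}$) control of $v_k$ via Schauder estimates applied to the Liouville equation in harmonic coordinates.

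\emph{Step 1b.} The remaining task is to show that the boost parameters $\vec a_k$ stay bounded and in fact tend to $0$. This is where the balancing condition $\int f_i\omega^3\d\widehat{\mu}=0$ enters. It says precisely that the spatial part of the vector $\textbf{Z}$ from \cite{wolff4} vanishes. By the equivariance $\textbf{Z}(\Sigma_{\omega_{\vec a}})=\Lambda_{\vec a}\textbf{Z}(\Sigma_\omega)$, and since $\textbf{Z}(\Sigma_{v_k})$ is close to $(1,\vec 0)$, we get $\vec a_k\to0$. Hence $\omega_k\to1$ in $C^{2,\alpha}$, contradicting $\delta>0$. This step also shows why $C^1$ rather than $C^0$ smallness is assumed: it gives the compactness needed for $\Phi_k$ to be controlled after renormalizing.

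\textbf{Step 2 (linear estimate).} Now $u$ is small, and we can write
\[
-\widehat{\Delta}u-2u=(K-1)e^{2u}+\big(e^{2u}-1-2u\big).
\]
The last term is quadratic, of size $O(\norm{u}_{C^{2,\alpha}}^2)$ in $C^\alpha$. The operator $-\widehat{\Delta}-2$ has kernel spanned by the first spherical harmonics $f_i$. The linearization of the normalization $\int f_i e^{3u}\d\widehat{\mu}=0$ reads $3\int f_iu\,\d\widehat{\mu}=O(\norm{u}^2)$, so the projection of $u$ onto the kernel is quadratically small. Schauder estimates on the complement of the kernel then give
\[
\norm{u}_{C^{2,\alpha}}\le C\norm{K-1}_{C^\alpha}+C\norm{u}_{C^{2,\alpha}}^2.
\]
The quadratic term can be absorbed once $\varepsilon$ is small. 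Converting from $u$ back to $\omega-1$ finishes the proof.

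\textbf{Main obstacle.} I expect Step 1b to be the hardest. One must make sure that a degenerating sequence of boosts, which would concentrate $\omega$ near a point, cannot satisfy the balancing condition while keeping $K$ close to $1$. Making the uniformization argument with Möbius normalization quantitative is the delicate point. The timelike, future-pointing property and equivariance of $\textbf{Z}$ should resolve it, since they identify the unique boost that balances the surface.
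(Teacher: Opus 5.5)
Your structure is sound: first show that $u=\ln\omega$ is a priori small, then linearize, with the balancing condition killing the kernel $\operatorname{span}\{f_i\}$ of $-\widehat{\Delta}-2$. The a priori step, however, is obtained quite differently from the paper.

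The paper combines two ingredients. The first is \cite[Proposition 2.9]{kroenckewolff}, the same estimate under an a priori bound on $\omega$; your Step 2 plays this role. The second is Proposition \ref{prop_gausscurvature_c0}, which supplies that bound. It is proved in Appendix \ref{appendix_roundness} via an improved Onofri inequality under the $\omega^3$-balancing, using a Chang--Yang maximization, a Kazdan--Warner argument showing the Lagrange multipliers vanish, and the implicit function theorem. This gives $W^{1,2}$-smallness and then, following Klainerman--Szeftel, $C^0$-smallness of $u$ from $C^0$-pinching of $K$ alone.

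You instead use Cheeger--Gromov compactness and uniformization to write $u_k=v_k\circ\Phi_k+\frac{1}{2}\ln\btr{\det D\Phi_k}$ with $v_k$ small. The Lorentz equivariance of $\textbf{Z}$ then forces $\Phi_k$ to be a rotation composed with a small boost. What your route buys: it treats the balancing condition purely as a gauge fixing of the M\"obius group, so any normalization in which $u$ is small transfers to the $\omega^3$-balancing. For example, combine it with \cite[Proposition 3.7]{klainermanszeftel}, which uses the conformal center of mass normalization (reachable by a M\"obius map, and $C^0$-pinching is M\"obius invariant). This would give Proposition \ref{prop_gausscurvature_c0} without reproving an Onofri inequality. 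What the paper's route buys: it stays within PDE on $(\Sbb^2,\hatgamma)$, avoids the compactness and continuity-of-uniformization results you invoke without proof, and yields a functional inequality with $W^{1,2}$ control as a by-product.

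One claim in Step 1a is wrong and should be dropped. The $C^1(\hatgamma)$-smallness of $K_k$ does not give $C^{2,\alpha}$ (let alone $C^{3,\alpha}$) control of $v_k$. The curvature of $e^{2v_k}\hatgamma$ is $K_k\circ\Phi_k^{-1}$, whose gradient carries the factor $\btr{D\Phi_k^{-1}}$. That factor is unbounded exactly when the boosts degenerate, which you have not yet excluded at that point (cf.\ the remark after Corollary \ref{kor_quantitativecontrol}). Equivalently, the intrinsic gradient $\omega_k^{-1}\btr{\widehat{\nabla}K_k}$ is uncontrolled without a lower bound on $\omega_k$.

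The fix is to run Step 1a with $C^0$-pinching only. Bonnet--Myers and Klingenberg give $C^{1,\beta}$ Cheeger--Gromov compactness, which still yields $v_k\to0$ uniformly and hence $\textbf{Z}(\Sigma_{e^{v_k}})\to(1,\vec 0)$. Step 1b then gives $\norm{u}_{C^0}\le\delta$. That is all Step 2 needs, since both $\norm{e^{2u}-1-2u}_{C^\alpha}\le C\norm{u}_{C^0}\norm{u}_{C^\alpha}$ and $\btr{\int_{\Sbb^2}f_iu\d\widehat{\mu}}\le C\norm{u}_{C^0}^2$ can be absorbed. Your closing remark is therefore backwards: the boosts are controlled by the balancing condition alone, and H\"older control of $K$ enters only through the Schauder estimate in Step 2. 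Also, $K\in C^1$ gives $C^{2,\beta}$ regularity, not $C^{3,\alpha}$.
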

	
	We highlight that here we drop the uniform bounds on the conformal factor compared to \cite[Proposition 2.9]{kroenckewolff} in the assumptions, as we show them to be always satisfied by the following Proposition:
	
	\begin{prop}\label{prop_gausscurvature_c0}
		Let $\gamma=\omega^2\hatgamma$ be conformally round with Gauss curvature $K$ such that 
		\[
		\int_{\Sbb^2}\omega^2\d\widehat{\mu}=4\pi\text{, and } \int_{\Sbb^2} f_i\omega^3\d\widehat{\mu}=0
		\] 
		for $i=1,2,3$. For every $\delta>0$, there exists $\varepsilon>0$ such that if
		\[
			\norm{K-1}_{C^0(\hatgamma)}\le \varepsilon
		\]
		then $\btr{u}\le \delta$, where $u:=\ln\omega$.
	\end{prop}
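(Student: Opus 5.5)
We argue by contradiction with a compactness argument modulo the Möbius group. Suppose there exist $\delta>0$ and a sequence $\omega_k$ satisfying the normalizations $\int_{\Sbb^2}\omega_k^2\d\widehat{\mu}=4\pi$ and $\int_{\Sbb^2}f_i\omega_k^3\d\widehat{\mu}=0$, with Gauss curvatures $K_k$ satisfying $\norm{K_k-1}_{C^0}\to0$, but $\sup\btr{u_k}>\delta$, where $u_k=\ln\omega_k$. The conformal factor solves
\[
-\widehat{\Delta}u_k+1=K_ke^{2u_k}.
\]
Since $K_k\to1$ uniformly, Gauss--Bonnet and the area normalization give total curvature $4\pi$ and area $4\pi$.

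The first step is to establish a priori control up to a conformal transformation. For $k$ large we have $1/2\le K_k\le 2$, so the metrics $\gamma_k=\omega_k^2\hatgamma$ have area $4\pi$ and curvature pinched between two positive constants. Hence their diameter is bounded (Bonnet--Myers), their injectivity radius is bounded below (Klingenberg in dimension $2$ with $K\le 2$), and their area is bounded. By uniformization we have $\gamma_k=\Psi_k^*(\tilde\omega_k^2\hatgamma)$ for Möbius transformations $\Psi_k$. We choose $\Psi_k$ to normalize the center of mass $\int x\,\tilde\omega_k^2\d\widehat\mu=0$ (the standard Chang--Yang/Onofri balancing). With this choice, the standard blow-up analysis for $-\widehat\Delta\tilde u+1=\tilde K e^{2\tilde u}$ with $\tilde K\to1$ in $C^0$ applies: Brezis--Merle, where bubbling would concentrate mass $4\pi$ at a point, contradicts balancing. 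It yields uniform $C^{1,\alpha}$ bounds on $\tilde u_k$, and elliptic regularity gives $W^{2,p}$ bounds.

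Passing to a subsequence, $\tilde u_k\to\tilde u$ with $-\widehat\Delta\tilde u+1=e^{2\tilde u}$ on $\Sbb^2$. By Obata / Chen--Li classification, $e^{2\tilde u}\hatgamma$ is a Möbius pullback of the round metric. The balancing condition then forces $\tilde u=0$, since a nontrivial boost has nonzero center of mass. So $\tilde\omega_k\to1$ uniformly, meaning $\gamma_k$ is $C^0$-close to a Möbius pullback of the round metric. Equivalently,
\[
\omega_k=\tilde\omega_k\circ\Psi_k\cdot b_{\vec{a}_k}\circ(\cdot)
\]
in the notation of the boost formula of Subsection \ref{subsec_prelim_quantativeestimates}, i.e.\ $\omega_k$ is uniformly close to $b_{\vec a_k}$ precomposed with a rotation.

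The remaining step, which is the main obstacle, is to show that the constraint $\int f_i\omega_k^3=0$ forces $\vec a_k\to0$. The natural tool is the $4$-vector $\mathbf{Z}$ and its Lorentz equivariance $\mathbf{Z}(\Sigma_{\omega_{\vec a}})=\Lambda_{\vec a}\mathbf{Z}(\Sigma_\omega)$. The constraint says exactly that $\mathbf{Z}(\Sigma_{\omega_k})$ is purely timelike. On the other hand, we can write $\Sigma_{\omega_k}$ as the boost by $\vec a_k$ of $\Sigma_{\tilde\omega_k}$ with $\tilde\omega_k\to1$ uniformly. Then $\mathbf{Z}(\Sigma_{\tilde\omega_k})$ converges to a multiple of $(1,\vec0)$, so $\mathbf{Z}(\Sigma_{\omega_k})$ is close in direction to $(\sqrt{1+\btr{\vec a_k}^2},\vec a_k)$.

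Care is needed here because boosts with $\btr{\vec a_k}\to\infty$ amplify errors. To handle this, we argue in the Minkowski hyperboloid picture. The direction of $\Lambda_{\vec a}\mathbf{Z}$ for $\mathbf{Z}$ in a fixed small cone around $(1,\vec0)$ lies within bounded hyperbolic distance of $\Lambda_{\vec a}(1,\vec0)$, because boosts are hyperbolic isometries. Purely timelike $\mathbf{Z}(\Sigma_{\omega_k})$ therefore forces $\btr{\vec a_k}$ to be bounded, and indeed $\btr{\vec a_k}\to0$ as the cone shrinks. Consequently $\omega_k\to1$ uniformly (the rotation leaves $1$ invariant), so $u_k\to0$ uniformly. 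This contradicts $\sup\btr{u_k}>\delta$.
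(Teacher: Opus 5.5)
Your argument is correct, but it takes a genuinely different route from the paper's.

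\emph{The paper's route.} It works directly with the constraint $\int_{\Sbb^2}f_ie^{3u}\d\widehat{\mu}=0$ and re-runs the Chang--Yang variational argument under this constraint. That involves maximizers of $J_a$, an Euler--Lagrange equation with multipliers $\alpha_if_ie^{3u}$, a Kazdan--Warner computation forcing $\alpha_i=0$, and an implicit-function argument near $a=1$. This yields an improved Onofri inequality with constant $a<1$, hence $W^{1,2}$-smallness of $u$, which is then upgraded to $C^0$ along the lines of Klainerman--Szeftel.

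\emph{Your route.} You pass to the classical normalization $\int_{\Sbb^2}\vec{x}\,e^{2\tilde u}\d\widehat{\mu}=0$, where $C^0$-closeness is already available. This is precisely the Klainerman--Szeftel estimate the paper takes as its model, so your blow-up step could simply be cited. You then show that the Lorentz transformation relating the two normalizations tends to the identity, via equivariance of $\mathbf{Z}$ and the isometric action on $\mathbb{H}^3$. That step is sound and can even be made effective:
\begin{itemize}
\item Since $\d\mu=\omega^2\d\widehat{\mu}$, $\mathbf{Z}$ is just the area-average of the position vector $\omega(1,\vec{x})$, so equivariance is immediate.
\item The bound $\btr{\tilde u}\le\delta'$ puts the direction of $\mathbf{Z}(\Sigma_{\tilde\omega})$ within hyperbolic distance $O(\delta')$ of $(1,\vec{0})$. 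Hence $\operatorname{arcsinh}\btr{\vec{a}}=O(\delta')$.
\item The conformal factor of the boost lies between $e^{-\operatorname{arcsinh}\btr{\vec{a}}}$ and $e^{\operatorname{arcsinh}\btr{\vec{a}}}$, so $\btr{u}\le C\delta'$ follows without any contradiction argument.
\end{itemize}

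\emph{Comparison.} Your route avoids redoing the constrained Onofri analysis, and it explains conceptually why the two balancing conditions are interchangeable for almost round metrics. The paper's route produces a stronger intermediate result: an improved Onofri inequality and a $W^{1,2}$ bound adapted to the new constraint.

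\emph{Two small points.} If you keep your own blow-up argument, excluding concentration needs each bubble to carry the full area $4\pi$. That comes from the Chen--Li classification of the rescaled limit, or from Li--Shafrir quantization. The Brezis--Merle lower bound alone only gives area $2\pi$ per blow-up point, which would not rule out two antipodal concentration points compatible with the balancing. Separately, the Bonnet--Myers and Klingenberg remarks are never used and can be dropped.
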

	
	Proposition \ref{prop_gausscurvature_c0} follows in close analogy to an estimate by Klainerman--Szeftel \cite[Proposition 3.7]{klainermanszeftel} where the main difference is that we replace the condition of vanishing conformal center of mass
	\[
		\int_{\Sbb^2} f_i\omega^2\d\widehat{\mu}=0
	\]
	by the balancing condition 
	\[
		\int_{\Sbb^2} f_i\omega^3\d\widehat{\mu}=0
	\]
	motivated by the Minkowski $4$-vector $\textbf{Z}$. For the convenience of the reader, we provide an outline of the proof in Appendix \ref{appendix_roundness}. As the Minkowskian $4$-vector $\textbf{Z}$ of a spacelike cross section transforms equivariantly under a Lorentz boost as outlined above, this allows us to directly extend Proposition \ref{prop_gausscurvature_c1} to general spacelike cross sections:
	
	\begin{kor}\label{kor_quantitativecontrol}
		Let $\gamma=\omega^2\hatgamma$ be a conformally round metric with scalar curvature $\scal$, area radius $\rho$, and associated $4$-vector $\textbf{Z}$ corresponding to a vector $\vec{a}\in\R^3$. There exists uniform constants $\varepsilon>0$, $C>0$ such that if 
		\[
		\norm{\scal-\fint\scal }_{C^1\left(\hatgamma\right)}\le \frac{\varepsilon}{\rho^2\sqrt{1+\btr{\vec{a}}^2}}
		\]
		then
		\begin{align*}
			\norm{{\omega}-b_{\rho,\vec{a}}}_{C^{2}\left(\hatgamma\right)}&\le C(1+\btr{\vec{a}}^2)^2 \rho^3 \norm{\scal-\fint\scal }_{C^1\left(\hatgamma\right)}.
		\end{align*}
	\end{kor}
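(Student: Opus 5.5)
We reduce Corollary \ref{kor_quantitativecontrol} to Proposition \ref{prop_gausscurvature_c1} (together with Proposition \ref{prop_gausscurvature_c0}) by a boost followed by a rescaling, and then track how the $C^1$ and $C^2$ norms change under the Möbius transformation. By construction, $\textbf{Z}(\Sigma_\omega)$ is parallel to $(\sqrt{1+|\vec a|^2},\vec a)$. We apply the boost $\Lambda_{-\vec a}$ and use the equivariance $\textbf{Z}(\Sigma_{\omega_{-\vec a}})=\Lambda_{-\vec a}\textbf{Z}(\Sigma_\omega)$. This gives a new graph function
\[
\widetilde\omega=\omega_{-\vec a}=\frac{\omega\circ\Phi}{\sqrt{1+|\vec a|^2}+\vec a\cdot\vec x}
\]
whose $4$-vector is purely timelike, i.e. $\int f_i\widetilde\omega^3\d\widehat\mu=0$.

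Since $\Lambda_{-\vec a}$ is an isometry of the Minkowski lightcone, the surface $(\Sbb^2,\widetilde\omega^2\hatgamma)$ is isometric to $(\Sbb^2,\omega^2\hatgamma)$ via $\Phi$. Hence the area is preserved, so the area radius is still $\rho$, and the Gauss curvature satisfies $\widetilde K=K\circ\Phi$. We then rescale to $\bar\omega=\widetilde\omega/\rho$, which normalizes $\int\bar\omega^2\d\widehat\mu=4\pi$, and the curvature becomes $\bar K=\rho^2\widetilde K$. By Gauss--Bonnet, $\fint\scal=2\fint K=4/\rho^2$, where the average is taken with respect to $\gamma$. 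So $\bar K-1=\tfrac{\rho^2}{2}(\scal-\fint\scal)\circ\Phi$.

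The main technical step is to estimate the $C^1(\hatgamma)$ norm of a function composed with the Möbius map $\Phi$. For $f\circ\Phi$ the $C^0$ norm is unchanged, and $|\widehat\nabla(f\circ\Phi)|\le|d\Phi|\,|\widehat\nabla f|$. The conformal factor of $\Phi$ is $|d\Phi|=b_{\pm\vec a}$, bounded by $\sqrt{1+|\vec a|^2}+|\vec a|\le 2\sqrt{1+|\vec a|^2}$. This gives
\[
\|\bar K-1\|_{C^1}\le C\rho^2\sqrt{1+|\vec a|^2}\,\|\scal-\fint\scal\|_{C^1}.
\]
The smallness hypothesis was chosen precisely to make this at most $C\varepsilon$. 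We then invoke Proposition \ref{prop_gausscurvature_c0} to get uniform bounds on $\ln\bar\omega$, and Proposition \ref{prop_gausscurvature_c1} with some fixed $\alpha$ (using $C^\alpha\le C^1$) to get
\[
\|\bar\omega-1\|_{C^{2}}\le C\rho^2\sqrt{1+|\vec a|^2}\,\|\scal-\fint\scal\|_{C^1}.
\]

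Finally we transform back. We have $\omega\circ\Phi=\rho\,\bar\omega\,(\sqrt{1+|\vec a|^2}+\vec a\cdot\vec x)$, and for the round sphere the same formula with $\bar\omega=1$ gives exactly $b_{\rho,\vec a}\circ\Phi$, since the boost maps the centered sphere to the sphere $b_{\rho,\vec a}$. Therefore
\[
\omega-b_{\rho,\vec a}=\rho\big[(\bar\omega-1)\,(\sqrt{1+|\vec a|^2}+\vec a\cdot\vec x)\big]\circ\Phi^{-1}.
\]
The $C^2$ norm of the boost factor is bounded by $C\sqrt{1+|\vec a|^2}$. The composition with $\Phi^{-1}$ costs up to two factors of $|d\Phi^{-1}|\lesssim\sqrt{1+|\vec a|^2}$, plus second derivatives of $\Phi$ of the same order. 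Collecting everything, the first composition contributes $\sqrt{1+|\vec a|^2}$, the multiplication contributes $\sqrt{1+|\vec a|^2}$, and the back-composition contributes $(1+|\vec a|^2)$. This gives the total $(1+|\vec a|^2)^2\rho^3$, which is the claimed bound.

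The obstacle I expect is bookkeeping the powers of $\sqrt{1+|\vec a|^2}$ in the derivatives of the Möbius map. This needs explicit formulas for $\Phi$ in terms of the conformal factor $b_{\vec a}$, in particular the estimate $|\widehat\nabla^2\Phi|\lesssim 1+|\vec a|^2$. A further point to check is that the $C^1$ and $C^2$ norms are taken with respect to $\hatgamma$ on both sides, so that pullback by $\Phi$ really only introduces factors of $b_{\pm\vec a}$ and their derivatives.
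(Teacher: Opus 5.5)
Your proposal is correct and takes essentially the paper's route. Both arguments boost by $-\vec a$ and rescale by $\rho$ to get a balanced, area-normalized conformal factor, apply Proposition \ref{prop_gausscurvature_c1} via $\widetilde K=\frac{\rho^2}{2}\scal\circ\Phi_{-\vec a}$, and transform back through $\omega-b_{\rho,\vec a}=\rho\, b_{\vec a}(\widetilde\omega\circ\Phi_{\vec a}-1)$, using $|D^k\Phi_{\pm\vec a}|\le C(1+\btr{\vec a}^2)^{k/2}$. One slip: Gauss--Bonnet gives $\fint\scal=2\rho^{-2}$, not $4\rho^{-2}$, and $2\rho^{-2}$ is what your identity $\bar K-1=\frac{\rho^2}{2}(\scal-\fint\scal)\circ\Phi$ actually requires.
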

	\begin{proof}
		Consider the conformally round metric $\widetilde{\omega}^2\hatgamma$, where $\widetilde{\omega}:=\rho^{-1}\omega_{-\vec{a}}$, where we recall that
		\[
			\omega_{-\vec{a}}=b_{-\vec{a}}\,\omega\circ \Phi_{-\vec{a}}
		\]
		and $\Phi_{-\vec{a}}$ is a uniquely determined M\"obius transformation with $\det D\Phi_{-\vec{a}}=b_{-\vec{a}}$. As we may assume without loss of generality that $\vec{a}=(0,0,\btr{\vec{a}})$ (after a suitable rotation acting as an isometry on $\widehat{\gamma}$), one can confirm by direct computation that
		\[
			\btr{D^k \Phi_{-\vec{a}}}\le C_{1,k} \btr{\widehat{\nabla}^{k-1}b_{-\vec{a}}}\le C_{2,k}(1+\btr{\vec{a}}^2)^{\frac{k}{2}}
		\]
		for uniform constants $C_{1,k}$, $C_{2,k}$ only depending on $k$. As 
		\[
			\widetilde{K}=\frac{\rho^2}{2}\scal\circ \Phi_{-\vec{a}},
		\]
		and 
		\[
			\omega-b_{\rho,\vec{a}}=\rho b_{\vec{a}}\left(\widetilde{\omega}\circ \Phi_{\vec{a}}-1\right),
		\]
		where $\widetilde{K}$ denotes the Gauss curvature of $\widetilde{\omega}^2\hatgamma$, the above estimates directly imply the claim as $\widetilde{\omega}$ satisfies all assumptions of Proposition \ref{prop_gausscurvature_c1}.
		
	\end{proof}
	
	Observe that the dependency on $\vec{a}$ in the assumption 
	\[
	\norm{\scal-\fint\scal }_{C^1\left(\hatgamma\right)}\le \frac{\varepsilon}{\rho^2\sqrt{1+\btr{\vec{a}}^2}}
	\] 
	only arises from the derivatives of the scalar curvature, as the scalar curvature itself transforms by composition with a M\"obius transformation under a boost. Consequently, we can in fact use Proposition \ref{prop_gausscurvature_c0} to derive a rough estimate for the vector $\vec{a}$:
	
	\begin{kor}\label{kor_acontrol_rough}
		Let $\gamma=\omega^2\hatgamma$ be a conformally round metric with scalar curvature $\scal$, area radius $\rho$, and associated $4$-vector $\textbf{Z}$ corresponding to a vector $\vec{a}\in\R^3$. There exists a uniform constant $\varepsilon>0$ such that if 
		\[
			\norm{\scal-\fint\scal}_{C^0(\,\widehat{\gamma}\,)}\le \frac{\varepsilon}{\rho^2},
		\]
		then
		\[
			\min\left(2\btr{\vec{a}},\sqrt{1+\btr{\vec{a}}^2}\right)\le 2\min\left(\frac{\rho}{\omega_{\min}},\frac{\omega_{\max}}{\rho}\right),
		\]
		where $\omega_{\min}$ and $\omega_{\max}$ denote the (global) minimum and maximum of $\omega$ on $\Sbb^2$, respectively.
	\end{kor}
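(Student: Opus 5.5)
The plan is to reduce to the normalized situation of Proposition \ref{prop_gausscurvature_c0} by boosting with $-\vec{a}$, exactly as in the proof of Corollary \ref{kor_quantitativecontrol}, and then to read off the claimed bound from the explicit form of the conformal factor $b_{-\vec{a}}$.

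\emph{Normalization.} As there, set
\[
\widetilde{\omega}:=\rho^{-1}\omega_{-\vec{a}}=\rho^{-1}b_{-\vec{a}}\,\omega\circ\Phi_{-\vec{a}}.
\]
By the equivariance $\textbf{Z}(\Sigma_{\omega_{-\vec{a}}})=\Lambda_{-\vec{a}}(\textbf{Z}(\Sigma_\omega))$, the $4$-vector of $\Sigma_{\omega_{-\vec{a}}}$ is parallel to the time axis, so $\int_{\Sbb^2}f_i\widetilde{\omega}^3\d\widehat{\mu}=0$. Since boosts are isometries, the area is preserved, which gives $\int_{\Sbb^2}\widetilde{\omega}^2\d\widehat{\mu}=4\pi$.

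\emph{Curvature bound.} By Gauss--Bonnet, $\fint\scal=8\pi/\btr{\Sigma}=2/\rho^2$. Combined with $\widetilde{K}=\frac{\rho^2}{2}\scal\circ\Phi_{-\vec{a}}$, this gives
\[
\widetilde{K}-1=\frac{\rho^2}{2}\Big(\scal-\fint\scal\Big)\circ\Phi_{-\vec{a}}.
\]
The $C^0$-norm is invariant under composition with the diffeomorphism $\Phi_{-\vec{a}}$, so $\norm{\widetilde{K}-1}_{C^0(\hatgamma)}\le\varepsilon/2$. Proposition \ref{prop_gausscurvature_c0} with $\delta=\ln 2$ then yields, for $\varepsilon$ small enough,
\[
\tfrac12\le\widetilde{\omega}\le 2\quad\text{on }\Sbb^2.
\]

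\emph{Reading off the bound.} Invert the relation to get $\omega\circ\Phi_{-\vec{a}}=\rho\,\widetilde{\omega}/b_{-\vec{a}}$, where
\[
b_{-\vec{a}}(\vec{x})^{-1}=\sqrt{1+\btr{\vec{a}}^2}+\vec{a}\cdot\vec{x}.
\]
Because $\Phi_{-\vec{a}}$ is surjective, the extrema of $\omega$ are attained at images of points of $\Sbb^2$.
\begin{itemize}
\item At $\vec{x}=\vec{a}/\btr{\vec{a}}$ (any point if $\vec{a}=0$), $b_{-\vec{a}}^{-1}=\sqrt{1+\btr{\vec{a}}^2}+\btr{\vec{a}}$. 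Hence
\[
\omega_{\max}\ge\tfrac{\rho}{2}\Big(\sqrt{1+\btr{\vec{a}}^2}+\btr{\vec{a}}\Big).
\]
\item At $\vec{x}=-\vec{a}/\btr{\vec{a}}$, $b_{-\vec{a}}^{-1}=\sqrt{1+\btr{\vec{a}}^2}-\btr{\vec{a}}=\big(\sqrt{1+\btr{\vec{a}}^2}+\btr{\vec{a}}\big)^{-1}$. Hence
\[
\omega_{\min}\le 2\rho\Big(\sqrt{1+\btr{\vec{a}}^2}+\btr{\vec{a}}\Big)^{-1}.
\]
\end{itemize}
Both estimates give
\[
\min\Big(\frac{\rho}{\omega_{\min}},\frac{\omega_{\max}}{\rho}\Big)\ge\tfrac12\Big(\sqrt{1+\btr{\vec{a}}^2}+\btr{\vec{a}}\Big)\ge\tfrac12\sqrt{1+\btr{\vec{a}}^2},
\]
which in particular implies the stated inequality with the minimum on the left.

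\emph{Expected obstacle.} The main point requiring care is the justification that $\widetilde{\omega}$ satisfies both normalizations of Proposition \ref{prop_gausscurvature_c0}. The balancing condition relies on the equivariance of $\textbf{Z}$ from \cite{wolff4}, and area preservation relies on the boost being an isometry. It also has to be checked that $\varepsilon$ is uniform: it is chosen from Proposition \ref{prop_gausscurvature_c0} for the single fixed value $\delta=\ln 2$, independently of $\vec{a}$ and $\rho$. The remaining steps are elementary.
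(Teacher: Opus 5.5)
Your proposal is correct and follows the paper's proof essentially step by step. You boost by $-\vec{a}$ to obtain a normalized, balanced $\widetilde{\omega}$, apply Proposition \ref{prop_gausscurvature_c0} with a fixed $\delta$, and evaluate $b_{-\vec{a}}$ at $\pm\vec{a}/\btr{\vec{a}}$. The differences are minor: you take $\delta=\ln 2$ instead of the paper's $\delta=\frac{1}{2}e^{-1}$ with a mean value argument, and you write out the $\omega_{\max}$ case, which the paper only says is derived similarly.
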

	\begin{proof}
		Wlog we assume $\vec{a}\not=0$, otherwise the estimate is trivial. As before, we note that $\widetilde{\omega}=\rho^{-1}b_{-\vec{a}}\cdot\left(\omega\circ \Phi_{-\vec{a}}\right)$ has area radius one and satisfies the balancing condition. Moreover,
		\[
			\norm{\widetilde{K}-1}_{C^0(\,\hatgamma\,)}=\frac{\rho^2}{2}\norm{\scal-\fint\scal}_{C^0(\,\hatgamma\,)}.
		\]
		We now choose $\varepsilon$ sufficiently small such that Proposition \ref{prop_gausscurvature_c0} holds with $\delta=\frac{1}{2}e^{-1}<1$. Using the mean value theorem, if is straightforward to check that $\btr{\ln\widetilde{\omega}}\le \delta$ implies
		\[
			\norm{\widetilde{\omega}-1}_{C^0(\hatgamma)}\le \frac{1}{2}.
		\]
		Rearranging the inequality and using the above identity for $\widetilde{\omega}$, we find
		\[
			b_{-\vec{a}}(\vec{x})\le \frac{3\rho}{2\omega\circ\Phi_{-\vec{a}}(\vec{x})}\le 2\frac{\rho}{\omega_{\min}}
		\]
		for all $\vec{x}\in \Sbb^2$. Now observe that
		\[
			\min\left(2\btr{\vec{a}},\sqrt{1+\btr{\vec{a}}^2}\right)\le \sqrt{1+\btr{\vec{a}}^2}+\btr{\vec{a}}=b_{-\vec{a}}\left(\frac{-\vec{a}}{\btr{\vec{a}}}\right)\le 2\frac{\rho}{\omega_{\min}}.
		\] 
		The corresponding inequality involving $\omega_{\max}$ is derived similarly.
	\end{proof}
	
	\section{A Sobolev-inequality on spacelike cross sections}\label{section_sobolev}
	
	While several formulations of a Sobolev inequality are well-known for topological $2$-spheres, our later analysis requires a notion with respect to the extrinsic geometry of a spacelike cross section. For the purpose of this paper, we thus make the following definition:
	\begin{defi}\label{defi_sobolev}
		Let $(\Sigma,\gamma)$ be a closed, orientable, spacelike codimension $2$ surface in an ambient $4$-dimensional spacetime $(\overline{M},\overline{g})$ with $\mathcal{H}^2\ge 0$. We say that $\Sigma$ carries a Sobolev inequality (with respect to $\mathcal{H}^2$) with constant $c_S>0$ if
		\begin{align}\label{eq_prelim_SobolevGoal}
			\int_\Sigma f^2\d\mu \le c_S\left(\int_\Sigma \btr{\nabla f}+\sqrt{\mathcal{H}^2}f\d\mu\right)^2
		\end{align}
		for all $f\in W^{1,2}(\Sigma)$.
	\end{defi}
	
	In their previous work, the authors have already established a similar Sobolev inequality in asymptotically Schwarzschildean lightcone for a restricted class of spacelike cross sections, see \cite[Lemma 3.11]{kroenckewolff}. It is not difficult to verify that in this setting, the spacelike cross sections under consideration also carry a Sobolev inequality in the sense of Definition \ref{defi_sobolev} for some uniform constant $c_S$. The main objective of this section is to establish more general criteria under which a spacelike cross section carries a Sobolev inequality with respect to $\mathcal{H}^2$ while retaining sufficient control on the constant $c_S$.
	
	To this end, we first note that a surface (with non-negative mean curvature) in $\R^3$ (understood as a time symmetric hypersurface in $\R^{1,3}$) carries a Sobolev inequality in the sense of Definition \ref{defi_sobolev} by the well-known Michael--Simon--Sobolev inequality, where $c_S$ is in fact a uniform constant indepenent of $\Sigma$. Exploiting the closeness to Euclidean space, Huisken--Yau then extend this inequality to their setting, see \cite[Proposition 5.4]{huiskenyau}. In close analogy, we first investigate the validity of a Sobolev inequality as in Definition \ref{defi_sobolev} for spacelike cross sections of the standard Minkowski lightcone.
	
	We observe the following direct consequence by the H\"older inequality:
	\begin{lem}\label{lem_prelim_Sobolev_consequence}
		Assume that $(\Sigma,\gamma)$ carries a Sobolev inequality with constant $c_S$. Then for any $f\in W^{1,2}(\Sigma)\cap L^{2p}(\Sigma)$, $p\ge 1$, we have
		\[
		\left(\int_\Sigma \btr{f}^{2p}\d\mu\right)^\frac{1}{p}\le 2 c_Sp^2\btr{\operatorname{supp}f}^\frac{1}{p}\int_\Sigma\btr{\nabla f}^2+ \mathcal{H}^2f^2\d\mu.
		\]
	\end{lem}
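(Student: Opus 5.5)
We apply the Sobolev inequality of Definition \ref{defi_sobolev} to the power $g:=\btr{f}^p$ and then remove the first-order terms with Cauchy--Schwarz and Hölder. For $p\ge 1$ and $f\in W^{1,2}(\Sigma)\cap L^{2p}(\Sigma)$ we assume first that $f$ is bounded, so that $g\in W^{1,2}(\Sigma)$. The general case then follows by truncating $f$ at height $k$ and letting $k\to\infty$ with monotone convergence; truncation does not enlarge $\operatorname{supp} f$. We have $\btr{\nabla g}\le p\btr{f}^{p-1}\btr{\nabla f}$ almost everywhere. Since $p\ge1$ and $\mathcal{H}^2\ge0$, we may bound the zeroth-order term by $\sqrt{\mathcal{H}^2}\btr{f}^p\le p\btr{f}^{p-1}\sqrt{\mathcal{H}^2}\btr{f}$. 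Hence \eqref{eq_prelim_SobolevGoal} gives
\[
\int_\Sigma\btr{f}^{2p}\d\mu\le c_Sp^2\left(\int_\Sigma\btr{f}^{p-1}\left(\btr{\nabla f}+\sqrt{\mathcal{H}^2}\btr{f}\right)\d\mu\right)^2.
\]

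Next we apply Cauchy--Schwarz on the right-hand side, together with $(a+b)^2\le 2(a^2+b^2)$:
\[
\left(\int_\Sigma\btr{f}^{p-1}(\btr{\nabla f}+\sqrt{\mathcal{H}^2}\btr{f})\right)^2\le 2\int_\Sigma\btr{f}^{2p-2}\d\mu\int_\Sigma\btr{\nabla f}^2+\mathcal{H}^2f^2\d\mu.
\]
Both integrals are effectively over $\operatorname{supp}f$. By Hölder with exponents $\frac{p}{p-1}$ and $p$, we get $\int_\Sigma\btr{f}^{2p-2}\le \btr{\operatorname{supp}f}^{\frac1p}\left(\int_\Sigma\btr{f}^{2p}\right)^{\frac{p-1}{p}}$; for $p=1$ this is trivially an equality.

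Combining the two estimates gives
\[
\int_\Sigma\btr{f}^{2p}\le 2c_Sp^2\btr{\operatorname{supp}f}^{\frac1p}\left(\int_\Sigma\btr{f}^{2p}\right)^{\frac{p-1}{p}}\int_\Sigma\btr{\nabla f}^2+\mathcal{H}^2f^2.
\]
If $\int\btr{f}^{2p}=0$ there is nothing to show. Otherwise, since this quantity is finite, we divide by $\left(\int\btr{f}^{2p}\right)^{\frac{p-1}{p}}$ and obtain the claim.

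The only delicate point is the technical justification that $\btr{f}^p\in W^{1,2}$, which is handled by the truncation described above. All other steps are elementary.
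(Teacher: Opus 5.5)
Your proposal is correct and follows the route the paper intends: the paper gives no written proof and calls the lemma a direct consequence of the H\"older inequality, which is exactly your argument of applying the Sobolev inequality to $|f|^p$, splitting off $\int_{\operatorname{supp} f}|f|^{2p-2}$ by Cauchy--Schwarz and H\"older, and dividing. Your truncation step handles $|f|^p\in W^{1,2}$ correctly (it uses $|f_k|\le|f|$ and $|\nabla f_k|\le|\nabla f|$), and restricting to $\operatorname{supp} f$ when $p=1$ is justified by the standard fact that $\nabla f=0$ a.e.\ on $\{f=0\}$.
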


	\subsection{A Sobolev inequality in the Minkowski case}\label{subsec_sobolev_minkowski}
	
	We recall the well-known fact that there is a $1$-to-$1$ correspondence between spacelike cross sections of the Minkowski lightcone and conformally round metrics $\gamma=\omega^2\widehat{\gamma}$, where $\mathcal{H}^2=2\scal_\gamma$. See for example \cite{wolff1}. By direct computation, we find the first, preliminary estimate:
	\begin{prop}\label{prop_sobolev_mink1}
		Let $\Sigma$ be a spacelike cross section in the Minkowski lightcone with conformally round metric $\gamma=\omega^2\widehat{\gamma}$ and spacetime mean curvature $\mathcal{H}^2> 0$. Then
		\[
		\int_\Sigma f^2\d\mu\le (2\pi)^{-1}\left(\int_\Sigma \btr{\nabla f}+c\sqrt{\mathcal{H}^2}f\d\mu\right)^2
		\]
		for any $f\in W^{1,2}(\Sigma)$, where $c\ge 1$ is a constant depending on $\norm{\widehat{\nabla}\ln\omega}_{C^1(\,\widehat{\gamma}\,)}$.
	\end{prop}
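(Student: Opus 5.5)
The strategy is to pull everything back to $(\Sbb^2,\hatgamma)$ and use the $L^1$-Sobolev inequality of the round sphere, absorbing the error terms coming from the conformal factor into the mean curvature term. With $\gamma=\omega^2\hatgamma$ we have $\d\mu=\omega^2\d\widehat{\mu}$ and $\btr{\nabla f}_\gamma=\omega^{-1}\btr{\widehat{\nabla}f}_{\hatgamma}$. The quantity $\int_\Sigma f^2\d\mu$ is the $L^2$ norm of $\omega f$ on the round sphere. We therefore apply the $L^1$-Sobolev inequality on $(\Sbb^2,\hatgamma)$ to $g:=\omega f$ (assuming $f\ge0$ without loss of generality):
\[
\left(\int_{\Sbb^2}g^2\d\widehat{\mu}\right)^{\frac12}\le (4\pi)^{-\frac12}\left(\int_{\Sbb^2}\btr{\widehat{\nabla} g}\d\widehat{\mu}+\int_{\Sbb^2}g\d\widehat{\mu}\right).
\]
This is the sharp isoperimetric/Sobolev inequality on $\Sbb^2$, obtained via coarea and the spherical isoperimetric inequality $L^2\ge A(4\pi-A)/\pi$. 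One can use it in the cruder form with constant $(2\pi)^{-1/2}$, which matches the constant in the statement. Next we expand $\widehat{\nabla} g=\omega\widehat{\nabla} f+f\widehat{\nabla}\omega$. The first term gives
\[
\int_{\Sbb^2}\omega\btr{\widehat{\nabla} f}\d\widehat{\mu}=\int_\Sigma\btr{\nabla f}_\gamma\d\mu,
\]
and the remaining terms give
\[
\int_{\Sbb^2} f\left(\btr{\widehat{\nabla}\omega}+\omega\right)\d\widehat{\mu}=\int_\Sigma f\,\omega^{-1}\left(1+\btr{\widehat{\nabla}\ln\omega}_{\hatgamma}\right)\d\mu.
\]

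It remains to bound $\omega^{-1}(1+\btr{\widehat{\nabla}\ln\omega})$ by $c\sqrt{\mathcal{H}^2}$. In the Minkowski lightcone $\mathcal{H}^2=2\scal_\gamma=4K_\gamma$, where
\[
K_\gamma=\omega^{-2}\left(1-\widehat{\Delta}\ln\omega\right).
\]
Since $\mathcal{H}^2>0$ pointwise, we get $1-\widehat{\Delta}u>0$ with $u=\ln\omega$. We want a uniform lower bound
\[
1-\widehat{\Delta}u\ge c_0>0
\]
depending only on $\norm{\widehat{\nabla}u}_{C^1}$. This gives $\sqrt{\mathcal{H}^2}\ge 2\sqrt{c_0}\,\omega^{-1}$, and hence
\[
\omega^{-1}\left(1+\btr{\widehat{\nabla}u}\right)\le \frac{1+\norm{\widehat{\nabla}u}_{C^0}}{2\sqrt{c_0}}\sqrt{\mathcal{H}^2},
\]
so that we can take $c=\max\left(1,(1+\norm{\widehat{\nabla}u}_{C^0})/(2\sqrt{c_0})\right)$. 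Combining the three steps and squaring yields the claimed inequality.

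The main obstacle is this last lower bound. Positivity of $1-\widehat{\Delta}u$ is only qualitative, and a $C^1$ bound on $\widehat{\nabla}u$ controls $\widehat{\Delta}u$ from above but not $1-\widehat{\Delta}u$ away from zero. If this fails, I would instead have $c$ also depend on $\min(\mathcal{H}^2\omega^2)$, or alternatively rewrite $\int f\omega^{-1}$ through the Gauss–Bonnet-type identity $\int_\Sigma \mathcal{H}^2\d\mu=16\pi$. Either way the dependence of $c$ enters only through the pointwise comparison between $\omega^{-1}(1+\btr{\widehat{\nabla}\ln\omega})$ and $\sqrt{\mathcal{H}^2}$, which is where $\norm{\widehat{\nabla}\ln\omega}_{C^1(\hatgamma)}$ appears.
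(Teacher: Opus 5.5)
Your argument is essentially the paper's proof: apply a Sobolev inequality on $(\Sbb^2,\hatgamma)$ to $\omega f$, split $\widehat{\nabla}(\omega f)$ by the triangle inequality, and compare $\omega^{-1}(1+\btr{\widehat{\nabla}\ln\omega})$ pointwise with $\sqrt{\mathcal{H}^2}=2\omega^{-1}\sqrt{1-\widehat{\Delta}\ln\omega}$. The paper quotes Michael--Simon on the round sphere with $H=2$; your coarea route also works, but only with the correct spherical isoperimetric inequality $L^2\ge A(4\pi-A)$, since the version with the extra factor $\pi^{-1}$ does not give your constant.

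The obstacle you flag is not overcome in the paper either. It simply sets $c:=\max_{\Sbb^2}\bigl(1+\tfrac12\btr{\widehat{\nabla}\ln\omega}\bigr)/\sqrt{1-\widehat{\Delta}\ln\omega}$, which is finite by compactness and $\mathcal{H}^2>0$ and is exactly your fallback. So the stated dependence on $\norm{\widehat{\nabla}\ln\omega}_{C^1(\hatgamma)}$ should be read loosely: $c$ depends on $\Sigma$, as the remark following the proposition stresses.
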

	\begin{bem}\label{rem_sobolev_mink1}
		The proof relies on the Michael--Simon--Sobolev inequality for the round sphere, and we emphasize that the constant $c$ depends on $\Sigma$. Of course, the question if one can prove the above inequality for some uniform constant $c$ independent of $\Sigma$ naturally arises. We note that, as $\Sigma$ has positive Gauss curvature, there exists an isometric embedding into $\R^3$ with
		\[
		H^2=2\newbtr{\accentset{\circ}{h}}^2+2\scal=2\newbtr{\accentset{\circ}{h}}^2+\mathcal{H}^2
		\]
		by the Gauss equation in $\R^3$, where $h$ denotes the second fundamental form of the surface in $\R^3$. Recall that the Michael--Simon--Sobolev inequality for surfaces in $\R^3$ states that
		\[
		\int_\Sigma f^2\d\mu\le c\left(\int_\Sigma \btr{\nabla f}+\btr{H}f\d\mu\right)^2.
		\]
		Hence, the above Sobolev inequality in the Minkowski lightcone is equivalent to a modified version of the Sobolev inequality in $\R^3$ incorporating the trace-free part of $h$. Whether one can expect such a modified version to hold with a uniform constant is unclear to the authors and beyond the scope of this paper.
	\end{bem}
	\begin{proof}
		As smooth functions lie dense in $W^{1,2}(\Sigma)$, it suffices to prove the estimate for $f$ smooth. By the Micheal--Simon--Sobolev inequality we have
		\[
		\int_{\Sbb^2} f^2\d\widehat{\mu}\le c\left(\int_{\Sbb^2}\newbtr{\widehat{\nabla}f}+2f\d\widehat{\mu}\right)^2
		\]
		on the round sphere $\Sbb^2$ as $H=2$. In the following, we will use $c\le (2\pi)^{-1}$.\footnote{Note that $(2\pi)^{-1}$ is not optimal, but we use this explicit constant going forward for convenience.}
		Using the fact that $\Sigma$ is conformally round with $\d\mu=\omega^2\d\widehat{\mu}$, we find
		\[
		\int_\Sigma f^2\d\mu=\int_{\Sbb^2}(f\omega)^2\d\widehat{\mu}\le (2\pi)^{-1}\left(\int_{\Sbb^2}\newbtr{\widehat{\nabla}f\omega}+2f\omega\d\widehat{\mu}\right)^2.
		\]
		As 
		\begin{align*}
			\newbtr{\widehat{\nabla}f\omega}^2&=\omega^2\newbtr{\widehat{\nabla}f}^2+f^2\newbtr{\widehat{\nabla}\omega}^2+2\omega f\spann{\widehat{\nabla}\omega,\widehat{\nabla} f}\\
			&\le \omega^2\newbtr{\widehat{\nabla} f}^2+f^2\newbtr{\widehat{\nabla}\omega}^2+2\omega f\newbtr{\widehat{\nabla}\omega}\newbtr{\widehat{\nabla}f}\\
			&=\left(\omega \newbtr{\widehat{\nabla}f}+f\newbtr{\widehat{\nabla}\omega}\right)^2,
		\end{align*}
		and $\newbtr{\nabla f}^2=\omega^{-2}\newbtr{\widehat{\nabla}f}^2$, we can conclude that
		\[
		\int_\Sigma f\d\mu \le (2\pi)^{-1}\left(\int_\Sigma \newbtr{\nabla f}+\left(\frac{2}{\omega}+\frac{\newbtr{\nabla\omega}}{\omega}\right)f\d\mu \right)^2
		\]
		As $\mathcal{H}^2=2\scal$, we note that
		\[
		\sqrt{\mathcal{H}^2}=\sqrt{\frac{4}{\omega^2}+\frac{4\newbtr{\widehat{\nabla}\omega}^2}{\omega^4}-\frac{4\widehat{\Delta}\omega}{\omega^3}}=\frac{2}{\omega}\sqrt{1-\widehat{\Delta}\ln\omega}
		\]
		Hence, the claim follows by defining
		\[
		c:=\max\limits_{\Sbb^2}\frac{1+\frac{1}{2}\newbtr{\widehat{\nabla}\ln\omega}}{\sqrt{1-\widehat{\Delta}\ln\omega}}.
		\]
	\end{proof}
	
	While Proposition \ref{prop_sobolev_mink1} readily implies that any spacelike cross section of the Minkowski lightcone with $\mathcal{H}^2>0$ carries a Sobolev inequality with respect to $\mathcal{H}^2$, the constant derived in Proposition \ref{prop_sobolev_mink1} is far from optimal. In particular, while the constant is scaling invariant, it is not Lorentz invariant. More precisely, it is easy to construct a family of round spheres (for which the Sobolev constant holds with respect to a uniform constant) such that the constant in Proposition \ref{prop_sobolev_mink1} diverges to infinity. However, as Lorentz transformations are isometries of the Minkowski spacetime, one may directly remedy this fact by taking the infimum of the constant over all possibly isometric embeddings into the Minkowski lightcone\footnote{Note that the Sobolev inequality with respect to $\mathcal{H}^2$ is completely intrinsic in this setting as $\mathcal{H}^2=2\scal$. Since Lorentz transformations are equivalent to a conformal diffeomorphism on the given surface, the Sobolev inequality indeed holds for any of these constants and in particular their infimum.}.
	
	\begin{kor}\label{kor_sobolev_mink2}
		Let $\Sigma$ be a spacelike cross section of the Minkowski lightcone with $\mathcal{H}^2>0$. Then $\Sigma$ carries a Sobolev inequality with respect to $\mathcal{H}^2$ with constant 
		\[
			c_0(\Sigma):=(2\pi)^{-1}\inf\limits_{\vec{a}\in\R^3}\max\limits_{\Sbb^2}\frac{\left(1+\frac{1}{2}\newbtr{\widehat{\nabla}\ln\omega_{\vec{a}}}\right)^2}{{1-\widehat{\Delta}\ln\omega_{\vec{a}}}}\ge (2\pi)^{-1},
		\]
	\end{kor}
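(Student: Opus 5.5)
The plan is to deduce Corollary \ref{kor_sobolev_mink2} from Proposition \ref{prop_sobolev_mink1}, applied to every Lorentz-boosted copy of $\Sigma$. The key point is that the Sobolev inequality of Definition \ref{defi_sobolev} is intrinsic for cross sections of the Minkowski lightcone. There $\mathcal{H}^2=2\scal_\gamma$, so both sides of \eqref{eq_prelim_SobolevGoal} involve only the metric $\gamma$ and the function $f$. For $\vec{a}\in\R^3$, the boost $\Lambda_{\vec{a}}$ is an isometry of Minkowski space preserving the lightcone. It maps $\Sigma=\Sigma_\omega$ to $\Sigma_{\omega_{\vec{a}}}$, and restricted to $\Sigma$ it is an isometry $(\Sigma,\gamma)\to(\Sigma_{\omega_{\vec{a}}},\omega_{\vec{a}}^2\hatgamma)$. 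Under the identification with $\Sbb^2$, this is realised by the Möbius map $\Phi$ with $\omega_{\vec{a}}^2\hatgamma=\Phi^*(\omega^2\hatgamma)$.

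\textbf{Step 1 (one boost).} Fix $\vec{a}$. Given $f\in W^{1,2}(\Sigma)$, set $\tilde f:=f\circ\Phi$. This is in $W^{1,2}$ of the boosted surface. Because the map is an isometry and $\mathcal{H}^2$ is determined by the scalar curvature, the three quantities $\int f^2\d\mu$, $\int\btr{\nabla f}\d\mu$ and $\int\sqrt{\mathcal{H}^2}f\d\mu$ are all unchanged. Also $\mathcal{H}^2>0$ is preserved.

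\textbf{Step 2 (apply Proposition \ref{prop_sobolev_mink1}).} On the boosted surface the proposition gives
\[
\int f^2\d\mu\le(2\pi)^{-1}\Big(\int\btr{\nabla f}+c_{\vec{a}}\sqrt{\mathcal{H}^2}f\d\mu\Big)^2,
\]
with $c_{\vec{a}}=\max_{\Sbb^2}\big(1+\tfrac12\newbtr{\widehat{\nabla}\ln\omega_{\vec{a}}}\big)\big/\sqrt{1-\widehat{\Delta}\ln\omega_{\vec{a}}}$ as in the proof of that proposition. The denominator is well defined because $1-\widehat{\Delta}\ln\omega_{\vec{a}}=\omega_{\vec{a}}^2\mathcal{H}^2/4>0$. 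Since $c_{\vec{a}}\ge1$, we can bound $\int\btr{\nabla f}+c_{\vec{a}}\sqrt{\mathcal{H}^2}f$ by $c_{\vec{a}}\big(\int\btr{\nabla f}+\sqrt{\mathcal{H}^2}f\big)$. Here we may assume $f\ge0$, replacing $f$ by $\btr{f}$ in the usual way. This yields the inequality of Definition \ref{defi_sobolev} with constant $(2\pi)^{-1}c_{\vec{a}}^2$, which is exactly $(2\pi)^{-1}\max_{\Sbb^2}\frac{(1+\frac12\newbtr{\widehat{\nabla}\ln\omega_{\vec{a}}})^2}{1-\widehat{\Delta}\ln\omega_{\vec{a}}}$.

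\textbf{Step 3 (infimum and lower bound).} The inequality holds for every $\vec{a}$, so it holds for any constant above the infimum. Letting the constant decrease to the infimum, it holds at $c_0(\Sigma)$ itself by taking the limit in a fixed inequality.

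For the lower bound $c_0(\Sigma)\ge(2\pi)^{-1}$, note that $\int_{\Sbb^2}\widehat{\Delta}\ln\omega_{\vec{a}}\d\widehat\mu=0$. Hence there is a point where $\widehat{\Delta}\ln\omega_{\vec{a}}\ge0$, and at that point the quotient is at least $1$. So every maximum is at least $1$.

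The only step needing care is Step 1, namely verifying that a boost acts as an isometry between the cross sections, i.e.\ that pulling back $\omega_{\vec{a}}^2\hatgamma$ by the Möbius map gives $\omega^2\hatgamma$. This follows from $\Lambda_{\vec{a}}$ being an ambient isometry preserving the cone, together with the formula for $\omega_{\vec{a}}$ recalled in Section \ref{subsec_prelim_quantativeestimates}.
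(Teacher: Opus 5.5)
Your proposal is correct and is essentially the paper's argument. Like the paper, you apply Proposition~\ref{prop_sobolev_mink1} to every boosted copy $\Sigma_{\omega_{\vec{a}}}$, use that the inequality is intrinsic (since $\mathcal{H}^2=2\scal$ and a boost restricts to an isometry between cross sections), and pass to the infimum over $\vec{a}$. Your write-up also makes explicit two steps the paper leaves implicit: absorbing $c_{\vec{a}}\ge 1$ into the constant $(2\pi)^{-1}c_{\vec{a}}^2$, and the lower bound via $\int_{\Sbb^2}\widehat{\Delta}\ln\omega_{\vec{a}}\d\widehat{\mu}=0$.
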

	
	\begin{bem}\label{bem_sobolev_mink2}
		Recall from Section \ref{subsec_prelim_quantativeestimates} that 
		\[
		\omega_{\vec{a}}=\frac{\omega\circ \Phi_{\vec{a}}}{\sqrt{1+\newbtr{\vec{a}}^2}-\vec{a}\,\vec{x}}
		\]
		with $\Phi_{\vec{a}}$ being a uniquely determined M\"obius transformation. While $c_0(\Sigma)$ is by definition scaling and Lorentz invariant, and clearly the optimal constant derived from our approach, we still desire criteria under which $c_0(\Sigma)$ is controlled from above. While the definition of $c_0(\Sigma)$ appears quite opaque at first sight, it is clear heuristically that the constant is well-controlled for any spacelike cross section that is sufficiently $C^2$-close to a boosted round sphere. We now give a precise criteria based on this:\newline
		Let $\vec{a}\in\R^3$. By construction
		\[
			\sqrt{2\pi\,c_0(\Sigma)}\le \max\limits_{\Sbb^2}\frac{1+\frac{1}{2}\newbtr{\widehat{\nabla}\ln\omega_{-\vec{a}}}}{\sqrt{1-\widehat{\Delta}\ln\omega_{-\vec{a}}}}
			=
			\max\limits_{\Sbb^2}\frac{1+\frac{1}{2}\newbtr{\widehat{\nabla}^{\vec{a}}\ln(\omega\cdot b_{\vec{a}}^{-1})}_{\hatgamma_{\vec{a}}}}{\sqrt{1-\widehat{\Delta}_{\vec{a}}\ln(\omega\cdot b_{\vec{a}}^{-1})}}.
		\]
		Here, we used that one can directly check that
			\begin{align*}
				\btr{\widehat{\nabla}\ln\omega_{-\vec{a}}}_{\widehat{\gamma}}\circ \Phi_{\vec{a}}&=\btr{\widehat{\nabla}^{\vec{a}}\ln(\omega\cdot b_{\vec{a}}^{-1})}_{\widehat{\gamma}_{\vec{a}}},\\
				\widehat{\Delta}\ln\omega_{-\vec{a}}\circ \Phi_{\vec{a}}&=\widehat{\Delta}_{\vec{a}}\ln(\omega\cdot b_{\vec{a}}^{-1}),
			\end{align*}
		where $\widehat{\nabla}^{\vec{a}}$ and $\widehat{\Delta}_{\vec{a}}$ denote the gradient and Laplacian with respect to the conformally round metric $\widehat{\gamma}_{\vec{a}}=b_{\vec{a}}^2\widehat{\gamma}$. Hence, $c_0(\Sigma)
		\le (2\pi)^{-1}(1+\varepsilon)$ if 
		\begin{align*}
			\newbtr{\widehat{\nabla}\ln\left(\omega\,b_{\vec{a}}{^{-1}}\right)}_{\hatgamma}&\le \frac{\delta}{\sqrt{1+\newbtr{\vec{a}}^2}},\\
			\newbtr{\widehat{\Delta}\ln\left(\omega\,b_{\vec{a}}{^{-1}}\right)}&\le \frac{\delta}{1+\newbtr{\vec{a}}^2}
		\end{align*}
		for some $\delta=\delta(\varepsilon)$ sufficiently small. While the choice of $\vec{a}$ is a priori arbitrary, the $C^2$-closeness to a boosted sphere infers that the canonical candidate to check for this criteria is the (unique) vector $\vec{a}$ associated to $\Sigma$ via the Minkowski $4$-vector $\textbf{Z}$ as in Section \ref{subsec_prelim_quantativeestimates}. Indeed, this choice of $\vec{a}$ achieves the infimum for round spheres.
	\end{bem}
	
	\subsection{A Sobolev inequality in asymptotically Schwarzschildean lightcones}\label{subsec_solobolev_asymSchw}

	We now extend the Sobolev inequality from the Minkowski lightcone in a direct way:
	Let $\Sigma=\Sigma_\omega$ be a spacelike cross section in an asymptotically Schwarzschildean lightcone. If $\omega\ge r_0$ sufficiently large, then $\gamma=\gamma_\omega$ is uniformly equivalent to the conformally round metric $\widetilde{\gamma}_\omega=\omega^2\widehat{\gamma}$. Note that $\widetilde{\gamma}$ is the induced metric of the corresponding spacelike cross section $\widetilde{\Sigma}=\widetilde{\Sigma}_\omega$ in the Minkowski lightcone, and let $\widetilde{R}$ denote the scalar curvature of $\widetilde{\gamma}_\omega$. Applying the considerations of the previous section to $\widetilde{\Sigma}$ we immediately obtain the following:
	
	\begin{prop}\label{prop_sobolev_asymSchw}
		Let $\Sigma$ be a spacelike cross section of an asymptotically Schwarzschildean lightcone $\mathcal{N}$ with $\mathcal{H}^2>0$. If 
		\[
			\btr{\mathcal{H}^2-2\widetilde{R}}\le \varepsilon\mathcal{H}^2
		\]
		for some $\varepsilon<1$, then $\Sigma$ carries a Sobolev inequality with respect to $\mathcal{H}^2$ with constant $C(c_0(\Sigma),\varepsilon)$ provided $\omega\ge r_0$ sufficiently large, where $C(c_0(\Sigma),\varepsilon)$ is a constant only depending on $c_0(\Sigma)$ and $\varepsilon$ and where we define $c_0(\Sigma):=c_0(\widetilde{\Sigma})$ by slight abuse of notation.
	\end{prop}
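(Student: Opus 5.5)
Since $\mathcal{H}^2>0$ and $\btr{\mathcal{H}^2-2\widetilde{R}}\le\varepsilon\mathcal{H}^2$ with $\varepsilon<1$, we have $2\widetilde{R}\ge(1-\varepsilon)\mathcal{H}^2>0$. Hence $\widetilde{\Sigma}=\widetilde{\Sigma}_\omega$ is a spacelike cross section of the Minkowski lightcone with $\widetilde{\mathcal{H}}^2=2\widetilde{R}>0$. Corollary \ref{kor_sobolev_mink2} then applies and gives, for all $f\in W^{1,2}(\widetilde{\Sigma})$,
\[
\int_{\widetilde{\Sigma}}f^2\d\widetilde{\mu}\le c_0(\Sigma)\left(\int_{\widetilde{\Sigma}}\btr{\widetilde{\nabla}f}_{\widetilde{\gamma}}+\sqrt{2\widetilde{R}}\,f\d\widetilde{\mu}\right)^2 .
\]
The plan is to transfer this inequality to $\Sigma$ through the identification of both surfaces with $\Sbb^2$ via the graph function $\omega$, comparing each term. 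As usual, it suffices to take $f$ smooth and nonnegative, by density and by replacing $f$ with $\btr{f}$.

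\emph{Step 1: metric comparison.} Write $\gamma=\gamma_\omega$ in the coordinates $\partial_i=\partial_I+\partial_I\omega\,\ul{L}$. Since $\ul{L}$ is null and orthogonal to $T\mathcal{N}$, we get $\gamma_\omega(\partial_i,\partial_j)=(\gamma_r)_{IJ}|_{r=\omega}$. By Definition \ref{defi_asymclassS}, $\gamma_r=r^2\hatgamma+O_{3,3}(1)$, so
\[
\gamma_\omega=\omega^2\hatgamma+O(1)=\widetilde{\gamma}_\omega\,(1+O(\omega^{-2})).
\]
This yields constants $1\le\Lambda\le 1+C r_0^{-2}$ with $\Lambda^{-1}\widetilde{\gamma}\le\gamma\le\Lambda\widetilde{\gamma}$ as bilinear forms, provided $\omega\ge r_0$ is large. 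Consequently $\Lambda^{-1}\d\widetilde{\mu}\le\d\mu\le\Lambda\d\widetilde{\mu}$ and $\btr{\widetilde{\nabla}f}_{\widetilde{\gamma}}\le\Lambda^{1/2}\btr{\nabla f}_\gamma$, since $f$ is a function on $\Sbb^2$ and only the inverse metrics differ. This is exactly the uniform equivalence stated before the proposition. Only $C^0$ control of $\gamma-\omega^2\hatgamma$ is needed, with no derivatives of $\omega$, which is crucial because we have no a priori control on $\widehat{\nabla}\omega$.

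\emph{Step 2: comparison of the mean curvature terms.} From the hypothesis, $2\widetilde{R}\le(1+\varepsilon)\mathcal{H}^2$, hence $\sqrt{2\widetilde{R}}\le\sqrt{1+\varepsilon}\sqrt{\mathcal{H}^2}\le\sqrt{2}\sqrt{\mathcal{H}^2}$.

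\emph{Step 3: assembling.} Combining the Minkowski inequality with Steps 1 and 2 gives
\[
\int_\Sigma f^2\d\mu\le\Lambda\int_{\widetilde{\Sigma}}f^2\d\widetilde{\mu}\le\Lambda\, c_0(\Sigma)\Big(\Lambda^{3/2}\int_\Sigma\btr{\nabla f}\d\mu+\Lambda\sqrt{1+\varepsilon}\int_\Sigma\sqrt{\mathcal{H}^2}f\d\mu\Big)^2 .
\]
This is bounded by $C(c_0(\Sigma),\varepsilon)\big(\int_\Sigma\btr{\nabla f}+\sqrt{\mathcal{H}^2}f\d\mu\big)^2$ with, for instance, $C=4(1+\varepsilon)c_0(\Sigma)$ once $r_0$ is chosen so large that $\Lambda^{5}\le 2$.

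The only step needing care is Step 1, namely checking that the induced metric on $\Sigma_\omega$ really is $\gamma_r$ evaluated at $r=\omega$ (the $\ul{L}$-components drop out) and that the $O_{3,3}(1)$ error is uniformly small relative to $\omega^2\hatgamma$, independently of $\omega$'s derivatives. Everything else is bookkeeping. The lower bound $\mathcal{H}^2>0$ together with $\varepsilon<1$ is used only to ensure that $\widetilde{R}>0$, so that Corollary \ref{kor_sobolev_mink2} is applicable.
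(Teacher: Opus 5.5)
Your proposal is correct and is essentially the paper's argument, which the authors compress into one line: for $\omega\ge r_0$ large, $\gamma_\omega$ is uniformly equivalent to $\omega^2\hatgamma$, so the Minkowski Sobolev inequality of Corollary \ref{kor_sobolev_mink2} for $\widetilde{\Sigma}$ transfers to $\Sigma$ once you use $\sqrt{2\widetilde{R}}\le\sqrt{1+\varepsilon}\sqrt{\mathcal{H}^2}$. You only make the constants explicit; your reduction to $f\ge 0$ via $\btr{f}$ is also the right reading of Definition \ref{defi_sobolev}, which is only meaningful for nonnegative $f$.
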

	
	For later convenience, we provide a direct comparison between $\mathcal{H}^2$ and $\widetilde{\scal}$ in the following:
	
	\begin{lem}\label{lem_comparrisonRtildemathcalH}
		Let $\mathcal{N}$ be an asymptotically Schwarzschildean lightcone, $\Sigma$ a spacelike cross section in $\mathcal{N}$. If $\omega\ge r_0$ sufficiently large, then
		\begin{align*}
			\btr{\mathcal{H}^2-2\widetilde{R}+\frac{8m}{\omega^3}}&\le \frac{C}{\omega^4}\left(1+\newbtr{\widehat{\nabla}\ln\omega}_{\hatgamma}^2+\newbtr{\widehat{\nabla}^2\ln\omega}_{\hatgamma}\right),\\
			\btr{\nabla\left(\mathcal{H}^2-2\widetilde{R}+\frac{8m}{\omega^3}\right)}_\gamma&\le \frac{C}{\omega^5}\left(\newbtr{\widehat{\nabla}^3\ln\omega}_{\hatgamma}+\left(1+\newbtr{\widehat{\nabla}\ln\omega}_{\hatgamma}\right)\left(1+\newbtr{\widehat{\nabla}\ln\omega}_{\hatgamma}^2+\newbtr{\widehat{\nabla}^2\ln\omega}_{\hatgamma}\right)\right)
		\end{align*}
		for some constant $C$ independent of $\Sigma$.
	\end{lem}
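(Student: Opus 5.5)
Our approach is to compute $\mathcal{H}^2=\ul{\theta}\theta$ on $\Sigma=\Sigma_\omega$ explicitly in terms of the background foliation and compare it term by term with the Minkowski expression $2\widetilde{R}$. By \eqref{eq_prelim_nullgeometry_frame}, $L=L_r-\btr{\nabla\omega}^2_\gamma\ul{L}-2\nabla\omega^I\partial_I$. Together with $\chi_r(\ul{L},\cdot)=0$, this gives the standard identity (cf. the trace of the identity for $A$ used in the proof of Lemma \ref{lem_prelim_A}, see \cite[Equation (3.12)]{kroenckewolff}):
\[
\mathcal{H}^2=\tr_\gamma A=\ul{\theta}\,\tr_\gamma\chi_r-2\ul{\theta}\Delta\omega+\btr{\nabla\omega}^2_\gamma\ul{\theta}^2-4\ul{\theta}\,\zeta_r(\nabla\omega).
\]
In the exact Minkowski lightcone with $\omega$ as graph function, one has $\ul{\chi}=\omega\hatgamma$, $\chi_r=r\hatgamma$, $\zeta_r=0$ and $\gamma=\omega^2\hatgamma$. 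The same formula then reproduces $2\widetilde{R}=\frac{4}{\omega^2}\bigl(1+\newbtr{\widehat{\nabla}\ln\omega}^2_{\hatgamma}-\widehat{\Delta}\ln\omega\bigr)$, consistent with the formula in the proof of Proposition \ref{prop_sobolev_mink1}. So the plan is to write $\mathcal{H}^2-2\widetilde{R}$ as a sum of differences of the corresponding terms and to estimate each difference using Definition \ref{defi_asymclassS}.

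The term-by-term comparison goes as follows.
\begin{enumerate}
\item \emph{Leading term.} Here $\ul{\theta}\tr_\gamma\chi_r=\ul{\theta}_r\theta_r$ evaluated at $r=\omega$, up to the difference between $\gamma$ and $\gamma_r$ (which differ by $\d\omega\otimes\d\omega$-type terms vanishing since $\ul{L}$ is null; in fact the tangential projection gives $\tr_\gamma\chi_r=\tr_{\gamma_r}\chi_r$ exactly). Lemma \ref{lem_prelim_backgroundfol} then gives $\mathcal{H}^2_r=\frac{4}{\omega^2}-\frac{8m}{\omega^3}+O(\omega^{-4})$. This is exactly where the $-\frac{8m}{\omega^3}$ shift arises, via $h(r)=1-\frac{2m}{r}+O(r^{-2})$.
\item \emph{Laplacian term.} We use $\gamma=r^2\hatgamma+O_{3,3}(1)$ and \eqref{eq_asym_inversemetric}. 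The Christoffel difference is $O(\omega^{-2})$ relative to $\hatgamma$, and $\ul{\theta}=\frac{2}{\omega}+O(\omega^{-3})$. Hence $-2\ul{\theta}\Delta\omega$ differs from $-\frac{4}{\omega^3}\widehat{\Delta}\omega$ by $\frac{C}{\omega^4}(\newbtr{\widehat{\nabla}^2\ln\omega}+\newbtr{\widehat{\nabla}\ln\omega}^2+\newbtr{\widehat{\nabla}\ln\omega})$, after writing $\widehat{\nabla}^2\omega=\omega(\widehat{\nabla}^2\ln\omega+\d\ln\omega\otimes\d\ln\omega)$.
\item \emph{Gradient term.} The gradient term is treated the same way.
\item \emph{Connection term.} The $\zeta_r$ term is bounded by $\frac{C}{\omega}\cdot\omega^{-2}\cdot\omega^{-2}\cdot\omega\newbtr{\widehat{\nabla}\ln\omega}$, i.e. $\frac{C}{\omega^4}\newbtr{\widehat{\nabla}\ln\omega}$.
\end{enumerate}
Using $\newbtr{\widehat{\nabla}\ln\omega}\le 1+\newbtr{\widehat{\nabla}\ln\omega}^2$, we arrive at the first estimate. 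The sharper error terms are handled by Lemma \ref{lem_appendix_derivativesomega} and Lemma \ref{lem_appendix_derivativesTR}, which convert $O_{k,l}$ decay of background tensors evaluated at $r=\omega$ into $\gamma$-norm bounds involving derivatives of $\ln\omega$.

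For the gradient estimate, we differentiate the same decomposition tangentially along $\Sigma$. Derivatives of background quantities evaluated at $r=\omega$ produce, by the chain rule, an $\partial_r$ term times $\d\omega$. Since the errors are $O_{k,1}$ with at least one $r$-derivative controlled (gaining a factor $r^{-1}$), this contributes $\omega^{-5}(1+\newbtr{\widehat{\nabla}\ln\omega})$ times the previous bracket. The term $\nabla\frac{8m}{\omega^3}$ cancels against the $r$-derivative of $-\frac{8m}{r^3}$ in $\mathcal{H}^2_r$ up to $O(r^{-4})$-derivative errors. Differentiating the Laplacian error yields the $\newbtr{\widehat{\nabla}^3\ln\omega}$ term, together with products of first and second derivatives, which gives the stated product structure.

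The main obstacle is bookkeeping: we must make sure that no error term carries only $\omega^{-3}$ decay, since at leading order the $\omega^{-3}$ terms must be exactly $-\frac{8m}{\omega^3}$. This requires the refined facts that $\chi_r-rh\hatgamma$ and $\gamma_r-r^2\hatgamma$ are $O(r^{-1})$ and $O(1)$ respectively, and that $\zeta_r=O(r^{-2})$. We would check this first by expanding $\theta_r$ carefully via Lemma \ref{lem_prelim_backgroundfol}.
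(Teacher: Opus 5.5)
Your proposal is correct and follows essentially the same route as the paper. You express $\mathcal{H}^2$ through the background foliation as $\mathcal{H}^2_r-2\ul{\theta}\Delta\omega+\ul{\theta}^2\btr{\nabla\omega}^2-4\ul{\theta}\zeta_r(\nabla\omega)$, extract $-\frac{8m}{\omega^3}$ from $\mathcal{H}^2_r$, and control the Laplacian discrepancy via $\gamma^{-1}-\widetilde{\gamma}^{-1}$ and the difference tensor $\widetilde{Q}$ of Lemma \ref{lem_appx_differencetensors}, together with Lemmas \ref{lem_appendix_derivativesomega} and \ref{lem_appendix_derivativesTR}; the paper merely regroups the Laplacian and gradient terms as $-2\ul{\theta}\omega\Delta\ln\omega+\ul{\theta}(\ul{\theta}-\frac{2}{\omega})\btr{\nabla\omega}^2$ so as to compare directly with $2\widetilde{R}=\frac{4}{\omega^2}-4\widetilde{\Delta}\ln\omega$.

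One slip to fix: the Minkowski formula is $2\widetilde{R}=\frac{4}{\omega^2}\bigl(1-\widehat{\Delta}\ln\omega\bigr)=\frac{4}{\omega^2}\bigl(1+\newbtr{\widehat{\nabla}\ln\omega}^2_{\hatgamma}-\omega^{-1}\widehat{\Delta}\omega\bigr)$, so your displayed bracket double-counts the gradient term. Nothing downstream is affected, because your term-by-term comparison is against the individual Minkowski terms $-\frac{4}{\omega^3}\widehat{\Delta}\omega$ and $\frac{4}{\omega^2}\newbtr{\widehat{\nabla}\ln\omega}^2_{\hatgamma}$ rather than against this closed form.
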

	\begin{proof}
		Given $\omega$, we recall that
		\begin{align*}
			2\widetilde{R}&=\frac{4}{\omega^2}-4\widetilde{\Delta}\ln\omega,\\
			\mathcal{H}^2&=\mathcal{H}^2_r-2\ul{\theta}\Delta\omega+\ul{\theta}^2\btr{\nabla\omega}^2-4\ul{\theta}\zeta_r(\nabla\omega)\\
			&=\mathcal{H}^2_r-2\ul{\theta}\Delta\ln\omega+\ul{\theta}\left(\ul{\theta}-\frac{2}{\omega}\right)\btr{\nabla\omega}^2-4\zeta_r(\nabla\omega),
		\end{align*}
		see for example \cite[Proposition 4.22 and Proposition 7.3]{wolff_thesis}. Using Definition \ref{defi_asymclassS} and Lemma \ref{lem_prelim_backgroundfol}, we hence find that
		\begin{align*}
			\mathcal{H}^2-2\widetilde{R}+\frac{8m}{\omega^3}=&\,O_{2,2}(r^{-4})\cdot \widehat{\Delta}\ln\omega+2\ul{\theta}\omega\left(\widetilde{\Delta}-\Delta\right)(\ln\omega)\\
			&\,+O_{1,1}(r^{-4})+\nabla\omega^AO_{2,2}(r^{-3})_A+\nabla\omega^A\nabla\omega^BO_{3,3}(r^{-2})_{AB},		
		\end{align*}
		where we used that $\widetilde{\Delta}=\omega^{-2}\widehat{\Delta}$ and where we use the same notation as in Proposition \ref{prop_appendix_curvatureidentities}. Observe that
		\begin{align*}
			\left(\widetilde{\Delta}-\Delta\right)(\ln\omega)
			=&\,(\widetilde{\gamma}^{ij}-\gamma^{ij})\widetilde{\nabla}^2_{ij}\ln\omega+\gamma^{ij}\widetilde{Q}_{ij}^k\d\ln\omega_k\\
			=&\,O_{3,3}(r^{-4})^{IJ}\left(\widehat{\nabla}^2_{IJ}\ln\omega-2\left(\d\ln\omega\otimes\d\ln\omega-\frac{1}{2}\newbtr{\widehat{\nabla}\ln\omega}_{\hatgamma}^2\widehat{\gamma}\right)_{IJ}\right)\\
			&\,+\gamma^{ij}\widetilde{Q}_{ij}^k\d\ln\omega_k,
		\end{align*}
		where the difference tensor $\widetilde{Q}$ satisfies
		\begin{align*}
			\widetilde{Q}_{IJ}^K=&\,\d\ln\omega_IO_2{2,3}(r^{-2})^K_J+\d\ln\omega_JO_2{2,3}(r^{-2})^K_I\\
			&\,-\d\ln\omega_LO_2{2,3}(r^{-2})^{KL}_{IJ}+O_2{3,2}(r^{-2})^K_{IJ}
		\end{align*}
		by Lemma \ref{lem_appx_differencetensors}. The claim then follows by Lemma \ref{lem_appendix_derivativesomega} and Lemma \ref{lem_appendix_derivativesTR}.
	\end{proof}
	
	\section{On the uniqueness of STCMC surfaces}\label{sec_uniqueness}
	
	We are now ready to prove Theorem \ref{thm_intro}. Before doing so, we will give a precise statement of the claim. We first define an a priori class of surfaces that is significantly weaker than in \cite[Definition 3.1]{kroenckewolff}:
	
	\begin{defi}\label{defi_classS}
	Let $0<\alpha<\frac{1}{2}$, $k\ge 1$, and let $\vec{v}\in(\R_{>0})^{k+1}:=\{\vec{v}\in\R^{k+1}\vert\vec{v}_l>0\}$. We say an STCMC surface $\Sigma$ with $\mathcal{H}^2>0$ lies in $\mathcal{S}_\alpha^k[\vec{v}]$ if
	\begin{align*}
		\omega\ge \vec{v}_1\left(\mathcal{H}^2\right)^{\alpha-\frac{1}{2}},\\
		\newbtr{\widehat{\nabla}^l\ln\omega}\le \vec{v}_{l+1}\left(\mathcal{H}^2\right)^{-\alpha\cdot l}
	\end{align*}
	for all $1\le l\le k$. For $k=1$, $0<\varepsilon<1,\,\kappa >0$, we additionally say that an STCMC surface $\Sigma$ with $\mathcal{H}^2>0$ lies in $\mathcal{S}^1_{\alpha, \varepsilon,\kappa}[v_1,v_2,V_1,V_2]$ if $\Sigma$ lies in $\mathcal{S}^1_\alpha[(v_1,v_2)]$,
	\begin{align*}
		\newbtr{\mathcal{H}^2-2\widetilde{R}}+\btr{\spann{\ul{\theta}^{-1}\accentset{\circ}{\ul{\chi}},\accentset{\circ}{A}}}\le \varepsilon\mathcal{H}^2,\text{ }\newbtr{\nabla\tau}_\gamma\le V_1(\mathcal{H}^2)^{1+\kappa}
	\end{align*}
	and $c_0(\Sigma)\le V_2$.\footnote{The above implies $\widetilde{R}>0$, which ensures that $c_0(\Sigma)$ as in Section \ref{section_sobolev} is well-defined.}
	\end{defi}
	By Proposition \ref{prop_sobolev_asymSchw}, it is straightforward to see that any $\Sigma$ in $\mathcal{S}^1_{\alpha, \varepsilon,\kappa}[v_1,v_2,V_1,V_2]$ carries a Sobolev inequality with respect to $\mathcal{H}^2$ in the sense of Definition \ref{defi_sobolev} with constant uniformly controlled by the given parameters.
	
	We now state the precise uniqueness statement for STCMC surfaces proven here:
	\begin{thm}\label{thm_main1}
		Let $\mathcal{N}$ be an asymptotically Schwarzschildean lightcone of mass $m>0$. Let $\kappa_0>0$. There exist positive constants $\alpha_0=\alpha_0(\kappa_0)<\frac{1}{2}$, $\varepsilon_0<1$ such that for all positive constants $v_1,v_2,V_1,V_2$, and  all $0<\alpha< \alpha_0$, $0<\varepsilon<\varepsilon_0$, $\kappa\ge \kappa_0$ there exists $0<\mathcal{H}_0^2=\mathcal{H}^2_0(m,\alpha,\varepsilon, \kappa, v_1,v_2,V_1,V_2)$ such that the following holds: For any $0<\mathfrak{H}^2\le \mathcal{H}^2_0$ there exists a unique STCMC surface $\Sigma$ in $\mathcal{S}_{\alpha,\varepsilon,\kappa}[v_1,v_2,V_1,V_2]$ with $\mathcal{H}^2=\mathfrak{H}^2$.
	\end{thm}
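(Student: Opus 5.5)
Following the strategy outlined in the introduction, I reduce the statement to the existence and uniqueness result of \cite[Theorem 5.2, Proposition 3.15]{kroenckewolff}. Existence is immediate once the leaves constructed there are shown to lie in $\mathcal{S}^1_{\alpha,\varepsilon,\kappa}[v_1,v_2,V_1,V_2]$. In \cite{kroenckewolff} these leaves are $C^2$-close to centered spheres $\omega\approx\rho$ with $\mathcal{H}^2\approx 4\rho^{-2}$, and they have $\newbtr{\widehat{\nabla}\ln\omega}=O(\rho^{-1})$, $\newbtr{\widehat{\nabla}^2\ln \omega}=O(\rho^{-1})$. From this the lower bound on $\omega$ and the derivative bounds in Definition \ref{defi_classS} hold for $\mathcal{H}^2$ small, since $\alpha<\frac12$. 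The $\varepsilon$-condition follows from Lemma \ref{lem_comparrisonRtildemathcalH}. The bound $\newbtr{\nabla\tau}\le C\rho^{-4}$ follows from Lemma \ref{lem_prelim_tau}, and $c_0\le (2\pi)^{-1}(1+o(1))$ follows from Remark \ref{bem_sobolev_mink2} with $\vec a=0$. The real content is uniqueness: I show that any STCMC $\Sigma$ in $\mathcal{S}^1_{\alpha,\varepsilon,\kappa}$ with $\mathcal{H}^2=\mathfrak{H}^2$ small already lies in the restrictive a priori class of \cite[Definition 3.1]{kroenckewolff}.

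\emph{Step 1 (integral and pointwise bound on $\accentset{\circ}{A}$).} By Proposition \ref{prop_sobolev_asymSchw}, $\Sigma$ carries a Sobolev inequality with uniform constant. I would combine the contracted null Simons' identity (Remark \ref{bem_prelim_contractedNullSimons}) with the a priori bounds. The decay $\omega\ge v_1(\mathcal{H}^2)^{\alpha-\frac12}$ together with Lemmas \ref{lem_prelim_tau}, \ref{lem_prelim_A} and Corollary \ref{kor_prelim_nablatau} makes all curvature, $\tau$ and $\nabla\tau$ terms small relative to $\mathcal{H}^2$. Here I use $\newbtr{\nabla\tau}\le V_1(\mathcal{H}^2)^{1+\kappa}$, and the inner product term is controlled by $\varepsilon\mathcal{H}^2$. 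This yields a differential inequality of the form
\[
\Delta\newbtr{\accentset{\circ}{A}}^2\ge 2\newbtr{\nabla\accentset{\circ}{A}}^2-c\,\mathcal{H}^2\newbtr{\accentset{\circ}{A}}^2-\text{(small)}\cdot\newbtr{\accentset{\circ}{A}}-\text{(inhomogeneous terms)},
\]
where the inhomogeneous terms come from $\overline{\nabla}\,\overline{\Riem}$ and the $\mathcal{H}^2\nabla\tau$ terms. A Stampacchia iteration, using Lemma \ref{lem_prelim_Sobolev_consequence} with area $\sim(\mathcal{H}^2)^{-1}$, then gives
\[
\sup\newbtr{\accentset{\circ}{A}}\le C(\mathcal{H}^2)^{1+\kappa'}
\]
for some $\kappa'>0$ depending on $\kappa_0$ and $\alpha$. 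This is where $\alpha_0(\kappa_0)$ enters, and this is the step I expect to be the main obstacle: the iteration must balance the weak $\omega$-lower bound against $\mathcal{H}^2$ so that every error term beats $\mathcal{H}^2\newbtr{\accentset{\circ}{A}}$ by a positive power. A similar but easier argument, applied to $\nabla\accentset{\circ}{A}$ and using Codazzi (Proposition \ref{prop_prelim_codazziA}), then gives $\newbtr{\nabla\accentset{\circ}{A}}\le C(\mathcal{H}^2)^{3/2+\kappa''}$.

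\emph{Step 2 (roundness).} The Gauss equation \eqref{eq_prelim_gauss3} and Lemma \ref{lem_comparrisonRtildemathcalH} show that $2\widetilde R-\mathcal{H}^2$ is controlled by $\newbtr{\accentset{\circ}{A}}$ times $\newbtr{\accentset{\circ}{\ul{\chi}}}/\ul{\theta}$, by curvature terms, and by $8m\omega^{-3}$. By the estimates \eqref{eq_prelim_c2estimate_A}, \eqref{eq_prelim_c3estimate_nablaA}, the improved $A$-bounds upgrade the $\widehat{\nabla}^2,\widehat{\nabla}^3$ control on $\ln\omega$. This gives the smallness of $\newbtr{\widetilde R-\fint\widetilde R}_{C^1}$ required by Corollary \ref{kor_quantitativecontrol}; it is at the critical rate $\rho^{-2}(1+\newbtr{\vec a}^2)^{-1/2}$ once $\vec a$ is bounded. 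That $\vec a$ is bounded follows from Corollary \ref{kor_acontrol_rough}, using that $\omega\ge v_1(\mathcal{H}^2)^{\alpha-\frac12}$ and that $\rho\sim(\mathcal{H}^2)^{-1/2}$. Corollary \ref{kor_quantitativecontrol} then gives $\omega=b_{\rho,\vec a}+$ small in $C^2$.

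\emph{Step 3 ($\vec a\to0$).} Integrating the STCMC equation against first spherical harmonics brings in the mass term $-8m\omega^{-3}$: a boosted sphere with $\vec a\ne 0$ has nonconstant $8m\,b_{\rho,\vec a}^{-3}$. The resulting balancing identity forces $\newbtr{\vec a}\le C\rho^{-\delta}$, since $m>0$. Hence $\Sigma$ satisfies the hypotheses of the class in \cite[Definition 3.1]{kroenckewolff}, and \cite[Proposition 3.15]{kroenckewolff} yields uniqueness.
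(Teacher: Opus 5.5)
Your reduction is the paper's: show that $\Sigma$ lies in the class of \cite[Definition 3.1]{kroenckewolff}, then invoke \cite[Proposition 3.15]{kroenckewolff}. However, your argument stops before that class is actually reached. Membership means \eqref{eq_proof_goal1}--\eqref{eq_proof_goal2}, which are \emph{sharp} bounds: $\btr{\omega-\rho}\le B_1$, $\newbtr{\accentset{\circ}{A}}\le B_2\rho^{-4}$, $\newbtr{\nabla\accentset{\circ}{A}}\le B_3\rho^{-5}$ and $\norm{\omega\rho^{-1}}_{C^3(\widehat{\gamma})}\le 10$. After your Steps 1--3 you only have $\newbtr{\accentset{\circ}{A}}\lesssim\rho^{-2-2\eta}$, $\newbtr{\nabla\accentset{\circ}{A}}\lesssim\rho^{-3-2\eta}$, $\btr{\vec{a}}\lesssim\rho^{-\delta}$, and $\omega/\rho$ within $\rho^{-\delta}$ of $b_{\vec{a}}$ in $C^2$, so only $\btr{\omega-\rho}=O(\rho^{1-\delta})$. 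Your sentence ``hence $\Sigma$ satisfies the hypotheses of the class'' therefore does not follow.

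What is missing is the closing bootstrap (Step 3 in the proof of Proposition \ref{prop_proof_end}).
\begin{itemize}
\item Once $\frac12\rho\le\omega\le 2\rho$ and $\newbtr{\widehat{\nabla}\ln\omega}\le 1$, every error term decays at its natural rate. Re-running the roundness and balancing estimates then gives $\btr{\vec{a}}+\norm{\omega/\rho-b_{\vec{a}}}_{C^2(\widehat{\gamma})}\le C\rho^3\max\newbtr{\nabla\accentset{\circ}{A}}+C\rho^{-1}$, and hence the same bound for $\newbtr{\widehat{\nabla}^i\ln\omega}$, $i=1,2$.
\item This must be fed back into the $\accentset{\circ}{A}$-bound, kept in the explicit form $\newbtr{\accentset{\circ}{A}}\le C(\mathcal{H}^2)^{-1}\max\btr{G}$ (Remark \ref{bem_proof_boundA1}) rather than as $(\mathcal{H}^2)^{1+\eta}$. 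Since $G$ is quadratic in $\nabla\omega$ to leading order, this yields $\newbtr{\accentset{\circ}{A}}\le C\rho^{3}\max\newbtr{\nabla\accentset{\circ}{A}}^2+C\rho^{-4}$.
\item Then \eqref{eq_proof_nablaA_improvement} gives $\newbtr{\nabla\accentset{\circ}{A}}^2\le C\rho^{-10}+C\rho^{-2}\max\newbtr{\nabla\accentset{\circ}{A}}^2$.
\end{itemize}
This coupled system closes only because the preliminary bound already forces $\rho^3\newbtr{\nabla\accentset{\circ}{A}}\to 0$. The outcome is $\newbtr{\accentset{\circ}{A}}\lesssim\rho^{-4}$, $\newbtr{\nabla\accentset{\circ}{A}}\lesssim\rho^{-5}$, $\btr{\vec{a}}\lesssim\rho^{-1}$, $\btr{\omega-\rho}\le C$, and the $C^3$ bound via \eqref{eq_prelim_c3estimate_nablaA}. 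Only then does \cite[Proposition 3.15]{kroenckewolff} apply.

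Several intermediate steps also need repair.

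\textbf{(i) Sign in the Simons inequality.} As displayed, your inequality carries $-c\,\mathcal{H}^2\newbtr{\accentset{\circ}{A}}^2$. With that sign the Stampacchia iteration does not close: you would need an independent $L^2$-bound on $\accentset{\circ}{A}$ (stability supplies it in Huisken--Yau), and the Sobolev inequality with $\btr{\Sigma}\mathcal{H}^2\approx 16\pi$ cannot absorb it. The argument works because the coefficient $f$ in Proposition \ref{prop_appendix_NullSimonAsymSchwarzschild} satisfies $f\ge(\frac12-2\varepsilon)\mathcal{H}^2-o(\mathcal{H}^2)>0$. The Riemannian $\newbtr{\accentset{\circ}{A}}^2$ term is replaced by $\spann{\ul{\theta}^{-1}\accentset{\circ}{\ul{\chi}},\accentset{\circ}{A}}$, which is exactly what the $\varepsilon$-condition controls. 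This positivity gives Lemma \ref{lem_proof_sobolev1} without any stability assumption.

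\textbf{(ii) The gradient step is not ``easier''.} Codazzi only controls $\dive\accentset{\circ}{A}$, not the full gradient. Differentiating the Simons identity produces $\nabla^2\tau$ and $\nabla G$, which involve third derivatives of $\omega$ that are not controlled in $\mathcal{S}^1$. The paper converts these back into $\newbtr{\nabla\accentset{\circ}{A}}$ via \eqref{eq_prelim_c3estimate_nablaA} and Corollary \ref{kor_prelim_nablatau}, and then applies the maximum principle to $\newbtr{\nabla\accentset{\circ}{A}}^2+C_1\mathcal{H}^2\newbtr{\accentset{\circ}{A}}^2$.

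\textbf{(iii) $\vec{a}$ is not bounded at that stage.} Corollary \ref{kor_acontrol_rough} with $\omega_{\min}\gtrsim\rho^{1-2\alpha}$ only gives $\btr{\vec{a}}\lesssim\rho^{2\alpha}$. Corollary \ref{kor_quantitativecontrol} therefore loses factors $\rho^{O(\alpha)}$, which must be beaten by taking $\alpha$ small.

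\textbf{(iv) The balancing test function.} In Step 3 the test function should be the Kazdan--Warner function $f_{\vec{a}}$, with $\int f\d\widetilde{\mu}=\int f\widetilde{\scal}\d\widetilde{\mu}=0$, which cancels the $\mathcal{H}^2$- and $\widetilde{\scal}$-contributions exactly. Against plain first spherical harmonics these do not cancel: for instance $(\rho^2\mathcal{H}^2-4)\int x^i b_{\vec{a}}^2\d\widehat{\mu}$ is of the same order $m\btr{\vec{a}}/\rho$ as the mass term.

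\textbf{(v) Existence check.} Your verification $\newbtr{\nabla\tau}\le C\rho^{-4}\le V_1(\mathcal{H}^2)^{1+\kappa}$ only works for $\kappa<1$. Likewise, $c_0\le V_2$ requires $V_2>(2\pi)^{-1}$. The paper's one-line remark on the converse inclusion has the same limitation.
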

	\begin{bem}\label{bem_main1}
		Note that the additional assumptions in $\mathcal{S}^1_{\alpha, \varepsilon,\kappa}[v_1,v_2,V_1,V_2]$ are trivially satisfied in spherical symmetry, i.e., $\mathcal{S}^1_{\alpha, \varepsilon,\kappa}[v_1,v_2,V_1,V_2]=\mathcal{S}^1_{\alpha}[(v_1,v_2)]$. Hence, the assumptions significantly weaken in spherical symmetry, in particular in the exact Schwarzschild case. However, we recall that the uniqueness of STCMC surfaces (without any additional assumptions on the individual cross section) is known in spherical symmetry by the work of Chen--Wang \cite{chenwang} if the ambient spacetime satisfies the null energy condition.
	\end{bem}
	To give some intuition to the nature of the statement, we make the following observations:
	
	\begin{lem}\label{lem_classS_observation}
		Let $\mathcal{N}$ be an asymptotically Schwarzschildean lightcone of mass $m$, $\Sigma\subseteq \mathcal{N}$ an STCMC surface. 
		\begin{enumerate}
			\item[(i)] For all $1>\varepsilon>0$, $0<\alpha<\frac{1}{6}$, and positive constants $v_1,v_2,v_3$, there exists $\mathcal{H}^2_0=\mathcal{H}^2_0(m,\alpha,\varepsilon,\kappa,v_1,v_2,v_3)$ such that the following holds: If $0<\mathcal{H}^2\le \mathcal{H}^2_0$ and $\Sigma$ lies in $ \mathcal{S}^2_\alpha[(v_1,v_2,v_3)]$, then $\Sigma$ lies in $ \mathcal{S}^1_{\alpha,\varepsilon,1-6\alpha}[v_1,v_2,C(v_1,v_2,v_3),c_0(\Sigma)]$.
			\item[(ii)] Let $\alpha<\frac{1}{4}$, and let $\rho$ denote the area radius of $\Sigma$. Assume $\btr{\ln(\omega\cdot b_{\rho,\vec{a}}^{-1})}\le c$,
			\[
				\newbtr{\widehat{\nabla}^l\ln(\omega\cdot b_{\rho,\vec{a}}^{-1})}\le \frac{\delta}{(1+\btr{\vec{a}}^2)^{\frac{l}{2}}}
			\]
			for $1\le l\le 2$, where $\delta$ is a (fixed) suitably small constant, and $\newbtr{\vec{a}}\le C\rho^{2\alpha}$. Then $\mathcal{H}^2>0$ and $\Sigma$ lies in $\mathcal{S}^2_{\alpha}[(v_1,v_2,v_3)]$ for some constants $v_i=v_i(m,c)$ provided $\rho\ge \rho_0=\rho_0(m,c,\alpha)$ sufficiently large, and $c_0(\Sigma)\le 1$.
		\end{enumerate}
	\end{lem}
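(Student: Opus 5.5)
For (i), I will first turn the a priori bounds of $\mathcal{S}^2_\alpha[(v_1,v_2,v_3)]$ into pointwise control in terms of $\mathcal{H}^2$. Since $\omega\ge v_1(\mathcal{H}^2)^{\alpha-\frac12}$, we have $\omega^{-1}\le v_1^{-1}(\mathcal{H}^2)^{\frac12-\alpha}$, which tends to zero as $\mathcal{H}^2\to0$. In particular $\omega\ge r_0$ once $\mathcal{H}^2_0$ is small, so Lemmas \ref{lem_prelim_tau}, \ref{lem_prelim_A} and \ref{lem_comparrisonRtildemathcalH} apply. Writing $X:=1+\newbtr{\widehat{\nabla}\ln\omega}^2+\newbtr{\widehat{\nabla}^2\ln\omega}$, the class assumptions give $X\le C(v_2,v_3)(\mathcal{H}^2)^{-2\alpha}$.

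Lemma \ref{lem_prelim_tau} then gives
\[
\newbtr{\nabla\tau}\le C\omega^{-4}X\le C(\mathcal{H}^2)^{2-4\alpha}(\mathcal{H}^2)^{-2\alpha}=C(\mathcal{H}^2)^{1+(1-6\alpha)},
\]
which is the $\nabla\tau$ condition with $\kappa=1-6\alpha>0$ and $V_1=C(v_1,v_2,v_3)$.

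Next I need a lower bound $\mathcal{H}^2\gtrsim\omega^{-2}$. Combining the first estimate of Lemma \ref{lem_comparrisonRtildemathcalH} with $2\widetilde R=\frac{4}{\omega^2}(1-\widehat\Delta\ln\omega)$ is not directly enough, because $\widehat\Delta\ln\omega$ is only bounded by $(\mathcal{H}^2)^{-2\alpha}$ and is not small. I will instead argue via the identity itself. The error terms are
\[
C\omega^{-4}X\le C\omega^{-2}(\mathcal{H}^2)^{1-2\alpha}\,(\mathcal{H}^2)^{-2\alpha}=C\omega^{-2}(\mathcal{H}^2)^{1-4\alpha},
\]
together with the mass term $8m\omega^{-3}$. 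So I will compare $\mathcal{H}^2$ against these quantities directly. For the mass term, $8m\omega^{-3}\le 8m v_1^{-3}(\mathcal{H}^2)^{\frac32-3\alpha}$, which is $\le\frac{\varepsilon}{3}\mathcal{H}^2$ for small $\mathcal{H}^2$ because $\frac32-3\alpha>1$. For the remaining error,
\[
C\omega^{-4}X\le C(\mathcal{H}^2)^{2-4\alpha-2\alpha}=C(\mathcal{H}^2)^{2-6\alpha}\le\tfrac{\varepsilon}{3}\mathcal{H}^2,
\]
since $2-6\alpha>1$ for $\alpha<\frac16$. This bound uses only $\omega^{-1}\lesssim(\mathcal{H}^2)^{\frac12-\alpha}$, with no lower bound on $\mathcal{H}^2$ needed, so the $\newbtr{\mathcal{H}^2-2\widetilde R}$ condition follows.

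For the remaining part of that condition, I use $\newbtr{\ul{\theta}^{-1}\accentset{\circ}{\ul{\chi}}}_\gamma\le C\omega^{-2}$ together with $\btr{A}\le C\omega^{-2}X$ from Lemma \ref{lem_prelim_A}. These give $\btr{\spann{\ul\theta^{-1}\accentset{\circ}{\ul\chi},\accentset{\circ}{A}}}\le C\omega^{-4}X$, which is bounded exactly as above. The remaining requirement $c_0(\Sigma)\le V_2$ holds with $V_2=c_0(\Sigma)$. That $c_0(\Sigma)$ is well-defined follows from $\widetilde R>0$, which in turn follows from $2\widetilde R\ge(1-\varepsilon)\mathcal{H}^2>0$.

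For (ii), I will write $\omega=b_{\rho,\vec a}e^{u}$ with $\btr{u}\le c$ and $\newbtr{\widehat\nabla^lu}\le\delta(1+\btr{\vec a}^2)^{-l/2}$. The key facts are that $\newbtr{\widehat\nabla^l\ln b_{\vec a}}\le C(1+\btr{\vec a}^2)^{l/2}$ and that $b_{\vec a}\ge(\sqrt{1+\btr{\vec a}^2}+\btr{\vec a})^{-1}\ge c'(1+\btr{\vec a}^2)^{-1/2}$. Together with $\btr{\vec a}\le C\rho^{2\alpha}$, these give
\[
\omega\ge c\rho^{1-2\alpha},\qquad \newbtr{\widehat\nabla^l\ln\omega}\le C\rho^{2\alpha l}.
\]

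To establish $\mathcal{H}^2>0$ and convert these bounds into powers of $\mathcal{H}^2$, I need $\mathcal{H}^2\approx\rho^{-2}$. Here I use Remark \ref{bem_sobolev_mink2}: the bounds on $u$ with $\delta$ small give $c_0(\Sigma)\le1$ after shrinking $\delta$, and they also show that $2\widetilde R$ is comparable to the round value. Boost invariance gives $2\widetilde R=\frac{4}{\rho^2}(1+O(\delta))\,e^{-2u}\circ(\cdot)$, hence $2\widetilde R\approx\rho^{-2}$ up to factors depending on $c$. Lemma \ref{lem_comparrisonRtildemathcalH} then bounds the errors by $C\omega^{-4}\rho^{4\alpha}\le C\rho^{-4+12\alpha}$, which is $\ll\rho^{-2}$ for $\alpha<\frac14$ (more precisely, $12\alpha-4<-2$ requires $\alpha<\frac16$). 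This gives $\mathcal{H}^2\approx\rho^{-2}$.

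The main obstacle is this exponent bookkeeping at $\alpha<\frac14$. To recover the margin, I will first use the sharper bound $\omega^{-1}\le C\rho^{-1}\sqrt{1+\btr{\vec a}^2}$ and $\widehat\Delta\ln\omega=\widehat\Delta\ln b_{\vec a}+O(\delta)$, keeping $\omega$-weighted quantities rather than worst-case ones, since $\omega^{-4}X$ only reaches its extreme where $b$ is small. Once $\mathcal{H}^2\approx\rho^{-2}$ is in hand, the bounds $\omega\ge c\rho^{1-2\alpha}$ and $\newbtr{\widehat\nabla^l\ln\omega}\le C\rho^{2\alpha l}$ translate directly into the defining inequalities of $\mathcal{S}^2_\alpha$, with constants depending on $m$ and $c$.
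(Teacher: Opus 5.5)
Part (i) is correct and matches the paper: you feed the $\mathcal{S}^2_\alpha$ bounds into Lemmas \ref{lem_prelim_tau}, \ref{lem_prelim_A} and \ref{lem_comparrisonRtildemathcalH}. Your exponents are right: $\newbtr{\nabla\tau}\le C(\mathcal{H}^2)^{2-6\alpha}$, the mass term is $\le C(\mathcal{H}^2)^{\frac32-3\alpha}$, and the remaining errors are $\le C(\mathcal{H}^2)^{2-6\alpha}$; this is what gives $\kappa=1-6\alpha$.

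Part (ii), however, has a genuine gap. Your argument needs $\mathcal{H}^2=2\widetilde R+o(\rho^{-2})$ uniformly on $\Sbb^2$. When you list the errors from Lemma \ref{lem_comparrisonRtildemathcalH}, you drop the term $8m\omega^{-3}$, and this term is \emph{not} pointwise negligible for $\frac16\le\alpha<\frac14$. At $\vec x=-\vec a/\btr{\vec a}$ one has $\omega\approx e^{u}\rho/(2\btr{\vec a})$, so $8m\omega^{-3}\approx 64m\,e^{-3u}\btr{\vec a}^3\rho^{-3}$. For $\btr{\vec a}\sim\rho^{2\alpha}$ this is of size $\rho^{6\alpha-3}\gtrsim\rho^{-2}$. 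No $\omega$-weighting helps here. Your ``sharper bound'' $\omega^{-1}\le C\rho^{-1}\sqrt{1+\btr{\vec a}^2}$ is only a restatement of $\omega\ge c\rho^{1-2\alpha}$. For the other error term your heuristic is correct but unproved. Let $\theta$ be the angle to $\vec a$; then $\newbtr{\widehat\nabla\ln b_{\vec a}}=\btr{\vec a}\sin\theta\, b_{\vec a}$ and $\newbtr{\widehat\nabla^2\ln b_{\vec a}}\le C(\btr{\vec a}b_{\vec a}+\btr{\vec a}^2\sin^2\theta\, b_{\vec a}^2)$. With $X$ as in your part (i), these give $\omega^{-4}X\le C\rho^{-4}(1+\btr{\vec a}^2)^2\le C\rho^{-4+8\alpha}$.

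The missing idea is that $\mathcal{H}^2$ is constant, so no uniform pointwise comparison is needed. The paper integrates against $\d\widetilde\mu=\omega^2\d\widehat\mu$ and uses Gauss--Bonnet for $\widetilde\gamma$, which removes $\widehat\Delta\ln\omega$ entirely:
\[
4\pi\widetilde\rho^{\,2}\mathcal{H}^2=16\pi-\int_{\Sbb^2}\frac{8m}{\omega}\d\widehat\mu+\int_{\Sbb^2}\omega^2\Big(\mathcal{H}^2-2\widetilde R+\frac{8m}{\omega^3}\Big)\d\widehat\mu .
\]
The weight $\omega^2$ lowers the mass contribution to $O(\rho^{2\alpha-1})$. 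It lowers the error to $C\int_{\Sbb^2}\omega^{-2}X\d\widehat\mu\le C\rho^{-2+8\alpha}$, and this is exactly where $\alpha<\frac14$ enters. Then $\btr{\rho^2-\widetilde\rho^{\,2}}\le C$ gives $C^{-1}\le\rho^2\mathcal{H}^2\le C$.

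Alternatively, you can stay pointwise and evaluate at the single point $\vec x_0=\vec a/\btr{\vec a}$. There $\omega\ge e^{-c}\rho$ and $\omega^{-4}X\le C\rho^{-4}$, so $\mathcal{H}^2=2\widetilde R(\vec x_0)+O(\rho^{-3})$, and your formula $2\widetilde R=4\rho^{-2}e^{-2u}(1+O(\delta))$ gives $C(c)^{-1}\le\rho^2\mathcal{H}^2\le C(c)$. That formula is still what you need afterwards for $\widetilde R>0$ and for $c_0(\Sigma)\le 1$ via Remark \ref{bem_sobolev_mink2}.
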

	
	\begin{bem}\,\label{bem_classS_observation}
		\begin{enumerate}
			\item[(i)] In view of Lemma \ref{lem_classS_observation}, Theorem \ref{thm_main1} yields a uniqueness statement for STCMC surfaces sufficiently close to a boosted sphere $b_{\rho,\vec{a}}$ under a mild control on $\btr{\vec{a}}$. In contrast, the uniqueness of STCMC surfaces established by the authors in their previous work \cite[Proposition 3.15]{kroenckewolff}, applies only to surfaces sufficiently close to a centered sphere in the sense that $\btr{\vec{a}}<<1$. In this sense, the uniqueness provided in Theorem \ref{thm_main1} constitutes a significant improvement.
			\item[(ii)] In the proof of Claim (ii), we will in fact show that $\widetilde{\rho}^2\widetilde{R}\ge 1$ by virtue of the argument below. As a lower bound on the (rescaled) scalar curvature is preserved under Lorentz transformations, the definition of $c_0(\Sigma)$ also yields the bound
			\[
				c_0(\Sigma)\le C\left(\btr{\vec{a}},\norm{\widehat{\nabla}\ln(\omega\,b_{\rho,\vec{a}}^{-1})}_{C^0(\,\widehat{\gamma}\,)}\right),
			\]
			and it is easy to check that $\Sigma$ already lies in $\mathcal{S}^2_{\alpha}[(v_1,v_2,v_3)]$ if $\newbtr{\widehat{\nabla}^l\ln(\omega\cdot b_{\rho,\vec{a}}^{-1})}\le C\rho^{2l\alpha}$. Hence, under the assumption that $\btr{\vec{a}}$ is uniformly bounded, Theorem \ref{thm_main1} applies also if $\Sigma$ is merely comparable to a boosted sphere in $C^2$ (with some uniform constant $C$).
		\end{enumerate}
	\end{bem}
	\begin{proof}[Proof of Lemma \ref{lem_classS_observation}]\,
		\begin{enumerate}
			\item[(i)] By definition, $\Sigma$ lies in $S_\alpha^1[(v_1,v_2)]$. Moreover, by Lemma \ref{lem_prelim_tau}, Lemma \ref{lem_prelim_A}, and Lemma \ref{lem_comparrisonRtildemathcalH}, it is straightforward to check that the claim holds for $\mathcal{H}\le\mathcal{H}^2_0$ sufficiently small.
			\item[(ii)] Note that 
			\[
				b_{\rho,\vec{a}}\ge \frac{\rho}{2\sqrt{1+\btr{\vec{a}}^2}},\text{ and }\newbtr{\widehat{\nabla}^l\ln b_{\rho,\vec{a}}}\le C_l\btr{\vec{a}}^l.
			\]
			In particular, we find 
			\[
				\omega\ge C(c,C)\rho^{1-2\alpha},\text{ and }\newbtr{\widehat{\nabla}^l\ln \omega}\le C(c,C)\rho^{2\alpha l}
			\]
			for $l=1,2$. Let $\widetilde{\rho}$ denote the area radius of $\widetilde{\gamma}_\omega$, and we recall that by \cite[Equation (A.1)]{kroenckewolff} we have $\btr{\rho^2-\widetilde{\rho}^2}\le C$ provided $\rho\ge \rho_0$ sufficiently large. As $\mathcal{H}^2$ is constant and $\d\widetilde{\mu}=\omega^2\d\widehat{\mu}$,
			we find
			\begin{align*}
				4\pi\rho^2\left(1+\frac{\widetilde{\rho}^2-\rho^2}{\rho^2}\right)\mathcal{H}^2&=
				\int_{\Sbb^2}\mathcal{H}^2\d\widetilde{\mu}\\
				&=16\pi-\int_{\Sbb^2}\frac{8m}{\omega}\d\widehat{\mu}+\int_{\Sbb^2}\omega^2\left(\mathcal{H}^2-2\widetilde{R}+\frac{8m}{\omega^3}\right)\d\widehat{\mu},
			\end{align*}
			where we used the Gauss--Bonnet theorem. By Lemma \ref{lem_comparrisonRtildemathcalH}, there exists a uniform constant $C>4$ such that
			\[
			C^{-1}\le \mathcal{H}^2\rho^2\le C
			\]
			provided $\rho\ge \rho_0$ sufficiently large. Hence, $\mathcal{H}^2>0$ and $\Sigma$ lies in $S_\alpha^2[(v_1,v_2,v_3)]$. By (i), $\widetilde{R}>0$ provided $\rho\ge\rho_0$. In particular $c_0(\Sigma)$ is well-defined. In view of Remark \ref{bem_sobolev_mink2}, choosing $\delta$ sufficiently small, we have
			\[
				c_0(\Sigma)\le 1,
			\]
			concluding the proof.
		\end{enumerate}
	\end{proof}
	
	Combining Theorem \ref{thm_main1}, Lemma \ref{lem_classS_observation} and Remark \ref{bem_classS_observation}, we obtain an improved uniqueness of the foliation by STCMC surfaces constructed by the authors in \cite[Theorem 5.2]{kroenckewolff}:
	
	\begin{thm}\label{thm_main2}
		Let $\mathcal{N}$ be an asymptotically Schwarzschildean lightcone of mass $m>0$. There exists $0<\alpha_0<\frac{1}{2}$ and an asymptotic foliation $(\Sigma_\sigma)_{\sigma\in(\sigma_0,\infty)}$ of $\mathcal{N}$ by STCMC surfaces that is unique in the following sense: Let $(\widetilde{\Sigma}_{s})_{s\in(s_0,\infty)}$ be an asymptotic foliation by STCMC surfaces and let $(\vec{a}_{s})_{s}$ be a family of vectors in $\R^3$ such that either
		\begin{align*}
			&\btr{\vec{a}_s}\le C\rho_{s}^{2\alpha}&&&&\btr{\vec{a}_s}\le C\\
			&\text{\,\,\,    \,\, and}&&\text{ or }&&\text{\,\,\,   \,\,  and}\\
			&\newbtr{\widehat{\nabla}^l\ln(\omega_s\cdot b_{\rho_{s},\vec{a}_{s}}^{-1})}\le \frac{\delta}{(1+\btr{\vec{a}_{s}}^2)^{\frac{l}{2}}}&&&& \newbtr{\widehat{\nabla}^l\ln(\omega_{s}\cdot b_{\rho_{s},\vec{a}_{s}}^{-1})}\le C
		\end{align*}
		for $0<\alpha\le \alpha_0$, $l=1,2$, where $\delta$ is a (fixed) suitably small constant and $C$ is some uniform constant independent of $s$. Then there exist $s_1\ge s_0$ sufficiently large such that the STCMC surfaces $\widetilde{\Sigma}_{s}$ are leaves of the foliation $(\Sigma_\sigma)_{\sigma\in(\sigma_0,\infty)}$ for all $s\ge s_1$. Moreover, the asymptotic foliation by STCMC surfaces (unique in this sense) is an asymptotically flat background foliation and has $E_B=m$ and $\vec{P}_B=\vec{0}$.
	\end{thm}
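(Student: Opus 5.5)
The plan is to reduce Theorem \ref{thm_main2} to Theorem \ref{thm_main1} leaf by leaf, using Lemma \ref{lem_classS_observation} and Remark \ref{bem_classS_observation} to place each leaf $\widetilde{\Sigma}_s$ in a fixed a priori class. The foliation $(\Sigma_\sigma)$ is the one constructed in \cite[Theorem 5.2]{kroenckewolff}. First we check that its leaves lie in the classes of Theorem \ref{thm_main1}. They are $C^2$-close to centered spheres, i.e.\ boosted spheres with $\vec{a}=0$, so Lemma \ref{lem_classS_observation}(ii) followed by (i) gives $\Sigma_\sigma\in\mathcal{S}^1_{\alpha,\varepsilon,1-6\alpha}[v_1,v_2,V_1,1]$ for $\sigma$ large. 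We fix $\kappa_0:=1-6\alpha_0>0$ with $\alpha_0<\frac16$, small enough that Theorem \ref{thm_main1} applies with $\alpha_0(\kappa_0)$. In \cite{kroenckewolff} the map $\sigma\mapsto\mathcal{H}^2(\Sigma_\sigma)$ is strictly monotone with $\mathcal{H}^2\rho^2\to4$. Hence every value $\mathfrak{H}^2\le\mathcal{H}^2_0$ is attained by exactly one leaf.

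Next, take a competing foliation $(\widetilde{\Sigma}_s)$ and treat the two alternatives separately.

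\emph{Case one:} $\btr{\vec{a}_s}\le C\rho_s^{2\alpha}$ together with the $\delta$-closeness. We first need a $C^0$ bound $\btr{\ln(\omega_s b_{\rho_s,\vec{a}_s}^{-1})}\le c$. This follows by integrating the gradient bound, with the oscillation controlled by $\delta$, once the additive constant is fixed by the area normalisation $\btr{\rho_s^2-\widetilde{\rho}_s^2}\le C$ from \cite[Equation (A.1)]{kroenckewolff}. Lemma \ref{lem_classS_observation}(ii) then yields $\widetilde{\Sigma}_s\in\mathcal{S}^2_\alpha[(v_1,v_2,v_3)]$ with $c_0\le1$, and part (i) upgrades this to $\mathcal{S}^1_{\alpha,\varepsilon,1-6\alpha}[v_1,v_2,V_1,1]$. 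All these constants are independent of $s$.

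\emph{Case two:} $\btr{\vec{a}_s}\le C$ and mere $C^2$-comparability. Here we use Remark \ref{bem_classS_observation}(ii). Bounded $\vec{a}_s$ and a $C$-bound on $\widehat{\nabla}^l\ln(\omega_s b^{-1})$ give membership in $\mathcal{S}^2_\alpha$, and the lower bound $\widetilde{\rho}^2\widetilde{R}\ge1$ gives a uniform bound $c_0(\widetilde{\Sigma}_s)\le V_2(C)$. Part (i) then again places $\widetilde{\Sigma}_s$ in $\mathcal{S}^1_{\alpha,\varepsilon,1-6\alpha}[v_1,v_2,V_1,V_2]$ for large $s$.

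In both cases we take the maximum of the finitely many parameter choices, namely the leaves of $(\Sigma_\sigma)$ and the two cases. This gives a single admissible tuple $(v_1,v_2,V_1,V_2)$ and hence a single threshold $\mathcal{H}^2_0$. Since $\rho_s\to\infty$ and $\mathcal{H}^2\rho_s^2$ is bounded between two positive constants, there is $s_1$ with $\mathcal{H}^2(\widetilde{\Sigma}_s)\le\mathcal{H}^2_0$ for all $s\ge s_1$. Uniqueness in Theorem \ref{thm_main1} then forces $\widetilde{\Sigma}_s=\Sigma_\sigma$ for the unique $\sigma$ with the same spacetime mean curvature.

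It remains to treat the asymptotic claims. The leaves satisfy $\omega_\sigma=\sigma+O(1)$ in $C^2$, with centered-sphere asymptotics from \cite{kroenckewolff}. So $(\Sigma_\sigma)$ is a reparametrisation of the given asymptotically Schwarzschildean background foliation up to lower-order terms, and is therefore an asymptotically flat background foliation. For the Bondi quantities, we compute the Hawking mass using Lemma \ref{lem_comparrisonRtildemathcalH}: $\int\mathcal{H}^2=16\pi-8m\int\omega^{-1}\d\widehat{\mu}+o(\rho^{-1})$, which gives $E_B=m$. The linear momentum is the $f_i$-moment of the mass aspect, which is zero, since $\omega_\sigma$ is centered up to $o(1)$; so $\vec{P}_B=\vec{0}$, consistent with Section \ref{subsubsec_prelim_AF}.

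The main obstacle I expect is the uniformity in Case two. The bound on $c_0$ must be independent of $s$ even though we only have $C^2$-comparability rather than $\delta$-closeness. This requires showing $\widetilde{R}\,\widetilde{\rho}^2$ is bounded below uniformly, presumably via the $C^0$ closeness furnished by Proposition \ref{prop_gausscurvature_c0} combined with Lemma \ref{lem_comparrisonRtildemathcalH}, so that the smallness $\varepsilon<\varepsilon_0$ in Definition \ref{defi_classS} holds for large $s$.
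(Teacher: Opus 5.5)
Your proposal is correct and is essentially the paper's argument. The paper obtains Theorem \ref{thm_main2} by combining Theorem \ref{thm_main1} with Lemma \ref{lem_classS_observation} and Remark \ref{bem_classS_observation}, reading off $E_B=m$ and $\vec{P}_B=\vec{0}$ as in Section \ref{subsubsec_prelim_AF}, and your derivation of the $C^0$ bound on $\ln(\omega_s b_{\rho_s,\vec{a}_s}^{-1})$ from the gradient bound and $\btr{\rho^2-\widetilde{\rho}^2}\le C$ correctly supplies a hypothesis of Lemma \ref{lem_classS_observation}(ii) that the theorem leaves implicit; the one inaccuracy is in your closing paragraph, since neither $\widetilde{\rho}^2\widetilde{R}\ge 1$ nor the $\varepsilon$-smallness needs Proposition \ref{prop_gausscurvature_c0}, both following from Lemma \ref{lem_classS_observation}(i) (via Lemma \ref{lem_comparrisonRtildemathcalH}) together with $\rho^2\mathcal{H}^2\approx 4$ from the Gauss--Bonnet identity, as in the proof of Lemma \ref{lem_classS_observation}(ii).
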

	\begin{bem}\,
		\begin{enumerate}
			\item[(i)] In \cite{kroenckewolff}, the authors established the uniqueness of the asymptotic foliation by STCMC surfaces among spacelike cross sections close to centered spheres with $\btr{\vec{a}}\le C\sigma^{-1}$. Theorem \ref{thm_main2} now extends this uniqueness to STCMC foliations sufficiently comparable to boosted spheres with only some mild control on $\btr{\vec{a}}$. 
			\item[(ii)] Recall our discussion about asymptotically flat background foliations in Section \ref{subsubsec_prelim_AF}. While the condition
			\[
				\lim_{r\to\infty}r^2K_r=1
			\]
			does in general not imply $C^2$-convergence, see Appendix \ref{appendix_roundness} below, one would still heuristically expect that the foliation is comparable to boosted spheres (after rescaling) and that $\btr{\vec{a}}$ remains bounded along the foliation. It thus seems natural to make this additional assumption for asymptotically flat background foliations which is likely generic for a suitably strong notion of asymptotic flatness. Assuming that this additional property is given, Theorem \ref{thm_main2} states that for an asymptotically Schwarzschildean lightcone of positive mass $m>0$ there is a unique asymptotically flat background foliation by STCMC surfaces, and that the Bondi energy agrees with the mass parameter $m$ and that the  Bondi linear momentum vanishes with respect to the foliation.
			\item[(iii)] Note that $\alpha_0<<\frac{1}{2}$. Indeed, we expect that one can construct counterexamples to Theorem \ref{thm_main1} for $\alpha$ close to $\frac{1}{2}$ in analogy to the work of Brendle--Eichmair \cite{brendleeichmair}.
		\end{enumerate}
	\end{bem}
	
	\subsection{Proof of Theorem \ref{thm_main1}}\label{subsec_uniqueness_proof}
	
	It remains to prove Theorem \ref{thm_main1}. We first observe the following:
	\begin{lem}\label{lem_proof_H2rho}
		Let $\mathcal{N}$ be an asymptotically Schwarzschildean lightcone of mass $m>0$, and let $\Sigma$ be an STCMC surface with $\mathcal{H}^2>0$ that lies within $\mathcal{S}_{\alpha,\varepsilon,\kappa}^1[v_1,v_2,V_1,V_2]$. If $\alpha<\frac{1}{6}$ and $\varepsilon<\varepsilon_0$ sufficiently small, then there exists a uniform constant $C>4$ such that
		\[
			C^{-1}\le \rho^2\mathcal{H}^2\le C
		\]
		provided $\mathcal{H}^2\le \mathcal{H}^2_0(m,\alpha,v_1,v_2)$.
	\end{lem}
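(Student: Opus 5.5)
The plan is to integrate the STCMC condition over $\Sigma$ with respect to $\d\widetilde{\mu}=\omega^2\d\widehat{\mu}$ and apply Gauss--Bonnet to $\widetilde{\gamma}_\omega$, as in the proof of Lemma \ref{lem_classS_observation} (ii). Since $\mathcal{H}^2$ is constant, we get
\[
\mathcal{H}^2\,4\pi\widetilde{\rho}^2=\int_{\Sbb^2}\mathcal{H}^2\d\widetilde{\mu}=16\pi-\int_{\Sbb^2}\frac{8m}{\omega}\d\widehat{\mu}+\int_{\Sbb^2}\omega^2\Big(\mathcal{H}^2-2\widetilde{R}+\frac{8m}{\omega^3}\Big)\d\widehat{\mu}.
\]
The main term is $16\pi$, and I will show that the other two terms are small relative to it.

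For the mass term, Definition \ref{defi_classS} gives $\omega\ge v_1(\mathcal{H}^2)^{\alpha-\frac12}$. Hence $8m/\omega\le 8m v_1^{-1}(\mathcal{H}^2)^{\frac12-\alpha}$, which tends to $0$ as $\mathcal{H}^2\to0$ because $\alpha<\frac12$.

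For the error term, I will use Lemma \ref{lem_comparrisonRtildemathcalH}. The class gives $|\widehat{\nabla}\ln\omega|\le v_2(\mathcal{H}^2)^{-\alpha}$, but there is no a priori control of $\widehat{\nabla}^2\ln\omega$. I expect this to be the main obstacle. I see two ways around it.

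\emph{Route 1, avoiding second derivatives.} Use the pointwise assumption $|\mathcal{H}^2-2\widetilde{R}|\le\varepsilon\mathcal{H}^2$ directly, instead of Lemma \ref{lem_comparrisonRtildemathcalH}. Then
\[
\int\omega^2\big|\mathcal{H}^2-2\widetilde R\big|\d\widehat\mu\le\varepsilon\mathcal{H}^2\,4\pi\widetilde\rho^2,
\]
and $8m\int\omega^{-1}\d\widehat{\mu}$ is handled as above. This yields
\[
(1\pm\varepsilon)\mathcal{H}^2\widetilde{\rho}^2=4+o(1),
\]
so $\mathcal{H}^2\widetilde{\rho}^2$ lies in $[3/(1+\varepsilon),\,5/(1-\varepsilon)]$ for $\mathcal{H}^2$ small. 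This route avoids second derivatives entirely, and I plan to use it. The condition $\varepsilon<\varepsilon_0$ keeps the constant away from degenerate values.

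\emph{Route 2, as a check.} The $\widehat{\Delta}\ln\omega$ contributions in Lemma \ref{lem_comparrisonRtildemathcalH} can be integrated by parts against $\omega^2 O(r^{-4})$, leaving only first derivatives. This is the reason one might want $\alpha<\frac16$: the errors are of size $\omega^{-2}|\widehat{\nabla}\ln\omega|^2\lesssim(\mathcal{H}^2)^{1-4\alpha}$, which is small.

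It remains to compare $\rho$ with $\widetilde{\rho}$. Since $\gamma_r=r^2\hatgamma+O(1)$, we have $\d\mu=\omega^2(1+O(\omega^{-2}))\d\widehat{\mu}$ up to terms involving $|\nabla\omega|^2$. More precisely, $\gamma_\omega$ and $\widetilde{\gamma}_\omega$ differ by $O_{3,3}(1)$ in the $\partial_I$ components, so $|\rho^2-\widetilde{\rho}^2|\le C$ (cf. \cite[Equation (A.1)]{kroenckewolff}). Combining this with $\widetilde{\rho}^2\ge\omega_{\min}^2\to\infty$ gives $\rho^2/\widetilde{\rho}^2\to1$. We conclude $C^{-1}\le\rho^2\mathcal{H}^2\le C$ with, for example, $C=6>4$, once $\mathcal{H}^2\le\mathcal{H}^2_0(m,\alpha,v_1,v_2)$.
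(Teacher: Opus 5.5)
Your Route 1 is correct, and it is genuinely different from the paper's argument.

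\emph{The paper's route.} The paper applies Gauss--Bonnet to $(\Sigma,\gamma)$ itself through the null Gauss equation (Proposition \ref{prop_prelim_nullgauss}). This turns $4\pi\rho^2\mathcal{H}^2-16\pi$ into an integral over $\Sigma$ of $\langle\ul{\theta}^{-1}\accentset{\circ}{\ul{\chi}},\accentset{\circ}{A}\rangle$ and ambient curvature terms. It then uses the \emph{other} half of the class hypothesis, $|\langle\ul{\theta}^{-1}\accentset{\circ}{\ul{\chi}},\accentset{\circ}{A}\rangle|\le\varepsilon\mathcal{H}^2$, together with Proposition \ref{prop_appendix_curvatureidentities}. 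Bounding $8m\omega^{-3}$ and $\omega^{-4}|\widehat{\nabla}\ln\omega|^2$ pointwise against $\mathcal{H}^2$ produces relative errors $(\mathcal{H}^2)^{\frac12-3\alpha}$ and $(\mathcal{H}^2)^{1-6\alpha}$, and this is exactly where $\alpha<\frac16$ enters.

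\emph{Your route.} You apply Gauss--Bonnet to $\widetilde{\gamma}_\omega$ and use $|\mathcal{H}^2-2\widetilde{R}|\le\varepsilon\mathcal{H}^2$ directly. The mass terms in your identity cancel identically, so you get $\frac{4}{1+\varepsilon}\le\mathcal{H}^2\widetilde{\rho}^2\le\frac{4}{1-\varepsilon}$ without estimating anything. The price is the passage from $\widetilde{\rho}$ to $\rho$, which the paper's intrinsic route never needs.

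That step is sound even without derivative control on $\omega$, but for a cleaner reason than you state. Since $\ul{L}$ is null and normal to $\mathcal{N}$, $\gamma_\omega=\gamma_r|_{r=\omega}=\omega^2\hatgamma+O(1)$ exactly, with no $|\nabla\omega|^2$ terms. Hence $\d\mu=(1+O(\omega^{-2}))\,\d\widetilde{\mu}$, and $\rho^2/\widetilde{\rho}^2\to1$ because $\omega_{\min}\ge v_1(\mathcal{H}^2)^{\alpha-\frac12}\to\infty$. Your argument therefore needs only $\alpha<\frac12$ and no curvature asymptotics.

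Your Route 2 aside misattributes the role of $\alpha<\frac16$. Your own error count $(\mathcal{H}^2)^{1-4\alpha}$ would only need $\alpha<\frac14$. Also, Lemma \ref{lem_comparrisonRtildemathcalH} is an absolute-value bound, so it cannot be integrated by parts as stated; you would have to return to the exact expansion in its proof. Since you do not rely on Route 2, this does not affect the proof.
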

	\begin{proof}
		We argue similarly to before. Using that $\mathcal{H}^2$ is constant, we find that
		\[
			4\pi\rho^2\mathcal{H}^2=16\pi+\int_\Sigma2\spann{\ul{\theta}\accentset{\circ}{\ul{\chi}},\accentset{\circ}{A}}-2\left(\overline{R}-2\overline{\Ric}(\ul{L},L)+\frac{1}{2}\overline{\Riem}(\ul{L},L,L,\ul{L})\right)\d\mu,
		\]
		where we used the Gauss Equation, Proposition \ref{prop_prelim_nullgauss}, and the Gauss--Bonnet theorem. As $\Sigma$ lies in $\mathcal{S}^1_{\alpha,\varepsilon,\kappa}[v_1,v_2,V_1,V_2]$, Proposition \ref{prop_appendix_curvatureidentities} together with Lemma \ref{lem_appendix_derivativesomega} and Lemma \ref{lem_appendix_derivativesTR}, we find
		\begin{align*}
			4\pi\rho^2\mathcal{H}^2&\ge\frac{16\pi}{1+\varepsilon+C(m,v_1)(\mathcal{H}^2)^{\frac{1}{2}-3\alpha}+C(v_1,v_2)(\mathcal{H}^2)^{1-6\alpha}},\\ 4\pi\rho^2\mathcal{H}^2&\le \frac{16\pi}{1-\varepsilon-C(m,v_1)(\mathcal{H}^2)^{\frac{1}{2}-3\alpha}-C(v_1,v_2)(\mathcal{H}^2)^{1-6\alpha}}.
		\end{align*}
		Hence, the claim holds for $\mathcal{H}^2\le \mathcal{H}^2_0$ sufficiently small.
	\end{proof}
	
	We note that by \cite[Proposition 3.15 and Theorem 4.8]{kroenckewolff}, we have existence and uniqueness of STCMC surfaces within the a priori class $B_\sigma(B_1,B_2,B_3)$, compare \cite[Definition 3.1]{kroenckewolff}, provided $\sigma\ge \sigma_0$ sufficiently large. It thus suffices to show that any STCMC surface in $\mathcal{S}_{\alpha,\varepsilon,\kappa}^1[v_1,v_2,V_1,V_2]$ lies in $B_\rho(B_1,B_2,B_3)$ provided $0<\mathcal{H}^2\le \mathcal{H}^2_0$ sufficiently small. More precisely, we need to show that 
	\begin{align}\label{eq_proof_goal1}
		\max\left(\mathcal{H}^2\rho^2,\text{ }\norm{\omega\rho^{-1}}_{C^3(\,\widehat{\gamma}\,)}\right)\le 10,
	\end{align}
	and
	\begin{align}\label{eq_proof_goal2}
		\btr{\omega-\rho}\le B_1,\text{ }\newbtr{\accentset{\circ}{A}}_\gamma\le \frac{B_2}{\rho^4},\text{ }\newbtr{\nabla\accentset{\circ}{A}}\le \frac{B_3}{\rho^5}.
	\end{align}
	Note that the a priori estimates \cite[Proposition 3.3]{kroenckewolff} readily imply the converse inclusion.
	
	We first establish the following consequence of the contracted null Simons' identity:
	\begin{lem}\label{lem_proof_sobolev1}
		Let $\mathcal{N}$ be an asymptotically Schwarzschildean lightcone of mass $m>0$, $\Sigma$ an STCMC surface with $\mathcal{H}^2>0$. Let $\kappa_0>0$. There exists $\alpha_0<\frac{1}{2}$, $\varepsilon_0<1$ and a uniform constant $C$ such that for all $0<\alpha<\alpha_0$, $0<\varepsilon<\varepsilon_0$, $\kappa\ge \kappa_0$
		\[	
		\int_\Sigma \mathcal{H}^2\newbtr{\accentset{\circ}{A}}^2+\newbtr{\nabla \newbtr{\accentset{\circ}{A}}}^2\d\mu \le C \int_\Sigma (\mathcal{H}^2)^{-1}\btr{G}^2\d\mu
		\]
		if $\Sigma$ lies in $\mathcal{S}^1_{\alpha,\varepsilon,\kappa}[v_1,v_2,V_1,V_2]$ provided $\mathcal{H}^2\le \mathcal{H}^2_0(m,\alpha,\varepsilon,\kappa,v_1,v_2, V_1)$, where $G$ is as in Proposition \ref{prop_appendix_NullSimonAsymSchwarzschild}. In addition, for any $p\ge 1$, we find that
		\[
		\left(\int_\Sigma \newbtr{\accentset{\circ}{A}}^{2p}\d\mu\right)^{\frac{1}{p}}\le C(p,V_2) \newbtr{\operatorname{supp}\newbtr{\accentset{\circ}{A}}}^{\frac{1}{p}}\int_\Sigma (\mathcal{H}^2)^{-1}\btr{G}^2\d\mu.
		\]
	\end{lem}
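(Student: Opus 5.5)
We will follow the Huisken--Yau strategy: contract the null Simons' identity with $\accentset{\circ}{A}$, integrate by parts, and absorb everything except the curvature inhomogeneity $G$. By Proposition \ref{prop_appendix_NullSimonAsymSchwarzschild}, which we take to be the asymptotically Schwarzschildean form of Remark \ref{bem_prelim_contractedNullSimons}, we write
\[
\Delta\accentset{\circ}{A}=-2\tau^k\nabla_k\accentset{\circ}{A}+\Big(\tfrac12\mathcal{H}^2+E\Big)\accentset{\circ}{A}+G ,
\]
where $E$ collects all zeroth order coefficients of $\accentset{\circ}{A}$: the term $\spann{\ul{\theta}^{-1}\accentset{\circ}{\ul{\chi}},\accentset{\circ}{A}}$, $\dive\tau$, $\btr{\tau}^2$, $\nabla\tau$, and the ambient curvature terms. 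The remainder $G$ contains the terms not proportional to $\accentset{\circ}{A}$, namely the $\mathcal{H}^2\nabla\tau$, $\mathcal{H}^2\tau\otimes\tau$, $\mathcal{H}^2\overline{\Riem}$ and $\overline{\nabla}\,\overline{\Riem}$ pieces.

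We pair this identity with $\accentset{\circ}{A}$ and integrate over $\Sigma$. This gives
\[
\int_\Sigma \btr{\nabla\accentset{\circ}{A}}^2+\tfrac12\mathcal{H}^2\btr{\accentset{\circ}{A}}^2\d\mu = -\int_\Sigma E\btr{\accentset{\circ}{A}}^2 -\spann{G,\accentset{\circ}{A}}+2\spann{\tau^k\nabla_k\accentset{\circ}{A},\accentset{\circ}{A}}\d\mu .
\]
Kato's inequality $\btr{\nabla\btr{\accentset{\circ}{A}}}\le\btr{\nabla\accentset{\circ}{A}}$ then controls the left-hand side of the claim. The $G$-term is handled by Young's inequality,
\[
\btr{\spann{G,\accentset{\circ}{A}}}\le \tfrac18\mathcal{H}^2\btr{\accentset{\circ}{A}}^2+2(\mathcal{H}^2)^{-1}\btr{G}^2 .
\]
The $\tau$-term is bounded by $\tfrac14\btr{\nabla\accentset{\circ}{A}}^2+4\btr{\tau}^2\btr{\accentset{\circ}{A}}^2$. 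By Lemma \ref{lem_prelim_tau} and the definition of $\mathcal{S}^1_\alpha$, using $\omega\ge v_1(\mathcal{H}^2)^{\alpha-\frac12}$ and $\btr{\widehat\nabla\ln\omega}\le v_2(\mathcal{H}^2)^{-\alpha}$, we have
\[
\btr{\tau}^2\le C(v_1,v_2)(\mathcal{H}^2)^{3-7\alpha}\ll\mathcal{H}^2 .
\]

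The main obstacle is the pointwise bound $\btr{E}\le\tfrac18\mathcal{H}^2$. We treat its pieces as follows.
\begin{enumerate}[label=(\alph*)]
\item The term $\btr{\spann{\ul{\theta}^{-1}\accentset{\circ}{\ul{\chi}},\accentset{\circ}{A}}}$ is at most $\varepsilon\mathcal{H}^2$ by the definition of $\mathcal{S}^1_{\alpha,\varepsilon,\kappa}$, so it is small once $\varepsilon<\varepsilon_0$.
\item We have $\btr{\nabla\tau}\le V_1(\mathcal{H}^2)^{1+\kappa}\le V_1(\mathcal{H}^2)^{\kappa_0}\,\mathcal{H}^2$. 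This is small for $\mathcal{H}^2\le\mathcal{H}^2_0$, and it is exactly why the bound on $\nabla\tau$ is built into the class.
\item The ambient curvature terms are $O(\omega^{-3})$ plus derivatives of the graph function, by Proposition \ref{prop_appendix_curvatureidentities} and Lemmas \ref{lem_appendix_derivativesomega}--\ref{lem_appendix_derivativesTR}. With the lower bound on $\omega$ this gives $C(m,v_1,v_2)(\mathcal{H}^2)^{\frac32-3\alpha-c\alpha}$. For $\alpha<\alpha_0$ this is $o(\mathcal{H}^2)$, which fixes $\alpha_0$.
\end{enumerate}
The care needed here is in tracking the powers of $\btr{\widehat\nabla\ln\omega}$ that appear when the background frame is converted to the frame of $\Sigma$ via \eqref{eq_prelim_nullgeometry_frame}. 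After absorption we obtain the first inequality, with constant $C$ uniform.

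For the second inequality we note that $\Sigma\in\mathcal{S}^1_{\alpha,\varepsilon,\kappa}$ carries a Sobolev inequality with constant controlled by $c_0(\Sigma)\le V_2$ and $\varepsilon$, by Proposition \ref{prop_sobolev_asymSchw}. We apply Lemma \ref{lem_prelim_Sobolev_consequence} with $f=\btr{\accentset{\circ}{A}}\in W^{1,2}$, which is Lipschitz. This gives
\[
\Big(\int_\Sigma\btr{\accentset{\circ}{A}}^{2p}\Big)^{1/p}\le 2c_Sp^2\btr{\operatorname{supp}\btr{\accentset{\circ}{A}}}^{1/p}\int_\Sigma\btr{\nabla\btr{\accentset{\circ}{A}}}^2+\mathcal{H}^2\btr{\accentset{\circ}{A}}^2\d\mu .
\]
Inserting the first inequality yields the claim with $C(p,V_2)$.
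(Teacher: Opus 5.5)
Your proposal is correct and is essentially the paper's argument. You pair the contracted null Simons' identity of Proposition \ref{prop_appendix_NullSimonAsymSchwarzschild} with $\accentset{\circ}{A}$ and show that all zeroth-order coefficients are $o(\mathcal{H}^2)$, using $\varepsilon$, the bound $\newbtr{\nabla\tau}\le V_1(\mathcal{H}^2)^{1+\kappa}$ and the lower bound on $\omega$ from $\mathcal{S}^1_{\alpha,\varepsilon,\kappa}[v_1,v_2,V_1,V_2]$; you then absorb via Young's inequality and conclude with Lemma \ref{lem_prelim_Sobolev_consequence}.

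The paper instead passes through the distributional Kato--Simons inequality $\int_\Sigma \spann{\Delta\accentset{\circ}{A},\accentset{\circ}{A}}+\newbtr{\nabla\newbtr{\accentset{\circ}{A}}}^2\d\mu\le 0$ (reused in the Stampacchia iteration) and rewrites the transport term via $\nabla\newbtr{\accentset{\circ}{A}}$. Your exact integration with Kato applied only at the end is slightly cleaner and even controls $\int_\Sigma\newbtr{\nabla\accentset{\circ}{A}}^2\d\mu$. One harmless slip: Lemma \ref{lem_prelim_tau} gives $\newbtr{\tau}^2\le C(v_1,v_2)(\mathcal{H}^2)^{3-8\alpha}$, not $(\mathcal{H}^2)^{3-7\alpha}$.
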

	
	\begin{proof}
		Since
		\begin{align*}
			\Delta\newbtr{\accentset{\circ}{A}}^2&=2\spann{\Delta\accentset{\circ}{A},\accentset{\circ}{A}}+2\newbtr{\nabla \accentset{\circ}{A}}^2,\\
			\nabla\newbtr{\accentset{\circ}{A}}^2&=2\spann{\nabla\accentset{\circ}{A},\accentset{\circ}{A}},
		\end{align*}
		we have $\newbtr{\nabla \newbtr{\accentset{\circ}{A}}}\le \newbtr{\nabla \accentset{\circ}{A}}$ and
		\[
		2\newbtr{\accentset{\circ}{A}}\Delta \newbtr{\accentset{\circ}{A}}+2\newbtr{\nabla \newbtr{\accentset{\circ}{A}}}^2=2\spann{\Delta\accentset{\circ}{A},\accentset{\circ}{A}}+2\newbtr{\nabla \accentset{\circ}{A}}^2
		\]
		at any point where $\newbtr{\accentset{\circ}{A}}\not=0$.
		Hence, at any point where $\newbtr{\accentset{\circ}{A}}\not=0$ we have 
		\begin{align}\label{eq_simon_point}
			\newbtr{\accentset{\circ}{A}}\Delta \newbtr{\accentset{\circ}{A}}\ge \spann{\Delta \accentset{\circ}{A},\accentset{\circ}{A}}.
		\end{align}
		In fact, as in Schoen--Simon--Yau \cite{schoensimonyau}, the above inequality holds everywhere in the distributional sense, i.e.,
		\begin{align}\label{eq_simon_distribution}
			\int_\Sigma \spann{\Delta\accentset{\circ}{A},\accentset{\circ}{A}}+\newbtr{\nabla \newbtr{\accentset{\circ}{A}}}^2\d\mu \le 0.
		\end{align}
		By Proposition \ref{prop_appendix_NullSimonAsymSchwarzschild} we have 
		\[
			\Delta\accentset{\circ}{A}_{ij}=f\accentset{\circ}{A}_{ij}-2\tau^k\nabla_k\accentset{\circ}{A}_{ij}+\left(F\ast \accentset{\circ}{A}\right)_{ij}+G_{ij}.
		\]
		Hence, combining this with \eqref{eq_simon_distribution} we have
		\[
			\int_\Sigma f\newbtr{\accentset{\circ}{A}}^2+\newbtr{\nabla \newbtr{\accentset{\circ}{A}}}^2\d\mu \le \int_\Sigma 2\tau^k\spann{\nabla_k\accentset{\circ}{A},\accentset{\circ}{A}}+\newbtr{\accentset{\circ}{A}}\cdot\left(\newbtr{F\ast \accentset{\circ}{A}}+\btr{G}\right)\d\mu.
		\]
		Notice that, as the integrand is trivial at all points where $\newbtr{\accentset{\circ}{A}}=0$, one finds
		\[
			\int_\Sigma 2\tau^k\spann{\nabla_k\accentset{\circ}{A},\accentset{\circ}{A}}\d\mu=\int_\Sigma2\tau^k\nabla_k\newbtr{\accentset{\circ}{A}}^2\d\mu=\int_\Sigma 2\tau^k\nabla_k\newbtr{\accentset{\circ}{A}}\cdot\newbtr{\accentset{\circ}{A}}\d\mu.
		\]
		Moreover, as $\Sigma$ lies in $\mathcal{S}_{\alpha,\varepsilon,\kappa}^1[v_1,v_2,V_1,V_2]$, Proposition \ref{prop_appendix_NullSimonAsymSchwarzschild} together with Lemma \ref{lem_prelim_tau}, Lemma \ref{lem_appendix_derivativesomega} and Lemma \ref{lem_appendix_derivativesTR} yield that 
		\begin{align*}
			\btr{\tau}&\le C(v_1,v_2)(\mathcal{H}^2)^{\frac{3}{2}-4\alpha},\\
			f&\ge \frac{1}{2}\mathcal{H}^2-2\varepsilon\mathcal{H}^2-C(v_1,v_2)(\mathcal{H}^2)^{\frac{3}{2}-4\alpha}-C(v_1,v_2)(\mathcal{H}^2)^{2-6\alpha},\\
			\newbtr{F\ast\accentset{\circ}{A}}&\le C \newbtr{\accentset{\circ}{A}}\left(\varepsilon\mathcal{H}^2+C(v_1,v_2)(\mathcal{H}^2)^{2-6\alpha}\right),
		\end{align*}
		where we also used that $\btr{\nabla\tau}\le \varepsilon\mathcal{H}^2$ by assumption provided $\mathcal{H}_0^2$ is sufficiently small in terms of $\varepsilon$, $\kappa$, and $V_1$. The first claim then follows by Youngs' inequality for $\alpha_0=\frac{1}{8}$, and $\varepsilon_0$, $\mathcal{H}^2_0$ sufficiently small. The second claim directly follows from Lemma \ref{lem_prelim_Sobolev_consequence}, where we note again that the Sobolev constant $c_S$ is uniformly controlled by assumption.
	\end{proof}
	
	We first prove an initial, pointwise upper bound on $\newbtr{\accentset{\circ}{A}}$:
	\begin{prop}\label{prop_proof_Abound_1}
		Let $\mathcal{N}$ be an asymptotically Schwarzschildean lightcone of mass $m>0$, $\Sigma$ an STCMC surface with $\mathcal{H}^2>0$. Let $\kappa_0>0$. There exists $\alpha_0<\frac{1}{2}$, $\varepsilon_0$, and a small number $\eta>0$, which only depends on $\kappa_0$, such that for all $0<\alpha<\alpha_0$, $0<\varepsilon<\varepsilon_0$, $\kappa\ge \kappa_0$
		\[	
		\newbtr{\accentset{\circ}{A}}\le (\mathcal{H}^2)^{1+\eta}
		\]
		if $\Sigma$ lies in $\mathcal{S}^1_{\alpha,\varepsilon,\kappa_0}[v_1,v_2,V_1,V_2]$ provided $\mathcal{H}^2\le \mathcal{H}^2_0(m,\alpha,\varepsilon,\kappa,v_1,v_2,V_1,V_2)$.
	\end{prop}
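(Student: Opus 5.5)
We prove the bound by a Stampacchia iteration in the spirit of Huisken--Yau \cite[Section 5]{huiskenyau}. The inputs are the contracted null Simons' identity in the form of Proposition \ref{prop_appendix_NullSimonAsymSchwarzschild} and the Sobolev inequality of Definition \ref{defi_sobolev}. By Proposition \ref{prop_sobolev_asymSchw}, the Sobolev inequality holds with a constant controlled by $V_2$ and $\varepsilon$. Set $u:=\newbtr{\accentset{\circ}{A}}$ and, for $k>0$, let $u_k:=(u-k)_+$ and $A(k):=\{u>k\}$.

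\textbf{Step 1: energy inequality for truncations.} We test the distributional inequality \eqref{eq_simon_distribution} with $u_k$ instead of $u$. Exactly as in the proof of Lemma \ref{lem_proof_sobolev1}, the decomposition $\Delta\accentset{\circ}{A}=f\accentset{\circ}{A}-2\tau^k\nabla_k\accentset{\circ}{A}+F\ast\accentset{\circ}{A}+G$ applies. We use the bounds $f\ge(\frac12-2\varepsilon)\mathcal{H}^2-o(\mathcal{H}^2)$, $\btr{\tau}\le C(\mathcal{H}^2)^{\frac32-4\alpha}$ and $\btr{F\ast\accentset{\circ}{A}}\le C(\varepsilon+o(1))\mathcal{H}^2 u$. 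Absorbing the $\tau$- and $F$-terms with Young's inequality then gives
\[
\int_{\Sigma}\btr{\nabla u_k}^2+\mathcal{H}^2u_k^2\d\mu\le C\int_{A(k)}\btr{G}\,u_k\d\mu\le C\,\norm{G}_{L^\infty}\int_{A(k)}u_k\d\mu.
\]
The a priori class and Proposition \ref{prop_appendix_NullSimonAsymSchwarzschild} give $\norm{G}_{L^\infty}\le C(\mathcal{H}^2)^{2-\beta}$ for some small $\beta$ depending on $\alpha$. The terms in $G$ involving $\nabla\tau$ and $\nabla^2\tau$ are where $V_1$ and $\kappa$ enter. The term $\mathcal{H}^2\nabla\tau$ is bounded by $V_1(\mathcal{H}^2)^{2+\kappa}$. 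The term $\nabla^2\tau$ is controlled via Corollary \ref{kor_prelim_nablatau}, which brings in $\nabla A$; this is handled by integrating by parts against $u_k$ so that only $\nabla\tau$ and $\tau$ remain.

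\textbf{Step 2: iteration.} By Lemma \ref{lem_prelim_Sobolev_consequence} with $p=2$ and H\"older's inequality,
\[
\int u_k\le\btr{A(k)}^{3/4}\Big(\int u_k^{4}\Big)^{1/4}\le C\btr{A(k)}\Big(\int\btr{\nabla u_k}^2+\mathcal{H}^2u_k^2\Big)^{1/2}.
\]
Combining this with Step 1 yields $\int u_k\le C\,\norm{G}_{L^\infty}\btr{A(k)}^{2}$. For $h>k$ we have $(h-k)\btr{A(h)}\le\int u_k$, hence
\[
\btr{A(h)}\le C\norm{G}_{L^\infty}(h-k)^{-1}\btr{A(k)}^2 .
\]
The exponent $2>1$ is what Stampacchia's lemma needs, and it gives $\btr{A(k_0+d)}=0$ with $d^{\,}\sim C\norm{G}_{L^\infty}\btr{A(k_0)}$. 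Choose $k_0=\frac12(\mathcal{H}^2)^{1+\eta}$. By Chebyshev's inequality and Lemma \ref{lem_proof_sobolev1} with $p=1$, $\btr{A(k_0)}\le k_0^{-2}\int u^2\le Ck_0^{-2}(\mathcal{H}^2)^{-2}\int\btr{G}^2$. Using Lemma \ref{lem_proof_H2rho}, $\btr{\Sigma}\sim(\mathcal{H}^2)^{-1}$. This gives $d\le C(\mathcal{H}^2)^{4-3\beta-2(1+\eta)-3}\cdot(\mathcal{H}^2)^{\dots}$, and we choose $\eta$ small depending on $\kappa_0$ and $\alpha_0$ so that $d\le\frac12(\mathcal{H}^2)^{1+\eta}$ once $\mathcal{H}^2\le\mathcal{H}_0^2$. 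It follows that $u\le(\mathcal{H}^2)^{1+\eta}$.

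\textbf{Main obstacle.} The main difficulty is bookkeeping the powers of $\mathcal{H}^2$ in $G$ so that the gain beats the losses $(\mathcal{H}^2)^{-c\alpha}$ coming from the a priori class. This gain is $(\mathcal{H}^2)^{2}$ in the Schwarzschild part and $(\mathcal{H}^2)^{1+\kappa}$ from $\nabla\tau$. Here $\eta\sim\min(\kappa_0,1)/4$, and $\alpha_0$ must be reduced accordingly. If the bound from Chebyshev's inequality is too weak, I would instead start the iteration from a higher $L^{2p}$ norm, which Lemma \ref{lem_proof_sobolev1} provides for any $p$.
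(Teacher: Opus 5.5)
Your Stampacchia scheme is sound and leaner than the paper's. You test the Kato-type inequality $\Delta u\ge fu-2\tau(\nabla u)-\newbtr{F\ast\accentset{\circ}{A}}-\newbtr{G}$ on $\{u>0\}$ against $u_k$. Keep $\int f\,u\,u_k$ on the left until the $F$-term, of size $\varepsilon\mathcal{H}^2uu_k$, has been absorbed. This gives $\newbtr{A(h)}\le C\max_\Sigma\newbtr{G}\,(h-k)^{-1}\newbtr{A(k)}^2$; the paper instead multiplies by $u\,u_k^p$ with $p>1$ and carries extra H\"older exponents. You also do not need Chebyshev. Stampacchia gives $u\le k_0+4C\max_\Sigma\newbtr{G}\,\newbtr{A(k_0)}$, and letting $k_0\downarrow0$ with $\newbtr{A(k_0)}\le\newbtr{\Sigma}\le C(\mathcal{H}^2)^{-1}$ (Lemma \ref{lem_proof_H2rho}) yields $\newbtr{\accentset{\circ}{A}}\le C\max_\Sigma\newbtr{G}/\mathcal{H}^2$. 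This is exactly the paper's bound $d\le C\mathcal{G}(\mathcal{H}^2)^{-1}$. So the whole proposition reduces to the size of $\max_\Sigma\newbtr{G}$, and that is where your argument has a genuine gap.

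You assert $\norm{G}_{L^\infty}\le C(\mathcal{H}^2)^{2-\beta}$, and with this the proof cannot close. The direct route then gives only $\newbtr{\accentset{\circ}{A}}\lesssim(\mathcal{H}^2)^{1-\beta}$. Your Chebyshev route gives $d\lesssim k_0^{-2}(\mathcal{H}^2)^{-3}\norm{G}_{L^\infty}^3\sim(\mathcal{H}^2)^{1-2\eta-3\beta}$, which never beats $(\mathcal{H}^2)^{1+\eta}$; the placeholder in your exponent count hides exactly this. What is needed, and true, is $\max_\Sigma\newbtr{G}\le C(\mathcal{H}^2)^{2+\eta'}$ with $\eta'>0$. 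It comes from the explicit structure in Proposition \ref{prop_appendix_NullSimonAsymSchwarzschild}:
$\newbtr{G}\le C(m)\omega^{-5}\newbtr{\nabla\omega}^2+\mathcal{H}^2\big(\newbtr{\nabla\tau}+\newbtr{\tau}^2+C\omega^{-4}(1+\newbtr{\nabla\omega}^2)\big)+C\omega^{-6}(1+\newbtr{\nabla\omega}^4)$.
The mass term is quadratic in $\nabla\omega$ and carries $\omega^{-5}$ rather than $\omega^{-4}$, so the class bounds give $C(\mathcal{H}^2)^{\frac52-7\alpha}$. The torsion term gives $V_1(\mathcal{H}^2)^{2+\kappa}$, and everything else is $O((\mathcal{H}^2)^{3-10\alpha})$. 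Hence $G$ has no contribution of exact order $(\mathcal{H}^2)^2$, contrary to your ``gain $(\mathcal{H}^2)^2$ in the Schwarzschild part''. This is what forces $\alpha<\frac{1}{14}$; with e.g.\ $\alpha_0=\frac1{16}$, any $\eta<\min(\frac1{16},\kappa_0)$ works.

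Finally, $G$ contains only $\nabla\tau$, not $\nabla^2\tau$; the latter enters $\nabla f$ and $\nabla(F\ast\accentset{\circ}{A})$ in Lemma \ref{lem_proof_deltaA_deltanablaA}. So Corollary \ref{kor_prelim_nablatau} and the integration by parts are unnecessary here. As you describe it, that step would also produce $\nabla\accentset{\circ}{A}$, which $\nabla u_k$ does not control.
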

	\begin{bem}\label{bem_proof_boundA1}
		For later convenience, we remark that by virtue of the proof we have in fact shown that
		\[	
		\newbtr{\accentset{\circ}{A}}\le \frac{C(V_2)}{\mathcal{H}^2}\max\limits_\Sigma\left[\frac{C(m)}{\omega^5}\btr{\nabla\omega}^2+\mathcal{H}^2\left(\btr{\nabla\tau}+\btr{\tau}^2+\frac{C}{\omega^4}\left(1+\btr{\nabla\omega}^2\right)\right)+\frac{C}{\omega^6}\left(1+\btr{\nabla\omega}^4\right)\right]
		\]
		if $\Sigma$ lies in $\mathcal{S}^1_{\alpha,\varepsilon,\kappa}[v_1,v_2,V_1,V_2]$ provided $\mathcal{H}^2\le \mathcal{H}^2_0(m,\alpha,\varepsilon,\kappa,v_1,v_2,V_1,V_2)$.
	\end{bem}
	
	As in \cite{huiskenyau}, we use the Lemma of Stampacchia, which we state here for the convenience of the reader:
	
	\begin{lem}[Lemma of Stampacchia]\label{lem_stampaccia}
		Let $\varphi\colon [k_0,\infty)\to\R$ be a non-negative, non-increasing function such that for positive constants $\alpha,\gamma,c$ with $\gamma>1$ we have
		\[
		\varphi(h)\le \frac{c}{(h-k)^\alpha}\varphi(k)^\gamma
		\]
		for all $h>k\ge k_0$. Then $\varphi(k_0+d)=0$ where
		\[
		d^\alpha=c2^{\frac{\alpha\gamma}{\gamma-1}}\varphi(k_0)^{\gamma-1}.
		\]
	\end{lem}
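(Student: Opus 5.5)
The plan is the classical dyadic iteration. If $\varphi(k_0)=0$, then $d=0$ and the claim is just $\varphi(k_0)=0$, so assume $\varphi(k_0)>0$ and hence $d>0$. Set
\[
\mu:=\frac{\alpha}{\gamma-1}>0,\qquad k_s:=k_0+d-\frac{d}{2^s},\quad s\in\mathbb{N}_0 .
\]
The sequence $k_s$ is strictly increasing, starts at $k_0$, and converges to $k_0+d$ from below. Its increments are $k_{s+1}-k_s=d\,2^{-(s+1)}$.

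The core step is to prove by induction that
\[
\varphi(k_s)\le \varphi(k_0)\,2^{-s\mu}\qquad\text{for all } s\in\mathbb{N}_0 .
\]
The case $s=0$ is trivial. For the inductive step, apply the hypothesis with $h=k_{s+1}>k=k_s\ge k_0$ and then the induction hypothesis:
\[
\varphi(k_{s+1})\le \frac{c\,2^{(s+1)\alpha}}{d^\alpha}\,\varphi(k_s)^\gamma\le \frac{c\,2^{(s+1)\alpha}}{d^\alpha}\,\varphi(k_0)^\gamma\,2^{-s\mu\gamma}.
\]
Now substitute the defining relation $d^\alpha=c\,2^{\mu\gamma}\varphi(k_0)^{\gamma-1}$, which is exactly the stated choice since $\frac{\alpha\gamma}{\gamma-1}=\mu\gamma$. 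The right-hand side becomes
\[
\varphi(k_0)\,2^{(s+1)(\alpha-\mu\gamma)} .
\]
The only computation to check is the exponent identity $\alpha-\mu\gamma=\alpha-\frac{\alpha\gamma}{\gamma-1}=-\frac{\alpha}{\gamma-1}=-\mu$. This gives $\varphi(k_{s+1})\le\varphi(k_0)\,2^{-(s+1)\mu}$ and closes the induction. This bookkeeping, namely matching the choice of $d$ to the geometric decay rate $\mu$, is the only delicate point, and it is precisely what the constant $2^{\frac{\alpha\gamma}{\gamma-1}}$ in the definition of $d$ is designed for.

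Finally, since $\varphi$ is non-increasing and $k_s<k_0+d$, we have for every $s$
\[
0\le\varphi(k_0+d)\le\varphi(k_s)\le\varphi(k_0)\,2^{-s\mu}.
\]
Letting $s\to\infty$ and using $\mu>0$ yields $\varphi(k_0+d)=0$, as claimed.
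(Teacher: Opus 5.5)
Your proof is correct. The dyadic levels $k_s=k_0+d(1-2^{-s})$, the inductive bound $\varphi(k_s)\le\varphi(k_0)2^{-s\alpha/(\gamma-1)}$ (closed by the identity $\alpha-\frac{\alpha\gamma}{\gamma-1}=-\frac{\alpha}{\gamma-1}$), the final monotonicity step and the degenerate case $\varphi(k_0)=0$ are all in order. The paper gives no proof of this lemma; it only quotes it as the classical Stampacchia lemma used by Huisken--Yau, and your argument is the standard proof of that lemma.
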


	\begin{proof}[Proof of Proposition \ref{prop_proof_Abound_1}]
		For convenience, we define the function $u=\newbtr{\accentset{\circ}{A}}$, and further define $u_k:=\max(u-k,0)$, and the subset $A(k)=\{u>k\}=\{u_k>0\}$ of $\Sigma$. Let $p> 1$, $k\ge 0$. Multiplying \eqref{eq_simon_point} with $u_k^p$, the contracted null Simons' identity, Proposition \ref{prop_appendix_NullSimonAsymSchwarzschild}, yields
		\begin{align}\label{eq_proof_a02_1}
			u_k^pu\Delta u\ge fu^2u_k^2-2\tau(\nabla u)u_k^pu+u_k^p\spann{\accentset{\circ}{A},F\ast\accentset{\circ}{A}+G},
		\end{align}
		where the inequality again holds in the distributional sense.
		Integration by parts yields
		\begin{align*}
			\int_{A(k)}u_k^pu\Delta u\d\mu+\int_{A(k)}(u_k^p+puu_k^{p-1})\btr{\nabla u}^2\d\mu=0.
		\end{align*}
		Thus, by similar estimates as in the proof of Lemma \ref{lem_proof_sobolev1}, \eqref{eq_proof_a02_1} yields
		\[
			\int_{A(k)}(u_k^p+puu_k^{p-1})\btr{\nabla u}^2+\mathcal{H}^2u^2u_k^p\d\mu\le C\int_{A(k)}\btr{G}u_k^pu\d\mu
		\]
		provided $\varepsilon_0$ and $\mathcal{H}^2_0$ are sufficiently small.
		Now consider $U:=u_k^{\frac{p}{2}+1}$ and as $p> 1$, we observe that 
		\[
		\frac{1}{2}\btr{\nabla U}^2\le 2p(p+1)u_k^p\btr{\nabla u}^2
		\]
		on $A(k)$. As $u\ge u_k$, we conclude that
		\[
		\int_{A(k)}\btr{\nabla U}^2+\mathcal{H}^2 U^2\d\mu\le C(p)\int_{A(k)} \btr{G}u u_k^p\d\mu.
		\]
		Thus, Proposition \ref{prop_sobolev_asymSchw} and Lemma \ref{lem_prelim_Sobolev_consequence} yield
		\begin{align}\label{eq_proof_a02_2}
			\left(\int_{A(k)}\btr{U}^{2q}\d\mu \right)^{\frac{1}{q}}\le C(p,q,V_2)\btr{A(k)}^\frac{1}{q}\int_{A(k)}\btr{G}u u_k^p\d\mu
		\end{align}
		for any $1\le q<\infty$. Similarly, Lemma \ref{lem_prelim_Sobolev_consequence} implies that
		\begin{align}\label{eq_proof_a02_3}
			\left(\int_{A(k)} u_k^{2a}\right)^{\frac{1}{a}}\le C(a,V_2) \btr{A(k)}^{\frac{1}{a}}\int_{\Sigma}(\mathcal{H}^2)^{-1}\btr{G}^2\d\mu,
		\end{align}
		for any $1\le a<\infty$, where we used $u_k\le u$, $\nabla u_k=\nabla u$ on $A(k)$ and the first inequality in Lemma \ref{lem_proof_sobolev1}.
		
		To control the remaining integral term in \eqref{eq_proof_a02_2} on the right hand side, let $w>1$ and using the H\"older inequality twice we find
		\begin{align*}
			\int_{A(k)}u u_k^p\d\mu &\le \left(\int_{A(k)}u^2\d\mu\right)^{\frac{1}{2}}\left(\left(\int_{A(k)}u_k^{2p}\d\mu\right)^\frac{1}{p}\right)^{\frac{p}{2}}\\
			&\le \btr{A(k)}^{\frac{w-1}{w}}\left(\int_\Sigma u^{2w}\d\mu\right)^{\frac{1}{2w}} \left(\left(\int_{A(k)}u_k^{2p}\d\mu\right)^\frac{1}{p}\right)^{\frac{p}{2}}
		\end{align*}
		Applying Lemma \ref{lem_proof_sobolev1} to the $u$-integral and \eqref{eq_proof_a02_3} to the $u_k$-integral, we find that
		\[
		\int_{A(k)}u u_k^p\d\mu \le C(p,w,V_2) \btr{A(k)}^{\frac{2w-1}{2w}}\btr{\Sigma}^{\frac{1}{2w}}\left(\int_\Sigma (\mathcal{H}^2)^{-1}\btr{G}^2\d\mu\right)^\frac{p+1}{2}.
		\]
		Using that $\btr{\Sigma}\le C(\mathcal{H}^2)^{-1}$ by Lemma \ref{lem_proof_H2rho}, combining \eqref{eq_proof_a02_2} for $q=1$ with the above inequality, we find
		\[
		\int_{A(k)} f^2\d\mu \le C(p,w,V_2)\btr{\Sigma}^{\frac{1}{2w}} (\mathcal{H}^2)^{-(p+1)}\mathcal{G}^{p+2}\btr{A(k)}^{\frac{4w-1}{2w}},
		\]
		where $\mathcal{G}:=\max\limits_\Sigma\btr{G}$.
		Finally, note that for any $h>k$ we find
		\begin{align*}
			(h-k)^{p+2}\btr{A(h)}&\le \int_{A(h)}\btr{u-k}^{p+2}\\&\le \int_{A(k)}f^2\d\mu\\
			&\le C(p,w,V)\btr{\Sigma}^{\frac{1}{2w}} (\mathcal{H}^2)^{-(p+1)}\mathcal{G}^{p+2}\btr{A(k)}^{\frac{4w-1}{2w}}.
		\end{align*}
		Hence, the Lemma of Stampacchia \ref{lem_stampaccia} yields that $\btr{A(d)}=0$ for
		\[
		d^{p+2}=C(p,w,V_2)\btr{\Sigma}^{\frac{1}{2w}} (\mathcal{H}^2)^{-(p+1)}\mathcal{G}^{p+2}\btr{A(0)}^{\frac{2w-1}{2w}}.
		\]
		In particular, $u\le d$ almost everywhere and as $u$ is continuous we obtain the sup-bound $u\le d$. Invoking Lemma \ref{lem_proof_H2rho} again, we observe that
		\[
		\btr{\Sigma}^{\frac{1}{2w}}\btr{A(0)}^{\frac{2w-1}{2w}}\le \btr{\Sigma}\le C(\mathcal{H}^2)^{-1},
		\]
		so fixing any choice of $w$, e.g. $w=2$, we obtain 
		\[
		d\le C(p,V_2) \mathcal{G}(\mathcal{H}^2)^{-1},
		\]
		where we may now also take $p=2$.
		Note that by Proposition \ref{prop_appendix_NullSimonAsymSchwarzschild}, we have
		\[
		\btr{G}\le \frac{C(m)}{\omega^5}\btr{\nabla\omega}^2+\mathcal{H}^2\left(\newbtr{\nabla\tau}+\btr{\tau}^2+C\omega^{-4}\left(1+\btr{\nabla\omega}^2\right)\right)+\frac{C}{\omega^6}\left(1+\btr{\nabla\omega}^4\right).
		\]
		As $\Sigma$ lies in $\mathcal{S}^1_{\alpha,\varepsilon,\kappa}[v_1,v_2,V_1,V_2]$, we find
		\[
			\mathcal{G}\le C(m,v_1,v_2)(\mathcal{H}^2)^{\frac{5}{2}-7\alpha}+V_1(\mathcal{H}^2)^{2+\kappa}+C(v_1,v_2)\left((\mathcal{H}^2)^{4-8\alpha}+(\mathcal{H}^2)^{3-6\alpha}+(\mathcal{H}^2)^{3-10\alpha}\right).
		\]
		In particular, for $\alpha_0=\frac{1}{14}$, there exists $\eta(\alpha,\kappa)\le \frac{\kappa_0}{2}$ for which the claim holds. Thus, taking $\alpha_0=\frac{1}{16}<\frac{1}{14}$ the claim holds for some $\eta>0$ only depending on $\kappa_0$.
	\end{proof}
	
	Next, we aim to obtain an initial bound on $\newbtr{\nabla\accentset{\circ}{A}}$. We first establish the following technical lemma:
	
	\begin{lem}\label{lem_proof_deltaA_deltanablaA}
		Let $\mathcal{N}$ be an asymptotically Schwarzschildean lightcone of mass $m>0$, $\Sigma$ an STCMC surface with $\mathcal{H}^2>0$. Let $\kappa_0>0$. There exists $\alpha_0<\frac{1}{2}$, $\varepsilon_0$ and universal constants $C_1$, $C_2$ such that for all $0<\alpha<\alpha_0$, $0<\varepsilon\le \varepsilon_0$, $\kappa\ge \kappa_0$
		\begin{align*}
			\Delta\newbtr{\accentset{\circ}{A}}^2&\ge \frac{3}{2}\newbtr{\nabla\accentset{\circ}{A}}^2-2\btr{G}\newbtr{\accentset{\circ}{A}},\\
			\Delta\newbtr{\nabla\accentset{\circ}{A}}^2&\ge -C_1\mathcal{H}^2\newbtr{\nabla\accentset{\circ}{A}}^2-C_2(\mathcal{H}^2)^{-1}\left(f_1+f_2\newbtr{\accentset{\circ}{A}}^2\right)
		\end{align*}
		if $\Sigma$ lies in $S_{\alpha,\varepsilon,\kappa}^1[v_1,v_2,V_1,V_2]$ provided $\mathcal{H}^2\le \mathcal{H}_0^2(m,\alpha,\varepsilon,\kappa,v_1,v_2,V_1,V_2)$, where $G$ is as in Proposition \ref{prop_appendix_NullSimonAsymSchwarzschild} and $f_1$, $f_2$ are positive functions with
		\begin{align*}
			f_1\le&\,\frac{C(m)}{\omega^{12}}\newbtr{\widehat{\nabla}\ln\omega}_{\hatgamma}^2\left(\newbtr{\widehat{\nabla}^2\ln\omega}_{\hatgamma}^2+\newbtr{\widehat{\nabla}\ln\omega}_{\hatgamma}^4\right)+\frac{C}{\omega^{14}}\left(1+\newbtr{\widehat{\nabla}\ln\omega}_{\hatgamma}^{10}\right)\\
			&\,+\frac{C}{\omega^{10}}(\mathcal{H}^2)^2\left(1+\newbtr{\widehat{\nabla}\ln\omega}_{\hatgamma}^6\right)+\frac{C}{\omega^6}(\mathcal{H}^2)^4\left(1+\newbtr{\widehat{\nabla}\ln\omega}_{\hatgamma}^2\right),\\
			f_2\le&\,\frac{C(m)}{\omega^8}\newbtr{\widehat{\nabla}\ln\omega}_{\hatgamma}^2+\frac{C}{\omega^6}(\mathcal{H}^2)^2\left(1+\newbtr{\widehat{\nabla}\ln\omega}_{\hatgamma}^2\right)+\frac{C}{\omega^{10}}\left(1+\newbtr{\widehat{\nabla}\ln\omega}_{\hatgamma}^6\right).
		\end{align*}
	\end{lem}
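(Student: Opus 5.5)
The plan is to derive both inequalities from the contracted null Simons' identity in the form of Proposition~\ref{prop_appendix_NullSimonAsymSchwarzschild},
\[
\Delta\accentset{\circ}{A}_{ij}=f\accentset{\circ}{A}_{ij}-2\tau^k\nabla_k\accentset{\circ}{A}_{ij}+\left(F\ast\accentset{\circ}{A}\right)_{ij}+G_{ij},
\]
together with the bounds on $f$, $F$ and $\tau$ already recorded in the proof of Lemma~\ref{lem_proof_sobolev1}. For the first inequality, expand $\Delta\newbtr{\accentset{\circ}{A}}^2=2\newbtr{\nabla\accentset{\circ}{A}}^2+2\spann{\Delta\accentset{\circ}{A},\accentset{\circ}{A}}$ and insert the identity. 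Since $f\ge \frac{1}{2}\mathcal{H}^2-2\varepsilon\mathcal{H}^2-o(\mathcal{H}^2)$ and $\newbtr{F\ast\accentset{\circ}{A}}\le C(\varepsilon+o(1))\mathcal{H}^2\newbtr{\accentset{\circ}{A}}$, the term $(f\newbtr{\accentset{\circ}{A}}^2+\spann{F\ast\accentset{\circ}{A},\accentset{\circ}{A}})$ is nonnegative once $\varepsilon_0$ and $\mathcal{H}^2_0$ are small. The transport term is handled by Young's inequality,
\[
4\btr{\tau}\newbtr{\nabla\accentset{\circ}{A}}\newbtr{\accentset{\circ}{A}}\le \tfrac12\newbtr{\nabla\accentset{\circ}{A}}^2+8\btr{\tau}^2\newbtr{\accentset{\circ}{A}}^2 .
\]
Because $\btr{\tau}^2\le C(\mathcal{H}^2)^{3-8\alpha}\ll\mathcal{H}^2$ for $\alpha<\frac14$, the second piece is absorbed into the positive $f$-term. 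What remains is $\frac32\newbtr{\nabla\accentset{\circ}{A}}^2-2\btr{G}\newbtr{\accentset{\circ}{A}}$, which is the first inequality.

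For the second inequality, write $\Delta\newbtr{\nabla\accentset{\circ}{A}}^2=2\newbtr{\nabla^2\accentset{\circ}{A}}^2+2\spann{\Delta\nabla\accentset{\circ}{A},\nabla\accentset{\circ}{A}}$. Commute $\Delta$ and $\nabla$, which produces the terms $\Riem\ast\nabla\accentset{\circ}{A}+\nabla\Riem\ast\accentset{\circ}{A}$. On a surface $\Riem$ is given by $\scal$, and by the Gauss equation $\scal=\mathcal{H}^2+O(\newbtr{\accentset{\circ}{A}}\newbtr{\accentset{\circ}{\ul\chi}}\ul\theta^{-1})+$ ambient curvature, so $\btr{\Riem}\le C\mathcal{H}^2$. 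The term $\nabla\scal$ is expressed through $\nabla\accentset{\circ}{A}$, $\nabla\accentset{\circ}{\ul\chi}$ and $\nabla\overline{\Riem}$ terms. Next differentiate the Simons identity:
\[
\nabla\Delta\accentset{\circ}{A}=\nabla f\,\accentset{\circ}{A}+f\nabla\accentset{\circ}{A}-2\nabla\tau\ast\nabla\accentset{\circ}{A}-2\tau\ast\nabla^2\accentset{\circ}{A}+\nabla F\ast\accentset{\circ}{A}+F\ast\nabla\accentset{\circ}{A}+\nabla G .
\]
Pairing with $\nabla\accentset{\circ}{A}$ gives the following contributions. The term with $f\ge0$ is dropped. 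The terms with $F$ and $\nabla\tau$ are bounded by $C\mathcal{H}^2\newbtr{\nabla\accentset{\circ}{A}}^2$, using $\btr{\nabla\tau}\le V_1(\mathcal{H}^2)^{1+\kappa}$. The term $\tau\ast\nabla^2\accentset{\circ}{A}$ is absorbed by $\newbtr{\nabla^2\accentset{\circ}{A}}^2$ via Young, at the cost of $\btr{\tau}^2\newbtr{\nabla\accentset{\circ}{A}}^2$. Every remaining cross term $X\cdot\nabla\accentset{\circ}{A}$ is split as $\mathcal{H}^2\newbtr{\nabla\accentset{\circ}{A}}^2+(\mathcal{H}^2)^{-1}\btr{X}^2$. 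This yields the form $-C_1\mathcal{H}^2\newbtr{\nabla\accentset{\circ}{A}}^2-C_2(\mathcal{H}^2)^{-1}(\btr{\nabla G}^2+\btr{\nabla f}^2\newbtr{\accentset{\circ}{A}}^2+\btr{\nabla F}^2\newbtr{\accentset{\circ}{A}}^2+\dots)$.

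The main obstacle is the bookkeeping that identifies $f_1\gtrsim\btr{\nabla G}^2$ and $f_2\gtrsim\btr{\nabla f}^2+\btr{\nabla F}^2$ with the stated powers of $\omega$ and $\widehat{\nabla}\ln\omega$. The difficulty is that $\nabla G$, $\nabla f$ and $\nabla F$ contain $\nabla^2\tau$, $\nabla\overline{\Riem}$ and third derivatives of $\omega$. I would control these using Corollary~\ref{kor_prelim_nablatau}, Lemma~\ref{lem_prelim_A} and Lemmas~\ref{lem_appendix_derivativesomega}, \ref{lem_appendix_derivativesTR}, together with the curvature decay of Proposition~\ref{prop_appendix_curvatureidentities}. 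Any occurrence of $\btr{\nabla A}=\newbtr{\nabla\accentset{\circ}{A}}$ (recall $\mathcal{H}^2$ is constant) multiplied by a small coefficient such as $\omega^{-2}$ or $\btr{\tau}$ is reabsorbed into $C_1\mathcal{H}^2\newbtr{\nabla\accentset{\circ}{A}}^2$. Likewise, $\btr{A}\le\newbtr{\accentset{\circ}{A}}+\mathcal{H}^2$ is used to move $A$-terms either into the $f_2\newbtr{\accentset{\circ}{A}}^2$ part or into the $(\mathcal{H}^2)^4$ part of $f_1$. This is where $\alpha_0$ must be taken small, so that each error coefficient is $o(\mathcal{H}^2)$ given $\omega\ge v_1(\mathcal{H}^2)^{\alpha-1/2}$.
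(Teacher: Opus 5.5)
You have the same architecture as the paper: expand $\Delta\newbtr{T}^2=2\spann{\Delta T,T}+2\newbtr{\nabla T}^2$, insert Proposition \ref{prop_appendix_NullSimonAsymSchwarzschild}, commute $\Delta$ and $\nabla$, differentiate the Simons identity, and close with Young's inequality. Your argument for the first inequality is fine and uses only the class assumptions.

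\textbf{The gap (second inequality).} Your bookkeeping does not close, because you never use a pointwise bound on $\newbtr{\accentset{\circ}{A}}$. The class $\mathcal{S}^1_{\alpha,\varepsilon,\kappa}[v_1,v_2,V_1,V_2]$ does not give one: it controls only $\omega$ and $\widehat{\nabla}\ln\omega$, and the Hessian of $\ln\omega$, hence $A$, is pointwise unconstrained.

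Concretely, $f$ and (by the Gauss equation) $\scal$ contain $\spann{\ul{\theta}^{-1}\accentset{\circ}{\ul{\chi}},\accentset{\circ}{A}}$, and $f$ contains $\dive\tau$. Corollary \ref{kor_prelim_nablatau} gives $\btr{\nabla^2\tau}\le C\omega^{-2}\newbtr{\nabla\accentset{\circ}{A}}+C\omega^{-3}\btr{A}+\dots$. So the products $\btr{\nabla f}\newbtr{\accentset{\circ}{A}}\newbtr{\nabla\accentset{\circ}{A}}$, $\btr{\nabla\scal}\newbtr{\accentset{\circ}{A}}\newbtr{\nabla\accentset{\circ}{A}}$ and $\newbtr{\nabla(F\ast\accentset{\circ}{A})}\newbtr{\nabla\accentset{\circ}{A}}$ contain:
\begin{itemize}
\item $C\omega^{-2}\newbtr{\accentset{\circ}{A}}\newbtr{\nabla\accentset{\circ}{A}}^2$;
\item $C\omega^{-3}\newbtr{\accentset{\circ}{A}}^2\newbtr{\nabla\accentset{\circ}{A}}$, after your substitution $\btr{A}\le\newbtr{\accentset{\circ}{A}}+\mathcal{H}^2$;
\item $\newbtr{\nabla(\ul{\theta}^{-1}\accentset{\circ}{\ul{\chi}})}\newbtr{\accentset{\circ}{A}}^2\newbtr{\nabla\accentset{\circ}{A}}$.
\end{itemize}
In the first term the coefficient of $\newbtr{\nabla\accentset{\circ}{A}}^2$ is $\omega^{-2}\newbtr{\accentset{\circ}{A}}$, not $\omega^{-2}$. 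It can only be absorbed into $C_1\mathcal{H}^2\newbtr{\nabla\accentset{\circ}{A}}^2$ if $\newbtr{\accentset{\circ}{A}}\ll\omega^2\mathcal{H}^2$ pointwise. The other two produce, after Young, terms like $(\mathcal{H}^2)^{-1}\omega^{-6}(1+\newbtr{\widehat{\nabla}\ln\omega}^2)\newbtr{\accentset{\circ}{A}}^4$. These are not of the form $f_1+f_2\newbtr{\accentset{\circ}{A}}^2$ with $f_1,f_2$ depending only on $\omega$, its derivatives and $\mathcal{H}^2$. For the same reason, setting $f_2\gtrsim\btr{\nabla f}^2+\btr{\nabla F}^2$ cannot give the stated bound, since $\nabla f$ itself contains $\nabla\accentset{\circ}{A}$ and $\accentset{\circ}{A}$. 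Your device $\btr{A}\le\newbtr{\accentset{\circ}{A}}+\mathcal{H}^2$ does work inside $\nabla G$, but not in terms that already carry a factor $\newbtr{\accentset{\circ}{A}}$.

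\textbf{The fix.} The missing ingredient is Proposition \ref{prop_proof_Abound_1}, which holds under the same hypotheses (after possibly shrinking $\alpha_0$) and gives $\newbtr{\accentset{\circ}{A}}\le(\mathcal{H}^2)^{1+\eta}$. The paper's proof invokes it explicitly, and it resolves all three issues:
\begin{itemize}
\item It gives $\btr{A}\le C\mathcal{H}^2$, hence $\btr{\nabla^2\tau}\le C\omega^{-2}\newbtr{\nabla\accentset{\circ}{A}}+C\omega^{-3}\mathcal{H}^2+C\omega^{-5}(1+\newbtr{\widehat{\nabla}\ln\omega}^3)$.
\item It makes $\omega^{-2}\newbtr{\accentset{\circ}{A}}\ll\mathcal{H}^2$, so the cubic terms are absorbed into $C_1\mathcal{H}^2\newbtr{\nabla\accentset{\circ}{A}}^2$.
\item It reduces $\newbtr{\accentset{\circ}{A}}^4$ to $(\mathcal{H}^2)^2\newbtr{\accentset{\circ}{A}}^2$. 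This is exactly the source of the term $C\omega^{-6}(\mathcal{H}^2)^2(1+\newbtr{\widehat{\nabla}\ln\omega}^2)$ in $f_2$.
\end{itemize}
With this bound inserted at those points, the rest of your outline goes through as in the paper.
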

	
	\begin{proof}
		We first recall the well-known formula
		\[
		\Delta\btr{T}^2=2\spann{\Delta T,T}+2\btr{\nabla T}^2,
		\]
		for a $(0,k)$-tensor $T$ and the commutator identity
		\begin{align*}
			2\left(\spann{\nabla \Delta T,\nabla T}-\spann{\Delta\nabla T,\nabla T}\right)
			=&\, -\frac{1}{2}\scal \newbtr{\nabla T}^2+2T\left(\nabla \scal,\dive T\right)-2\spann{ T\otimes \d\scal, \nabla T}\\
			&\,+2\scal \newbtr{\dive T}^2-2\scal \nabla_m T_{in}\nabla^iT^{mn},
		\end{align*}
		for a symmetric $(0,2)$-tensor $T$. Using Proposition \ref{prop_appendix_NullSimonAsymSchwarzschild} and estimating similar to before, it is straightforward to obtain 
		\[
			\Delta\newbtr{\accentset{\circ}{A}}^2\ge \frac{3}{2}\newbtr{\nabla\accentset{\circ}{A}}^2-2\btr{G}\newbtr{\accentset{\circ}{A}}
		\]
		provided $\mathcal{H}^2\le\mathcal{H}^2_0$ sufficiently small. Taking a derivative of the contracted null Simons' identity, Proposition \ref{prop_appendix_NullSimonAsymSchwarzschild}, and using the commutator identity as above, we obtain
		\begin{align*}
			\Delta \newbtr{\nabla\accentset{\circ}{A}}^2\ge &\,
			(2f-4\btr{\nabla\tau}-2\btr{\tau}^2-C\btr{\scal})\newbtr{\nabla\accentset{\circ}{A}}^2\\
			&\,-\left(\btr{\nabla f}\newbtr{\accentset{\circ}{A}}+2\newbtr{\nabla (F\ast\accentset{\circ}{A})}+\btr{\nabla\scal}\newbtr{\accentset{\circ}{A}}+\btr{\nabla G}\right)\newbtr{\nabla\accentset{\circ}{A}}
		\end{align*}
		Note that by the Gauss Equation, Proposition \ref{prop_prelim_nullgauss}, we have
		\[
			\scal=\frac{1}{2}\mathcal{H}^2-\spann{\ul{\theta}^{-1}\accentset{\circ}{\ul{\chi}},\accentset{\circ}{A}}+\left(\overline{\scal}-2\overline{\Ric}(\ul{L},L)+\frac{1}{2}\overline{\Riem}(\ul{L},L,L,\ul{L})\right).
		\]
		As $\Sigma$ lies in $\mathcal{S}^1_{\alpha,\varepsilon,\kappa}[v_1,v_2,V_1,V_2]$, the curvature identities, Proposition \ref{prop_appendix_curvatureidentities}, readily imply that $\btr{\scal}\le \mathcal{H}^2$ provided $\mathcal{H}^2\le \mathcal{H}^2_0$ sufficiently small. Arguing in a similar fashion for the other terms in the first line, we find
		\begin{align*}
			\Delta \newbtr{\nabla\accentset{\circ}{A}}^2\ge &\,
			-c_1\mathcal{H}^2\newbtr{\nabla\accentset{\circ}{A}}^2-\left(\btr{\nabla f}\newbtr{\accentset{\circ}{A}}+2\newbtr{\nabla (F\ast\accentset{\circ}{A})}+\btr{\nabla\scal}\newbtr{\accentset{\circ}{A}}+\btr{\nabla G}\right)\newbtr{\nabla\accentset{\circ}{A}}
		\end{align*}
		for some universal constant $c_1$. As $\mathcal{H}^2$ is constant, the Gauss equation further implies that
		\begin{align*}
			\btr{\nabla\scal}\le &\,\newbtr{\ul{\theta}^{-1}\accentset{\circ}{\ul{\chi}}}\newbtr{\nabla\accentset{\circ}{A}}+\newbtr{\nabla\left(\ul{\theta}^{-1}\accentset{\circ}{\ul{\chi}}\right)}\newbtr{\accentset{\circ}{A}}+\frac{C(m)}{\omega^4}\btr{\nabla\omega}\\
			&\,+\btr{\nabla\left(O_{2,2}(r^{-4})+\nabla\omega^AO_{2,2}(r^{-3})_A+\nabla\omega^A\nabla\omega^BO_{2,2}(r^{-2})_{AB}\right)},
		\end{align*}
		where we once again used the curvature identities, Proposition \ref{prop_appendix_curvatureidentities}. Similarly, we find that
		\begin{align*}
			\btr{\nabla f}\le &\,\newbtr{\ul{\theta}^{-1}\accentset{\circ}{\ul{\chi}}}\newbtr{\nabla\accentset{\circ}{A}}+\newbtr{\nabla\left(\ul{\theta}^{-1}\accentset{\circ}{\ul{\chi}}\right)}\newbtr{\accentset{\circ}{A}}+\frac{C(m)}{\omega^4}\btr{\nabla\omega}+\btr{\nabla^2\tau}+2\btr{\tau}\btr{\nabla\tau}\\
			&\,+\btr{\nabla\left(O_{2,2}(r^{-4})+\nabla\omega^AO_{2,2}(r^{-3})_A+\nabla\omega^A\nabla\omega^BO_{2,2}(r^{-2})_{AB}\right)}.
		\end{align*}
		Observe that as $\mathcal{H}^2$ is constant, Proposition \ref{prop_proof_Abound_1} and Corollary \ref{kor_prelim_nablatau} imply that
		\[
			\btr{\nabla^2\tau}\le \frac{C}{\omega^2}\newbtr{\nabla\accentset{\circ}{A}}+\frac{C}{\omega^3}\mathcal{H}^2+\frac{C}{\omega^5}\left(1+\newbtr{\widehat{\nabla}\ln\omega}^3_{\hatgamma}\right).
		\]
		Using this bound on $\btr{\nabla^2\tau}$ also to control $\newbtr{\nabla (F\ast\accentset{\circ}{A})}$ as given in Proposition \ref{prop_appendix_NullSimonAsymSchwarzschild}, a straightforward application of Youngs' inequality yields
		\begin{align*}
			\left(\btr{\nabla f}\newbtr{\accentset{\circ}{A}}+2\newbtr{\nabla (F\ast\accentset{\circ}{A})}+\btr{\nabla\scal}\newbtr{\accentset{\circ}{A}}+\btr{\nabla G}\right)\newbtr{\nabla\accentset{\circ}{A}}\le c_2\mathcal{H}^2 \newbtr{\nabla\accentset{\circ}{A}}^2+C_2(\mathcal{H}^2)^{-1}\left(f_1+f_2\newbtr{\accentset{\circ}{A}}^2\right),
		\end{align*}
		where $f_1$, $f_2$ are as claimed by Lemma \ref{lem_prelim_backgroundfol}, Lemma \ref{lem_appendix_derivativesomega}, and Lemma \ref{lem_appendix_derivativesTR}, and where we bound the arising second derivative terms by \eqref{eq_prelim_c2estimate_A}.
	\end{proof}
	
	Lemma \ref{lem_proof_deltaA_deltanablaA} now allows us to obtain the desired bound on $\newbtr{\nabla\accentset{\circ}{A}}$:
	
	\begin{prop}\label{prop_proof_nablaA1}
		Let $\mathcal{N}$ be an asymptotically Schwarzschildean lightcone of mass $m>0$, $\Sigma$ an STCMC surface with $\mathcal{H}^2>0$. Let $\kappa_0>0$. There exists $\alpha_0<\frac{1}{2}$, $\varepsilon_0$, and a small number $\eta>0$, which only depends on $\kappa_0$, such that for all $0<\alpha<\alpha_0$, $0<\varepsilon<\varepsilon_0$, $\kappa\ge \kappa_0$
		\[	
		\newbtr{\nabla\accentset{\circ}{A}}\le (\mathcal{H}^2)^{\frac{3}{2}+\eta}
		\]
		if $\Sigma$ lies in $\mathcal{S}^1_{\alpha,\varepsilon,\kappa}[v_1,v_2,V_1,V_2]$ provided $\mathcal{H}^2\le \mathcal{H}^2_0(m,\alpha,\varepsilon,\kappa, v_1,v_2,V_1,V_2)$.
	\end{prop}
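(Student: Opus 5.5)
The plan mirrors the proof of Proposition \ref{prop_proof_Abound_1}: I would run a Stampacchia iteration, this time applied to $v:=\newbtr{\nabla\accentset{\circ}{A}}^2$, using the second inequality of Lemma \ref{lem_proof_deltaA_deltanablaA}. The bad term $-C_1\mathcal{H}^2\newbtr{\nabla\accentset{\circ}{A}}^2$ has the wrong sign, so the first step is to remove it with the first inequality of the same lemma. Consider
\[
	\psi:=\newbtr{\nabla\accentset{\circ}{A}}^2+N\mathcal{H}^2\newbtr{\accentset{\circ}{A}}^2
\]
for a large universal constant $N$ (say $N=2C_1$). Since $\mathcal{H}^2$ is constant, Lemma \ref{lem_proof_deltaA_deltanablaA} gives
\[
	\Delta\psi\ge \left(\tfrac{3}{2}N-C_1\right)\mathcal{H}^2\newbtr{\nabla\accentset{\circ}{A}}^2-2N\mathcal{H}^2\btr{G}\newbtr{\accentset{\circ}{A}}-C_2(\mathcal{H}^2)^{-1}\left(f_1+f_2\newbtr{\accentset{\circ}{A}}^2\right).
\]
Adding and subtracting $c\,\mathcal{H}^2\cdot N\mathcal{H}^2\newbtr{\accentset{\circ}{A}}^2$, I obtain an inequality of the form
\[
	\Delta\psi\ge c\,\mathcal{H}^2\psi-\mathcal{F},
\]
where
\[
	\mathcal{F}:=C\left(\mathcal{H}^2\btr{G}\newbtr{\accentset{\circ}{A}}+(\mathcal{H}^2)^2\newbtr{\accentset{\circ}{A}}^2+(\mathcal{H}^2)^{-1}(f_1+f_2\newbtr{\accentset{\circ}{A}}^2)\right)
\]
is a known pointwise quantity. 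Proposition \ref{prop_proof_Abound_1} bounds $\newbtr{\accentset{\circ}{A}}\le(\mathcal{H}^2)^{1+\eta}$. The bound on $\btr{G}$ comes from the proof of Proposition \ref{prop_proof_Abound_1}. The bounds on $f_1,f_2$ come from the estimates in Lemma \ref{lem_proof_deltaA_deltanablaA}, converted to powers of $\mathcal{H}^2$ via the class assumptions $\omega\ge v_1(\mathcal{H}^2)^{\alpha-1/2}$ and $\newbtr{\widehat{\nabla}\ln\omega}\le v_2(\mathcal{H}^2)^{-\alpha}$; the second derivative of $\ln\omega$ is controlled by \eqref{eq_prelim_c2estimate_A} together with the bound on $\accentset{\circ}{A}$. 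Checking exponents, for $\alpha_0$ small I expect $\max_\Sigma\mathcal{F}\le C(\mathcal{H}^2)^{4+2\eta'}$ for some $\eta'>0$ depending only on $\kappa_0$. For instance, the leading term $(\mathcal{H}^2)^{-1}\omega^{-14}$ scales like $(\mathcal{H}^2)^{6-14\alpha}$, and $(\mathcal{H}^2)^2\newbtr{\accentset{\circ}{A}}^2\le(\mathcal{H}^2)^{4+2\eta}$.

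Given the differential inequality for $\psi$, I would derive a sup bound. With a positive coefficient $c\,\mathcal{H}^2\psi$, a maximum principle argument at the maximum point of $\psi$ directly gives $c\,\mathcal{H}^2\psi_{\max}\le\mathcal{F}_{\max}$, since $\Delta\psi\le0$ there. This yields
\[
	\newbtr{\nabla\accentset{\circ}{A}}^2\le\psi\le C(\mathcal{H}^2)^{3+2\eta'},
\]
that is, $\newbtr{\nabla\accentset{\circ}{A}}\le C(\mathcal{H}^2)^{3/2+\eta'}$. The claim follows after shrinking $\eta$ and $\mathcal{H}^2_0$ to absorb the constant. If the sign bookkeeping fails and only $\Delta\psi\ge-C\mathcal{H}^2\psi-\mathcal{F}$ is available, I would fall back on a Stampacchia iteration for $\psi_k=(\psi-k)_+$ exactly as in Proposition \ref{prop_proof_Abound_1}. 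That iteration uses the Sobolev inequality of Proposition \ref{prop_sobolev_asymSchw} via Lemma \ref{lem_prelim_Sobolev_consequence}, which absorbs a $\mathcal{H}^2$-weighted zeroth-order term. It also needs an $L^2$ bound on $\psi$, which follows by integrating the first inequality of Lemma \ref{lem_proof_deltaA_deltanablaA}: $\int\newbtr{\nabla\accentset{\circ}{A}}^2\le\frac43\int\btr{G}\newbtr{\accentset{\circ}{A}}$, and then $\btr{\Sigma}\le C(\mathcal{H}^2)^{-1}$ from Lemma \ref{lem_proof_H2rho}.

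The main obstacle is the exponent bookkeeping in $\mathcal{F}$. The mass term $C(m)\omega^{-12}\newbtr{\widehat{\nabla}\ln\omega}^2\newbtr{\widehat{\nabla}^2\ln\omega}^2$ in $f_1$ must be handled using \eqref{eq_prelim_c2estimate_A}, which bounds $\newbtr{\widehat{\nabla}^2\ln\omega}$ by $1+\omega^2\btr{A}+\newbtr{\widehat{\nabla}\ln\omega}^2$. Here $\btr{A}\le\mathcal{H}^2+\newbtr{\accentset{\circ}{A}}$ gives $\omega^2\btr{A}\lesssim(\mathcal{H}^2)^{2\alpha}$, so this term is of order $(\mathcal{H}^2)^{5-O(\alpha)}$, strictly better than the required $(\mathcal{H}^2)^{4}$. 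Similarly, the $\btr{G}\newbtr{\accentset{\circ}{A}}$ contribution is of order $(\mathcal{H}^2)^{1+\min(5/2-7\alpha,2+\kappa)+1+\eta}$, which is fine. I would therefore choose $\alpha_0$ below all the resulting thresholds, for example $\alpha_0=\frac1{16}$ as before, and then set $\eta$ to the minimum of the gained exponents.
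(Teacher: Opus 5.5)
Your argument is the paper's proof: the same auxiliary function $\newbtr{\nabla\accentset{\circ}{A}}^2+N\mathcal{H}^2\newbtr{\accentset{\circ}{A}}^2$ (the paper takes $N=C_1$), the same absorption giving $\Delta\psi\ge c\,\mathcal{H}^2\psi-\mathcal{F}$, and the maximum principle at a maximum point of $\psi$, so the Stampacchia fallback is never needed. Only your sample numbers need fixing: the term $C(m)\omega^{-12}\newbtr{\widehat{\nabla}\ln\omega}^6$ in $f_1$ contributes $(\mathcal{H}^2)^{4-18\alpha}$ to $\psi$, so $\alpha_0=\frac{1}{16}$ is too large and you need $\alpha_0<\frac{1}{18}$; also, the factor $\omega^4\btr{A}^2$ coming from \eqref{eq_prelim_c2estimate_A} should be absorbed into $\omega^{-12}$, not bounded via $\omega^2\btr{A}\lesssim(\mathcal{H}^2)^{2\alpha}$, which treats the lower bound on $\omega$ as an upper bound.
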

	
	\begin{proof}
		Define $f:=\newbtr{\nabla\accentset{\circ}{A}}^2+C_1\mathcal{H}^2\newbtr{\accentset{\circ}{A}}^2$. By Lemma \ref{lem_proof_deltaA_deltanablaA}, we have
		\[
			\Delta f\ge \frac{C_1}{2}\mathcal{H}^2\newbtr{\nabla\accentset{\circ}{A}}^2-2\mathcal{H}^2\btr{G}\newbtr{\accentset{\circ}{A}}-C_2(\mathcal{H}^2)^{-1}\left(f_1+f_2\newbtr{\accentset{\circ}{A}}^2\right).
		\]
		In particular
		\[
			\Delta f\ge \frac{C_1}{2}\mathcal{H}^2f-\frac{C_1}{2}(\mathcal{H}^2)^2\newbtr{\accentset{\circ}{A}}^2-2\mathcal{H}^2\btr{G}\newbtr{\accentset{\circ}{A}}-C_2(\mathcal{H}^2)^{-1}\left(f_1+f_2\newbtr{\accentset{\circ}{A}}^2\right).
		\]
		As $0\ge \Delta f(p)$ at a point $p\in\Sigma$ where $f$ takes its global maximum, we have
		\begin{align}\label{eq_proof_nablaA_improvement}
			\newbtr{\nabla\accentset{\circ}{A}}^2\le f\le f(p)\le C\max\limits_{\Sigma}\left(\mathcal{H}^2\newbtr{\accentset{\circ}{A}}^2+\btr{G}\newbtr{\accentset{\circ}{A}}+(\mathcal{H}^2)^{-2}\left(f_1+f_2\newbtr{\accentset{\circ}{A}}^2\right)\right).
		\end{align}
		Using Proposition \ref{prop_proof_Abound_1}, the expansion of $G$ as in Proposition \ref{prop_appendix_NullSimonAsymSchwarzschild}, and the estimates for $f_1$, $f_2$ as in Lemma \ref{lem_proof_deltaA_deltanablaA}, the claim follows for a $\eta>0$ only depending on $\kappa_0$ by combining Lemma \ref{lem_appendix_derivativesomega}, Lemma \ref{lem_appendix_derivativesTR} and the fact that $\Sigma$ lies in  $\mathcal{S}^1_{\alpha,\varepsilon,\kappa}[v_1,v_2,V_1,V_2]$ if $\alpha_0$ is chosen sufficiently small, where we use \eqref{eq_prelim_c2estimate_A} to control the second derivative term in $f_1$ in terms of $\mathcal{H}^2$ and first derivatives.
	\end{proof}
	
	Although the initial bounds on $\newbtr{\accentset{\circ}{A}}$ and $\newbtr{\nabla\accentset{\circ}{A}}$ are still far from the desired bounds \eqref{eq_proof_goal2}, we have precisely obtained the critical decay to argue as in the proof of the a priori estimates, \cite[Proposition 3.3]{kroenckewolff}, by using the quantitative estimates in Section \ref{subsec_prelim_quantativeestimates}. This will allow us to conclude that \eqref{eq_proof_goal1} and \eqref{eq_proof_goal2} are satisfied, and thus conclude the proof of Theorem \ref{thm_main1}:
	
	\begin{prop}\label{prop_proof_end}
		Let $\mathcal{N}$ be an asymptotically Schwarzschildean lightcone of mass $m>0$, $\Sigma$ an STCMC surface with $\mathcal{H}^2>0$. Assume there exists $\beta>1$ such that
		\[
			\newbtr{\accentset{\circ}{A}}\le (\mathcal{H}^2)^{\beta},\text{ and }\newbtr{\nabla\accentset{\circ}{A}}\le (\mathcal{H}^2)^{\frac{3}{2}\beta}.
		\]
		Let $\kappa_0\ge 0$. There exists $\alpha_0=\alpha_0(\beta)<\frac{1}{2}$, $\varepsilon_0$ such that for all $0<\alpha<\alpha_0$, $0<\varepsilon\le \varepsilon_0$, $\kappa\ge \kappa_0$ the following holds: If $\Sigma$ lies in $\mathcal{S}^1_{\alpha,\varepsilon,\kappa}[v_1,v_2,V_1,V_2]$, then $\Sigma$ satisfies \eqref{eq_proof_goal1} and \eqref{eq_proof_goal2} for suitable (fixed) constants $B_1$, $B_2$, $B_3$ provided $\mathcal{H}^2\le \mathcal{H}^2_0(m,\alpha,\varepsilon,\kappa,\beta,v_1,v_2,V_1,V_2)$.
	\end{prop}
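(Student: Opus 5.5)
The plan is to transfer the smallness of $\accentset{\circ}{A}$ into smallness of the oscillation of the scalar curvature of the conformally round comparison metric $\widetilde{\gamma}=\omega^2\hatgamma$. Then Corollary \ref{kor_quantitativecontrol} and Corollary \ref{kor_acontrol_rough} bring $\omega$ close to a boosted sphere $b_{\rho,\vec a}$ with controlled $\vec a$. Finally the balancing of the null geometry forces $\vec a$ to be small, and we bootstrap to \eqref{eq_proof_goal1}, \eqref{eq_proof_goal2}, arguing as in the a priori estimates \cite[Proposition 3.3]{kroenckewolff}.

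\textbf{Step 1: scalar curvature oscillation.} Throughout, $\rho^2\mathcal{H}^2\sim 1$ by Lemma \ref{lem_proof_H2rho}. The Gauss equation (Proposition \ref{prop_prelim_nullgauss}), combined with $\mathcal{H}^2$ constant, gives
\[
2\scal-\mathcal{H}^2=-2\spann{\ul{\theta}^{-1}\accentset{\circ}{\ul{\chi}},\accentset{\circ}{A}}+\text{curvature terms}.
\]
Lemma \ref{lem_comparrisonRtildemathcalH} then expresses $2\widetilde{\scal}-\mathcal{H}^2$ up to $-8m\omega^{-3}$ and errors $\omega^{-4}(1+|\widehat\nabla\ln\omega|^2+|\widehat\nabla^2\ln\omega|)$. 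We control $\widehat\nabla^2\ln\omega$ by \eqref{eq_prelim_c2estimate_A} via $\omega^2|A|\le \omega^2(\mathcal{H}^2+|\accentset{\circ}{A}|)$, and $\widehat\nabla^3\ln\omega$ by \eqref{eq_prelim_c3estimate_nablaA} via $|\nabla\accentset{\circ}{A}|$. With the class bounds ($\omega\gtrsim\rho^{1-2\alpha}$, $|\widehat\nabla\ln\omega|\lesssim\rho^{2\alpha}$) and the hypotheses on $\accentset{\circ}{A}$, $\nabla\accentset{\circ}{A}$ with $\beta>1$, this yields
\[
\norm{\widetilde{\scal}-\fint\widetilde{\scal}}_{C^1(\hatgamma)}\le C\rho^{-2-\eta'}
\]
for some $\eta'>0$, once $\alpha_0(\beta)$ is small. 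The $C^1(\hatgamma)$ norm costs a factor $\omega\lesssim\rho^{1+2\alpha}$ per derivative, which is where the constraint on $\alpha_0$ enters. Note that $8m\omega^{-3}$ has oscillation only of order $m\rho^{-3}$ times the oscillation of $\ln\omega$, which is fine.

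\textbf{Step 2: roundness and control of $\vec a$.} First, Corollary \ref{kor_acontrol_rough} together with the class bounds gives $\sqrt{1+|\vec a|^2}\lesssim\rho/\omega_{\min}\lesssim\rho^{2\alpha}$. Then the hypothesis of Corollary \ref{kor_quantitativecontrol} holds, since $\rho^{-2-\eta'}\ll\varepsilon\rho^{-2}(1+|\vec a|^2)^{-1/2}$ for $\alpha$ small, and we obtain
\[
\norm{\omega-b_{\rho,\vec a}}_{C^2}\le C\rho^{1+8\alpha-\eta'}=o(\rho).
\]
This is the main obstacle: the $C^2$-closeness alone does not pin down $\vec a$. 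To force $|\vec a|\to 0$, I would integrate the STCMC equation against first spherical harmonics, as in the a priori estimates of \cite{kroenckewolff}. For a boosted sphere in Schwarzschild, $\mathcal{H}^2-2\widetilde{\scal}\approx-8m\omega^{-3}$ is non-constant unless $\vec a=0$, while $\widetilde{\scal}$ is nearly constant up to $\rho^{-2-\eta'}$. Testing against $f_i\circ\Phi$ then produces an identity of the form
\[
m\rho^{-3}\,\vec a/\sqrt{1+|\vec a|^2}=O(\rho^{-3-\eta''}).
\]
Hence $|\vec a|\lesssim\rho^{-\eta''}$, using $m>0$. Iterating Steps 1 and 2 with the improved $\vec a$ (now $b_{\rho,\vec a}$ is uniformly round) gives $\norm{\omega\rho^{-1}}_{C^2}\le 2$ and $|\widehat\nabla\ln\omega|\lesssim\rho^{-\eta}$.

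\textbf{Step 3: optimal decay.} With $\omega$ uniformly comparable to $\rho$ in $C^2$, the term $G$ of Proposition \ref{prop_appendix_NullSimonAsymSchwarzschild} improves to $|G|\lesssim\rho^{-6}+m\rho^{-5}|\widehat\nabla\ln\omega|^2\rho^{-2}$. Rerunning the Stampacchia bound (Remark \ref{bem_proof_boundA1}) and the maximum principle \eqref{eq_proof_nablaA_improvement} gives $|\accentset{\circ}{A}|\lesssim\rho^{-4}$ and $|\nabla\accentset{\circ}{A}|\lesssim\rho^{-5}$, possibly after one further iteration with the improved gradient. This yields the $C^3$ bound in \eqref{eq_proof_goal1} through \eqref{eq_prelim_c3estimate_nablaA}. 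Feeding this back into Step 2 improves the balance to $|\vec a|\lesssim\rho^{-1}$ and $|\omega-\rho|\le B_1$, which establishes \eqref{eq_proof_goal2}.
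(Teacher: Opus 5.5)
Your Step 1 and the first half of Step 2 are sound and match the paper's Step 1 in substance. You control the oscillation of $\widetilde{\scal}$ directly through $4m\omega^{-3}$, whereas the paper bounds $\widetilde{\nabla}\widetilde{\scal}$ by $\newbtr{\nabla\accentset{\circ}{A}}$ via the traced Codazzi equation, see \eqref{eq_proof_proof_1}; at this stage either works.

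The genuine gap is exactly where you place the main obstacle, namely forcing $\vec{a}\to 0$. By Lemma \ref{lem_comparrisonRtildemathcalH} and the constancy of $\mathcal{H}^2$, you have $2\widetilde{\scal}=\mathcal{H}^2+8m\omega^{-3}+O(\rho^{-4+C\alpha})$. So the oscillation of $\widetilde{\scal}$ \emph{is}, up to lower order, the oscillation of $4m\omega^{-3}$. For $\omega\approx b_{\rho,\vec{a}}$ this is at least of order $m\rho^{-3}\btr{\vec{a}}$, which is precisely the signal you want to detect. Testing against a weight $\phi$ with $\int\phi\d\widetilde{\mu}=0$ (such as $f_i\circ\Phi$ corrected to mean zero) only returns $2\int\phi\,\widetilde{\scal}\d\widetilde{\mu}\approx 8m\int\phi\,\omega^{-1}\d\widehat{\mu}$, whose right-hand side is of order $m\rho^{-1}\btr{\vec{a}}$. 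To extract anything you would need $\int\phi\,\widetilde{\scal}\d\widetilde{\mu}$ independently to accuracy $o(m\rho^{-1}\btr{\vec{a}})$. Your justification, that $\widetilde{\scal}$ is constant up to $\rho^{-2-\eta'}$, bounds this integral only by $\rho^{-2-\eta'}\btr{\Sigma}\sim\rho^{-\eta'}$, with $\eta'=1-O(\alpha)$ from your own Step 1. That gives $\btr{\vec{a}}\lesssim\rho^{1-\eta'}/m$, which is not even bounded. So the identity $m\rho^{-3}\vec{a}/\sqrt{1+\btr{\vec{a}}^2}=O(\rho^{-3-\eta''})$ does not follow from what you wrote.

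The missing idea is an exact cancellation of the $\widetilde{\scal}$-term, which the paper obtains from the Kazdan--Warner identity. Rotate so that $\vec{a}=(0,0,\btr{\vec{a}})$ and set $f=2\cos\theta-2\hatgamma(\widehat{\nabla}\ln\omega,\widehat{\nabla}\cos\theta)$, i.e.\ minus the $\widetilde{\gamma}$-divergence of the conformal Killing field $\widehat{\nabla}\cos\theta$. Then $\int f\d\widetilde{\mu}=\int f\,\widetilde{\scal}\d\widetilde{\mu}=0$. Hence $8m\int f\omega^{-1}\d\widehat{\mu}=\int f\,(\mathcal{H}^2-2\widetilde{\scal}+8m\omega^{-3})\d\widetilde{\mu}$, and the right-hand side is small by Lemma \ref{lem_comparrisonRtildemathcalH} and \eqref{eq_prelim_c2estimate_A}. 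Integration by parts gives $\int f\omega^{-1}\d\widehat{\mu}=6\int\omega^{-1}\cos\theta\d\widehat{\mu}$, and $6\int b_{\vec{a}}^{-1}\cos\theta\d\widehat{\mu}=-8\pi\btr{\vec{a}}$. This is \eqref{eq_proof_proof_3}: it bounds $\btr{\vec{a}}$ by the $C^0$-distance of $\omega/\widetilde{\rho}$ to $b_{\vec{a}}$ from Step 1 (up to the factor $\sqrt{1+\btr{\vec{a}}^2}\max\widetilde{\rho}/\omega$), plus $O(\rho^{-1+C\alpha}/m)$. A linearized version of this cancellation around the round metric $b_{\rho,\vec{a}}^2\hatgamma$, whose curvature linearization has kernel spanned by the $f_i\circ\Phi$, could substitute. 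However, it must be set up explicitly, including the $(1+\btr{\vec{a}}^2)$-losses, and it is not the reason you give.

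There is also a smaller slip in Step 3. By Lemma \ref{lem_appendix_derivativesomega}, $\btr{\nabla\omega}_\gamma\sim\newbtr{\widehat{\nabla}\ln\omega}_{\hatgamma}$, so the $m$-term of $G$ is of order $m\rho^{-5}\newbtr{\widehat{\nabla}\ln\omega}^2$, without your extra factor $\rho^{-2}$. Remark \ref{bem_proof_boundA1} then gives $\rho^{-4}$ only once $\newbtr{\widehat{\nabla}\ln\omega}\lesssim\rho^{-1/2}$. The paper secures this by the coupled absorption $\newbtr{\widehat{\nabla}^i\ln\omega}\lesssim\rho^{3}\max\newbtr{\nabla\accentset{\circ}{A}}+\rho^{-1}$ together with $\rho^3\newbtr{\nabla\accentset{\circ}{A}}\le 1$.
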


	\begin{proof}
		In the following, we will frequently make use of the estimates in Section \ref{subsec_prelim_asymSchw} and estimate the error terms using Lemma \ref{lem_appendix_derivativesomega} and Lemma \ref{lem_appendix_derivativesTR} without calling explicit attention to it. 
		
		First, by Lemma \ref{lem_proof_H2rho} we find that
		\[
		C^{-1}\rho^{-2}\le \mathcal{H}^2\le C\rho^{-2}
		\]
		if $\alpha_0$ and $\mathcal{H}^2\le \mathcal{H}_0^2$ sufficiently small, where $C$ is some universal constant. In particular, we can express the above assumptions in terms of the area radius $\rho$, for $\rho\ge \rho_0$ sufficiently large.
		
		In fact, we will first consider the conformally round metric $\widetilde{\gamma}=\omega^2\hatgamma$ with area radius $\widetilde{\rho}$ and vector $\vec{a}$ associated to the Minkowski $4$-vector $\textbf{Z}$ as defined in Section \ref{subsec_prelim_quantativeestimates}. Note that by the asymptotic expansion of the volume form $\d\mu$, see \cite[Equation (A.1)]{kroenckewolff}, we find $2\widetilde{\rho}\ge \rho\ge \frac{1}{2}\widetilde{\rho}$. In particular, we may first formulate everything with respect to $\widetilde{\gamma}$ and $\widetilde{\rho}$. In the following, $\widetilde{\rho}\ge \widetilde{\rho}_0$ will always be chosen sufficiently large (possibly depending on all other parameters).
		
		For the convenience of the reader, we split the proof into several steps:\newline
		\textbf{Step 1: Comparison to a round sphere of radius $\widetilde{\rho}$ and associated vector $\vec{a}$}\newline
		
		As $\mathcal{H}^2$ is constant, taking a trace of the Codazzi Equation, Proposition \ref{prop_prelim_codazziA}, yields 
		\[
			\dive\accentset{\circ}{A}_j=\ul{\theta}\gamma^{ij}\overline{\Riem}_{ijkL}+\frac{1}{2}\mathcal{H}^2\tau_j-\accentset{\circ}{A}(\vec{\tau},\partial_j),
		\]
		and we further observe that the curvature identities, Proposition \ref{prop_appendix_curvatureidentities}, imply that
		\[
			\btr{\nabla\left(\frac{4m}{\omega^3}\right)-\ul{\theta}\gamma^{ij}\overline{\Riem}_{ijkL}}\le \frac{C}{\omega^5}\left(1+\newbtr{\widehat{\nabla}\ln\omega}_{\hatgamma}^3\right).
		\]
		Combining the above with Lemma \ref{lem_comparrisonRtildemathcalH}, we obtain
		\begin{align}\label{eq_proof_proof_1}
			\newbtr{\widetilde{\nabla}\widetilde{R}}_{\widetilde{\gamma}}\le C\left[2\newbtr{\nabla\accentset{\circ}{A}}+\frac{1}{\omega^5}\left(1+\newbtr{\widehat{\nabla}\ln\omega}_{\hatgamma}^3\right)+\frac{\mathcal{H}^2}{\omega^3}\left(1+\newbtr{\widehat{\nabla}\ln\omega}_{\hatgamma}\right)\right],
		\end{align}
		provided $\widetilde{\rho}\ge \widetilde{\rho}_0$ sufficiently large, where we used that $\gamma$, $\widetilde{\gamma}$ are uniformly equivalent, and Equations \eqref{eq_prelim_c2estimate_A} and \eqref{eq_prelim_c3estimate_nablaA} to bound the higher derivatives. By assumption, \eqref{eq_proof_proof_1} implies that
		\[
			\newbtr{\widetilde{\nabla}\widetilde{\scal}}_{\widetilde{\gamma}}\le C\widetilde{\rho}^{-3\beta}
		\]
		provided $\widetilde{\rho}\ge \widetilde{\rho}_0$ if $\alpha<\alpha_0(\beta)$ sufficiently small.
		As $\fint\widetilde{\scal}=2\widetilde{\rho}^{-2}$ by the Gauss--Bonnet Theorem, there exists a point $\vec{x}\in\Sbb^2$ such that $\widetilde{\scal}(\vec{x})=\fint\widetilde{\scal}=2\widetilde{\rho}^{-2}$. Let $\vec{y}$ be any point on $(\Sbb^2,\widetilde{\gamma})$ that can be connected to $\vec{x}$ by a unit speed geodesic $\mathfrak{s}$ of length $L\le 10\widetilde{\rho}$. Then
		\[	
		\btr{\widetilde{\scal}(\vec{y})-\widetilde{\scal}(\vec{x})}\le \int_{\mathfrak{s}} \spann{\widetilde{\nabla}\widetilde{\scal},\dot{\mathfrak{s}}}\d s\le C\widetilde{\rho}^{1-3\beta}.
		\]
		In particular, $\widetilde{\scal}(\vec{y})\ge \widetilde{\rho}^{-2}$ if $\widetilde{\rho}\ge \widetilde{\rho}_0$ sufficiently large. By the theorem of Bonnet-Myers, we can now conclude that such geodesics in fact cover all of $(\Sbb^2,\widetilde{\gamma})$, i.e., $\operatorname{diam}(\Sbb^2,\widetilde{\gamma})\le 10\widetilde{\rho}$. We conclude that
		\[
		\newnorm{\widetilde{\scal}-\fint\widetilde{\scal}}_{C^1(\,\widetilde{\gamma}\,)}\le C\widetilde{\rho} \max\limits_{\Sbb^2}\newbtr{\widetilde{\nabla}\widetilde{\scal}}_{\widetilde{\gamma}},
		\]
		where we consider a weighted $C^1$ norm as in \cite[Section 2.2]{kroenckewolff}. By the definition of the weighted norm, and the fact that $\omega_{\max}\ge \widetilde{\rho}$ and  $\btr{\widehat{\nabla}\widetilde{\scal}}_{\widehat{\gamma}}=\omega\btr{\widetilde{\nabla}\widetilde{\scal}}_{\widetilde{\gamma}}$, we find
		\[
		\newnorm{\scal-\fint\scal}_{C^1(\widehat{\gamma})}\le C\omega_{\max} \max\limits_{\Sbb^2}\newbtr{\widetilde{\nabla}\widetilde{\scal}}_{\widetilde{\gamma}}.
		\]
		Using the diameter bound, we find
		\begin{align}\label{eq_proof_lemApriori_1}
			\omega\le \widetilde{\rho}(1+10\btr{\widetilde{\nabla}\omega}_{\widetilde{\gamma}})=\rho(1+10\btr{\widehat{\nabla}\ln\omega}_{\hatgamma})\le C(v_2)\widetilde{\rho}^{1+2a}
		\end{align}
		if $\rho\ge\rho_0$ sufficiently large. As $\newbtr{\widetilde{\nabla}\widetilde{\scal}}_{\widetilde{\gamma}}\le C\widetilde{\rho}^{-3\beta}$, Corollary \ref{kor_acontrol_rough} and Equation \eqref{eq_proof_lemApriori_1} further imply that $\btr{\vec{a}}\le C\rho^{2\alpha}$. Hence, if $\alpha_0(\beta)$ is chosen sufficiently small and if $\widetilde{\rho}\ge\widetilde{\rho}$ is sufficiently large, then Corollary \ref{kor_quantitativecontrol} implies that
		\begin{align}\label{eq_proof_proof_2}
			\newnorm{\omega-b_{\widetilde{\rho},\vec{a}}}_{C^2(\,\widehat{\gamma}\,)}\le C\left(1+\newbtr{\vec{a}}^2\right)^2\widetilde{\rho}^3\omega_{\max}\max\limits_{\Sbb^2}\newbtr{\widetilde{\nabla}\widetilde{\scal}}.
		\end{align}
		Hence
		\[
		\norm{\frac{\omega}{\widetilde{\rho}}-b_{\vec{a}}}_{C^2(\,\widehat{\gamma}\,)}\le C\rho^{-\delta}
		\]
		for some uniform $\delta>0$ if $\alpha_0(\beta)$ is chosen sufficiently small. \newline\newline
		\textbf{Step 2: Improved estimate on the associated vector $\vec{a}$}\newline
		Assume $\vec{a}\not=0$, as the desired estimate below will be trivially satisfied otherwise. Consider the function $f=f_{\vec{a}}$ that is induced by a family of M\"obius transformations in direction $\frac{\vec{a}}{\btr{\vec{a}}}$ as in \cite[Section 5]{wolff_stability}. We briefly recall that
		\[
		\int_{\Sbb^2} f\d\widetilde{\mu}=\int_{\Sbb^2}f\widetilde{\scal}\d\widetilde{\mu}=0,
		\]
		where the first identity is due to the fact that the area is preserved under a M\"obius transformation and the second one is a consequence of the Kazdan--Warner identity, see \cite[Proposition 5.3]{wolff_stability}. In particular, as $\d\widetilde{\mu}=\omega^2\d\widehat{\mu}$ and $\mathcal{H}^2$ is constant, we conclude that 
		\[
		8m\widetilde{\rho}^{-1}\int_{\Sbb^2}\frac{\widetilde{\rho}}{\omega}f\d\widehat{\mu}=\int_{\Sbb^2}f\left(\mathcal{H}^2-2\widetilde{\scal}+\frac{8m}{\omega^3}\right)\d\widetilde{\mu}.
		\]
		We assume without loss of generality that $\vec{a}=(0,0,\btr{\vec{a}})$ (as we can always achieve this by a suitable rotation acting as an isometry on the round metric). In particular,
		\[
		b_{\vec{a}}=\frac{1}{\sqrt{1+\btr{\vec{a}}^2}-\btr{\vec{a}}\cos\theta},
		\]
		and 
		\[
		f=2\hatgamma\left(\widehat{\nabla}\ln\omega,\sin\theta\partial_\theta\right)+2\cos\theta=2\cos\theta-2\widehat{\gamma}(\widehat{\nabla}\ln\omega,\widehat{\nabla}\cos\theta).
		\]
		Using partial integration and the fact that $\cos\theta$ is a first eigenfunction, we thus also find
		\begin{align*}
			\int_{\Sbb^2}\frac{\widetilde{\rho}}{\omega}f\d\widehat{\mu}=6\int_{\Sbb^2}\frac{\widetilde{\rho}}{\omega}\cos\theta\d\widehat{\mu}=6\int_{\Sbb^2}b_{\vec{a}} ^{-1}\cos\theta\d\widehat{\mu}+6\int_{\Sbb^2}\left(\frac{\widetilde{\rho}}{\omega}-b_{\vec{a}}^{-1}\right)\cos\theta\d\widehat{\mu}
		\end{align*}
		Explicit integration yields
		\[
		6\int_{\Sbb^2}b_{\vec{a}}^{-1}\cos\theta\d\widehat{\mu}=-8\pi\btr{\vec{a}},
		\]
		so combining the above two identities and using Lemma \ref{lem_comparrisonRtildemathcalH} together with Equation \eqref{eq_prelim_c2estimate_A}, we obtain
		\begin{align}
			\begin{split}\label{eq_proof_proof_3}
			\btr{\vec{a}}\le&\, C\sqrt{1+\btr{\vec{a}}^2}\max\left(\frac{\widetilde{\rho}}{\omega}\right)\norm{\frac{\omega}{\widetilde{\rho}}-b_{\vec{a}}}_{C^0(\widehat{\gamma})}\\
			&\,+\frac{C\widetilde{\rho}}{m}\left(1+\max\newbtr{\widehat{\nabla}\ln\omega}_{\hatgamma}\right)\left(\mathcal{H}^2+\frac{C}{\omega_{\min}^2}\left(1+\max\btr{\widehat{\nabla}\ln\omega}_{\widehat{\gamma}}^2\right)\right),
			\end{split}
		\end{align}
		where we also used that $b_{\vec{a}}^{-1}\le 2\sqrt{1+\btr{\vec{a}}^2}$ and $\btr{\widehat{\nabla}b_{\vec{a}}}\le C(1+\btr{\vec{a}}^2)$. By assumption, and the already established upper bounds on $\omega$ and $\vec{a}$, we thus find
		\[
		\btr{\vec{a}}\le C\widetilde{\rho}^{-\frac{\delta}{2}}.
		\]
		provided $\widetilde{\rho}\ge \widetilde{\rho}$ sufficiently large if $\alpha<\alpha_0(\beta)$ is chosen sufficiently small. Since 
		\[
			\btr{1-b_{\vec{a}}}\le  C\btr{\vec{a}}\sqrt{1+\btr{\vec{a}}^2},\text{ and } \btr{\widehat{\nabla}b_{\vec{a}}}\le C\btr{\vec{a}}\sqrt{1+\btr{\vec{a}}^2},
		\]
		this improved bound on $\vec{a}$ together with the already established comparison between $\omega$ and $b_{\widetilde{\rho},\vec{a}}$ in Step 1 now yields that
		\begin{align*}
			\btr{\omega-\rho}&\le \widetilde{\rho}^{1-\frac{1}{3}\delta},\\
			\btr{\widehat{\nabla}\frac{\omega}{\widetilde{\rho}}}&\le C\widetilde{\rho}^{-\frac{1}{3}\delta}
		\end{align*}
		provided $\widetilde{\rho}\ge \widetilde{\rho}_0$ sufficiently large if $\alpha<\alpha_0(\beta)$ is chosen sufficiently small.
		In particular, $\frac{1}{2}\widetilde{\rho}\le \omega\le 2\widetilde{\rho}$, which further implies that
		\[
		\btr{\widehat{\nabla}\ln\omega}\le \rho^{-\frac{1}{3}\delta}.
		\]
		\noindent\textbf{Step 3: Revisiting previous estimates}
		We are now in the position to feed the newly obtained $C^2$ estimates on $\omega$ into the previous estimates. This will result in drastically improved estimates, yielding the claim. Using $\frac{1}{2}\rho\le \omega\le 2\rho$ and $\max(\btr{\vec{a}}, \newbtr{\widehat{\nabla}\ln\omega}_{\hatgamma})\le 1$ for $\rho\ge \rho_0$ sufficiently large, Equations \eqref{eq_proof_proof_1}, \eqref{eq_proof_proof_2}, and \eqref{eq_proof_proof_3} now yield
		\begin{align*}
			\norm{\frac{\omega}{\rho}-b_{\vec{a}}}_{C^{2}(\widehat{\gamma})}&\le C\widetilde{\rho}^3\max\newbtr{\nabla\accentset{\circ}{A}}+\frac{C}{\widetilde{\rho}^2},\\
			\btr{\vec{a}}&\le C\norm{\frac{\omega}{\rho}-b_{\vec{a}}}_{C^1(\widehat{\gamma})}+\frac{C}{\widetilde{\rho}}\le  C\widetilde{\rho}^3\max\newbtr{\nabla\accentset{\circ}{A}}+\frac{C}{\widetilde{\rho}},
		\end{align*}
		which in particular yield that
		\[
		\btr{\widehat{\nabla}^i\ln\omega}\le C\widetilde{\rho}^{3}\max \newbtr{\nabla\accentset{\circ}{A}}+\frac{C}{\widetilde{\rho}}
		\]
		for $i=1,2$. By Remark \ref{bem_proof_boundA1}, we can now conclude that
		\[
		\newbtr{\accentset{\circ}{A}}\le\frac{C}{\widetilde{\rho}^3}\left(\frac{C}{\widetilde{\rho}}+\widetilde{\rho}^3\max\newbtr{\nabla\accentset{\circ}{A}}\right)^2+ \frac{C}{\widetilde{\rho}^4}\le C\widetilde{\rho}^3\max\newbtr{\nabla\accentset{\circ}{A}}^2+\frac{C}{\widetilde{\rho}^4},
		\]
		where we used Youngs' inequality. Finally, in a similar fashion, Equation \eqref{eq_proof_nablaA_improvement} yields 
		\[
			\newbtr{\nabla\accentset{\circ}{A}}^2\le \frac{C}{\widetilde{\rho}^{10}}+\frac{C}{\widetilde{\rho}^2}\max\newbtr{\nabla\accentset{\circ}{A}}^2\left(C\widetilde{\rho}^{-1}+\widetilde{\rho}^6\max\newbtr{\nabla\accentset{\circ}{A}}^2+\widetilde{\rho}^{12}\max\newbtr{\nabla\accentset{\circ}{A}}^4\right).
		\]
		By assumption $\widetilde{\rho}^3\newbtr{\nabla\accentset{\circ}{A}}\le 1$ provided $\widetilde{\rho}\ge \widetilde{\rho}_0$ sufficiently large. Hence, the above bounds imply that
		\[
			\newbtr{\accentset{\circ}{A}}\le \frac{\widetilde{B}_2}{\widetilde{\rho}^4},\text{ and }\newbtr{\nabla\accentset{\circ}{A}}\le \frac{\widetilde{B}_3}{\widetilde{\rho}^5}
		\]
		for suitable (fixed) constants $\widetilde{B}_2$, $\widetilde{B}_3$ for $\widetilde{\rho}\ge \widetilde{\rho}_0$ sufficiently large. Using the $C^2$-bounds derived above, \eqref{eq_prelim_c3estimate_nablaA} and Lemma \ref{lem_comparrisonRtildemathcalH}, we further find
		\[
			\max\left(\btr{\omega-\widetilde{\rho}},\, \norm{\frac{\omega}{\widetilde{\rho}}}_{C^3(\widehat{\gamma})},\,\widetilde{\rho}^2\mathcal{H}^2\right)\le 7
		\]
		provided $\widetilde{\rho}\ge \widetilde{\rho}_0$ sufficiently large. Finally, we note that $\btr{\rho-\widetilde{\rho}}\le C\widetilde{\rho}^{-1}$ by \cite[Equation (A.1)]{kroenckewolff}, so we may replace $\widetilde{\rho}$ by $\rho$ in all estimates to obtain \eqref{eq_proof_goal1} and \eqref{eq_proof_goal2}. This concludes the proof.
	\end{proof}
	
	\appendix
	
	\section{Difference Tensor revisited}\label{appendix_differencetensor}
	
	We revisit our analysis of the difference tensor in \cite[Appendix: Difference tensor estimates]{kroenckewolff} and provide some more general estimates. In particular, we will not assume that the derivatives of $\omega$ are bounded up to some order $k$ ($k=3$ in \cite{kroenckewolff}), but only assume that $\omega \ge r_0$ for some sufficiently large $r_0$. Up to not assuming an a priori bound on the derivatives of $\omega$, all estimates are derived in complete analogy to \cite{kroenckewolff}, so we will skip most of the proofs for brevity. As above, we will always consider the graph of a spacelike cross section $\omega\colon\Sbb^2\to(r_0,\infty)$ with respect to a background foliation as in Definition \ref{defi_asymclassS}, and (the pullback of) the induced the metric $\gamma_\omega$ on $\Sbb^2$ defined via $(\gamma_\omega)_p=(\gamma_{\omega(p)})_p$. Moreover, we consider the conformally round metric $\widetilde{\gamma}_\omega:=\omega^2\hatgamma$, where $\widehat{\gamma}$ denotes the standard round metric. One can check that
	\begin{align}\label{eq_appx_inverse}
		\gamma_\omega^{KL}=\tildegamma_\omega^{KL}+O_{3,3}(r^{-4})^{KL}.
	\end{align}
	In the following, $\nabla$, $\widetilde{\nabla}$, and $\widehat{\nabla}$ will always denote the gradient of a function or the tensor derivative with respect to $\gamma$, $\widetilde{\gamma}$, and $\widehat{\gamma}$, respectively.
	Let $\Gamma_{IJ}^K$, $\widetilde{\Gamma}_{IJ}^K$, $\widehat{\Gamma}_{IJ}^K$ denote Christoffel symbols with respect to $\gamma_\omega$, $\tildegamma_\omega$, and $\hatgamma$, respectively\footnote{Here, capital letters denote local coordinates on $\Sbb^2$ to emphasize that we work with the pullback of all objects as smooth (families of) tensor fields on $\Sbb^2$.}. Then the difference tensors
	\begin{align*}
		\widehat{Q}_{IJ}^K&:=\Gamma_{IJ}^K-\widehat{\Gamma}_{IJ}^K,\\
		\widetilde{Q}_{IJ}^K&:=\Gamma_{IJ}^K-\widetilde{\Gamma}_{IJ}^K,
	\end{align*}
	satisfy the following identities, see \cite[Lemma A.1]{kroenckewolff}:
	\begin{lem}\label{lem_appx_differencetensors}
		\begin{align*}
			\widehat{Q}_{IJ}^K=
			&\,\gamma_{\omega}^{KL}\left(w\d\omega_I\hatgamma_{JL}+\omega\d\omega_J\hatgamma_{IL}-\omega\d\omega_L\hatgamma_{IJ}\right)\\
			&\,+\frac{1}{2}\gamma_{\omega}^{KL}\left(\d\omega_IO_{2,3}(r^{-1})_{JL}+\d\omega_JO_{2,3}(r^{-1})_{IL}-\d\omega_LO_{2,3}(r^{-1})_{IJ}\right)\\
			&\,+\gamma_{\omega}^{KL}\left(O_{3,2}(1)_{IJL}+O_{3,2}(1)_{JIL}-O_{3,2}(1)_{LIJ}\right),\\
			\,\\
			\widetilde{Q}_{IJ}^K=
			&\,\frac{1}{2}\gamma_\omega^{KL}\left(d\omega_IO_{2,3}(r^{-1})_{JL}+d\omega_JO_{2,3}(r^{-1})_{IL}-d\omega_LO_{2,3}(r^{-1})_{IJ}\right)\\
			&\,+\gamma^{KL}_\omega\left(O_{3,2}(1)_{IJL}+O_{3,2}(1)_{JIL}-O_{3,2}(1)_{LIJ}\right)\\
			&\,+\frac{1}{\omega}\gamma_\omega^{KL}\left(\widehat{\nabla}\omega^M\hatgamma_{IJ}-\d\omega_I\delta_J^M-\d\omega_J\delta_I^M\right)O_{3,3}(1)_{LM},
		\end{align*}
	\end{lem}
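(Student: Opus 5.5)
The two formulas in Lemma \ref{lem_appx_differencetensors} come from the Koszul formula. For any two metrics, the difference of Christoffel symbols is
\[
\Gamma_{IJ}^K-\overline{\Gamma}_{IJ}^K=\frac{1}{2}\gamma^{KL}\left(\overline{\nabla}_I\gamma_{JL}+\overline{\nabla}_J\gamma_{IL}-\overline{\nabla}_L\gamma_{IJ}\right),
\]
where $\overline{\nabla}$ is the reference connection ($\widehat{\nabla}$ or $\widetilde{\nabla}$). So the whole task is to compute $\widehat{\nabla}\gamma_\omega$ and $\widetilde{\nabla}\gamma_\omega$ from the expansion of Definition \ref{defi_asymclassS}.

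\textbf{Step 1: splitting the metric.} Since $\gamma_\omega$ is the pullback of $\gamma_r$ evaluated at $r=\omega(p)$, I write
\[
(\gamma_\omega)_{JL}=\omega^2\hatgamma_{JL}+\sigma(\omega(p),p)_{JL},\qquad \sigma_r=O_{3,3}(1).
\]
\textbf{Step 2: the chain rule.} Differentiating gives
\[
\widehat{\nabla}_I(\gamma_\omega)_{JL}=2\omega\,\d\omega_I\hatgamma_{JL}+(\widehat{\nabla}_I\sigma_r)|_{r=\omega}+\d\omega_I(\partial_r\sigma_r)|_{r=\omega}.
\]
By Definition \ref{defi_prelim_O}, $\widehat{\nabla}\sigma_r$ is of the type $O_{3,2}(1)$: one spherical derivative is used, and $r$-derivatives up to order two remain available. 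Likewise $\partial_r\sigma_r$ is of the type $O_{2,3}(r^{-1})$ (the exact index bookkeeping follows the convention of \cite{kroenckewolff}).

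\textbf{Step 3: the formula for $\widehat{Q}$.} Inserting Step 2 into the Koszul formula, the three $2\omega\,\d\omega$ terms, multiplied by $\tfrac12$, give exactly the first line of $\widehat{Q}$. The $\partial_r\sigma$ terms give the second line, which keeps the factor $\tfrac12$. The $\widehat{\nabla}\sigma$ terms give the third line, with the $\tfrac12$ absorbed into the $O$-notation.

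\textbf{Step 4: the formula for $\widetilde{Q}$.} Here I use $\widetilde{\nabla}=\widehat{\nabla}-P$, where
\[
P_{IJ}^M=\frac{1}{\omega}\left(\d\omega_I\delta_J^M+\d\omega_J\delta_I^M-\widehat{\nabla}\omega^M\hatgamma_{IJ}\right)
\]
is the standard conformal change of connection. Since $\widetilde{\nabla}\widetilde{\gamma}=0$, only $\widetilde{\nabla}\sigma$ survives, and
\[
\widetilde{\nabla}_I\sigma_{JL}=\widehat{\nabla}_I\sigma_{JL}-P_{IJ}^M\sigma_{ML}-P_{IL}^M\sigma_{JM}.
\]
In the Koszul combination the $P$-terms collect, after symmetrization, into
\[
-2P_{IJ}^M\sigma_{ML}.
\]
To see this, note that the terms with $P_{IL}$ and $P_{JL}$ cancel pairwise by the symmetry of $P$ and $\sigma$. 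Multiplying by $\tfrac12\gamma^{KL}$ gives
\[
\frac{1}{\omega}\gamma^{KL}\left(\widehat{\nabla}\omega^M\hatgamma_{IJ}-\d\omega_I\delta_J^M-\d\omega_J\delta_I^M\right)\sigma_{LM},
\]
which is the third line of $\widetilde{Q}$ with $\sigma=O_{3,3}(1)$. The remaining terms are identical to lines two and three of $\widehat{Q}$.

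The only delicate point is checking that the $P$-terms cancel and combine as claimed; this should be verified carefully by writing out the six terms of the Koszul combination. Everything else is bookkeeping of the orders. In particular, no derivative bound on $\omega$ is used anywhere, which is what allows the lemma to hold under the weakened assumptions of this paper.
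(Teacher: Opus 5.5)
Your computation is correct and follows the standard route for this lemma; the paper itself only cites \cite[Lemma A.1]{kroenckewolff}. You apply the difference-of-connections formula to $\gamma_\omega=\omega^2\hatgamma+\sigma_r|_{r=\omega}$ with the chain rule, and for $\widetilde{Q}$ the six $P$-terms do collapse to $-2P_{IJ}^M\sigma_{ML}$. This is consistent with $\widetilde{Q}=\widehat{Q}-P$ together with $\gamma^{KL}-\widetilde{\gamma}^{KL}=-\gamma^{KM}\sigma_{MN}\widetilde{\gamma}^{NL}$.

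The one slip is your justification of the subscripts. Under Definition \ref{defi_prelim_O}, spending one spherical derivative on $\sigma_r=O_{3,3}(1)$ gives $\widehat{\nabla}\sigma_r=O_{2,3}(1)$, and spending one radial derivative gives $\partial_r\sigma_r=O_{3,2}(r^{-1})$. So the labels in the statement, which you tried to reproduce, are transposed relative to that definition. This is harmless for the later estimates, which only need $O_{2,2}$-type control of these terms.
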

	This yields the following estimates on the difference tensors:
	
	\begin{lem}\label{lem_appendix_differencetensor2}
		For $\omega\ge r_0$ sufficiently large, there exists a uniform constant $C$ such that
		\begin{align*}
			\newbtr{\widehat{Q}}_{\widehat{\gamma}}&\le C\left(\newbtr{\widehat{\nabla}\ln\omega}_{\hatgamma}+\omega^{-2}\right),\\
			\newbtr{\widehat{\nabla}\widehat{Q}}_{\hatgamma}&\le C\left(\newbtr{\widehat{\nabla}^2\ln\omega}_{\hatgamma}+\omega^{-2}\left(1+\newbtr{\widehat{\nabla}\ln\omega}^2_{\hatgamma}\right)\right),\\
			\newbtr{\widehat{\nabla}^3\widehat{Q}}_{\hatgamma}&\le  C\left(\newbtr{\widehat{\nabla}^3\ln\omega}_{\hatgamma}+\omega^{-2}\left(1+\newbtr{\widehat{\nabla}\ln\omega}_{\hatgamma}\right)\left(1+\newbtr{\widehat{\nabla}\ln\omega}^2_{\hatgamma}+\newbtr{\widehat{\nabla}^2\ln\omega}_{\hatgamma}\right)\right).
		\end{align*}
	\end{lem}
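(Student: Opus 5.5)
We will derive the three bounds by differentiating the explicit formula for $\widehat{Q}$ from Lemma \ref{lem_appx_differencetensors}. We write it as
\[
\widehat{Q}^K_{IJ}=\gamma_\omega^{KL}\,\omega^2\bigl(\d\ln\omega_I\,\hatgamma_{JL}+\d\ln\omega_J\,\hatgamma_{IL}-\d\ln\omega_L\,\hatgamma_{IJ}\bigr)+E^K_{IJ},
\]
where $E$ collects the $O_{2,3}(r^{-1})\d\omega$ and $O_{3,2}(1)$ terms. By \eqref{eq_appx_inverse}, $\gamma_\omega^{KL}=\omega^{-2}\hatgamma^{KL}+O_{3,3}(r^{-4})^{KL}$. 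The leading part is therefore $\hatgamma^{KL}(\cdots)$ applied to $\d\ln\omega$, plus an error bounded by $C\omega^{-2}\newbtr{\widehat{\nabla}\ln\omega}$. For the remaining terms we write $\d\omega=\omega\,\d\ln\omega$. Then $\omega^{-2}\cdot O(r^{-1})\cdot\omega\,\d\ln\omega$ is $O(\omega^{-2})\newbtr{\widehat{\nabla}\ln\omega}$, and $\omega^{-2}O_{3,2}(1)$ is $O(\omega^{-2})$. This gives the $C^0$ bound directly.

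For derivatives, the key tool is the chain rule for the families $T_r$ evaluated at $r=\omega(\vec{x})$, as in Lemma \ref{lem_appendix_derivativesTR}. We have
\[
\widehat{\nabla}(T_\omega)=(\widehat{\nabla}T)_\omega+(\partial_rT)_\omega\,\d\omega .
\]
Since $\partial_r$ gains a factor $r^{-1}$, we get $\d\omega/\omega=\d\ln\omega$. Thus each derivative of an $O_{k,l}(r^\alpha)$ term yields $r^\alpha(1+\newbtr{\widehat{\nabla}\ln\omega})$. A second derivative produces $\newbtr{\widehat{\nabla}\ln\omega}^2+\newbtr{\widehat{\nabla}^2\ln\omega}$ from $\partial_r^2T\,\d\omega\otimes\d\omega$ and $\partial_rT\,\widehat{\nabla}^2\omega$, using $\widehat{\nabla}^2\omega/\omega=\widehat{\nabla}^2\ln\omega+\d\ln\omega\otimes\d\ln\omega$.

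We apply this to $\widehat{\nabla}\widehat{Q}$. The leading term gives $\widehat{\nabla}^2\ln\omega$, plus derivatives of the $O(r^{-4})$ correction times $\omega^2\d\ln\omega$. Those are bounded by $\omega^{-2}\bigl((1+\newbtr{\widehat{\nabla}\ln\omega})\newbtr{\widehat{\nabla}\ln\omega}+\newbtr{\widehat{\nabla}^2\ln\omega}\bigr)$. The error terms in $E$ give $\omega^{-2}(1+\newbtr{\widehat{\nabla}\ln\omega}^2+\newbtr{\widehat{\nabla}^2\ln\omega})$. Absorbing $\omega^{-2}\newbtr{\widehat{\nabla}^2\ln\omega}$ into the main term, and $\newbtr{\widehat{\nabla}\ln\omega}\le 1+\newbtr{\widehat{\nabla}\ln\omega}^2$, yields the second estimate.

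The third estimate (read as the second covariant derivative, matching the displayed right-hand side; the regularity $O_{3,\cdot}$, $O_{2,3}$ allows it) is obtained the same way with one more derivative. We then collect the polynomial in $\newbtr{\widehat{\nabla}^j\ln\omega}$, $j\le3$. The only delicate point is the bookkeeping: we must check that every mixed product, for instance $\newbtr{\widehat{\nabla}\ln\omega}\,\newbtr{\widehat{\nabla}^2\ln\omega}$ or $\newbtr{\widehat{\nabla}\ln\omega}^3$, is dominated by $(1+\newbtr{\widehat{\nabla}\ln\omega})(1+\newbtr{\widehat{\nabla}\ln\omega}^2+\newbtr{\widehat{\nabla}^2\ln\omega})$. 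We must also check that the number of $r$- and angular derivatives used never exceeds the regularity indices of the $O_{k,l}$ classes. This is the step I expect to require care, though it involves no new idea.
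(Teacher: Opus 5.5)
Your proposal is correct and is essentially the paper's argument. The paper likewise uses $\gamma_\omega^{-1}=\omega^{-2}\hatgamma^{-1}+O_{3,3}(r^{-4})$ to rewrite $\widehat{Q}$ as $(\delta+O_{2,3}(r^{-2}))\ast\d\ln\omega+O_{3,2}(r^{-2})$, and then applies the chain rule $\widehat{\nabla}_K(T_r\vert_{r=\omega})=\omega\,\d\ln\omega_K\,\partial_rT_r\vert_{r=\omega}+\widehat{\nabla}_KT_r\vert_{r=\omega}$ once and twice; your reading of the third bound as an estimate for $\widehat{\nabla}^2\widehat{Q}$ matches what the paper's proof actually establishes.
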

	\begin{bem}
		Note that the estimates here and in the following are understood as pointwise bounds on the tensor norm, i.e,
		\[
			\newbtr{\widehat{Q}}_{\widehat{\gamma}}(\vec{x})\le C\left(\newbtr{\widehat{\nabla}\ln\omega}_{\hatgamma}(\vec{x})+\omega^{-2}(\vec{x})\right)
		\]
		for all $\vec{x}\in\Sbb^2$.
	\end{bem}
	\begin{proof}
		As $\gamma_{\omega}^{-1}=\widetilde{\gamma}_\omega^{-1}+O_{3,3}(r^{-4})^{IJ}$, we in particular have $\gamma_\omega^{-1}=O_{3,3}(r^{-2})^{IJ}$. Using both, it is straightforward to simplify the identity for $\widehat{Q}$ in Lemma \ref{lem_appx_differencetensors} to
		\begin{align*}
			\widehat{Q}_{IJ}^K=&\,\left(\delta_J^K+O_{2,3}(r^{-2})_J^K\right)\d\ln\omega_I+\left(\delta_I^K+O_{2,3}(r^{-2})_I^K\right)\d\ln\omega_J\\
			&\,-\left(\widehat{\gamma}^{KL}\widehat{\gamma}_{IJ}+O_{2,3}(r^{-2})_{IJ}^{KL}\right)\d\ln\omega_L+O_{3,2}(r^{-2})_{IJ}^K.
		\end{align*}
		This readily implies the estimate for $\widehat{Q}$. To estimate the tensor derivative of $\widehat{Q}$ we observe that for any smooth family of tensor fields $(T_r)$ evaluated along $r=\omega$, we have for $T=T_r\vert_{r=\omega}$
		\[
			\widehat{\nabla}_KT=\omega\d\ln\omega_K\partial_rT_r\vert_{r=\omega}+\widehat{\nabla}_KT_r\vert_{r=\omega}.
		\]
		Hence, if $T=O_{k,l}(r^\alpha)$ for $k,l\ge 1$, we find
		\[
			\newbtr{\widehat{\nabla} T}_{\hatgamma}\le C\omega^{\alpha}\left(1+\newbtr{\widehat{\nabla}\ln\omega}_{\hatgamma}\right).
		\]
		If $k,l\ge 2$, we similarly derive that
		\[
			\newbtr{\widehat{\nabla}^2T}_{\hatgamma}\le C\omega^{\alpha}\left(1+\newbtr{\widehat{\nabla}\ln\omega}_{\hatgamma}^2+\newbtr{\widehat{\nabla}^2\ln\omega}_{\hatgamma}\right).
		\]
		Combining this with the above identity for $\widehat{Q}$ implies the estimate for $\widehat{\nabla}\widehat{Q}$ and $\widehat{\nabla}^2\widehat{Q}$.
	\end{proof}
	
	From this, it is straightforward to establish the following comparison between the tensor derivatives of $\omega$:
	\begin{lem}\label{lem_appendix_derivativesomega}
		For $\omega\ge r_0$ sufficiently large, there exist uniform positive constants $C_1$, $C_2$, $C_3$ such that
		\[
			C_1^{-1}\newbtr{\widehat{\nabla}\ln\omega}_{\hatgamma}\le \newbtr{\nabla\omega}_\gamma\le C_1\newbtr{\widehat{\nabla}\ln\omega}_{\hatgamma},
		\]
		and
		\begin{align*}
			\newbtr{\nabla^2\omega}_\gamma\le&\, C_2\omega^{-1}\left(\newbtr{\widehat{\nabla}^2\ln\omega}_{\hatgamma}+\newbtr{\widehat{\nabla}\ln\omega}_{\hatgamma}\left(\newbtr{\widehat{\nabla}\ln\omega}_{\hatgamma}+\omega^{-2}\right)\right),\\
			\newbtr{\widehat{\nabla}^2\ln\omega}_{\hatgamma}\le&\, C_2\left(\omega \newbtr{\nabla^2\omega}_\gamma+\newbtr{\widehat{\nabla}\ln\omega}_{\hatgamma}\left(\newbtr{\widehat{\nabla}\ln\omega}_{\hatgamma}+\omega^{-2}\right)\right),\\
			\newbtr{\nabla^3\omega}_\gamma\le&\, C_3\omega^{-2}\left(\newbtr{\widehat{\nabla}^3\ln\omega}_{\hatgamma}+\newbtr{\widehat{\nabla}^2\ln\omega}_{\hatgamma}\left(\newbtr{\widehat{\nabla}\ln\omega}_{\hatgamma}+\omega^{-2}\right)\right)\\
			&+C_3\omega^{-2}\left(\newbtr{\widehat{\nabla}\ln\omega}_{\hatgamma}\left(\newbtr{\widehat{\nabla}\ln\omega}_{\hatgamma}+\omega^{-2}\right)^2+\omega^{-{2}}\newbtr{\widehat{\nabla}\ln\omega}_{\hatgamma}\right),\\
			\newbtr{\widehat{\nabla}^3\ln\omega}_{\hatgamma}\le&\, C_3\left(\omega^2\newbtr{{\nabla}^3\omega}_{\gamma}+\newbtr{\widehat{\nabla}^2\ln\omega}_{\hatgamma}\left(\newbtr{\widehat{\nabla}\ln\omega}_{\hatgamma}+\omega^{-2}\right)\right)\\
			&+C_3\left(\newbtr{\widehat{\nabla}\ln\omega}_{\hatgamma}\left(\newbtr{\widehat{\nabla}\ln\omega}_{\hatgamma}+\omega^{-2}\right)^2+\omega^{-{2}}\newbtr{\widehat{\nabla}\ln\omega}_{\hatgamma}\right).\\
		\end{align*}
	\end{lem}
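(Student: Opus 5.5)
The plan is to compare the three covariant derivatives directly, using the difference tensors of Lemma \ref{lem_appx_differencetensors} and the bounds of Lemma \ref{lem_appendix_differencetensor2}. I will always work on $\Sbb^2$ with the pulled-back metric $\gamma_\omega$, and use \eqref{eq_appx_inverse} together with $\gamma_\omega^{-1}=O_{3,3}(r^{-2})$. The latter makes $\gamma_\omega$ uniformly equivalent to $\omega^2\hatgamma$ once $\omega\ge r_0$ is large. Consequently, for a $(0,k)$-tensor $T$ we have $C^{-1}\omega^{-k}\btr{T}_{\hatgamma}\le\btr{T}_{\gamma}\le C\omega^{-k}\btr{T}_{\hatgamma}$.

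\textbf{First derivatives.} Since $\d\omega=\omega\,\d\ln\omega$, we get $\btr{\nabla\omega}_\gamma\approx\omega^{-1}\btr{\d\omega}_{\hatgamma}=\btr{\widehat{\nabla}\ln\omega}_{\hatgamma}$. This is exactly the first two-sided estimate.

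\textbf{Second derivatives.} We write
\[
\nabla^2_{IJ}\omega=\widehat{\nabla}^2_{IJ}\omega-\widehat{Q}_{IJ}^K\partial_K\omega,
\qquad
\widehat{\nabla}^2\omega=\omega\left(\widehat{\nabla}^2\ln\omega+\d\ln\omega\otimes\d\ln\omega\right).
\]
The bound $\btr{\widehat{Q}}_{\hatgamma}\le C(\btr{\widehat{\nabla}\ln\omega}+\omega^{-2})$ then gives
\[
\btr{\nabla^2\omega}_{\hatgamma}\le C\omega\left(\btr{\widehat{\nabla}^2\ln\omega}+\btr{\widehat{\nabla}\ln\omega}(\btr{\widehat{\nabla}\ln\omega}+\omega^{-2})\right).
\]
Converting norms costs a factor $\omega^{-2}$, which yields the bound with prefactor $\omega^{-1}$. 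The reverse inequality follows by solving the same identity for $\widehat{\nabla}^2\ln\omega$. This uses the same $\widehat{Q}$ bound and $\btr{\widehat{\nabla}^2\omega}_{\hatgamma}\le\omega^3\btr{\nabla^2\omega}_\gamma+\ldots$; dividing by $\omega$ gives the claim.

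\textbf{Third derivatives.} Differentiating once more gives
\[
\nabla^3\omega=\widehat{\nabla}(\nabla^2\omega)-\widehat{Q}\ast\nabla^2\omega
=\widehat{\nabla}^3\omega-\widehat{\nabla}\widehat{Q}\ast\d\omega-\widehat{Q}\ast\widehat{\nabla}^2\omega-\widehat{Q}\ast\nabla^2\omega.
\]
I also expand $\widehat{\nabla}^3\omega=\omega(\widehat{\nabla}^3\ln\omega+\widehat{\nabla}^2\ln\omega\ast\d\ln\omega+(\d\ln\omega)^{\otimes3})$. The terms are then bounded as follows. The term $\widehat{\nabla}\widehat{Q}\ast\d\omega$ is controlled by $\omega\btr{\widehat{\nabla}\ln\omega}(\btr{\widehat{\nabla}^2\ln\omega}+\omega^{-2}(1+\btr{\widehat{\nabla}\ln\omega}^2))$, using Lemma \ref{lem_appendix_differencetensor2}. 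The two $\widehat{Q}$-quadratic terms produce $\omega\btr{\widehat{\nabla}^2\ln\omega}(\btr{\widehat{\nabla}\ln\omega}+\omega^{-2})$ and $\omega\btr{\widehat{\nabla}\ln\omega}(\btr{\widehat{\nabla}\ln\omega}+\omega^{-2})^2$. Collecting these and multiplying by the norm conversion factor $\omega^{-3}$ gives the stated bound with prefactor $\omega^{-2}$. Here the pure $\omega^{-2}\btr{\widehat{\nabla}\ln\omega}$ term comes from the $O(r^{-2})$ part of $\widehat{\nabla}\widehat{Q}$, and $\btr{\widehat{\nabla}\ln\omega}^3$ is absorbed into $\btr{\widehat{\nabla}\ln\omega}(\btr{\widehat{\nabla}\ln\omega}+\omega^{-2})^2$. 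The reverse estimate for $\widehat{\nabla}^3\ln\omega$ is obtained by solving the same identity, with the $\nabla^2\omega$ term replaced via the second-order estimate.

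\textbf{Main obstacle.} The only genuinely delicate point is bookkeeping. In the difference tensors, the $r$-dependent error tensors are evaluated at $r=\omega$, so every hat-derivative produces an extra $\omega\,\d\ln\omega\,\partial_r$ term. I handle this exactly as in the proof of Lemma \ref{lem_appendix_differencetensor2}, and I check that these terms never exceed the claimed powers of $\omega^{-2}$.
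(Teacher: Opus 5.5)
Your proposal is correct and follows the paper's route: the paper presents the lemma as a straightforward consequence of the difference-tensor bounds in Lemma \ref{lem_appendix_differencetensor2}, which is exactly what you do with $\nabla^2\omega=\widehat{\nabla}^2\omega-\widehat{Q}\ast\d\omega$, its derivative, and the norm equivalence $\btr{T}_\gamma\approx\omega^{-k}\btr{T}_{\hatgamma}$. One slip: in the reverse second-order estimate the conversion should read $\btr{\nabla^2\omega}_{\hatgamma}\le C\omega^{2}\btr{\nabla^2\omega}_\gamma$ (your own $k=2$ rule), not $\omega^3$, and only with $\omega^2$ does dividing by $\omega$ give the claimed $\omega\btr{\nabla^2\omega}_\gamma$.
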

	Similarly, we find the following estimates for a family of tensor field $(T_r)$ evaluated along $r=\omega$:
	\begin{lem}\label{lem_appendix_derivativesTR}
		Let $T_r=O_{k,l}(r^{\alpha})$ be a family of $(m,n)$-tensors. For $\omega\ge r_0$ sufficiently small, there exists a positive $C$ (independent of $\omega$) such that
		\begin{align*}
			\newbtr{T_r}_\gamma&\le C\omega^{\alpha+m-n},\\
			\newbtr{\nabla T_r}_\gamma&\le C\omega^{\alpha+m-n-1}\left(1+\newbtr{\widehat{\nabla}\ln\omega}_{\hatgamma}\right),\\
			\newbtr{\nabla^2 T_r}_\gamma&\le C\omega^{\alpha+m-n-2}\left(1+\newbtr{\widehat{\nabla}\ln\omega}^2_{\hatgamma}+\newbtr{\widehat{\nabla}^2\ln\omega}_{\hatgamma}\right),\\
			\newbtr{\nabla^3T_r}_\gamma&\le C\omega^{\alpha+m-n-3}\left(\left(1+\newbtr{\widehat{\nabla}\ln\omega}_{\hatgamma}\right)\left(1+\newbtr{\widehat{\nabla}\ln\omega}^2_{\hatgamma}+\newbtr{\widehat{\nabla}^2\ln\omega}_{\hatgamma}\right)+\newbtr{\widehat{\nabla}^3\ln\omega}_{\hatgamma}\right)
		\end{align*}
		assuming that $k,l$ are sufficiently large to control this number of derivatives.
	\end{lem}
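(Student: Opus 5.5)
The plan is to reduce everything to the round metric $\hatgamma$ on $\Sbb^2$ and then collect terms. Three ingredients are needed: a norm comparison between $\gamma=\gamma_\omega$ and $\hatgamma$; a chain rule for differentiating $T:=T_r\vert_{r=\omega}$ with respect to $\widehat{\nabla}$; and the difference tensor $\widehat{Q}=\Gamma-\widehat{\Gamma}$ from Lemma \ref{lem_appendix_differencetensor2}, which converts $\nabla$ into $\widehat{\nabla}$.

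\textbf{Norm comparison and zeroth order.} By \eqref{eq_appx_inverse} and $\gamma_r=r^2\hatgamma+O_{3,3}(1)$, the metric $\gamma_\omega$ is uniformly equivalent to $\omega^2\hatgamma$ for $\omega\ge r_0$ large. Hence, as in Remark \ref{bem_prelim_backgroundfol}, any $(m,n)$-tensor $S$ on $\Sbb^2$ satisfies $\newbtr{S}_\gamma\le C\omega^{m-n}\newbtr{S}_{\hatgamma}$. Since $\newbtr{T_r}_{\hatgamma}\le Cr^{\alpha}$, the first estimate follows at once. For the higher estimates, note that $\nabla^jT$ is an $(m,n+j)$-tensor, so its $\gamma$-norm carries the factor $\omega^{m-n-j}$. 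It therefore suffices to bound $\newbtr{\nabla^jT}_{\hatgamma}\le C\omega^\alpha\cdot(\text{the bracket in the claim})$ for $j=1,2,3$.

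\textbf{Chain rule.} The basic identity, already used in the proof of Lemma \ref{lem_appendix_differencetensor2}, is
\[
\widehat{\nabla}_K T=\omega\,\d\ln\omega_K\,(\partial_rT_r)\vert_{r=\omega}+(\widehat{\nabla}_KT_r)\vert_{r=\omega}.
\]
Iterating it, $\widehat{\nabla}^jT$ becomes a sum of terms $(r^i\partial_r^i\widehat{\nabla}^{l}T_r)\vert_{r=\omega}$ multiplied by polynomials in $\widehat{\nabla}\ln\omega,\dots,\widehat{\nabla}^j\ln\omega$. Each such polynomial has total derivative order at most $j$. The derivatives of $\omega$ always appear as $\omega\,\d\ln\omega$, and when $\widehat{\nabla}$ falls on the factor $\omega$ it produces another $\omega\,\d\ln\omega$. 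Since $T_r=O_{k,l}(r^\alpha)$, each coefficient $r^i\partial_r^i\widehat{\nabla}^lT_r$ is $O(r^\alpha)$. This gives:
\begin{itemize}
\item $\newbtr{\widehat{\nabla}T}_{\hatgamma}\le C\omega^\alpha(1+\newbtr{\widehat{\nabla}\ln\omega})$;
\item $\newbtr{\widehat{\nabla}^2T}_{\hatgamma}\le C\omega^\alpha(1+\newbtr{\widehat{\nabla}\ln\omega}^2+\newbtr{\widehat{\nabla}^2\ln\omega})$;
\item $\newbtr{\widehat{\nabla}^3 T}_{\hatgamma}$ is bounded by $C\omega^\alpha$ times the bracket in the fourth claim. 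Here one uses $\newbtr{\widehat{\nabla}\ln\omega}^3+\newbtr{\widehat{\nabla}\ln\omega}\newbtr{\widehat{\nabla}^2\ln\omega}\le(1+\newbtr{\widehat{\nabla}\ln\omega})(1+\newbtr{\widehat{\nabla}\ln\omega}^2+\newbtr{\widehat{\nabla}^2\ln\omega})$.
\end{itemize}

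\textbf{Converting $\nabla$ to $\widehat{\nabla}$.} We write $\nabla T=\widehat{\nabla}T+\widehat{Q}\ast T$. Differentiating again gives
\[
\nabla^2T=\widehat{\nabla}^2T+\widehat{\nabla}\widehat{Q}\ast T+\widehat{Q}\ast\widehat{\nabla}T+\widehat{Q}\ast\widehat{Q}\ast T,
\]
and similarly $\nabla^3T$ is a sum of terms $\widehat{\nabla}^{a}\widehat{Q}\ast\cdots\ast\widehat{\nabla}^bT$ with total order $3$. Lemma \ref{lem_appendix_differencetensor2} gives $\widehat{Q}\lesssim\newbtr{\widehat{\nabla}\ln\omega}+\omega^{-2}$ and $\widehat{\nabla}\widehat{Q}\lesssim\newbtr{\widehat{\nabla}^2\ln\omega}+\omega^{-2}(1+\newbtr{\widehat{\nabla}\ln\omega}^2)$, together with the analogous second-derivative bound. (The statement there labels this bound $\widehat{\nabla}^3\widehat{Q}$; it is presumably $\widehat{\nabla}^2\widehat{Q}$, which is what is needed here.) Each $\widehat{\nabla}^a\widehat{Q}$ therefore behaves exactly like an order-$(a+1)$ expression in $\ln\omega$, up to harmless $\omega^{-2}\le 1$ corrections. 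Inserting these bounds together with the chain-rule bounds, every product has derivative weight at most $j$ and is dominated by the claimed bracket after applying Young-type inequalities such as $\newbtr{\widehat{\nabla}\ln\omega}\newbtr{\widehat{\nabla}^2\ln\omega}\le(1+\newbtr{\widehat{\nabla}\ln\omega})(1+\newbtr{\widehat{\nabla}\ln\omega}^2+\newbtr{\widehat{\nabla}^2\ln\omega})$.

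\textbf{Main obstacle.} The only real difficulty is the bookkeeping at third order: one must check that no term such as $\newbtr{\widehat{\nabla}^2\ln\omega}^2$ or $\newbtr{\widehat{\nabla}\ln\omega}^4$ appears, since either would exceed the claimed bracket. I would verify this with a weight count, assigning weight $i$ to $\widehat{\nabla}^i\ln\omega$ and weight $a+1$ to $\widehat{\nabla}^a\widehat{Q}$. Each term in $\nabla^3T$ then has total weight at most $3$, and every monomial of weight at most $3$ is bounded by the stated bracket. Finally, one needs $k,l\ge j$ so that all the required $\partial_r$ and $\widehat{\nabla}$ derivatives of $T_r$ are controlled, which is exactly the hypothesis of the lemma.
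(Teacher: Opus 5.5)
Your proposal is correct and follows essentially the route the paper has in mind. It combines the chain-rule identity $\widehat{\nabla}_K T=\omega\,\d\ln\omega_K\,(\partial_rT_r)\vert_{r=\omega}+(\widehat{\nabla}_KT_r)\vert_{r=\omega}$ from the proof of Lemma \ref{lem_appendix_differencetensor2}, the conversion of $\nabla$ into $\widehat{\nabla}$ via the difference tensor $\widehat{Q}$, and the uniform equivalence of $\gamma_\omega$ with $\omega^2\hatgamma$, which supplies the factor $\omega^{m-n-j}$; your weight count at third order is sound, and you are right that the last bound in Lemma \ref{lem_appendix_differencetensor2} should concern $\widehat{\nabla}^2\widehat{Q}$.
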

	
	\section{Curvature estimates revisited}\label{appendix_curvature}

	We restate the curvature identities in \cite[Appendix: Curvature estimates]{kroenckewolff} in a more general form. Using the curvature components in the exact Schwarzschild spacetime, see e.g. \cite[Lemma B.1 and B.2]{kroenckewolff}, we find the following in close analogy to \cite[Lemma B.3]{kroenckewolff}:
	
	\begin{prop}\label{prop_appendix_curvatureidentities}
		Let $\mathcal{N}$ be an asymptotically Schwarzschildean lightcone. If $\Sigma=\Sigma_\omega$ is a spacelike cross section of $\mathcal{N}$ with $\omega\ge r_0$ sufficiently large, then
		\begin{align*}
			\overline{\Riem}_{ikjl}=&\,2m\omega\left(\hatgamma_{IJ}\hatgamma_{KL}-\hatgamma_{IL}\hatgamma_{KJ}\right)+O_{2,2}(1)_{IKJL}\\
			&\,+\nabla\omega^AO_{2,2}(r)_{IKJLA}+\nabla\omega^A\nabla\omega^BO_{2,2}(r^2)_{IKJLAB},\\
			\overline{\Riem}_{i\ul{L}jL}=&\,-\frac{2m}{\omega}\hatgamma_{IJ}+O_{2,2}(r^{-2})_{IJ}+\nabla\omega^AO_{2,2}(r^{-1})_{IJA}+\nabla\omega^A\nabla\omega^BO_{2,2}(1)_{IJAB},\\
			\overline{\Riem}_{ijkL}=&\,-\frac{4m}{\omega^3} \nabla\omega^a\left(\gamma_{ik}\gamma_{ja}-\gamma_{ia}\gamma_{kj}\right)-\frac{2m}{\omega^3}\d\omega_j\gamma_{ik}+\frac{2m}{\omega^3}\d\omega_i\gamma_{jk}\\
			&\,+\nabla\omega^AO_{2,2}(1)_{IKJA}+\nabla\omega^A\nabla\omega^BO_{2,2}(r)_{IKJAB}+\nabla\omega^A\nabla\omega^B\nabla\omega^CO_{2,2}(r^2)_{IKJABC},\\
			\overline{\Riem}_{iLjL}=&-\frac{24}{\omega^3}\accentset{\circ}{\left(\d\omega\otimes\d\omega\right)}+O_{2,2}(r^{-2})_{IJ}+\nabla\omega^AO_{2,2}(r^{-1})_{IJA}+\nabla\omega^A\nabla\omega^BO_{2,2}(1)_{IJAB}\\
			&\,+\nabla\omega^A\nabla\omega^B\nabla\omega^CO_{2,2}(r)_{IJABC}+\nabla\omega^A\nabla\omega^B\nabla\omega^C\nabla\omega^DO_{2,2}(r^2)_{IJABCD},\\
			\overline{\Riem}_{ij\ul{L}L}=&\,O_{2,2}(r^{-2})_{IJ}+\nabla\omega^AO_{2,2}(r^{-1})_{IJA}+\nabla\omega^A\nabla\omega^BO_{2,2}(1)_{IJAB}\\
			\overline{\Riem}(\ul{L},L,L,\ul{L})=&\,\frac{8m}{\omega^3}+O_{2,2}(r^{-4})+\nabla\omega^AO_{2,2}(r^{-3})_A+\nabla\omega^A\nabla\omega^BO_{2,2}(r^{-2})_{AB},\\
			\overline{\Ric}(\ul{L},L)=&\,O_{2,2}(r^{-4})+\nabla\omega^AO_{2,2}(r^{-3})_A+\nabla\omega^A\nabla\omega^BO_{2,2}(r^{-2})_{AB},\\
			\overline{\Ric}(\ul{L},\ul{L})=&\,O_{2,2}(r^{-4}),\\
			\overline{\scal}=&\,O_{2,2}(r^{-4})+\nabla\omega^AO_{2,2}(r^{-3})_A+\nabla\omega^A\nabla\omega^BO_{2,2}(r^{-2})_{AB}
		\end{align*}
		along $\Sigma$, where $\nabla\omega^\mathcal{A}O_{k,l}(r^\alpha)_{\mathcal{I}}$ denotes contractions of $\nabla\omega$ and $(0,\btr{\mathcal{I}})$-tensors in $O_{k,l}(r^{\alpha})$ for capital multiindices $\mathcal{A}$, $\mathcal{I}$ with $\mathcal{A}\subseteq \mathcal{I}$.
	\end{prop}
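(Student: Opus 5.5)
The curvature identities in Proposition \ref{prop_appendix_curvatureidentities} will follow by decomposing the frame along $\Sigma$ via \eqref{eq_prelim_nullgeometry_frame}. We write $\partial_i=\partial_I+\partial_I\omega\,\ul{L}$ and $L=L_r-\btr{\nabla\omega}^2\ul{L}-2\nabla\omega^I\partial_I$, and then expand each component of $\overline{\Riem}$ on $\Sigma$ multilinearly in terms of the background frame $\{\partial_I,\ul{L},L_r\}$ of $S_{r}$, evaluated at $r=\omega(\vec{x})$. For instance, $\overline{\Riem}_{i\ul{L}jL}$ becomes $\overline{\Riem}_{I\ul{L}JL_r}$ plus terms in which one or two factors $\nabla\omega$ are contracted against components of the form $\overline{\Riem}_{I\ul{L}J K}$ or $\overline{\Riem}_{I\ul{L}\ul{L}\cdot}$. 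Terms with $\ul{L}$ in both slots of an antisymmetric pair vanish. For the $\ul{L}$-components coming from $\partial_I\omega\,\ul{L}$ we write $\d\omega_I$ and absorb them in the same way. Each background component is split as its Schwarzschild value plus an error. The errors are controlled by $\overline{\Riem}-\overline{\Riem}^{Schw}=\overline{O}_{2,2}(r^{-4})$ from Definition \ref{defi_asymclassS}, which, with the weighting of Definition \ref{defi_prelim_O_spacetime}, gives exactly the powers $r^{-4+\#\text{tangential}-\ldots}$ appearing in the stated $O_{2,2}$ coefficients.

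The Schwarzschild contributions are computed explicitly using \cite[Lemma B.1 and B.2]{kroenckewolff}. In the centered frame of the standard Schwarzschild lightcone, the only nonvanishing components are
\[
\overline{\Riem}_{IKJL}\propto 2m r(\hatgamma\hatgamma-\hatgamma\hatgamma),\qquad \overline{\Riem}_{I\ul{L}JL}=-\tfrac{2m}{r}\hatgamma_{IJ},\qquad \overline{\Riem}(\ul{L},L,L,\ul{L})=\tfrac{8m}{r^3},
\]
together with their symmetry partners, and the Schwarzschild Ricci tensor vanishes. Substituting these into the multilinear expansion produces the explicit $m$-dependent leading terms, for example the terms $\frac{4m}{\omega^3}\nabla\omega^a(\cdots)$ in $\overline{\Riem}_{ijkL}$, which come from $-2\nabla\omega^A\overline{\Riem}_{IJKA}$ and $\d\omega\otimes\overline{\Riem}_{\ul{L}JKL}$. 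We then rewrite $\hatgamma$-contractions as $\gamma$-contractions using $\gamma=\omega^2\hatgamma+O(1)$ from \eqref{eq_appx_inverse}. The discrepancy is $O_{2,2}$ of lower order and goes into the error tensors.

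The Ricci and scalar identities follow by tracing with $\gamma^{-1}$ together with the null directions, i.e.\ $\overline{g}^{-1}=\gamma^{IJ}\partial_I\partial_J+\frac12(\ul{L}\otimes L+L\otimes\ul{L})$. The Schwarzschild parts cancel by Ricci-flatness, leaving only errors. For $\overline{\Ric}(\ul{L},\ul{L})$ no $\nabla\omega$ terms appear, since $\ul{L}$ is unchanged and $\overline{\Ric}$ is a tensor: $\overline{\Ric}(\ul{L},\ul{L})$ is the same function on $\mathcal{N}$ regardless of the cross section, so it equals its background value $O_{2,2}(r^{-4})$.

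The main obstacle is bookkeeping: verifying that every mixed term has the claimed power of $r$ per factor of $\nabla\omega$. Each $\nabla\omega^A$ (a vector with index up, $\btr{\nabla\omega}\sim\omega^{-1}\btr{\widehat{\nabla}\omega}$) replaces a null slot $L$ by a tangential slot $\partial_A$. This raises the weight by one power of $r$ in Definition \ref{defi_prelim_O_spacetime}, which matches the pattern $O(r^{\alpha}),O(r^{\alpha+1}),\dots$ in the statement. We also need to check that two derivatives survive, i.e.\ that $O_{2,2}$ is preserved. This holds because the background curvature components are $O_{2,2}$ by assumption and we do not differentiate $\omega$ here. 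The quartic term in $\overline{\Riem}_{iLjL}$ arises from the two $L$ slots each contributing up to two $\nabla\omega$ factors, and its Schwarzschild leading part $\frac{24}{\omega^3}\accentset{\circ}{(\d\omega\otimes\d\omega)}$ follows from $4\nabla\omega^A\nabla\omega^B\overline{\Riem}_{IAJB}$ combined with the cross terms involving $\overline{\Riem}_{I\ul{L}J L}$.
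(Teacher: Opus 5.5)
Your proposal is the paper's proof with the bookkeeping written out. The paper likewise decomposes $\{\partial_i,\ul{L},L\}$ via \eqref{eq_prelim_nullgeometry_frame} into the background frame $\{\partial_I,\ul{L},L_r\}$, compares with Schwarzschild using Definition \ref{defi_asymclassS}, and leaves the rest as a lengthy but direct calculation. Two details need fixing when you carry it out: the Schwarzschild part of $\overline{\Riem}_{iLjL}$ also receives $\d\omega_I\d\omega_J\,\overline{\Riem}(\ul{L},L_r,\ul{L},L_r)=-\frac{8m}{\omega^3}\d\omega_I\d\omega_J$ from the $\ul{L}$-components of $\partial_i,\partial_j$ (without it the contributions you list are not trace-free), so the leading term is $-\frac{24m}{\omega^3}\accentset{\circ}{\left(\d\omega\otimes\d\omega\right)}$ rather than $+\frac{24}{\omega^3}\accentset{\circ}{\left(\d\omega\otimes\d\omega\right)}$, consistent with the coefficient $-\frac{12m}{\omega^3}\ul{\theta}^2$ in $G$ of Proposition \ref{prop_appendix_NullSimonAsymSchwarzschild}; and your weight count also produces a $\nabla\omega$-free term $\overline{\Riem}_{IJKL_r}=O_{2,2}(r^{-1})_{IJK}$ in $\overline{\Riem}_{ijkL}$, which the displayed formula omits, so your claim that the powers match the statement exactly does not hold there.
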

	\begin{proof}
		Using the decomposition \eqref{eq_prelim_nullgeometry_frame} of the frame $\{\partial_i,\ul{L},L\}$ of $TM$ along $\Sigma$ into the frame $\{\partial_I,\ul{L},L_r\}$ of $TM$ along $\mathcal{N}$ with respect to the background foliation, the identities follow from a lengthy, but direct calculation using Definition \ref{defi_asymclassS}.
	\end{proof}
	
	Under the assumption that the derivatives of $\omega$ are controlled up to third order, Proposition \ref{prop_appendix_curvatureidentities} directly recovers the corresponding identities in \cite[Lemma B.3]{kroenckewolff} using Lemma \ref{lem_appendix_derivativesomega} and Lemma \ref{lem_appendix_derivativesTR}.
	
	\section{Contracted Null Simons' identity}\label{appendix_nullsimon}
	
	We prove the contracted Null Simons' identity for an STCMC surface as stated in Remark \ref{bem_prelim_contractedNullSimons}:
	
	\begin{prop}[Contracted Null Simons' identity]\label{prop_appendix_contractedNullSimons}
		Let $(\Sigma,\gamma)$ be a spacelike cross section of a null hypersurface $\mathcal{N}$ in an ambient $4$-dimensional spacetime $(\overline{M},\overline{g})$, $\ul{\theta}>0$ along $\Sigma$. If $\Sigma$ is an STCMC surface, then
		\begin{align*}
			\Delta\accentset{\circ}{A}_{ij}
			=&\,-2\tau^k\nabla_k\accentset{\circ}{A}_{ij}\\
			&\,+\left(\frac{1}{2}\mathcal{H}^2+\spann{\ul{\theta}^{-1}\accentset{\circ}{\ul{\chi}},\accentset{\circ}{A}}-\left(\overline{R}-3\overline{\Ric}(\ul{L},\ul{L})+\overline{\Riem}(\ul{L},L,L,\ul{L})\right)-\dive\tau-\btr{\tau}^2\right)\accentset{\circ}{A}_{ij}\\
			&\,+\left(\left(\overline{\Ric}_{jm}-\frac{1}{2}\overline{\Riem}_{\ul{L}jLm}-\frac{1}{2}\overline{\Riem}_{Lj\ul{L}m}\right)\accentset{\circ}{A}^m_i-\overline{\Riem}_{ikjm}\accentset{\circ}{A}^{mk}\right)+\left(\nabla_k\tau_i-\nabla_i\tau_k\right)\accentset{\circ}{A}^k_j\\
			&\,+\left(\overline{\Riem}_{k\ul{L}jL}-\overline{\Riem}_{j\ul{L}kL}\right)\accentset{\circ}{A}_i^k-\frac{1}{2}\left(\Riem_{ki\ul{L}L}\accentset{\circ}{A}^k_j+\overline{\Riem}_{kl\ul{L}L}\accentset{\circ}{A}_i^k\right)\\
			&\,+\frac{1}{2}\mathcal{H}^2\left(\left(\nabla_i\tau_j+\nabla_j\tau_i+2\tau_i\tau_j\right)-\left(\dive\tau+\btr{\tau}^2\right)\gamma_{ij}\right)\\
			&\,-\frac{1}{2}\mathcal{H}^2\left(\left(\overline{\Riem}_{i\ul{L}jL}+\overline{\Riem}_{j\ul{L}iL}\right)-\left(\overline{\Ric}(\ul{L},L)-\frac{1}{2}\overline{\Riem}(\ul{L},L,L,\ul{L})\right)\gamma_{ij}\right)\\
			&\,+\gamma^{kl}\left(\overline{\nabla}_i\overline{\Riem}_{kjl(\ul{\theta}L)}-\overline{\nabla}_k\overline{\Riem}_{ilj(\ul{\theta}L)}\right)+\frac{1}{2}\left(\overline{\Riem}_{i(\ul{\theta}L)j(\ul{\theta}L)}-\overline{\Ric}(\ul{\theta}L,\ul{\theta}L)\ul{\theta}^{-1}\ul{\chi}_{ij}\right).
		\end{align*}
	\end{prop}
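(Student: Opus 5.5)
The plan is to derive the contracted identity by taking the $\gamma^{kl}$-trace of the Null Simons' Identity, Proposition \ref{prop_prelim_nullsimon}, in the indices $k,l$. Then we substitute the STCMC condition and the traced Codazzi equation, and finally pass to the trace-free part $\accentset{\circ}{A}=A-\frac{1}{2}\mathcal{H}^2\gamma$.

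\textbf{Step 1: traced Codazzi and the Hessian of $\mathcal{H}^2$.} Contracting Proposition \ref{prop_prelim_codazziA} over $i,k$ gives
\[
\dive A_j-\nabla_j\mathcal{H}^2=\ul{\theta}\gamma^{ik}\overline{\Riem}_{ijkL}+\tau_jA_{i}^{i}{}\!\!-\tau^iA_{ij}.
\]
Since $\mathcal{H}^2$ is constant, this expresses $\dive\accentset{\circ}{A}$ through the curvature, $\tau$ and $\accentset{\circ}{A}$; in particular $\dive\accentset{\circ}{A}=\frac{1}{2}\mathcal{H}^2\tau+\ul{\theta}\,\mathrm{tr}\overline{\Riem}(\cdot,\cdot,\cdot,L)-\accentset{\circ}{A}(\tau,\cdot)$.

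\textbf{Step 2: trace the Null Simons' identity over $k,l$.} The left side becomes $\Delta A_{ij}=\Delta\accentset{\circ}{A}_{ij}$. On the right side:
\begin{itemize}
\item The term $\nabla_i\nabla_j\mathcal{H}^2$ vanishes.
\item The intrinsic curvature terms $\Riem_{kij}{}^k{}_mA^m_l$ and the like reduce, in dimension two, to $\frac{\scal}{2}$-multiples of $A$ and $\gamma\,\mathcal{H}^2$. Inserting the trace-free decomposition leaves $\scal\,\accentset{\circ}{A}_{ij}$ plus pure-trace pieces.
\item The first-order $\tau$ terms become $\tau_j\nabla_i\mathcal{H}^2=0$, then $\tau_i\dive A_j$, $-\tau^k\nabla_iA_{jk}$ and $-\tau^k\nabla_kA_{ij}$. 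For the middle ones we use Codazzi once more, $\nabla_iA_{jk}=\nabla_kA_{ij}+\dots$, to convert them into $-\tau^k\nabla_k\accentset{\circ}{A}_{ij}$ plus $\tau\otimes\tau$ and curvature terms. This produces the coefficient $-2\tau^k\nabla_k\accentset{\circ}{A}_{ij}$, the terms $-|\tau|^2\accentset{\circ}{A}$ and $\mathcal{H}^2\tau_i\tau_j$, and the $\tau$-contributions to the curvature terms. The divergence is substituted from Step 1.
\item The $\nabla\tau$ terms give $-\dive\tau\,A_{ij}$, $(\nabla_k\tau_i-\nabla_i\tau_k)A^k_j$ and $\nabla_i\tau_j\mathcal{H}^2$. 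Splitting $A$ into $\accentset{\circ}{A}$ and its trace produces $\frac{1}{2}\mathcal{H}^2(\nabla_i\tau_j+\nabla_j\tau_i)$; the symmetrization comes from the antisymmetric part, using that $\Delta\accentset{\circ}{A}$ is symmetric.
\end{itemize}

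\textbf{Step 3: eliminate $\scal$ and the ambient curvature derivatives.} We eliminate $\scal$ by the Gauss equation \eqref{eq_prelim_gauss3} together with $|\vec{\two}|^2=\langle\chi,\ul{\chi}\rangle=\frac{1}{2}\mathcal{H}^2+\langle\ul{\theta}^{-1}\accentset{\circ}{\ul{\chi}},\accentset{\circ}{A}\rangle$. This yields the coefficient $\frac{1}{2}\mathcal{H}^2+\langle\ul{\theta}^{-1}\accentset{\circ}{\ul{\chi}},\accentset{\circ}{A}\rangle-(\dots)$ in front of $\accentset{\circ}{A}$.

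The remaining terms $\gamma^{kl}\nabla_i(\overline{\Riem}_{kjl(\ul{\theta}L)})+\gamma^{kl}\nabla_k(\overline{\Riem}_{lij(\ul{\theta}L)})$ are intrinsic covariant derivatives of ambient tensors restricted to $\Sigma$. We rewrite them as $\overline{\nabla}$-derivatives plus connection terms. Using $\overline{\nabla}_X\partial_k=\nabla_X\partial_k+\vec{\two}(X,\partial_k)$ and $\overline{\nabla}_X(\ul{\theta}L)=\ul{\theta}\,\overline{\nabla}_XL+X(\ul{\theta})L$, together with $\overline{\nabla}_XL=\chi(X)-\zeta(X)L$ (up to normalization conventions), these connection terms give products of $\overline{\Riem}$ with $\chi,\ul{\chi},\zeta,\d\ln\ul{\theta}$. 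The combination $\zeta-\d\ln\ul{\theta}=\tau$ is what makes the $\zeta$ and $\ul{\theta}$ derivatives collapse, and they combine with the $\tau\cdot\overline{\Riem}$ terms from Step 2. Contracting the $\chi,\ul{\chi}$-factors against curvature and splitting $\chi=\ul{\theta}^{-1}A=\ul{\theta}^{-1}(\accentset{\circ}{A}+\frac{1}{2}\mathcal{H}^2\gamma)$ produces the $\overline{\Riem}\ast\accentset{\circ}{A}$ terms and the $\mathcal{H}^2\overline{\Riem}_{i\ul{L}jL}$ terms. The term $\overline{\Riem}_{i(\ul{\theta}L)j(\ul{\theta}L)}$ together with the $\overline{\Ric}(\ul{\theta}L,\ul{\theta}L)\ul{\chi}$ term arises from the $\ul{\chi}$-components. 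Traces over $\Sigma$ of ambient curvature are expressed via $\overline{\Ric}$ and null components using $\overline{g}^{-1}=\gamma^{-1}+\frac{1}{2}(\ul{L}\otimes L+L\otimes\ul{L})$.

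\textbf{Step 4: symmetrize and take the trace-free part.} The left side $\Delta\accentset{\circ}{A}$ is symmetric and trace-free. We therefore symmetrize, and subtract the trace of the right-hand side, which must vanish identically by consistency. This fixes the $\gamma_{ij}$ corrections, $-(\dive\tau+|\tau|^2)\gamma_{ij}$ and the $\overline{\Ric}(\ul{L},L)$ terms.

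The main obstacle is the bookkeeping in Step 3: correctly converting the intrinsic derivatives of $\overline{\Riem}(\cdot,\cdot,\cdot,\ul{\theta}L)$ into ambient derivatives. Here I would first verify the identity in Schwarzschild and in the Minkowski lightcone, where most terms vanish, to fix signs.
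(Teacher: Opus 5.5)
Your plan is essentially the paper's proof. You trace the null Simons' identity over $k,l$ using $\Riem_{kijm}=\frac12\scal(\gamma_{kj}\gamma_{im}-\gamma_{ij}\gamma_{km})$; the pure-trace pieces in fact cancel exactly, leaving $\scal\,\accentset{\circ}{A}_{ij}$. You then treat the first-order $\tau$-terms with the traced and untraced Codazzi equation, eliminate $\scal$ via the Gauss equation, and convert $\gamma^{kl}\big(\nabla_i(\overline{\Riem}_{kjl(\ul{\theta}L)})+\nabla_k(\overline{\Riem}_{lij(\ul{\theta}L)})\big)$ to ambient derivatives using $\overline{\nabla}_X(\ul{\theta}L)=A(X,\cdot)^\sharp-\ul{\theta}\,\tau(X)L$. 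The resulting $\tau\ast\overline{\Riem}$ terms cancel those from your Step 2. Your Step 4 trace-consistency device is a harmless shortcut; in the paper the $\gamma_{ij}$-terms simply fall out of splitting $A=\accentset{\circ}{A}+\frac12\mathcal{H}^2\gamma$. Do not symmetrize, though, since the raw traced identity already has exactly the stated form, which is not termwise symmetric.

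One caution on Step 3. Inserting your (correct) formula $|\vec{\two}|^2=\frac12\mathcal{H}^2+\langle\ul{\theta}^{-1}\accentset{\circ}{\ul{\chi}},\accentset{\circ}{A}\rangle$ into \eqref{eq_prelim_gauss3} gives $\scal=\frac12\mathcal{H}^2-\langle\ul{\theta}^{-1}\accentset{\circ}{\ul{\chi}},\accentset{\circ}{A}\rangle+\big(\overline{\scal}-2\overline{\Ric}(\ul{L},L)+\frac12\overline{\Riem}(\ul{L},L,L,\ul{L})\big)$, which is the form used in the proof of Lemma \ref{lem_proof_deltaA_deltanablaA}. The signs you assert for these terms match the displayed statement and the appendix proof but do not follow from your inputs, so re-derive them rather than matching the target.
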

	\begin{proof}
		First, we recall that as $\Sigma$ is a two-dimensional surface we have that
		\[
			\Riem_{kijm}=\frac{1}{2}\scal\left(\gamma_{kj}\gamma_{im}-\gamma_{ij}\gamma_{km}\right).
		\]
		Using this identity, it is straightforward to check that upon taking a trace of the Null Simons' identity, Proposition \ref{prop_prelim_nullsimon}, we obtain
		\begin{align*}
			\Delta\accentset{\circ}{A}_{ij}
			=&\,\scal\accentset{\circ}{A}_{ij}+\tau_i\dive\accentset{\circ}{A}_j-\tau^k\nabla_i\accentset{\circ}{A}_{jk}-\tau^k\nabla_k\accentset{\circ}{A}_{ij}+\nabla_k\tau_i\accentset{\circ}{A}_j^k-\nabla_i\tau_k\accentset{\circ}{A}^k_j-\dive\tau\accentset{\circ}{A}_{ij}\\
			&\,+\frac{1}{2}\mathcal{H}^2\left(\nabla_i\tau_j+\nabla_j\tau_i-\dive\tau\gamma_{ij}\right)+\gamma^{kl}\left(\nabla_i\left(\overline{\Riem}_{kjl(\ul{\theta}L)}\right)+\nabla_k\left(\overline{\Riem}_{lij(\ul{\theta}L)}\right)\right),
		\end{align*}
		where some terms cancel as $\mathcal{H}^2$ is constant along $\Sigma$. By the Gauss equation, Proposition \ref{prop_prelim_nullgauss}, we have
		\[
			R=\frac{1}{2}\mathcal{H}^2+\spann{\ul{\theta}^{-1}\accentset{\circ}{\ul{\chi}},\accentset{\circ}{A}}-\left(\overline{R}-2\overline{\Ric}(\ul{L},\ul{L})+\frac{1}{2}\overline{\Riem}(\ul{L},L,L,\ul{L})\right).
		\]
		As $\mathcal{H}^2$ is constant, we have $\nabla A=\nabla\accentset{\circ}{A}$, so by the Codazzi Equation, Proposition \ref{prop_prelim_codazziA}, we find
		\begin{align*}
			\tau_i\dive\accentset{\circ}{A}_j-\tau^k\nabla_i\accentset{\circ}{A}_{jk}-\tau^k\nabla_k\accentset{\circ}{A}_{ij}
			=&\,-2\tau^k\nabla_k\accentset{\circ}{A}_{ij}+\mathcal{H}^2\left(\tau_i\tau_j-\frac{1}{2}\btr{\tau}^2\gamma_{ij}\right)-\btr{\tau}^2\accentset{\circ}{A}_{ij}\\
			&\,+\left(\tau_i\gamma^{kl}\overline{\Riem}_{kjl(\ul{\theta}L)}-\tau^k\overline{\Riem}_{ikj(\ul{\theta}L)}\right).
		\end{align*}
		Finally, relating ambient and tensor derivatives on $\Sigma$ as in \cite[Equation (C.2)]{kroenckewolff}, one can check by direct computation that
		\begin{align*}
			&\gamma^{kl}\left(\nabla_i\left(\overline{\Riem}_{kjl(\ul{\theta}L)}\right)+\nabla_k\left(\overline{\Riem}_{lij(\ul{\theta}L)}\right)\right)\\
			&\,=\,\gamma^{kl}\left(\overline{\nabla}_i\overline{\Riem}_{kjl(\ul{\theta}L)}-\overline{\nabla}_k\overline{\Riem}_{ilj(\ul{\theta}L)}\right)
			-\tau_i\gamma^{kl}\overline{\Riem}_{kjl(\ul{\theta}L)}+\tau^k\overline{\Riem}_{ikj(\ul{\theta}L)}\\
			&\,+\left(\overline{\Ric}(\ul{L},L)-\frac{1}{2}\overline{\Riem}(\ul{L},L,L,\ul{L})\right)\accentset{\circ}{A}_{ij}
			+\frac{1}{2}\left(\overline{\Riem}_{i(\ul{\theta}L)j(\ul{\theta}L)}-\overline{\Ric}(\ul{\theta}L,\ul{\theta}L)\ul{\theta}^{-1}\ul{\chi}_{ij}\right)\\
			&\,-\frac{1}{2}\left(\Riem_{ki\ul{L}L}\accentset{\circ}{A}^k_j+\overline{\Riem}_{kl\ul{L}L}\accentset{\circ}{A}_i^k\right)+\left(\gamma^{kl}\overline{\Riem}_{kjlm}\accentset{\circ}{A}^m_i-\overline{\Riem}_{ikjm}\accentset{\circ}{A}^{mk}\right)\\
			&\,+\left(\overline{\Riem}_{k\ul{L}jL}-\overline{\Riem}_{j\ul{L}kL}\right)\accentset{\circ}{A}_i^k\\
			&\,-\frac{1}{2}\mathcal{H}^2\left(\left(\overline{\Riem}_{i\ul{L}jL}+\overline{\Riem}_{j\ul{L}iL}\right)-\left(\overline{\Ric}(\ul{L},L)-\frac{1}{2}\overline{\Riem}(\ul{L},L,L,\ul{L})\right)\gamma_{ij}\right).
		\end{align*}
		Putting all of these identities together yields the claim.
	\end{proof}
	
	This form of the contracted Null Simons identity now yields the following in an asymptotically Schwarzschildean lightcone:
	
	\begin{prop}\label{prop_appendix_NullSimonAsymSchwarzschild}
		Let $\mathcal{N}$ be an asymptotically Schwarzschildean lightcone of mass $m>0$, $\Sigma\subseteq \mathcal{N}$ an STCMC surface. Then
		\[
			\Delta\accentset{\circ}{A}_{ij}=f\accentset{\circ}{A}_{ij}-2\tau^k\nabla_k\accentset{\circ}{A}_{ij}+\left(F\ast\accentset{\circ}{A}\right)_{ij}+G_{ij},
		\]
		where
		\begin{align*}
			f=&\,\frac{1}{2}\mathcal{H}^2+\spann{\ul{\theta}^{-1}\accentset{\circ}{\ul{\chi}},\accentset{\circ}{A}}-\left(\overline{R}-3\overline{\Ric}(\ul{L},L)+\overline{\Riem}(\ul{L},L,L,\ul{L})\right)+\frac{4m}{\omega^3}-\dive\tau-\btr{\tau}^2_\gamma,\\
			G_{ij}=&\,\left(\frac{44m}{\omega^4}\ul{\theta}-\frac{12m}{\omega^3}\ul{\theta}^2\right)\accentset{\circ}{\left(\d\omega\otimes\d\omega\right)}_{ij}+\frac{1}{2}\mathcal{H}^2{\left(\accentset{\circ}{\nabla_i\tau_j}+\accentset{\circ}{\nabla_j\tau_i}+2\accentset{\circ}{\tau_i\tau_j}\right)}\\
			&\,+\mathcal{H}^2\left(O_{2,2}(r^{-2})_{IJ}+\nabla\omega^AO_{2,2}(r^{-1})_{IJA}+\nabla\omega^A\nabla\omega^BO_{2,2}(1)_{IJAB}\right)\\
			&\,+O_{1,1}(r^{-4})_{IJ}+\nabla\omega^AO_{1,1}(r^{-3})_{IJA}+\nabla\omega^A\nabla\omega^BO_{1,1}(r^{-2})_{IJAB}\\
			&\,+\nabla\omega^A\nabla\omega^B\nabla^CO_{1,1}(r^{-1})_{IJABC}+\nabla\omega^A\nabla\omega^B\nabla\omega^C\nabla\omega^DO_{1,1}(1)_{IJABCD},
		\end{align*}
		and where $F_1\ast \accentset{\circ}{A}$ is a sum of contractions such that
		\begin{align*}
			\newbtr{F\ast\accentset{\circ}{A}}&\le C\left(\newbtr{T^{(2)}}+\newbtr{T^{(4)}}+\newbtr{   \nabla\tau}\right)\newbtr{\accentset{\circ}{A}},\\
			\newbtr{\nabla(F\ast\accentset{\circ}{A})}&\le C\left(\left(\newbtr{\nabla T^{(2)}}+\newbtr{\nabla T^{(4)}}+\newbtr{\nabla^2\tau}\right)\newbtr{\accentset{\circ}{A}}+\left(\newbtr{T^{(2)}}+\newbtr{T^{(4)}}+\newbtr{\nabla\tau}\right)\newbtr{\nabla\accentset{\circ}{A}}\right)
		\end{align*}
		for tensors $T^{(2)}$, $T^{(4)}$ with 
		\begin{align*}
			T^{(2)}_{IJ}&=O_{2,2}(r^{-2})_{IJ}+\nabla\omega^AO_{2,2}(r^{-1})_{IJA}+\nabla\omega^A\nabla\omega^BO_{2,2}(1)_{IJAB},\\
			T^{(4)}_{IJKL}&=O_{2,2}(1)_{IJKL}+\nabla\omega^AO_{2,2}(r)_{IJKLA}+\nabla\omega^A\nabla\omega^BO_{2,2}(r^{2})_{IJKLAB}.
		\end{align*}
	\end{prop}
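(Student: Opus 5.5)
The plan is to start from the contracted null Simons' identity, Proposition \ref{prop_appendix_contractedNullSimons}, and expand each ambient curvature term along $\Sigma$ with the curvature identities of Proposition \ref{prop_appendix_curvatureidentities}. Every term is then sorted into one of three groups: scalar multiples of $\accentset{\circ}{A}$, which go into $f$; contractions of a tensor with $\accentset{\circ}{A}$, which go into $F\ast\accentset{\circ}{A}$; and terms without $\accentset{\circ}{A}$, which go into $G$.

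\textbf{Step 1: the scalar coefficient $f$.} The coefficient of $\accentset{\circ}{A}_{ij}$ in the second line of Proposition \ref{prop_appendix_contractedNullSimons} goes directly into $f$. The new feature is the extra $\frac{4m}{\omega^3}$. I expect it to come from the leading Schwarzschild parts of the curvature terms that act on $\accentset{\circ}{A}$ as multiples of the identity, namely
\[
\overline{\Riem}_{ikjm}\accentset{\circ}{A}^{mk}
\]
and the $\overline{\Ric}_{jm}$-type contraction. Their principal part is $2m\omega(\hatgamma\hatgamma-\hatgamma\hatgamma)$, which is $\frac{2m}{\omega^3}(\gamma\gamma-\gamma\gamma)$ up to errors, and contracting this with $\accentset{\circ}{A}$ gives a scalar multiple of $\accentset{\circ}{A}$. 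Similarly, $\overline{\Riem}_{k\ul{L}jL}\accentset{\circ}{A}^k_i$ contributes via the leading term $-\frac{2m}{\omega}\hatgamma$. Adding up these leading pieces should yield exactly $\frac{4m}{\omega^3}\accentset{\circ}{A}$. The remainders have the form $O_{2,2}(1)+\nabla\omega^A O_{2,2}(r)+\dots$ or $O_{2,2}(r^{-2})+\dots$, so they define $T^{(4)}$ and $T^{(2)}$. The term $(\nabla_k\tau_i-\nabla_i\tau_k)\accentset{\circ}{A}^k_j$ also goes into $F\ast\accentset{\circ}{A}$, which explains the $\newbtr{\nabla\tau}$ in the bound. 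The derivative bound for $F\ast\accentset{\circ}{A}$ then follows from the Leibniz rule.

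\textbf{Step 2: the inhomogeneity $G$.} The $\mathcal{H}^2$-terms with $\tau$ become the trace-free $\frac{1}{2}\mathcal{H}^2(\nabla_i\tau_j+\nabla_j\tau_i+2\tau_i\tau_j)^\circ$ after removing the trace part; that trace part cancels against the trace of $G$, since $\accentset{\circ}{A}$ is trace-free. The $\mathcal{H}^2\overline{\Riem}_{i\ul{L}jL}$ terms are pure trace to leading order ($-\frac{2m}{\omega}\hatgamma$), so after projection only the error terms $\mathcal{H}^2(O_{2,2}(r^{-2})+\dots)$ survive. The main work is the last line:
\[
\gamma^{kl}\left(\overline{\nabla}_i\overline{\Riem}_{kjl(\ul{\theta}L)}-\overline{\nabla}_k\overline{\Riem}_{ilj(\ul{\theta}L)}\right)+\frac{1}{2}\overline{\Riem}_{i(\ul{\theta}L)j(\ul{\theta}L)}.
\]
Here I would expand $L$ via the frame decomposition \eqref{eq_prelim_nullgeometry_frame}, use the exact Schwarzschild values of $\overline{\nabla}\overline{\Riem}$ in the frame $\{\partial_I,\ul{L},L_r\}$, and substitute $\ul{\theta}=\frac{2}{\omega}+O(r^{-3})$. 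The $\overline{\Riem}_{iLjL}$ term supplies $-\frac{12m}{\omega^3}\ul{\theta}^2(\d\omega\otimes\d\omega)^\circ$, coming from its $-\frac{24}{\omega^3}$ leading term. The $\overline{\nabla}\overline{\Riem}$ terms, which involve one $L$-slot and therefore carry $\nabla\omega$ from the $-2\nabla\omega$ part of $L$, should give the $\frac{44m}{\omega^4}\ul{\theta}$ coefficient. Everything else is collected by its power of $\nabla\omega$ into the $O_{1,1}$ remainders. Only one derivative is available here because of the $\overline{O}_{1,1}(r^{-5})$ assumption on $\overline{\nabla}\overline{\Riem}$.

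\textbf{Main obstacle.} I expect the hard part to be the exact bookkeeping of the Schwarzschild constants. This means computing $\overline{\nabla}\overline{\Riem}^{Schw}$ contracted with $L=L_r-\btr{\nabla\omega}^2\ul{L}-2\nabla\omega$, and checking that no uncontrolled terms of order $\omega^{-5}\btr{\nabla\omega}$ survive outside the stated form. I would first do this in exact Schwarzschild in the standard null frame, where the Weyl tensor is type D and its derivative is explicit, and only then add the perturbative errors, which are routine given Definition \ref{defi_asymclassS}.
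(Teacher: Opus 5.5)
Your proposal follows the paper's proof essentially verbatim: start from the contracted null Simons' identity (Proposition \ref{prop_appendix_contractedNullSimons}), expand every ambient curvature term with Proposition \ref{prop_appendix_curvatureidentities}, split off $\frac{4m}{\omega^3}\accentset{\circ}{A}_{ij}$ as the leading Schwarzschild part of the $\accentset{\circ}{A}$-contractions, and collect the remainder into $F\ast\accentset{\circ}{A}$ and $G$ by a lengthy direct computation. One correction to your bookkeeping: $\overline{\Riem}_{k\ul{L}jL}$ enters only through the antisymmetric combination $(\overline{\Riem}_{k\ul{L}jL}-\overline{\Riem}_{j\ul{L}kL})\accentset{\circ}{A}^k_i$, so its symmetric leading part $-\frac{2m}{\omega}\hatgamma_{KJ}$ cancels; the $\frac{4m}{\omega^3}$ is $\frac{2m}{\omega^3}$ from $-\overline{\Riem}_{ikjm}\accentset{\circ}{A}^{mk}$ plus $\frac{2m}{\omega^3}$ from the tangential trace $\overline{\Ric}_{jm}-\frac{1}{2}\overline{\Riem}_{\ul{L}jLm}-\frac{1}{2}\overline{\Riem}_{Lj\ul{L}m}=\gamma^{kl}\overline{\Riem}_{kjlm}$.
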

	\begin{proof}
		Using the contracted Simons identity, Proposition \ref{prop_appendix_contractedNullSimons}, it is immediate that
		\[
			\Delta\accentset{\circ}{A}_{ij}=\widetilde{f}\accentset{\circ}{A}_{ij}-2\tau^k\nabla_k\accentset{\circ}{A}_{ij}+\left(\widetilde{F}\ast\accentset{\circ}{A}\right)_{ij}+G_{ij},
		\]
		which is already of the structure that we claim. Using the curvature identities, Proposition \ref{prop_appendix_curvatureidentities}, the expansion for $G$ follows from a lengthy, but direct computation. Note that in the pure Schwarzschild case
		\[
			\widetilde{f}\accentset{\circ}{A}_{ij}+\left(\widetilde{F}\ast\accentset{\circ}{A}\right)_{ij}={f}\accentset{\circ}{A}_{ij}.
		\]
		Using Proposition \ref{prop_appendix_curvatureidentities} again to expand out the curvature terms and identify the leading order term, we find
		\[
			\left(\widetilde{F}\ast\accentset{\circ}{A}\right)_{ij}=\frac{4m}{\omega^3}\accentset{\circ}{A}_{ij}+\left({F}\ast\accentset{\circ}{A}\right)_{ij},
		\]
		with ${F}\ast\accentset{\circ}{A}$ as claimed.
	\end{proof}
	
	\section{$C^0$-estimates for almost round surfaces under a balancing condition}\label{appendix_roundness}
	
	The main goal of this section is to prove Proposition \ref{prop_gausscurvature_c0}:
	
	\begin{prop}\label{prop_appendix_gausscurvature_c0}
		Let $\gamma=\omega^2\hatgamma$ be a conformally round with Gauss curvature $K$ such that 
		\[
		\int_{\Sbb^2}\omega^2\d\widehat{\mu}=4\pi\text{, and } \int_{\Sbb^2} f_i\omega^3\d\widehat{\mu}=0
		\] 
		for $i=1,2,3$. For every $\delta>0$, there exists $\varepsilon>0$ such that if
		\[
		\norm{K-1}_{C^0(\,\hatgamma\,)}\le \varepsilon
		\]
		then $\btr{u}\le \delta$, where $u:=\ln\omega$.
	\end{prop}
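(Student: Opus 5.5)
Argue by contradiction and compactness, following Klainerman--Szeftel \cite[Proposition 3.7]{klainermanszeftel}. Suppose the claim fails for some $\delta>0$. Then there is a sequence $\omega_n=e^{u_n}$ of conformal factors with $\int_{\Sbb^2}\omega_n^2\d\widehat{\mu}=4\pi$, $\int_{\Sbb^2}f_i\omega_n^3\d\widehat{\mu}=0$, $\norm{K_n-1}_{C^0}\to0$, and $\max\btr{u_n}>\delta$. The Gauss curvature equation reads
\[
-\widehat{\Delta}u_n+1=K_ne^{2u_n}.
\]
The strategy is to show that, after a suitable Möbius normalisation, $u_n$ converges to $0$. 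The balancing condition is then used to rule out escape to a boosted sphere.

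\textbf{Step 1 (concentration--compactness).} Since $K_n\to1$ uniformly and the area is fixed, Brezis--Merle/Chen--Li type estimates for this Liouville equation on $\Sbb^2$ give a dichotomy. Either $u_n$ is bounded in $L^\infty$ (hence in $C^{1,\beta}$ by elliptic regularity), or it blows up. In the blow-up case, since the total curvature $\int K_ne^{2u_n}=4\pi$ is exactly that of one bubble, the blow-up is a single bubble, and there are Möbius transformations $\Phi_n$ such that
\[
v_n:=u_n\circ\Phi_n+\ln\btr{\det D\Phi_n}^{1/2}
\]
stays bounded. In both cases, the Möbius-normalised factors (with $\Phi_n=\mathrm{id}$ in the bounded case) converge in $C^{1,\beta}$ to a solution of $-\widehat{\Delta}v+1=e^{2v}$ with area $4\pi$. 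By Obata/Chen--Li classification every such solution is $v=\ln b_{\vec{a}}$ for some $\vec{a}$, i.e.\ a boosted round sphere. Undoing the normalisation, $\omega_n$ is, up to $o(1)$ errors in the appropriate rescaled sense, equal to $b_{\vec{a}_n}$ composed with a Möbius transformation, where $\btr{\vec{a}_n}$ may diverge.

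\textbf{Step 2 (using the balancing condition).} Exploit that $\int f_i\omega^3\d\widehat{\mu}$ is, up to normalisation, the spatial part of the Minkowski $4$-vector $\textbf{Z}$, which transforms equivariantly under boosts by \cite{wolff4}. For an exact boosted sphere $b_{\vec{a}}$, $\textbf{Z}$ is proportional to $(\sqrt{1+\btr{\vec{a}}^2},\vec{a})$. Hence vanishing spatial part forces $\vec{a}=0$.

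I would first reduce to the bounded case by showing that the balancing condition rules out blow-up. Near a concentrating bubble the weight $\omega^3$ (compared with $\omega^2$ for area) makes $\int f_i\omega^3$ dominated by the bubble region. Concretely, for $b_{\vec{a}}$ one has $\int f_ib_{\vec{a}}^3\sim\btr{\vec{a}}\int b_{\vec{a}}^3$, which is nonzero. This has to be made quantitative with $o(1)$ errors that remain small relative to $\int\omega_n^3$, using the equivariance to move the error estimate to the normalised frame where convergence holds. This is also the point where the proof departs from Klainerman--Szeftel's $\int f_i\omega^2=0$, which is Möbius-normalising in a different way.

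\textbf{Step 3 (conclusion).} Once the bounded case is reached, $u_n\to v$ uniformly with $v=\ln b_{\vec{a}}$ and $\int f_ie^{3v}=0$, so $\vec{a}=0$ and $v\equiv0$. This contradicts $\max\btr{u_n}>\delta$.

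\textbf{Main obstacle.} The hard part is Step 2: showing that the $\omega^3$-balancing prevents blow-up, i.e.\ that $\btr{\vec{a}_n}$ stays bounded. The approximation to $b_{\vec{a}_n}$ is only $o(1)$ in the normalised frame. Transferring it back multiplies the error by factors like $b_{\vec{a}_n}^3$, so one must check that the relative error in $\textbf{Z}$ stays $o(1)$ and that the timelike direction of $\textbf{Z}$ is therefore still close to $\vec{a}_n$. I would handle this by computing $\textbf{Z}$ in the normalised frame and applying $\Lambda_{\vec{a}_n}$, using that Lorentz boosts preserve the closeness of unit timelike directions in hyperbolic distance.
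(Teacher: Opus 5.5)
Your proposal is correct, but it takes a different route from the paper.

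\textbf{The paper's route.} The paper does no blow-up analysis. It proves an improved Onofri inequality directly under the $\omega^3$-balancing condition (Lemma \ref{lem_appendix_improvedonofri}), using Chang--Yang's constrained maximisation of $J_a$:
\begin{itemize}
\item maximisers exist for $a>\frac{3}{4}$, via Klainerman--Szeftel's Aubin-type inequality applied to $\frac{3}{2}u$ together with H\"older;
\item the Kazdan--Warner identity kills the Lagrange multipliers;
\item an implicit-function argument gives $u_a=0$ near $a=1$.
\end{itemize}
This yields a $W^{1,2}$ bound in terms of the M\"obius-invariant functional $S[u]$. From there, Klainerman--Szeftel's $W^{1,2}$-smallness and $C^0$ arguments go through verbatim.

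\textbf{Your route.} You work in a normalisation where compactness is standard. You then transport the $\omega^3$-condition using the exact Lorentz equivariance of $\textbf{Z}$ and the fact that $\mathrm{SO}^+(1,3)$ acts isometrically on the hyperboloid of unit future timelike vectors. This mechanism fully resolves your ``main obstacle'': no error ever gets multiplied by $b_{\vec{a}_n}^3$. Suppose $\Lambda_n$ diverges while the unit direction of $\textbf{Z}(e^{v_n})$ converges. Then the unit direction of $\textbf{Z}(e^{u_n})$ lies at hyperbolic distance at least $d(\Lambda_n e_0,e_0)-O(1)\to\infty$ from $e_0=(1,0,0,0)$. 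This contradicts the balancing condition.

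\textbf{A shortcut.} The same observation makes your Step 1 unnecessary.
\begin{itemize}
\item M\"obius-normalise $u$ to $v=u\circ\Phi+\frac{1}{2}\ln\btr{\det D\Phi}$ with $\int_{\Sbb^2}f_ie^{2v}\d\widehat{\mu}=0$. This preserves the area and $\norm{K-1}_{C^0}$.
\item Apply \cite[Proposition 3.7]{klainermanszeftel} to get $\btr{v}\le\delta'$.
\item The unit direction of $\textbf{Z}(e^v)$ is then within hyperbolic distance $C\delta'$ of $e_0$.
\item By equivariance and balancing, this direction equals $\Lambda e_0$. So that distance is exactly the rapidity $\eta=\operatorname{arcsinh}\btr{\vec{a}}$ of the boost relating $u$ and $v$.
\item Since $e^{-\eta}\le b_{\vec{a}}\le e^{\eta}$, you get $\btr{u}\le(1+C)\delta'$.
\end{itemize}
This is a direct argument. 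It needs no contradiction, no Brezis--Merle or Li--Shafrir, and no classification.

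\textbf{What each route buys.} Your approach gives the shortest path to the $C^0$ statement, reusing Klainerman--Szeftel as a black box. The paper's heavier variational argument instead produces a new functional inequality: an Onofri-type bound with constant $a<1$ for $\omega^3$-balanced factors. It controls $\int\newbtr{\widehat{\nabla}u}^2$ by $S[u]$ alone, which is of independent use.
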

	
	To this end, we first establish a uniform $W^{1,2}(\widehat{\gamma})$ bound on $u$ as in \cite[Chpater 2.4]{klainermanszeftel}. In fact, we closely follow the strategy outlined in \cite[Chapter 2 and 3]{klainermanszeftel}, highlighting the necessary modifications.
	
	Recall the Onofri functional
	\[
		S[u]:=\fint_{\Sbb^2}\newbtr{\widehat\nabla u}^2+2u\d\widehat{\mu},
	\]
	see \cite{onofri}. It is a well-known fact that $S[u]$ is invariant under 
	\[
		u\mapsto u_\Phi:=u\circ\Phi+\frac{1}{2}\ln\left(\btr{\det D\Phi}\right),
	\]
	where $\Phi$ is a M\"obius transformation. In particular, $S[u]$ with $u=\ln\omega$ is invariant under Lorentz transformations acting on $\omega$ as described in Section \ref{subsec_prelim_quantativeestimates}.
	
	As a consequence, we may always assume that $u$ satisfies the balancing condition
	\begin{align}\label{eq_appendix_balancing}
		\fint_{\Sbb^2}f_ie^{3u}\d\widehat{\mu}=0
	\end{align}
	for first spherical harmonics $f_i$,  $i=1,2,3$, while leaving $S[u]$ invariant.
	
	As in \cite{klainermanszeftel}, the key ingredient to the desired $W^{1,2}$-control is the following improved Onofri-inequality:
	
	\begin{lem}\label{lem_appendix_improvedonofri}
		There exists an $a<1$ such that for all $u\in W^{1,2}(\widehat{\gamma})$ satisfying \eqref{eq_appendix_balancing}, we have
		\[
			\fint_{\Sbb^2} e^{2u}\d\widehat{\mu}\le \exp\left(a\fint_{\Sbb^2}\newbtr{\widehat{\nabla} u}^2\d\widehat{\mu}+2\fint_{\Sbb^2}u\d\widehat{\mu}\right).
		\]
	\end{lem}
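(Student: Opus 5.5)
We argue by contradiction, combining Aubin's improved Moser--Trudinger inequality with the M\"obius invariance of the Onofri functional $S[u]$. We use the same scheme as Klainerman--Szeftel \cite{klainermanszeftel}, whose argument uses the conformal balancing $\fint f_i e^{2u}=0$. The only genuinely new point is to show that the cubic balancing \eqref{eq_appendix_balancing} still rules out the non-compact M\"obius directions.

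\textbf{Setup of the contradiction.} Both sides of the inequality and the condition \eqref{eq_appendix_balancing} are invariant under $u\mapsto u+c$, so we normalize $\fint u\,\d\widehat{\mu}=0$. Suppose no $a<1$ works. Then there are $a_n\uparrow 1$ and balanced $u_n$ with
\[
\ln\fint e^{2u_n}\d\widehat{\mu}>a_n D_n,\qquad D_n:=\fint\newbtr{\widehat{\nabla}u_n}^2\d\widehat{\mu}.
\]
Onofri's inequality gives $\ln\fint e^{2u_n}\le D_n$. Hence the Onofri deficit
\[
\delta_n:=D_n-\ln\fint e^{2u_n}
\]
satisfies $0\le\delta_n<(1-a_n)D_n$.

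\textbf{Step 1: reduction to a conformally balanced representative.} For each $n$ we choose a M\"obius transformation $\Phi_n$, parametrised by a boost vector $\vec{a}_n$, such that $v_n:=(u_n)_{\Phi_n}$ satisfies the classical balancing $\fint f_ie^{2v_n}=0$. Such a $\Phi_n$ exists by the standard degree/fixed-point argument for the conformal center of mass. Both $S[\cdot]$ and $\fint e^{2u}$ are M\"obius invariant, so $v_n$ has the same deficit $\delta_n$. For conformally balanced $v_n$, Aubin's inequality gives
\[
\ln\fint e^{2v_n}\le\bigl(\tfrac12+\epsilon\bigr)\fint\newbtr{\widehat{\nabla}v_n}^2+2\fint v_n.
\]
Therefore the Dirichlet energy of $v_n-\fint v_n$ is bounded by $C\delta_n\le C(1-a_n)D_n$.

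\textbf{Step 2: the two cases.} If $D_n$ stays bounded, then $v_n-\fint v_n\to0$ in $W^{1,2}$. We then linearize at the constant. The deficit is
\[
\delta\approx\fint\newbtr{\widehat{\nabla}w}^2-2\fint\bigl(w-\textstyle\fint w\bigr)^2,
\]
which degenerates only along first spherical harmonics, i.e.\ exactly along the infinitesimal M\"obius directions. The cubic condition linearizes to $3\fint f_iu=0$, and also $\fint u_n=0$. Thus $u_n$ is orthogonal to the first and zeroth eigenspaces, and the spectral gap $\lambda_2=6$ gives $\delta_n\ge cD_n$ with $c>0$, contradicting $a_n\to1$. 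To make this rigorous, we handle the regime where $\vec{a}_n$ is bounded by compactness plus this second-variation computation. Here we use that the extremals $\ln b_{\vec{a}}+c$ of Onofri satisfy \eqref{eq_appendix_balancing} only for $\vec{a}=0$. This holds because $\int x_3\,b_{\vec{a}}^3\d\widehat{\mu}$ has the sign of $\btr{\vec{a}}$ when $\vec{a}=(0,0,\btr{\vec{a}})$; this is explicit integration.

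The remaining case is $D_n\to\infty$. Since $v_n$ has Dirichlet energy $o(D_n)$, all of the energy of $u_n$ sits in the boost, so $\btr{\vec{a}_n}\to\infty$. Passing to a subsequence, $\vec{a}_n/\btr{\vec{a}_n}\to\vec p$.

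\textbf{Step 3 (main obstacle): the cubic balancing fails for large boosts.} We write
\[
e^{3u_n}=b_{n}^{3}\,\bigl(e^{v_n}\circ\Phi_n^{-1}\bigr)^3,
\]
where $b_n$ is the conformal factor $b_{\vec{a}_n}$ of the boost. After the change of variables, $\int f_ie^{3u_n}\d\widehat{\mu}$ becomes
\[
\int (f_i\circ\Phi_n)\,e^{3v_n}\,b_{-\vec{a}_n}^{-1}\d\widehat{\mu}.
\]
The extra factor $b_{-\vec{a}_n}^{-1}$, which is absent in the quadratic case, concentrates the weight. It is of size $\sqrt{1+\btr{\vec{a}_n}^2}$ on a fixed neighbourhood of $\vec p$ and tends to zero elsewhere, while $f_i\circ\Phi_n\to p_i$ away from $-\vec p$. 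Since $v_n-\fint v_n$ is bounded in $W^{1,2}$, Moser--Trudinger gives uniform $L^q$ bounds on $e^{3(v_n-\fint v_n)}$. Together with uniform integrability, we expect to conclude
\[
\frac{\int f_ie^{3u_n}}{\int e^{3u_n}}\to p_i\neq0\quad\text{for some }i,
\]
contradicting \eqref{eq_appendix_balancing}. We anticipate that controlling the contribution near the antipode $-\vec p$, where $f_i\circ\Phi_n$ is not close to $p_i$, is the delicate part. There we plan to use that $b_{-\vec{a}_n}^{-1}\lesssim\btr{\vec{a}_n}^{-1}$ together with the $L^q$ bounds, which should make this contribution negligible against the concentrated mass.
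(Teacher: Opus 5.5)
Your bounded-energy case is essentially sound. The case $D_n\to\infty$, however, has a genuine gap.

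\textbf{The gap.} Step 3 rests on the claim that $v_n-\fint v_n$ is bounded in $W^{1,2}$, and nothing you have proved gives this. Aubin's inequality for conformally balanced functions holds only with an additive constant: $\ln\fint e^{2v}\d\widehat{\mu}\le(\frac12+\epsilon)\fint\newbtr{\widehat{\nabla}v}^2\d\widehat{\mu}+2\fint v\d\widehat{\mu}+C_\epsilon$. Dropping $C_\epsilon$, as you do in Step 1, requires a much deeper sharp inequality. So for $E_n:=\fint\newbtr{\widehat{\nabla}v_n}^2\d\widehat{\mu}$ you only get $E_n\le C(1+\delta_n)\le C(1+(1-a_n)D_n)$, and $(1-a_n)D_n$ may diverge. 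Without a uniform energy bound, Moser--Trudinger gives no uniform $L^q$ control on $e^{3(v_n-\fint v_n)}$. The equi-integrability you need for $\int f_ie^{3u_n}/\int e^{3u_n}\to p_i$ is therefore unavailable, and the decisive step remains an expectation.

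Your picture of the weight is also off. We have $\btr{\vec a_n}^{-1}b_{-\vec a_n}^{-1}(y)=\sqrt{1+\btr{\vec a_n}^{-2}}+\frac{\vec a_n}{\btr{\vec a_n}}\cdot y\to 1+\vec p\cdot y$ uniformly. After normalization it is of order one on all of $\Sbb^2$ away from $-\vec p$ and degenerates only at $-\vec p$; it does not concentrate near $\vec p$. In the bounded case, $v_n-\fint v_n\to 0$ should likewise come from compactness rather than from the constant-free inequality: a weak limit is a conformally balanced Onofri extremal, hence constant. You can even run that compactness directly on $u_n$.

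\textbf{The missing idea.} It is the one the paper uses in Lemma \ref{lem_appendix_onofri_prelim}. Condition \eqref{eq_appendix_balancing} says exactly that $\frac32u$ is conformally balanced. Aubin applied to $\frac32u$, together with $\fint e^{2u}\le(\fint e^{3u})^{2/3}$, gives $\ln\fint e^{2u}\le(\frac34+\epsilon)\fint\newbtr{\widehat{\nabla}u}^2+2\fint u+C_\epsilon$ for every balanced $u$. For your violating sequence this yields $(a_n-\frac34-\epsilon)D_n<C_\epsilon$, so $D_n$ is bounded and Steps 1 and 3 become superfluous.

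With that one line, your remaining argument is a complete proof, and a genuinely different one from the paper's. Its ingredients are compactness, the fact that $\ln b_{\vec a}+c$ satisfies \eqref{eq_appendix_balancing} only for $\vec a=0$, and the second variation. In the latter, the linearized constraint $\fint f_iu=O(\|u\|_{W^{1,2}}^2)$ together with $\lambda_2=6$ gives $\delta_n\ge cD_n$, with the cubic remainder controlled by Moser--Trudinger.

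The paper instead follows Chang--Yang. It uses the $\frac34+\epsilon$ bound to obtain maximizers $u_a$ of $J_a$ on the balanced class. It then kills the Lagrange multipliers in the Euler--Lagrange equation via a Kazdan--Warner identity, and shows $u_a=0$ for $a$ near $1$ by an implicit-function argument. Your contradiction-plus-linearization route uses the same spectral fact, namely that the linearized cubic constraint removes the kernel of $\widehat{\Delta}+2$. It avoids any PDE analysis of the Euler--Lagrange equation.

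If you want to keep your M\"obius picture, Step 3 can also be repaired quantitatively. Since $E_n=o(\ln\btr{\vec a_n})$, the Onofri bounds on $e^{3(v_n-\fint v_n)}$ grow only like $\btr{\vec a_n}^{o(1)}$. Conformal balancing plus H\"older gives $\int(1+\frac{\vec a_n}{\btr{\vec a_n}}\cdot y)e^{3(v_n-\fint v_n)}\d\widehat{\mu}\ge 4\pi$. This is considerably more work than the $\frac32u$ trick.
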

	We postpone the proof of Lemma \ref{lem_appendix_improvedonofri} to the end of this section and quickly outline how this first yields the desired $W^{1,2}$-control.
	
	\begin{kor}\label{kor_appendix_l2_balanced}
		Assume that $u\in W^{1,2}(\widehat{\gamma})$ satisfies \eqref{eq_appendix_balancing} and
		\[
			\fint_{\Sbb^2} e^{2u}\d\widehat{\mu}=1.
		\]
		Then 
		\[
			\int_{\Sbb^2}\newbtr{\widehat{\nabla}u}^2\d\widehat{\mu}\le \frac{4\pi}{a}S[u]
		\]
	\end{kor}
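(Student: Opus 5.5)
The plan is to read off the bound directly from the improved Onofri inequality, Lemma \ref{lem_appendix_improvedonofri}, and then compare the resulting constant with the stated one. Write
\[
D:=\fint_{\Sbb^2}\newbtr{\widehat{\nabla}u}^2\d\widehat{\mu},\qquad M:=\fint_{\Sbb^2}u\d\widehat{\mu},
\]
so that $S[u]=D+2M$ and $\int_{\Sbb^2}\newbtr{\widehat{\nabla}u}^2\d\widehat{\mu}=4\pi D$.

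\textbf{Step 1.} Since $u$ satisfies \eqref{eq_appendix_balancing}, Lemma \ref{lem_appendix_improvedonofri} applies. Together with the normalization $\fint e^{2u}\d\widehat{\mu}=1$ it gives
\[
1\le \exp\left(aD+2M\right).
\]
Taking logarithms yields $-2M\le aD$.

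\textbf{Step 2.} Insert this into the definition of $S[u]$:
\[
S[u]=D+2M\ge D-aD=(1-a)D.
\]
Hence $4\pi D\le \frac{4\pi}{1-a}S[u]$. In particular $S[u]\ge 0$, so the desired inequality is a nonnegative comparison.

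\textbf{Step 3.} To reach the stated constant $\frac{4\pi}{a}$, it suffices that $\frac{1}{1-a}\le \frac{1}{a}$, which holds exactly when $a\le \frac12$. So the remaining task is to check that the constant $a$ produced in the proof of Lemma \ref{lem_appendix_improvedonofri} can be taken with $a\le\frac12$.

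\textbf{Main obstacle.} Step 3 is where I expect the difficulty to lie. Enlarging $a$ in Lemma \ref{lem_appendix_improvedonofri} is always allowed, but it does not help here: for $a>\frac12$ the stated bound $D\le S[u]/a$ is stronger than what Steps 1 and 2 give. One therefore needs the improved constant itself to satisfy $a\le\frac12$.

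There are two reasons to expect this. First, there is the Aubin-type improvement of Onofri's inequality under vanishing first moments, whose threshold exponent is $\frac12$; Klainerman--Szeftel adapt exactly this, and here the moment condition is replaced by the balancing condition \eqref{eq_appendix_balancing}. Second, a linearization check supports it. For small $u$ one has
\[
\ln\fint e^{2u}\d\widehat{\mu}\approx 2M+2\operatorname{Var}(u).
\]
The balancing condition kills the first spherical harmonics to leading order, so the spectral gap gives $\operatorname{Var}(u)\le D/6$. The linearized inequality thus only needs $a\ge\frac13$, which is compatible with $a\le\frac12$.

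So when proving Lemma \ref{lem_appendix_improvedonofri}, I would follow the Aubin/Klainerman--Szeftel argument, keep track of the constant, and verify that it produces some $a\le\frac12$ (e.g.\ $a=\frac12$ itself, if the argument allows it). With that in hand, Steps 1--3 give
\[
\int_{\Sbb^2}\newbtr{\widehat{\nabla}u}^2\d\widehat{\mu}\le\frac{4\pi}{1-a}S[u]\le\frac{4\pi}{a}S[u].
\]
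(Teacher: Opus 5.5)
Your Steps 1 and 2 are exactly the paper's argument. Lemma \ref{lem_appendix_improvedonofri} with $\fint_{\Sbb^2}e^{2u}\d\widehat{\mu}=1$ gives $a\fint_{\Sbb^2}\newbtr{\widehat{\nabla}u}^2\d\widehat{\mu}+2\fint_{\Sbb^2}u\d\widehat{\mu}\ge 0$, hence $(1-a)\fint_{\Sbb^2}\newbtr{\widehat{\nabla}u}^2\d\widehat{\mu}\le S[u]$, and the paper's proof ends there. So what is actually established is $\int_{\Sbb^2}\newbtr{\widehat{\nabla}u}^2\d\widehat{\mu}\le\frac{4\pi}{1-a}S[u]$. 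This is also all that Proposition \ref{prop_appendix_w12_balancing} needs, namely a uniform constant. The $\frac{4\pi}{a}$ in the statement is a misprint for $\frac{4\pi}{1-a}$, and you should stop after Step 2.

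Step 3 is a genuine gap, and it cannot be closed.
\begin{itemize}
\item The paper's $a$ is close to $1$. Lemma \ref{lem_appendix_onofri_prelim} applies Aubin's $\frac12+\varepsilon$ to $\frac32u$ and then uses H\"older from $e^{3u}$ to $e^{2u}$, so it only yields the exponent $\frac34+\varepsilon$ with an additive constant. The Chang--Yang maximization is therefore run only for $a>\frac34$, and it gives $M_a=0$ only for $a\in(1-\delta,1]$.
\item Your heuristics do not support $a\le\frac12$. Aubin's threshold is $\frac12+\varepsilon$ \emph{with} an additive constant $C_\varepsilon$, not a constant-free $a\le\frac12$. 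The linearization only probes small $u$, whereas the threshold is set by concentration.
\item In fact $a<\frac12$ is impossible. Antipodally even functions satisfy \eqref{eq_appendix_balancing} automatically, and for them the coefficient $\frac12$ is sharp (Moser--Trudinger on $\mathbb{RP}^2$). So no $a<\frac12$ works even with an additive constant. The value $a=\frac12$ exactly, with no constant, would be a Chang--Yang-type sharp-constant theorem for the $e^{3u}$-balancing, which is nowhere available.
\end{itemize}
Under your normalization, the literal bound with $\frac{4\pi}{a}$ is equivalent to $0\le(1-a)\fint_{\Sbb^2}\newbtr{\widehat{\nabla}u}^2\d\widehat{\mu}+2\fint_{\Sbb^2}u\d\widehat{\mu}$. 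That is Lemma \ref{lem_appendix_improvedonofri} with constant $1-a$. For the $a$ the paper produces, $1-a<\frac12$, so the literal statement is false there, not merely unproved by your route.
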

	\begin{proof}
		By Lemma \ref{lem_appendix_improvedonofri}, we have
		\[
			\exp\left(a\fint_{\Sbb^2}\btr{\widehat{\nabla} u}^2\d\widehat{\mu}+2\fint_{\Sbb^2}u\d\widehat{\mu}\right)\ge 1
		\]
		by assumption. Hence
		\[
			a\fint_{\Sbb^2}\newbtr{\widehat{\nabla} u}^2\d\widehat{\mu}+2\fint_{\Sbb^2}u\d\widehat{\mu}\ge 0.
		\]
		In particular
		\[
			(1-a)\fint \newbtr{\widehat{\nabla}u}^2\d\widehat{\mu}=S[u]-a\fint_{\Sbb^2}\newbtr{\widehat{\nabla} u}^2\d\widehat{\mu}-2\fint_{\Sbb^2}u\d\widehat{\mu}\le S[u].
		\]
	\end{proof}
	\begin{prop}\label{prop_appendix_w12_balancing}
		For all $\delta>0$, there exists $\varepsilon>0$ such that if 
		\[
			\norm{K-1}_{C^0(\widehat{\gamma})}\le \varepsilon,
		\]
		where $K$ denotes the Gauss curvature of the metric $e^{2u}\widehat{\gamma}$, and $u$ satisfies \eqref{eq_appendix_balancing} and 
		\[
		\fint_{\Sbb^2} e^{2u}\d\widehat{\mu}=1,
		\]
		then 
		\[
			\norm{u}_{W^{1,2}(\widehat{\gamma})}\le \delta.
		\]
	\end{prop}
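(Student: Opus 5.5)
The argument follows Klainerman--Szeftel: first bound the Onofri functional $S[u]$ by the curvature deviation, then convert that into $W^{1,2}$-control via Corollary \ref{kor_appendix_l2_balanced}.

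\textbf{Step 1 (equation and normalisation).} The conformal factor satisfies
\[
-\widehat{\Delta}u+1=Ke^{2u}.
\]
Integrating over $\Sbb^2$ and using $\fint e^{2u}\d\widehat{\mu}=1$ gives $\fint Ke^{2u}\d\widehat{\mu}=1$, which is consistent with Gauss--Bonnet. Set $\bar u:=\fint u\d\widehat{\mu}$. Jensen's inequality gives $2\bar u\le \ln\fint e^{2u}=0$, so $\bar u\le 0$.

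\textbf{Step 2 (bounding $S[u]$).} Multiply the equation by $u-\bar u$ and integrate:
\[
\fint\newbtr{\widehat{\nabla}u}^2\d\widehat{\mu}=\fint Ke^{2u}(u-\bar u)\d\widehat{\mu}=\fint e^{2u}(u-\bar u)\d\widehat{\mu}+\fint(K-1)e^{2u}(u-\bar u)\d\widehat{\mu}.
\]
For the first term, write $e^{2u}u=\tfrac12 e^{2u}\ln e^{2u}$. By Jensen's inequality applied to the convex function $t\ln t$, together with $\fint e^{2u}=1$, we have $\fint e^{2u}u\ge 0$. This sign is the wrong way for an upper bound, so I would not use it directly. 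Instead I would use Lemma \ref{lem_appendix_improvedonofri} together with the entropy duality
\[
\fint e^{2u}\cdot 2v\le \ln\fint e^{2v}\quad\text{for all } v \text{ (since } \fint e^{2u}=1\text{)}.
\]
Apply this to $v=\lambda(u-\bar u)$ with $\lambda>1$ chosen so that $\lambda^2a<\lambda$, i.e.\ $\lambda<1/a$. Note that $\lambda u$ is no longer balanced. Here I would invoke the improved Onofri inequality for $\lambda u$ only after rebalancing by a Möbius transformation, or more simply use the plain Onofri inequality $\ln\fint e^{2w}\le\fint|\widehat{\nabla}w|^2+2\bar w$ with a careful constant.

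A cleaner route is as follows. The balancing gives $\fint e^{2u}(u-\bar u)\le \frac{a}{2}\fint|\widehat{\nabla}u|^2+$ (lower-order terms). Combining, I expect
\[
(1-a')\fint\newbtr{\widehat{\nabla}u}^2\d\widehat{\mu}\le \varepsilon\fint e^{2u}\btr{u-\bar u}\d\widehat{\mu}.
\]
The right-hand side is controlled using Moser--Trudinger, $\fint e^{4|u-\bar u|}\le C\exp(C\fint|\widehat{\nabla}u|^2)$, together with Hölder's inequality. This yields a bound of the form $D\le \varepsilon C(1+D)e^{CD}$ for $D:=\fint|\widehat{\nabla}u|^2$.

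\textbf{Step 3 (continuity argument).} The inequality in Step 2 alone only shows that $D$ is either small or large. To exclude the large branch, I would deform along a continuity path, for instance $u_t$ connecting $u$ to $0$, or argue by contradiction with a sequence. Suppose $K_n\to1$ uniformly while $\|u_n\|_{W^{1,2}}\ge\delta$. Corollary \ref{kor_appendix_l2_balanced} bounds $D_n\le \frac{4\pi}{a}S[u_n]$. It therefore suffices to bound $S[u_n]$ uniformly, which follows from $S[u]=\fint|\widehat{\nabla}u|^2+2\bar u\le D$ and the identity in Step 2, via an a priori bound. With $D_n$ bounded, $e^{2u_n}$ is bounded in every $L^p$ by Moser--Trudinger. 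Elliptic regularity for $-\widehat{\Delta}u_n=K_ne^{2u_n}-1$ then gives $W^{2,p}$ bounds, so a subsequence converges in $C^1$ to a solution of $-\widehat{\Delta}u+1=e^{2u}$. Such a limit is the factor of a round metric, i.e.\ $u=\ln b_{\vec{a}}$ for some $\vec{a}$. The balancing and normalisation pass to the limit and force $\vec{a}=0$, because the vector $\textbf{Z}$ of a boosted sphere has spatial part proportional to $\vec{a}$. Hence $u=0$, and $u_n\to0$ in $W^{1,2}$, a contradiction.

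The main obstacle is obtaining the uniform a priori bound on $D_n$, i.e.\ ruling out concentration (bubbling). Bubbling is exactly what the balancing condition combined with the improved Onofri inequality (constant $a<1$) is designed to prevent. I would try first the identity $S[u]=\fint|\widehat{\nabla} u|^2+2\bar u$ combined with the Step 2 energy identity and $\bar u\le 0$, aiming for $S[u]\le C$ whenever $\varepsilon\le\frac12$.
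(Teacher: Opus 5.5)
There is a genuine gap, and it is the one you flag yourself: nothing in the proposal produces a uniform bound on $D=\fint_{\Sbb^2}\newbtr{\widehat{\nabla}u}^2\d\widehat{\mu}$, equivalently on $S[u]$, yet Step 3 presupposes it. Once such a bound is available, your compactness argument does close. Moser--Trudinger gives $L^p$ bounds on $e^{2u_n}$, the mean is controlled by $-\frac{a}{2}D_n\le\bar u_n\le 0$, and elliptic estimates give $C^1$ convergence to a Möbius factor $\ln b_{\vec a}$, which the balancing forces to be $0$.

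Step 2 does not deliver the bound, for three reasons.
\begin{itemize}
\item The claimed inequality $\fint e^{2u}(u-\bar u)\d\widehat{\mu}\le\frac a2 D+\dots$ is not a consequence of the balancing condition. Take $e^{2u}=\frac12\bigl(b_{\vec a}^2+b_{-\vec a}^2\bigr)$. It is normalised and even, hence balanced, yet $\fint e^{2u}(u-\bar u)\d\widehat{\mu}\approx\frac12 D\to\infty$ as $\btr{\vec a}\to\infty$. This exceeds $\frac a2 D$ by an amount linear in $D$.
\item The Moser--Trudinger/Hölder estimate only yields the small-or-large dichotomy you mention.
\item The proposed way out, $S[u]\le D$ combined with the energy identity, is circular.
\end{itemize}

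The energy identity cannot do this job on its own. Combined with the definition of $S$ it reads $S[u]=2\fint Ke^{2u}u\d\widehat{\mu}-D$. So a bound on $S[u]$ is equivalent to an entropy bound $2\fint Ke^{2u}u\d\widehat{\mu}\le D+C$. For $K\equiv1$ this bound is saturated by the whole non-compact family $u=\ln b_{\vec a}$, where $S=0$ and $D$ is arbitrary. So what you need is a sharp, Möbius-invariant and curvature-sensitive estimate for $S[u]$. An example would be a sharp Onofri/Moser--Trudinger inequality on $(\Sbb^2,e^{2u}\hatgamma)$ applied to $-u$, with constants uniform for $\btr{K-1}\le\varepsilon$. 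Testing the equation against $u-\bar u$ and using non-sharp round-sphere inequalities will not produce it.

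The paper does not derive this input. It takes the required curvature control from \cite[Corollary 2.16]{klainermanszeftel}. The new balancing condition enters only through Corollary \ref{kor_appendix_l2_balanced}, which converts control of the Möbius-invariant $S[u]$ into control of $D$; the bounds $-\frac a2 D\le\bar u\le 0$ then control the mean. The rest follows \cite[Corollary 2.17]{klainermanszeftel}.
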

	\begin{proof}
		Using \cite[Corollary 2.16]{klainermanszeftel}, the proof follows verbatim as in \cite[Corollary 2.17]{klainermanszeftel}.
	\end{proof}
	Arguing as in the proof of \cite[Proposition 3.17]{klainermanszeftel} up to some minor modifications, Proposition \ref{prop_appendix_w12_balancing} readily implies Proposition \ref{prop_appendix_gausscurvature_c0}. It remains to prove Lemma \ref{lem_appendix_improvedonofri}, i.e., an improved Onofri-Inequality under the balancing condition \eqref{eq_appendix_balancing}. Here, we closely follow the strategy of Chang--Yang in \cite{changyang}. We first prove a preliminary version:
	
	\begin{lem}\label{lem_appendix_onofri_prelim}
		For all $\varepsilon>0$, there exists a constant $C_\varepsilon>0$ such that for all $u\in W^{1,2}(\widehat{\gamma})$ satisfying \eqref{eq_appendix_balancing} it holds that
		\[
			\fint_{\Sbb^2} e^{2u}\d\widehat{\mu}\le C_\varepsilon\exp\left(\left(\frac{3}{4}+\varepsilon\right)\fint_{\Sbb^2}\newbtr{\widehat{\nabla}{u}}^2\d\widehat{\mu}+2\fint_{\Sbb^2}u\d\widehat{\mu}\right).
		\]
	\end{lem}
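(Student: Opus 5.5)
The plan is to reduce the statement to an Aubin-type improved Moser--Trudinger inequality for $e^{3u}$ under the balancing condition \eqref{eq_appendix_balancing}, and then pass to $e^{2u}$ by H\"older's inequality. The exponent $\frac{3}{4}$ is exactly what this route yields. Since both sides of the claim are invariant under $u\mapsto u+c$, I may assume $\fint_{\Sbb^2}u\d\widehat{\mu}=0$.

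\textbf{Step 1 (H\"older).} Since $e^{2u}=(e^{3u})^{2/3}$, we have
\[
\fint_{\Sbb^2}e^{2u}\d\widehat{\mu}\le\Big(\fint_{\Sbb^2}e^{3u}\d\widehat{\mu}\Big)^{2/3}.
\]
It therefore suffices to show
\[
\fint_{\Sbb^2}e^{3u}\d\widehat{\mu}\le C_\varepsilon\exp\Big(\big(\tfrac{9}{8}+\varepsilon\big)\fint_{\Sbb^2}\newbtr{\widehat{\nabla}u}^2\d\widehat{\mu}\Big)
\]
for balanced $u$ with zero mean. Raising this to the power $\frac23$ gives the constant $\frac{3}{4}+\frac{2}{3}\varepsilon$.

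Note that Onofri's sharp inequality applied to $\frac{3}{2}u$ gives the same bound with constant $\frac{9}{4}$ and without any balancing. The task is to gain the factor $\frac12$, and this is precisely Aubin's improvement. The only difference from Aubin's setting is that the moment condition is imposed on the measure $e^{3u}\d\widehat{\mu}$ instead of on $e^{2u}\d\widehat{\mu}$. Because we apply the argument directly to $v=\frac32 u$, for which $e^{2v}=e^{3u}$, this is exactly Aubin's hypothesis for $v$.

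\textbf{Step 2 (concentration compactness).} For the balanced measure $\nu=e^{3u}\d\widehat{\mu}/\int_{\Sbb^2} e^{3u}\d\widehat{\mu}$, which has vanishing first moments, I would show the following. There exist $\delta_0>0$ and a finite family of pairs of disjoint closed sets $(\Omega_1,\Omega_2)$ on $\Sbb^2$, each at mutual distance at least $\delta_0$, such that for every balanced $\nu$ some pair satisfies $\nu(\Omega_1),\nu(\Omega_2)\ge\delta_0$.

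The argument is by contradiction. If $\nu$ put mass more than $1-\delta_0$ into a small cap around a point $p$, then $\int x\,d\nu$ would be close to $p\neq0$, contradicting the balancing. Hence the mass is spread out, and a finite covering of $\Sbb^2$ by small caps produces two separated caps each carrying a fixed fraction of the mass.

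\textbf{Step 3 (localised inequality).} Choose cutoffs $g_1,g_2$ with $g_i\equiv1$ on $\Omega_i$, disjoint supports, and $\newbtr{\widehat{\nabla}g_i}\le C(\delta_0)$. Apply Onofri's inequality to $g_i v$ with $v=\frac32 u$:
\[
\int_{\Sbb^2}e^{2v}\d\widehat{\mu}\le\delta_0^{-1}\int_{\Omega_i}e^{2g_iv}\d\widehat{\mu}\le C\exp\Big(\fint_{\Sbb^2}\newbtr{\widehat{\nabla}(g_iv)}^2\d\widehat{\mu}+2\fint_{\Sbb^2} g_iv\d\widehat{\mu}\Big).
\]
Since the supports of $g_1,g_2$ are disjoint, one of the indices satisfies
\[
\fint_{\Sbb^2}g_i^2\newbtr{\widehat{\nabla}v}^2\d\widehat{\mu}\le\tfrac12\fint_{\Sbb^2}\newbtr{\widehat{\nabla}v}^2\d\widehat{\mu}.
\]
The remaining contributions are handled as follows:
\[
\newbtr{\widehat{\nabla}(g_iv)}^2\le(1+\varepsilon)g_i^2\newbtr{\widehat{\nabla}v}^2+C_\varepsilon\newbtr{\widehat{\nabla}g_i}^2v^2,
\qquad
\fint_{\Sbb^2} g_iv\d\widehat{\mu}\le C\norm{v}_{L^2}.
\]
Both are then bounded by $\varepsilon\fint_{\Sbb^2}\newbtr{\widehat{\nabla}v}^2\d\widehat{\mu}+C_\varepsilon$, using Poincar\'e's inequality (valid because $v$ has mean zero) and Young's inequality. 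This yields the bound with constant $\frac12\cdot\frac94=\frac98$ up to $\varepsilon$, which completes Step 1.

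\textbf{Main obstacle.} I expect the main difficulty to be the lower-order term $\int_{\Sbb^2}\newbtr{\widehat{\nabla}g_i}^2v^2\d\widehat{\mu}$. It is only of order $\norm{v}_{L^2}^2$, which is quadratic in $v$, whereas we want to absorb it into $\varepsilon\norm{\widehat{\nabla}v}_{L^2}^2$ plus a constant.

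The standard fix, following Chang--Yang, is to split $v$ into two parts. The first is its projection $v_{\le N}$ onto finitely many spherical harmonics, which is controlled in $L^\infty$ by $\norm{v}_{L^2}$. The second is the high-frequency remainder, which satisfies
\[
\norm{v_{>N}}_{L^2}^2\le\frac{1}{N(N+1)}\norm{\widehat{\nabla}v}_{L^2}^2.
\]
One applies the cutoff argument only to $v_{>N}$ and treats $v_{\le N}$ by
\[
e^{2v}\le e^{2\max\btr{v_{\le N}}}e^{2v_{>N}},
\qquad
\max\btr{v_{\le N}}\le C_N\norm{\widehat{\nabla}v}_{L^2}\le\varepsilon\norm{\widehat{\nabla}v}_{L^2}^2+C_{N,\varepsilon}.
\]
The bound on $\max\btr{v_{\le N}}$ uses the zero mean of $v$, and its last step is Young's inequality. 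Choosing $N$ large in terms of $\varepsilon$ then closes the estimate.
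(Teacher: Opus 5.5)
Your proposal is correct and is essentially the paper's argument: the paper applies the Aubin-type improved Moser--Trudinger inequality, cited as Klainerman--Szeftel, Proposition 2.3, to $\frac{3}{2}u$. For this function \eqref{eq_appendix_balancing} is exactly the required moment condition on $e^{3u}$, and the paper then concludes with the same H\"older step $\fint_{\Sbb^2} e^{2u}\d\widehat{\mu}\le\left(\fint_{\Sbb^2} e^{3u}\d\widehat{\mu}\right)^{2/3}$. The only difference is that you sketch a proof of Aubin's inequality (concentration compactness plus the Chang--Yang low/high-frequency splitting, which correctly repairs the cutoff-gradient term you flagged), whereas the paper cites it without proof.
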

	\begin{proof}
		Applying \cite[Proposition 2.3]{klainermanszeftel} to $\frac{3}{2}u$, we find
		\begin{align*}
			\fint_{\Sbb^2} e^{3u}\d\widehat{\mu}&\le C_\varepsilon\exp\left(\frac{9}{4}\left(\frac{1}{2}+\varepsilon\right)\fint_{\Sbb^2}\newbtr{\widehat{\nabla}{u}}^2\d\widehat{\mu}+\frac{3}{2}\cdot2\fint_{\Sbb^2}u\d\widehat{\mu}\right)\\
			&=C_\varepsilon\exp\left(\left(\frac{3}{4}+\frac{3}{2}\varepsilon\right)\fint_{\Sbb^2}\newbtr{\widehat{\nabla}{u}}^2\d\widehat{\mu}+2\fint_{\Sbb^2}u\d\widehat{\mu}\right)^{\frac{3}{2}}
		\end{align*}
		Using the H\"older inequality, we find
		\[
			\fint_{\Sbb^2}e^{2u}\le \left(\fint_{\Sbb^2}e^{3u}\right)^{\frac{2}{3}}.
		\]
		This concludes the proof.
	\end{proof}
	
	We are now ready to prove Lemma \ref{lem_appendix_improvedonofri}:
	\begin{proof}[Proof of Lemma \ref{lem_appendix_improvedonofri}]
		We argue as in \cite[\S 3]{changyang} and consider the functional
		\[
			J_a(u)=\ln\left(\fint_{\Sbb^2}e^{2u}\d\widehat{\mu}\right)-\left(a\fint_{\Sbb^2}\newbtr{\widehat{\nabla}u}^2\d\widehat{\mu}+2\fint_{\Sbb^2}u\d\widehat{\mu}\right)
		\]
		for $\frac{3}{4}<a<1$. Let $M_a$ denote the maximum of $J_a(u)$ for all $u\in W^{1,2}(\widehat{\gamma})$ satisfying \eqref{eq_appendix_balancing}. Arguing as in \cite{changyang}, and using Lemma \ref{lem_appendix_onofri_prelim}, the maximum $M_a$ is achieved by a function $u_a$ for all $a>\frac{3}{4}$. 
		
		Moreover, for each $\eta>0$, there exists a constant $C_\eta$ such that
		\begin{align}\label{eq_appendix_onofrilemma1}
			\fint_{\Sbb^2}\newbtr{\widehat{\nabla} u_a}^2\d\widehat{\mu}\le C_\eta
		\end{align}
		for all $\frac{3}{4}+\eta\le a\le 1$, and $u_a$ (weakly) solves the corresponding Euler-Lagrange equation under the constraint \eqref{eq_appendix_balancing}, i.e.,
		\begin{align}\label{eq_appendix_onofrilemma2}
			a\widehat{\Delta}u_a+e^{2u_a}=1+\sum\limits_{i=1}^3\alpha_i f_ie^{3u_a}
		\end{align}
		for constants $\alpha_i$, where $f_i$ denote first spherical harmonics as above. We now claim that $u_a=0$ is the only solution satisfying \eqref{eq_appendix_onofrilemma1} and \eqref{eq_appendix_onofrilemma2} for $a$ sufficiently close to $1$, see Lemma \ref{lem_appendix_solution} below. Then $M_a=0$ for all $1\ge a>1-\delta$ for some sufficiently small $\delta>0$, implying the claim.
	\end{proof}
	
	\begin{lem}\label{lem_appendix_solution}
		Let $u_a$ be a solution of \eqref{eq_appendix_onofrilemma2} satisfying \eqref{eq_appendix_onofrilemma1}. Then
		\begin{enumerate}
			\item[\emph{(a)}] $\alpha_i=0$ for all $i=1,2,3$,
			\item[\emph{(b)}] $u_a=0$ for $a$ sufficiently close to $1$.
		\end{enumerate}
	\end{lem}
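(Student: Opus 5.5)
The plan follows Chang--Yang \cite[\S 3]{changyang}. For (a) I would derive Kazdan--Warner type identities by testing \eqref{eq_appendix_onofrilemma2} against suitable functions. For (b) I would combine a compactness argument as $a\to 1$ with a perturbative (linearization) argument around $u=0$. Throughout, $f_j$ are the normalized coordinate functions on $\Sbb^2$. They satisfy $\widehat{\nabla}^2 f_j=-f_j\hatgamma$, $\widehat{\Delta}f_j=-2f_j$ and $\spann{\widehat{\nabla}f_i,\widehat{\nabla}f_j}=\delta_{ij}-f_if_j$.

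\textbf{Step 1: identities for (a).} I would first multiply \eqref{eq_appendix_onofrilemma2} by $\spann{\widehat{\nabla}f_j,\widehat{\nabla}u_a}$ and integrate. Two integrations by parts using $\widehat{\nabla}^2f_j=-f_j\hatgamma$ show that the Laplacian term vanishes:
\[
\int\widehat{\Delta}u\spann{\widehat{\nabla}f_j,\widehat{\nabla}u}\d\widehat{\mu}=\int f_j\btr{\widehat{\nabla}u}^2-\int f_j\btr{\widehat{\nabla}u}^2=0 .
\]
The remaining terms are computed directly:
\[
\int e^{2u}\spann{\widehat{\nabla}f_j,\widehat{\nabla}u}=\int f_je^{2u},\qquad \int \spann{\widehat{\nabla}f_j,\widehat{\nabla}u}=2\int f_ju,
\]
\[
\int f_ie^{3u}\spann{\widehat{\nabla}f_j,\widehat{\nabla}u}=-\tfrac13\int e^{3u}\left(\delta_{ij}-3f_if_j\right).
\]
Second, I would multiply \eqref{eq_appendix_onofrilemma2} by $f_j$ itself, which gives $-2a\int uf_j+\int e^{2u}f_j=\sum_i\alpha_i\int f_if_je^{3u}$. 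Subtracting the two identities cancels the terms $\int f_if_je^{3u}$ and yields
\[
\tfrac13\,\alpha_j\int_{\Sbb^2}e^{3u_a}\d\widehat{\mu}=2(1-a)\int_{\Sbb^2}u_af_j\d\widehat{\mu}.
\]
So $\alpha_j=0$ for $a=1$, and in general $\btr{\alpha_j}\le C(1-a)$ by \eqref{eq_appendix_onofrilemma1} and Moser--Trudinger. To get (a) exactly, I would next exploit the balancing condition \eqref{eq_appendix_balancing}. I would test the equation additionally against $e^{u}f_j$, or against $\spann{\widehat{\nabla}f_j,\widehat{\nabla}u}e^{u}$, so that the term $\int f_je^{3u}=0$ appears. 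This should yield a second relation which, together with the first, forces $\int u_af_j=0$ and hence $\alpha_j=0$. This is the step I expect to be the main obstacle: the identity above alone only gives $\alpha=O(1-a)$. If an exact identity does not close, I would instead carry $\alpha=O(1-a)$ as a small perturbation into Step 3, since that is all that (b) needs.

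\textbf{Step 2: compactness as $a\to1$.} By \eqref{eq_appendix_onofrilemma1} and Moser--Trudinger, $e^{2u_a}$ and $e^{3u_a}$ are bounded in every $L^p$. Elliptic regularity then gives uniform $C^{2,\beta}$ bounds on $u_a$. Along a sequence $a\to1$, $u_a\to u_1$, where $u_1$ solves $\widehat{\Delta}u_1+e^{2u_1}=1$ (using $\alpha\to0$) and satisfies the balancing condition. Hence $e^{2u_1}\hatgamma$ is round with area $4\pi$, so $u_1=\frac12\ln\btr{\det D\Phi}$ for a M\"obius transformation $\Phi$. Since $\textbf{Z}$ transforms equivariantly and a boosted sphere has $\vec{a}\neq0$, balancing forces $\Phi$ to be a rotation, i.e.\ $u_1=0$. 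Thus $u_a\to0$ in $C^{2,\beta}$ as $a\to1$.

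\textbf{Step 3: local uniqueness near $0$.} Write $u=u_a$ and linearize \eqref{eq_appendix_onofrilemma2} at $0$ to get $a\widehat{\Delta}u+2u=Q(u)+\sum_i\alpha_if_i(1+O(u))$, with $Q$ quadratic. Expand in spherical harmonics. On modes of degree $k\ge2$, the operator $a\widehat{\Delta}+2$ is invertible with bound uniform in $a$ near $1$, since $-ak(k+1)+2\le-6a+2$. On constants, $2-0$ is invertible. The degree-one component of $u$ is controlled by the linearized balancing condition, $3\int f_iu=-\int f_i\,O(u^2)$. The $\alpha_i$ are bounded using Step 1. Together these give $\norm{u}\le C\norm{u}^2$, so $u_a=0$ once $\norm{u_a}$ is small, i.e.\ for $a$ sufficiently close to $1$ by Step 2. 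This proves (b), and with it Step 1's identity also shows $\alpha_i=0$.
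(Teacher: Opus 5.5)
Your Step~1 identity $\tfrac13\alpha_j\int_{\Sbb^2}e^{3u_a}\d\widehat{\mu}=2(1-a)\int_{\Sbb^2}u_af_j\d\widehat{\mu}$ is correct. Your suspicion that nothing of this type gives $\alpha_j=0$ exactly is well founded. The extra test functions $e^{u}f_j$ and $e^{u}\spann{\widehat{\nabla}f_j,\widehat{\nabla}u}$ are only a hope and should be dropped.

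The paper tries to get (a) exactly by applying the Kazdan--Warner identity \eqref{eq_appendix_generalKW} to both $v_1=2u_a$ and $v_2=3u_a$ and comparing the results. But $\widehat{\nabla}v_k=k\widehat{\nabla}u_a$, so both identities are multiples of one relation: test \eqref{eq_appendix_onofrilemma2} against $\spann{\widehat{\nabla}f,\widehat{\nabla}u_a}$ and against $f=\sum_i\alpha_if_i$. Using $\btr{\widehat{\nabla}f}^2+f^2=\btr{\alpha}^2$, each reduces to $\btr{\alpha}^2\fint e^{3u_a}\d\widehat{\mu}=6(1-a)\fint u_af\d\widehat{\mu}$, which is your identity contracted with $\alpha$.

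In the paper's chain, the step from $(3-\frac3a)\fint e^{v_2}h_2f$ to $(2-\frac2a)\fint e^{v_1}h_1f$ contradicts \eqref{eq_appendix_solution1}. Since $e^{v_1}h_1=\frac23e^{v_2}h_2$, the correct coefficient is $\frac92(1-\frac1a)$. With it, both ends of the chain express the same equation, and nothing forces $\fint e^{3u}(\btr{\widehat{\nabla}f}^2+f^2)=0$. So the paper does not close this gap either.

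Your fallback is the right move. It proves (b), and then (a) for $a$ close to $1$, because $u_a=0$ forces $\sum_i\alpha_if_i=0$. Neither argument proves (a) for every admissible $a$ as literally stated. However, the proof of Lemma~\ref{lem_appendix_improvedonofri} only needs $a$ near $1$.

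For (b), your route also differs from the paper's. The paper removes $\alpha$ via (a), quotes $\newnorm{u_a}_{C^0(\widehat{\gamma})}\to0$ from Chang--Yang, and applies the implicit function theorem to $\mathcal{F}(a,u)$. You instead get $u_a\to0$ from a self-contained compactness argument: uniform bounds, then balancing and the equivariance of $\textbf{Z}$ rule out boosted limits. You then prove $\newnorm{u_a}\le C\newnorm{u_a}^2$ mode by mode, treating $\alpha$ as a perturbation via $\btr{\alpha_j}\le C(1-a)\btr{\int u_af_j\d\widehat{\mu}}$.

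This does not depend on (a), and it is also more robust. As a map $W^{2,2}\to L^2\times\R^3$, $D_u\mathcal{F}_{(1,0)}$ is injective but not surjective, because the range of $\widehat{\Delta}+2$ is $L^2$-orthogonal to the first spherical harmonics. So only local injectivity near $0$ is available, and your quadratic estimate supplies exactly that. Equivalently, the linearization in the unknowns $(u,\alpha)$ is invertible at $(1,0,0)$.

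One point to state explicitly: integrating \eqref{eq_appendix_onofrilemma2} and using \eqref{eq_appendix_balancing} gives $\fint e^{2u_a}\d\widehat{\mu}=1$. Together with \eqref{eq_appendix_onofrilemma1}, this bounds $\fint u_a$ above and below. Your $L^p$ bounds in Step~2 and the lower bound on $\int e^{3u_a}$ in Step~1 both rely on it.
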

	\begin{proof}
		
		To prove (a), we recall a general Kazdan--Warner identity, see \cite[Equation (2.11)]{changyang}: If $v$ satisfies $\widehat{\Delta}v+he^v=c$ for a function $h$ and constant $c$, then
		\begin{align}\label{eq_appendix_generalKW}
			\fint_{\Sbb^2}e^v\spann{\widehat{\nabla}h,\widehat{\nabla}f_i}\d\widehat{\mu}=(2-c)\fint_{\Sbb^2} e^v f_i,
		\end{align}
		for first spherical harmonics $f_i$, $i=1,2,3$. We define $f:=\sum\limits_{i=1}^3\alpha_i f_i$. As $u_a$ satisfies \eqref{eq_appendix_onofrilemma2}, we find that $v_1=2u_a$, $v_2=3u_a$ satisfy
		\begin{align*}
			\widehat{\Delta}v_1+e^{v_1}h_1&=\frac{2}{a},\\
			\widehat{\Delta}v_2+e^{v_2}h_2&=\frac{3}{a},\\
		\end{align*}
		for $h_1=\frac{2}{a}(1-e^{u_a}f)$, $h_2=\frac{3}{a}(e^{-u_a}-f)$. In the following, we write $u=u_a$ for simplicity. 
		
		We first observe that
		\begin{align}\label{eq_appendix_solution1}
			\fint_{\Sbb^2}e^{v_1}h_1f\d\widehat{\mu}=\frac{2}{3}\fint_{\Sbb^2}e^{v_2}h_2f\d\widehat{\mu}.
		\end{align}
		Next, we compute that
		\begin{align*}
			\fint_{\Sbb^2}e^{v_1}\spann{\widehat{\nabla}h_1,\widehat{\nabla}f}\d\widehat{\mu}
			&=-\frac{2}{a}\fint_{\Sbb^2}e^{2u}\spann{\widehat{\nabla}e^uf,\widehat{\nabla}f}\d\widehat{\mu}\\
			&=-\frac{2}{a}\fint_{\Sbb^2}e^{3u}\newbtr{\widehat{\nabla}{f}}^2+\frac{1}{6}\spann{\widehat{\nabla}e^{3u},\widehat{\nabla}f^2}\d\widehat{\mu}\\
			&=-\frac{2}{a}\fint_{\Sbb^2}e^{3u}\newbtr{\widehat{\nabla}{f}}^2-\frac{1}{6}e^{3u}\widehat{\Delta}f^2\d\widehat{\mu}\\
			&=-\frac{2}{a}\fint_{\Sbb^2}e^{3u}\left(\frac{2}{3}\newbtr{\widehat{\nabla}{f}}^2-\frac{1}{3}e^{3u}f\widehat{\Delta}f\right)\d\widehat{\mu}\\
			&=-\frac{4}{3a}\fint_{\Sbb^2}e^{3u}\left(\newbtr{\widehat{\nabla}{f}}^2+f^2\right)\d\widehat{\mu},
		\end{align*}
		where we used that $f$ is a linear combination of first spherical harmonics, i.e., $-\widehat{\Delta}f=2f$. A similar integration by parts yields
		\[
			\fint_{\Sbb^2}e^{v_2}\spann{\widehat{\nabla}h_2,\widehat{\nabla}f}\d\widehat{\mu}=-\frac{3}{a}\fint_{\Sbb^2}e^{3u}\newbtr{\widehat{\nabla}{f}}^2\d\widehat{\mu}-\frac{3}{a}\fint_{\Sbb^2}e^{2u}f\d\widehat{\mu},
		\]
		and we observe that
		\[
			\fint_{\Sbb^2}e^{v_2}h_2f\d\widehat{\mu}=\frac{3}{a}\fint_{\Sbb^2}e^{2u}f\d\widehat{\mu}-\frac{3}{a}\fint_{\Sbb^2}e^{3u}f^2\d\widehat{\mu}.
		\]
		Combining the two identities derived by partial integration, the generalized Kazdan--Warner identity \eqref{eq_appendix_generalKW}, and \eqref{eq_appendix_solution1}, we find that
		\begin{align*}
			-\frac{3}{a}\fint_{\Sbb^2}e^{3u}\left(\newbtr{\widehat{\nabla}{f}}^2+f^2\right)\d\widehat{\mu}
			&=\fint_{\Sbb^2}e^{v_2}\spann{\widehat{\nabla}h_2,\widehat{\nabla}f}\d\widehat{\mu}+\fint_{\Sbb^2}e^{v_2}h_2f\d\widehat{\mu}\\
			&=\left(3-\frac{3}{a}\right)\fint_{\Sbb^2}e^{v_2}h_2f\d\widehat{\mu}\\
			&=\left(2-\frac{2}{a}\right)\fint_{\Sbb^2}e^{v_2}h_1f\d\widehat{\mu}\\
			&=\fint_{\Sbb^2}e^{v_1}\spann{\widehat{\nabla}h_1,\widehat{\nabla}f}\d\widehat{\mu}\\
			&=-\frac{4}{3a}\fint_{\Sbb^2}e^{3u}\left(\newbtr{\widehat{\nabla}{f}}^2+f^2\right)\d\widehat{\mu}.
		\end{align*}
		Hence
		\[
			\fint_{\Sbb^2}e^{3u}\left(\newbtr{\widehat{\nabla}{f}}^2+f^2\right)\d\widehat{\mu}=0,
		\]
		so $f=0$. As the first spherical harmonics $f_i$ are $L^2$-orthogonal this implies $\alpha_i=0$ as desired. This concludes the proof of (a).
		
		To prove (b), we now note that it suffices to show that $u_a=0$ is the only solution to
		\[
		a\widetilde{\Delta}u_a+e^{2u_a}-1=0
		\]
		satisfying \eqref{eq_appendix_balancing} for $a$ sufficiently close to $1$ by (a). A proof of this follows up to some modifications as in \cite{changyang}. For completeness, we include a different argument using the implicit function theorem. Here, we only invoke that $\norm{u_a}_{C^0(\widehat{\gamma})}\to 0$ as $a\to 1$ which follows almost verbatim as in \cite[Lemma 3.2]{changyang}. Consider the map
		\[
			\mathcal{F}\colon(0,\infty)\times W^{2,2}(\widehat{\gamma})\to L^2(\widehat{\gamma})\times \R^3,\,(a,u) \mapsto\left(a\widehat{\Delta}u+e^{2u}-1,\int_{\Sbb^2}e^{3u}f_i\d\widehat{\mu}\right)
		\]
		By construction, (b) is equivalent to showing that $\mathcal{F}(a,u_a)=(0,0)$ implies $u_a=0$ for $a$ sufficiently close to $1$. Observe that
		\[
			D_u\mathcal{F}_{(a,u)}(\varphi)=\left(a\widehat{\Delta}\varphi+2e^{2u}\varphi,3\int_{\Sbb^2}e^{3u}\varphi f_i\d\widehat{\mu}\right),
		\]
		in particular $D_u\mathcal{F}$ only depends pointwise on $(a,u)$. At $(1,0)$, which satisfies $\mathcal{F}(1,0)=(0,0)$, one finds
		\[
			D_u\mathcal{F}_{(1,0)}(\varphi)=\left(L(\varphi):=\widehat{\Delta}\varphi+2\varphi,3\int_{\Sbb^2}\varphi f_i\d\widehat{\mu}\right).
		\]
		Note that the kernel of $L$ is spanned by the first spherical harmonics, and the last three components are the projection of $\varphi$ onto that span. Hence, $	D_u\mathcal{F}_{(1,0)}$ is invertible by construction, and the implicit function theorem implies that for $a$ sufficiently close to $1$ there is a $W^{2,2}(\widehat{\gamma})$-neighborhood of $0$ in which there is a unique solution $u_a$, which is necessarily $u_a=0$. Moreover, since all solutions $u_a$ satisfy $\norm{u_a}_{C^0(\widehat{\gamma})}\to 0$ as $a\to 1$, by continuity $D_u\mathcal{F}_{(a,u_a)}$ remains invertible for $a$ sufficiently small. In particular, the implicit function theorem also yields a (continuous) path from $u_a$ to a solution $u_1$ for $a=1$. Since $u_1=0$ is the unique solution for $a=1$ and the local uniqueness of the implicit function theorem prevents the branching of paths of solutions, $u_a=0$ for $a$ sufficiently small. 
	\end{proof}

	\bibliography{bib_schwarzschildlightcones}

@article {cederbaumsakovich,
    AUTHOR = {Cederbaum, Carla and Sakovich, Anna},
     TITLE = {{On center of mass and foliations by constant spacetime mean
              curvature surfaces for isolated systems in general relativity}},
   JOURNAL = {Calc. Var. Partial Differential Equations},
  FJOURNAL = {Calculus of Variations and Partial Differential Equations},
    VOLUME = {60},
      YEAR = {2021},
    NUMBER = {6},
     PAGES = {Paper No. 214, 57},
      ISSN = {0944-2669,1432-0835},
   MRCLASS = {53C21 (58J37 83C05)},
  MRNUMBER = {4305436},
MRREVIEWER = {Ettore\ Minguzzi},
       DOI = {10.1007/s00526-021-02060-z},
       URL = {https://doi.org/10.1007/s00526-021-02060-z},
}

@article {anderssonmetzger,
    AUTHOR = {Andersson, Lars and Metzger, Jan},
     TITLE = {{Curvature estimates for stable marginally trapped surfaces}},
   JOURNAL = {J. Differential Geom.},
  FJOURNAL = {Journal of Differential Geometry},
    VOLUME = {84},
      YEAR = {2010},
    NUMBER = {2},
     PAGES = {231--265},
      ISSN = {0022-040X,1945-743X},
   MRCLASS = {53C42 (53C50 53C80)},
  MRNUMBER = {2652461},
MRREVIEWER = {John\ Urbas},
       URL = {http://projecteuclid.org/euclid.jdg/1274707313},
}

@article {kroenckewolff,
    AUTHOR = {Kr\"oncke, Klaus and Wolff, Markus},
     TITLE = {{Foliations of asymptotically {S}chwarzschildean lightcones by
              surfaces of constant spacetime mean curvature}},
   JOURNAL = {Math. Ann.},
  FJOURNAL = {Mathematische Annalen},
    VOLUME = {394},
      YEAR = {2026},
    NUMBER = {3},
     PAGES = {Paper No. 73, 71},
      ISSN = {0025-5831,1432-1807},
   MRCLASS = {53C21 (53C50 53E10)},
  MRNUMBER = {5036070},
       DOI = {10.1007/s00208-026-03331-w},
       URL = {https://doi.org/10.1007/s00208-026-03331-w},
}

@phdthesis{wolff_thesis,
  title        = {{On the Spacetime Mean Curvature of Surfaces in General Relativity}},
  author       = {Markus Wolff},
  year         = {2023},
  month        = {December},
  address      = {T\"ubingen, Germany},
  note         = {Available at \url{https://publikationen.uni-tuebingen.de/xmlui/handle/10900/148726}},
  school       = {University of T\"ubingen},
  type         = {PhD thesis}
}

@phdthesis{sauter,
title = {{Foliations of null hypersurfaces and the Penrose inequality}},
author = {Johannes Sauter},
year = {2008},
address = {Z\"urich, Switzerland},
note = {Available at \url{https://doi.org/10.3929/ethz-a-005713669}},
school = {ETH Z\"urich},
type = {PhD thesis}
}

@article {wolff4,
    AUTHOR = {Wolff, Markus},
     TITLE = {A {D}e {L}ellis--{M}\"uller type estimate on the {M}inkowski
              lightcone},
   JOURNAL = {Calc. Var. Partial Differential Equations},
  FJOURNAL = {Calculus of Variations and Partial Differential Equations},
    VOLUME = {63},
      YEAR = {2024},
    NUMBER = {7},
     PAGES = {Paper No. 185, 30},
      ISSN = {0944-2669,1432-0835},
   MRCLASS = {53C21 (53C50 53E10)},
  MRNUMBER = {4775719},
MRREVIEWER = {Francisco\ J.\ Palomo},
       DOI = {10.1007/s00526-024-02784-8},
       URL = {https://doi.org/10.1007/s00526-024-02784-8},
}

@article {wolff1,
    AUTHOR = {Wolff, Markus},
     TITLE = {Ricci flow on surfaces along the standard lightcone in the
              {$3+1$}-{M}inkowski spacetime},
   JOURNAL = {Calc. Var. Partial Differential Equations},
  FJOURNAL = {Calculus of Variations and Partial Differential Equations},
    VOLUME = {62},
      YEAR = {2023},
    NUMBER = {3},
     PAGES = {Paper No. 90, 22},
      ISSN = {0944-2669,1432-0835},
   MRCLASS = {53E20 (35Q75)},
  MRNUMBER = {4541080},
MRREVIEWER = {Julian\ Scheuer},
       DOI = {10.1007/s00526-022-02415-0},
       URL = {https://doi.org/10.1007/s00526-022-02415-0},
}

@article {onofri,
    AUTHOR = {Onofri, E.},
     TITLE = {On the positivity of the effective action in a theory of
              random surfaces},
   JOURNAL = {Comm. Math. Phys.},
  FJOURNAL = {Communications in Mathematical Physics},
    VOLUME = {86},
      YEAR = {1982},
    NUMBER = {3},
     PAGES = {321--326},
      ISSN = {0010-3616,1432-0916},
   MRCLASS = {58E15 (53C21 58G40)},
  MRNUMBER = {677001},
MRREVIEWER = {J.\ L.\ Kazdan},
       URL = {http://projecteuclid.org/euclid.cmp/1103921772},
}

@article {chenwang,
    AUTHOR = {Chen, Po-Ning and Wang, Ye-Kai},
     TITLE = {Two rigidity results for surfaces in {S}chwarzschild
              spacetimes},
   JOURNAL = {Math. Res. Lett.},
  FJOURNAL = {Mathematical Research Letters},
    VOLUME = {32},
      YEAR = {2025},
    NUMBER = {5},
     PAGES = {1373--1397},
      ISSN = {1073-2780,1945-001X},
   MRCLASS = {53C24 (53C42 53C50)},
  MRNUMBER = {5009213},
       DOI = {10.4310/mrl.251214113954},
       URL = {https://doi.org/10.4310/mrl.251214113954},
}

@article {eichmairkoerbermetzgerschulze,
    AUTHOR = {Eichmair, Michael and Koerber, Thomas and Metzger, Jan and
              Schulze, Felix},
     TITLE = {Huisken-{Y}au-type uniqueness for area-constrained {W}illmore
              spheres},
   JOURNAL = {Duke Math. J.},
  FJOURNAL = {Duke Mathematical Journal},
    VOLUME = {173},
      YEAR = {2024},
    NUMBER = {9},
     PAGES = {1677--1730},
      ISSN = {0012-7094,1547-7398},
   MRCLASS = {53C42 (49Q10 83C40)},
  MRNUMBER = {4766841},
MRREVIEWER = {Peng\ Wang},
       DOI = {10.1215/00127094-2023-0045},
       URL = {https://doi.org/10.1215/00127094-2023-0045},
}

@article {huiskenyau,
    AUTHOR = {Huisken, Gerhard and Yau, Shing-Tung},
     TITLE = {Definition of center of mass for isolated physical systems and
              unique foliations by stable spheres with constant mean
              curvature},
   JOURNAL = {Invent. Math.},
  FJOURNAL = {Inventiones Mathematicae},
    VOLUME = {124},
      YEAR = {1996},
    NUMBER = {1-3},
     PAGES = {281--311},
      ISSN = {0020-9910,1432-1297},
   MRCLASS = {53C20 (83C99)},
  MRNUMBER = {1369419},
MRREVIEWER = {Alan\ D.\ Rendall},
       DOI = {10.1007/s002220050054},
       URL = {https://doi.org/10.1007/s002220050054},
}

@article {roesch,
    AUTHOR = {Roesch, Henri P.},
     TITLE = {Proof of a null {P}enrose conjecture using a new quasi-local
              mass},
   JOURNAL = {Comm. Anal. Geom.},
  FJOURNAL = {Communications in Analysis and Geometry},
    VOLUME = {29},
      YEAR = {2021},
    NUMBER = {8},
     PAGES = {1847--1915},
      ISSN = {1019-8385,1944-9992},
   MRCLASS = {53C20 (53C50)},
  MRNUMBER = {4429247},
MRREVIEWER = {Marcus\ A.\ Khuri},
       DOI = {10.4310/cag.2021.v29.n8.a5},
       URL = {https://doi.org/10.4310/cag.2021.v29.n8.a5},
}

@article {roesch2,
    AUTHOR = {Roesch, Henri},
     TITLE = {Quasi-round {MOTS}s and stability of the {S}chwarzschild null
              {P}enrose inequality},
   JOURNAL = {Ann. Henri Poincar\'e},
  FJOURNAL = {Annales Henri Poincar\'e. A Journal of Theoretical and
              Mathematical Physics},
    VOLUME = {22},
      YEAR = {2021},
    NUMBER = {6},
     PAGES = {1937--1978},
      ISSN = {1424-0637,1424-0661},
   MRCLASS = {83C30 (83C40 83C57 83C75)},
  MRNUMBER = {4264889},
MRREVIEWER = {Veselin\ T.\ Videv},
       DOI = {10.1007/s00023-021-01047-y},
       URL = {https://doi.org/10.1007/s00023-021-01047-y},
}

@article {cederbaumcortiersakovich,
    AUTHOR = {Cederbaum, Carla and Cortier, Julien and Sakovich, Anna},
     TITLE = {On the center of mass of asymptotically hyperbolic initial
              data sets},
   JOURNAL = {Ann. Henri Poincar\'e},
  FJOURNAL = {Annales Henri Poincar\'e. A Journal of Theoretical and
              Mathematical Physics},
    VOLUME = {17},
      YEAR = {2016},
    NUMBER = {6},
     PAGES = {1505--1528},
      ISSN = {1424-0637,1424-0661},
   MRCLASS = {83C05 (53C20)},
  MRNUMBER = {3500223},
MRREVIEWER = {Jos\'e\ Nat\'ario},
       DOI = {10.1007/s00023-015-0438-5},
       URL = {https://doi.org/10.1007/s00023-015-0438-5},
}

@article {marssoria,
    AUTHOR = {Mars, Marc and Soria, Alberto},
     TITLE = {The asymptotic behaviour of the {H}awking energy along null
              asymptotically flat hypersurfaces},
   JOURNAL = {Classical Quantum Gravity},
  FJOURNAL = {Classical and Quantum Gravity},
    VOLUME = {32},
      YEAR = {2015},
    NUMBER = {18},
     PAGES = {185020, 30},
      ISSN = {0264-9381,1361-6382},
   MRCLASS = {83C40 (83C05)},
  MRNUMBER = {3400227},
MRREVIEWER = {Torsten\ Asselmeyer-Maluga},
       DOI = {10.1088/0264-9381/32/18/185020},
       URL = {https://doi.org/10.1088/0264-9381/32/18/185020},
}

@article {klainermanszeftel,
    AUTHOR = {Klainerman, Sergiu and Szeftel, J\'er\'emie},
     TITLE = {Effective results on uniformization and intrinsic {GCM}
              spheres in perturbations of {K}err},
      NOTE = {With an appendix by Camillo De Lellis},
   JOURNAL = {Ann. PDE},
  FJOURNAL = {Annals of PDE. Journal Dedicated to the Analysis of Problems
              from Physical Sciences},
    VOLUME = {8},
      YEAR = {2022},
    NUMBER = {2},
     PAGES = {Paper No. 18, 89},
      ISSN = {2524-5317,2199-2576},
   MRCLASS = {53C21 (35B45 83C57)},
  MRNUMBER = {4462883},
MRREVIEWER = {Man\ Chun\ Leung},
       DOI = {10.1007/s40818-022-00132-7},
       URL = {https://doi.org/10.1007/s40818-022-00132-7},
}

@article {tenan1,
    AUTHOR = {Tenan, Jacopo},
     TITLE = {Volume preserving spacetime mean curvature flow and foliations
              of initial data sets},
   JOURNAL = {J. Funct. Anal.},
  FJOURNAL = {Journal of Functional Analysis},
    VOLUME = {290},
      YEAR = {2026},
    NUMBER = {6},
     PAGES = {Paper No. 111313, 47},
      ISSN = {0022-1236,1096-0783},
   MRCLASS = {53E10 (35B40 35K55 53C80)},
  MRNUMBER = {5002643},
       DOI = {10.1016/j.jfa.2025.111313},
       URL = {https://doi.org/10.1016/j.jfa.2025.111313},
}

@article {brendleeichmair,
    AUTHOR = {Brendle, Simon and Eichmair, Michael},
     TITLE = {Large outlying stable constant mean curvature spheres in
              initial data sets},
   JOURNAL = {Invent. Math.},
  FJOURNAL = {Inventiones Mathematicae},
    VOLUME = {197},
      YEAR = {2014},
    NUMBER = {3},
     PAGES = {663--682},
      ISSN = {0020-9910,1432-1297},
   MRCLASS = {53C42 (53C20 83C99)},
  MRNUMBER = {3251832},
MRREVIEWER = {Andrew\ Bucki},
       DOI = {10.1007/s00222-013-0494-8},
       URL = {https://doi.org/10.1007/s00222-013-0494-8},
}

@article {eichmairkoerber1,
    AUTHOR = {Eichmair, Michael and Koerber, Thomas},
     TITLE = {Foliations of asymptotically flat manifolds by stable constant
              mean curvature spheres},
   JOURNAL = {J. Differential Geom.},
  FJOURNAL = {Journal of Differential Geometry},
    VOLUME = {128},
      YEAR = {2024},
    NUMBER = {3},
     PAGES = {1037--1083},
      ISSN = {0022-040X,1945-743X},
   MRCLASS = {53C20 (53C42)},
  MRNUMBER = {4810218},
MRREVIEWER = {Juan\ A.\ Aledo},
       DOI = {10.4310/jdg/1729092454},
       URL = {https://doi.org/10.4310/jdg/1729092454},
}

@article {huang,
    AUTHOR = {Huang, Lan-Hsuan},
     TITLE = {Foliations by stable spheres with constant mean curvature for
              isolated systems with general asymptotics},
   JOURNAL = {Comm. Math. Phys.},
  FJOURNAL = {Communications in Mathematical Physics},
    VOLUME = {300},
      YEAR = {2010},
    NUMBER = {2},
     PAGES = {331--373},
      ISSN = {0010-3616,1432-0916},
   MRCLASS = {53C12 (53C42 53C80)},
  MRNUMBER = {2728728},
MRREVIEWER = {Jan\ Metzger},
       DOI = {10.1007/s00220-010-1100-1},
       URL = {https://doi.org/10.1007/s00220-010-1100-1},
}

@article {nerz,
    AUTHOR = {Nerz, Christopher},
     TITLE = {Foliations by spheres with constant expansion for isolated
              systems without asymptotic symmetry},
   JOURNAL = {J. Differential Geom.},
  FJOURNAL = {Journal of Differential Geometry},
    VOLUME = {109},
      YEAR = {2018},
    NUMBER = {2},
     PAGES = {257--289},
      ISSN = {0022-040X,1945-743X},
   MRCLASS = {53C12 (53C20 53C42 83C05)},
  MRNUMBER = {3807320},
MRREVIEWER = {A.\ Burtscher},
       DOI = {10.4310/jdg/1527040873},
       URL = {https://doi.org/10.4310/jdg/1527040873},
}

@article {ma,
    AUTHOR = {Ma, Shiguang},
     TITLE = {Uniqueness of the foliation of constant mean curvature spheres
              in asymptotically flat 3-manifolds},
   JOURNAL = {Pacific J. Math.},
  FJOURNAL = {Pacific Journal of Mathematics},
    VOLUME = {252},
      YEAR = {2011},
    NUMBER = {1},
     PAGES = {145--179},
      ISSN = {0030-8730,1945-5844},
   MRCLASS = {53C12 (53C42)},
  MRNUMBER = {2862146},
MRREVIEWER = {Lan-Hsuan\ Huang},
       DOI = {10.2140/pjm.2011.252.145},
       URL = {https://doi.org/10.2140/pjm.2011.252.145},
}

@article {metzger,
    AUTHOR = {Metzger, Jan},
     TITLE = {Foliations of asymptotically flat 3-manifolds by 2-surfaces of
              prescribed mean curvature},
   JOURNAL = {J. Differential Geom.},
  FJOURNAL = {Journal of Differential Geometry},
    VOLUME = {77},
      YEAR = {2007},
    NUMBER = {2},
     PAGES = {201--236},
      ISSN = {0022-040X,1945-743X},
   MRCLASS = {53C12 (53C20)},
  MRNUMBER = {2355784},
MRREVIEWER = {John\ Urbas},
       URL = {http://projecteuclid.org/euclid.jdg/1191860394},
}

@article {chodosheichmair,
    AUTHOR = {Chodosh, Otis and Eichmair, Michael},
     TITLE = {Global uniqueness of large stable {CMC} spheres in
              asymptotically flat {R}iemannian 3-manifolds},
   JOURNAL = {Duke Math. J.},
  FJOURNAL = {Duke Mathematical Journal},
    VOLUME = {171},
      YEAR = {2022},
    NUMBER = {1},
     PAGES = {1--31},
      ISSN = {0012-7094,1547-7398},
   MRCLASS = {53C42 (53C20)},
  MRNUMBER = {4364730},
MRREVIEWER = {Zejun\ Hu},
       DOI = {10.1215/00127094-2021-0043},
       URL = {https://doi.org/10.1215/00127094-2021-0043},
}

@article {qingtian,
    AUTHOR = {Qing, Jie and Tian, Gang},
     TITLE = {On the uniqueness of the foliation of spheres of constant mean
              curvature in asymptotically flat 3-manifolds},
   JOURNAL = {J. Amer. Math. Soc.},
  FJOURNAL = {Journal of the American Mathematical Society},
    VOLUME = {20},
      YEAR = {2007},
    NUMBER = {4},
     PAGES = {1091--1110},
      ISSN = {0894-0347,1088-6834},
   MRCLASS = {53C20 (53C12 53C80)},
  MRNUMBER = {2328717},
MRREVIEWER = {Fei-Tsen\ Liang},
       DOI = {10.1090/S0894-0347-07-00560-7},
       URL = {https://doi.org/10.1090/S0894-0347-07-00560-7},
}

@incollection {ye,
    AUTHOR = {Ye, Rugang},
     TITLE = {Foliation by constant mean curvature spheres on asymptotically
              flat manifolds},
 BOOKTITLE = {Geometric analysis and the calculus of variations},
     PAGES = {369--383},
 PUBLISHER = {Int. Press, Cambridge, MA},
      YEAR = {1996},
      ISBN = {1-57146-037-3},
   MRCLASS = {53C12},
  MRNUMBER = {1449417},
MRREVIEWER = {Vladimir\ G.\ Tkachev},
}

@article {nevestian2,
    AUTHOR = {Neves, Andr\'e{} and Tian, Gang},
     TITLE = {Existence and uniqueness of constant mean curvature foliation
              of asymptotically hyperbolic 3-manifolds. {II}},
   JOURNAL = {J. Reine Angew. Math.},
  FJOURNAL = {Journal f\"ur die Reine und Angewandte Mathematik. [Crelle's
              Journal]},
    VOLUME = {641},
      YEAR = {2010},
     PAGES = {69--93},
      ISSN = {0075-4102,1435-5345},
   MRCLASS = {53C12 (53C21 53C80)},
  MRNUMBER = {2643925},
MRREVIEWER = {Jesse\ Ratzkin},
       DOI = {10.1515/CRELLE.2010.028},
       URL = {https://doi.org/10.1515/CRELLE.2010.028},
}

@article {nevestian1,
    AUTHOR = {Neves, Andr\'e{} and Tian, Gang},
     TITLE = {Existence and uniqueness of constant mean curvature foliation
              of asymptotically hyperbolic 3-manifolds},
   JOURNAL = {Geom. Funct. Anal.},
  FJOURNAL = {Geometric and Functional Analysis},
    VOLUME = {19},
      YEAR = {2009},
    NUMBER = {3},
     PAGES = {910--942},
      ISSN = {1016-443X,1420-8970},
   MRCLASS = {53C12 (53C21 53C80)},
  MRNUMBER = {2563773},
MRREVIEWER = {Jesse\ Ratzkin},
       DOI = {10.1007/s00039-009-0019-1},
       URL = {https://doi.org/10.1007/s00039-009-0019-1},
}

@article {lammmetzgerschulze,
    AUTHOR = {Lamm, Tobias and Metzger, Jan and Schulze, Felix},
     TITLE = {Foliations of asymptotically flat manifolds by surfaces of
              {W}illmore type},
   JOURNAL = {Math. Ann.},
  FJOURNAL = {Mathematische Annalen},
    VOLUME = {350},
      YEAR = {2011},
    NUMBER = {1},
     PAGES = {1--78},
      ISSN = {0025-5831,1432-1807},
   MRCLASS = {53C12 (53C24)},
  MRNUMBER = {2785762},
MRREVIEWER = {Jesse\ Ratzkin},
       DOI = {10.1007/s00208-010-0550-2},
       URL = {https://doi.org/10.1007/s00208-010-0550-2},
}

@misc{tenan2,
      title={Foliations by constant spacetime mean curvature surfaces for asymptotically hyperboloidal initial data sets}, 
      author={Jacopo Tenan},
      year={2026},
      eprint={2607.02244},
      archivePrefix={arXiv},
      primaryClass={math.DG},
      url={https://arxiv.org/abs/2607.02244}, 
	howpublished = {arXiv:2607.02244 (preprint)}
}

@misc{lambertscheuer,
      title={{Foliation of null cones by surfaces of constant spacetime mean curvature near MOTS}}, 
      author={Ben Lambert and Julian Scheuer},
      year={2026},
      eprint={2603.23083},
      archivePrefix={arXiv},
      primaryClass={math.DG},
      url={https://arxiv.org/abs/2603.23083}, 
	howpublished = {arXiv:2603.23083 (preprint)}
}

@article{maedlerwinicour,
   title={Bondi-Sachs Formalism},
   volume={11},
   ISSN={1941-6016},
   url={http://dx.doi.org/10.4249/scholarpedia.33528},
   DOI={10.4249/scholarpedia.33528},
   number={12},
   journal={Scholarpedia},
   publisher={Scholarpedia},
   author={M\"adler, Thomas and Winicour, Jeffrey},
   year={2016},
   pages={33528} }

@article {schoensimonyau,
    AUTHOR = {Schoen, R. and Simon, L. and Yau, S. T.},
     TITLE = {Curvature estimates for minimal hypersurfaces},
   JOURNAL = {Acta Math.},
  FJOURNAL = {Acta Mathematica},
    VOLUME = {134},
      YEAR = {1975},
    NUMBER = {3-4},
     PAGES = {275--288},
      ISSN = {0001-5962,1871-2509},
   MRCLASS = {53C40 (49F10)},
  MRNUMBER = {423263},
MRREVIEWER = {Robert\ Gulliver},
       DOI = {10.1007/BF02392104},
       URL = {https://doi.org/10.1007/BF02392104},
}

@misc{wolff_stability,
      title={A note on the stability of surfaces along null cones under area-preserving variations},
      year={2026},
	author = {Markus Wolff},
      eprint={2607.09325},
      archivePrefix={arXiv},
      primaryClass={math.DG},
      url={https://arxiv.org/abs/2607.09325}, 
	howpublished = {arXiv:2607.09325 (preprint)}
}

@article {changyang,
    AUTHOR = {Chang, Sun-Yung Alice and Yang, Paul C.},
     TITLE = {Prescribing {G}aussian curvature on {$S^2$}},
   JOURNAL = {Acta Math.},
  FJOURNAL = {Acta Mathematica},
    VOLUME = {159},
      YEAR = {1987},
    NUMBER = {3-4},
     PAGES = {215--259},
      ISSN = {0001-5962,1871-2509},
   MRCLASS = {35J60 (53A05 58E11 58G30)},
  MRNUMBER = {908146},
MRREVIEWER = {John\ Urbas},
       DOI = {10.1007/BF02392560},
       URL = {https://doi.org/10.1007/BF02392560},
}

	\nopagebreak
	\bibliographystyle{plain}
	\,\\\\\\
	{\small
		Klaus Kr\"oncke\newline
		KTH Royal Institute of Technology \newline
		Department of Mathematics\newline
		Lindstedtsv\"agen 25\newline
		114 28 Stockholm, Sweden\newline
		{https://orcid.org/0000-0001-7933-0034}\newline
		kroncke@kth.se}\newline
	\,\\
	{\small
		Markus Wolff\newline
		University of Vienna \newline
		Faculty of Mathematics\newline
		Oskar-Morgenstern Platz 1\newline
		1090 Vienna, Austria\newline
		{https://orcid.org/0000-0002-2257-7359}\newline
		markus.wolff@univie.ac.at}
\end{document}